\documentclass[twoside,a4paper,11pt,centertags]{amsart}

\usepackage[T1]{fontenc}
\usepackage[utf8]{inputenc}
\usepackage{subfiles}
\usepackage{multicol}

\makeatletter
\usepackage[section]{placeins}

\AtBeginDocument{%
	\expandafter\renewcommand\expandafter\subsection\expandafter{%
		\expandafter\@fb@secFB\subsection
	}%
}
\makeatother
\makeatletter
\@namedef{subjclassname@2020}{%
	\textup{2020} Mathematics Subject Classification}
\makeatother

\usepackage{amsfonts,amssymb,amsmath,amsthm}
\usepackage{mathrsfs}
\usepackage{nicefrac}
\usepackage{latexsym}
\usepackage{mathtools}
\usepackage[noadjust]{cite}
\usepackage{paralist}
\usepackage{longtable}
\usepackage{float}
\usepackage{pdfsync}
\usepackage{tikz,tikz-cd}
\usetikzlibrary{patterns}
\usepackage{hhline}
\usepackage{import}
\usepackage{dsfont}
\usepackage{bigints}

\usepackage{enumitem}

\usepackage[colorlinks=true,citecolor=magenta,linkcolor=cyan,
]{hyperref}
\AtBeginDocument{}

\numberwithin{equation}{section}

\theoremstyle{plain}
\newtheorem{thm}{Theorem}[section]
\newtheorem{lem}[thm]{Lemma}
\newtheorem{cor}[thm]{Corollary}
\newtheorem{prop}[thm]{Proposition}

\theoremstyle{definition}
\newtheorem{defn}[thm]{Definition}
\newtheorem{rem}[thm]{Remark}

\usepackage{color}

\renewcommand{\epsilon}{\varepsilon}

\newcommand{\IC}{\mathbb{C}}

\newcommand{\N}{\mathbb{N}}

\newcommand{\R}{\mathbb{R}}

\newcommand{\Z}{\mathbb{Z}}

\newcommand{\cC}{\mathcal{C}}
\newcommand{\cT}{\mathcal{T}}
\newcommand{\cD}{\mathcal{D}} 
\newcommand{\cF}{\mathcal{F}} 
\newcommand{\cG}{\mathcal{G}}

\newcommand{\cK}{\mathcal{K}} 
\newcommand{\cL}{\mathcal{L}}

\newcommand{\cP}{\mathcal{P}}

\newcommand{\cR}{\mathcal{R}}
\newcommand{\cS}{\mathcal{S}} 

\newcommand{\cU}{\mathcal{U}}

\renewcommand{\S}{\mathbb{S}}

\newcommand{\dense}{\cC}

\newcommand{\loc}{\operatorname{loc}}
\renewcommand{\L}{\operatorname{L}} 
\newcommand{\C}{\operatorname{C}} 
\renewcommand{\H}{\operatorname{H}} 
\newcommand{\W}{\operatorname{W}}
\newcommand{\B}{\operatorname{B}}
\newcommand{\A}{\operatorname{A}}

\newcommand{\X}{\operatorname{X}} 
\newcommand{\Y}{\operatorname{Y}}
\newcommand{\zZ}{\operatorname{Z}}
\newcommand{\U}{\operatorname{U}}
\newcommand{\I}{\operatorname{I}}

\newcommand{\frT}{\mathfrak{T}}
\newcommand{\frL}{\mathfrak{L}}
\newcommand{\sem}{\cR}

 \newcommand{\id}{\operatorname{Id}} 
 
\DeclareRobustCommand{\Hdot}{\dot{\H}\protect{\vphantom{H}}
} 
\DeclareRobustCommand{\Wdot}{\dot{\W}\protect{\vphantom{W}}} 
\DeclareRobustCommand{\Bdot}{\dot{\B}\protect{\vphantom{B}}} 
\DeclareRobustCommand{\Adot}{\dot{\A}\protect{\vphantom{A}}} 

\DeclareRobustCommand{\Xdot}{\dot{\X}\protect{\vphantom{X}}} 
\DeclareRobustCommand{\Ydot}{\dot{\Y}\protect{\vphantom{Y}}} 
\DeclareRobustCommand{\Zdot}{\dot{\zZ}\protect{\vphantom{Z}}} 
 
\DeclareRobustCommand{\Udot}{\dot{\U}\protect{\vphantom{U}}}
 
\DeclareRobustCommand{\cFdot}{\dot{\cF}\protect{\vphantom{\cF}}} 

\DeclareRobustCommand{\cLdot}{\dot{\cL}\protect{\vphantom{\cL}}} 
\DeclareRobustCommand{\cTdot}{\dot{\cT}\protect{\vphantom{\cT}}}
\DeclareRobustCommand{\cUdot}{\dot{\cU}\protect{\vphantom{\cU}}}
\DeclareRobustCommand{\cGdot}{\dot{\cG}\protect{\vphantom{\cG}}}

\newcommand{\eps}{\epsilon} 

\def\angle#1#2{\langle #1,#2 \rangle} 

\newcommand\supp{\operatorname{supp}}

\renewcommand{\iint}{\int_{}\kern-.34em \int} 
\renewcommand{\iiint}{\iint_{}\kern-.34em \int} 

\newcommand{\dx}{\, \mathrm{d} x}
\newcommand{\dd}{\, \mathrm{d} }
\newcommand{\dv}{\, \mathrm{d} v}

\newcommand{\ds}{\, \mathrm{d} s}
\newcommand{\dr}{\, \mathrm{d} r}
\newcommand{\dt}{\, \mathrm{d} t}

\newcommand{\abs}[1]{\left\lvert#1\right\rvert}

\newcommand{\norm}[1]{\left\|#1\right\|}
\newcommand{\homd}{\mathsf{K}}

\mathtoolsset{showmanualtags}

\title[Kinetic Sobolev Spaces]{Kinetic Sobolev Spaces}

\author{Pascal Auscher}
\address{Universit\'e Paris-Saclay, CNRS, Laboratoire de Math\'{e}matiques d'Orsay, 91405 Orsay, France\\ and 
French-Australian  Mathematical Sciences and Interactions, Australian National University-CNRS, Canberra, ACT, 2601, Australia}
\email{pascal.auscher@universite-paris-saclay.fr}

\author{Lukas Niebel}
\address{Institut f\"ur Analysis und Numerik,  Westf\"alische Wilhelms-Universit\"at M\"unster\\
Orl\'eans-Ring 10, 48149 M\"unster, Germany.}
\email{lukas.niebel@uni-muenster.de}

\date{March 17, 2026}
\thanks{The authors thank Cyril Imbert for discussions. The second author is funded by the Deutsche Forschungsgemeinschaft (DFG, German Research Foundation) under Germany's Excellence Strategy EXC 2044/2--390685587, Mathematics M\"unster: Dynamics--Geometry--Structure. A CC-BY 4.0 \url{https://creativecommons.org/licenses/by/4.0/} public copyright license has been applied by the authors to the present document and will be applied to all subsequent versions up to the Author Accepted Manuscript arising from this submission.}

\subjclass[2010]{Primary: 35K65, 35R05, 35D30, 35R09 Secondary:   35K70, 35B65}

\keywords{Kolmogorov equation, kinetic function spaces and embeddings, integrated kernel estimates, anisotropic Littlewood--Paley decomposition, $\L^p$ estimates}

\begin{document}
\allowdisplaybreaks
\begin{abstract}
We define and study homogeneous kinetic Sobolev spaces adapted to the Kolmogorov equation. We consider both local and non-local diffusion.  
The spaces are built from the Lebesgue spaces $\L^p$ for all integrability exponents $p \in (1,\infty)$ with regularity assumptions in the transport and diffusive directions according to the scaling of the Kolmogorov equation.
The regularity scale accommodates weak and strong solutions. 
We prove that the proposed spaces satisfy sharp embeddings quantifying the transfer-of-regularity \`a la Bouchut--H\"ormander, continuity-in-time in the spirit of Lions and the gain-of-integrability of Sobolev and Hardy--Littlewood--Sobolev type. 
A core tool are
mapping properties of the Kolmogorov operator, given by the fundamental solution, established between anisotropic homogeneous Sobolev spaces.
To achieve this, we prove $\L^p$ boundedness of related singular integral operators, for which
 we deduce novel kernel estimates by a Littlewood--Paley decomposition and geometric considerations. 
Moreover, we provide a new uniqueness criterion which allows us to show well-posedness of the Cauchy problem. 
\end{abstract}

\maketitle

\tableofcontents

\allowdisplaybreaks

\newpage

\section{Introduction}
\label{sec:intro}

In the same way the Sobolev scales associated with the Laplacian (and its fractional powers) and the corresponding heat operators are well adapted to the study of elliptic and parabolic problems, respectively, we may wonder what the analogous scales for kinetic problems are.
Since such problems involve both a transport field and a degenerate diffusion operator, it is natural to introduce scales of (homogeneous) spaces, which we call \emph{kinetic Sobolev spaces} associated with the local or non-local Kolmogorov equation 
with constant diffusion coefficient:
\begin{equation} \label{eq:kol}
		(\partial_t + v \cdot \nabla_x) f + (-\Delta_v)^{\beta}f = S  
\end{equation}
posed on the whole time line $t \in \R$ and the position-velocity full space $(x,v) \in \R^{2d}$, $d \ge 1$. 
Here, $\beta \in (0,1]$, $f = f(t,x,v) \colon \R^{1+2d} \to \R$ is a solution and $S = S(t,x,v) \colon \R^{1+2d} \to \R$ is a source term.
We speak of local diffusion in the case $\beta = 1$ and of non-local diffusion when $\beta \in (0,1)$. 
This partial differential equation is a central object in the theory of kinetic equations. 
It serves as a simplified model for linearisations of the Landau equation ($\beta=1$) and the Boltzmann equation 
($\beta\in(0,1)$).
Moreover, it generalises the heat equation (special case $f = f(t,v)$) and the Laplace equation (when $f = f(v)$).  

To start with, we recall various notions of solutions. 
Distributional solutions are distributions satisfying \eqref{eq:kol} in the distributional sense, that is, when all differential operators act on the test function only: this notion is what we use here to obtain the most general results. 
We emphasise that special care has to be taken when applying fractional diffusion operators to a distribution. 
A (variational/energy) weak solution is a distributional solution for which the diffusive term $(-\Delta_v)^{\beta}$ splits half on the solution and half on the test function. This may impose further restrictions.
A strong solution is a distributional solution (often a function) which satisfies \eqref{eq:kol}  with $S\in \L^p_{t,x,v}$ in an (a.e.) pointwise sense. It is clear that these three notions require adequate a priori regularity on $f$ itself.

\bigskip  

The equation \eqref{eq:kol} involves the free transport operator $\partial_t+v\cdot \nabla_x$ and the diffusion operator $(-\Delta_v)^{\beta}$ in the velocity variable. They do not commute. However, it is well-known from H\"ormander's work, at least in the local case, that the operator is hypo-elliptic and has excellent regularity properties. 
H\"ormander observed that one needs  spaces that quantify regularity and integrability of $(\partial_t + v \cdot \nabla_x) f$ 
and 
$(-\Delta_v)^{\beta}f$.
The general scheme is to define spaces of distributions  $(\X,\zZ)$ such that  if $\cL=\{ f \in \cD'(\R^{1+2d})\, ; \,  f \in \X \ \& \ (\partial_t + v \cdot \nabla_x)f \in \zZ\}$ then the differential operator $(\partial_t + v \cdot \nabla_x) + (-\Delta_v)^{\beta}$ is an isomorphism $\cL \to \zZ$. Moreover, its inverse should be the Kolmogorov operator given by the fundamental solution. If this operator maps $\zZ$ to a subspace $\Y$ of $\X$, then the gain from $\X$ to $\Y$ yields an embedding $\cL \hookrightarrow \Y$. As a consequence, it gives a priori regularity for solutions in $\X$ with source in $\zZ$. We stress that although  tied to the Kolmogorov equation, $\cL$ is \emph{not} a space of solutions. Having a good understanding  of $\cL$ can thus be used for other kinetic equations with non constant coefficients.

 As for Sobolev spaces, we require that $\cL$ describes a scale of spaces  distinguished by their  regularity and  integrability parameters. Having this in mind, three questions are central towards an explicit definition given fixed regularity and integrability parameters: 

\begin{enumerate}
	\item[\textbf{1.}]   What is the  largest distributional space in which distributional solutions are unique?  In other words, what is the largest choice for $\X$ ?
	\item[\textbf{2.}] On which  distributional spaces is the Kolmogorov operator bounded. In other words, find $\zZ$ and $\Y\subset \X$ such that  $\zZ\to \Y$ boundedness holds.
	\item[\textbf{3.}] Which space $\zZ$ is mapped by the Kolmogorov operator  into distributional solutions of \eqref{eq:kol}? In other words, when does $\zZ\to \cL$ boundedness hold? 
\end{enumerate}

\bigskip

Regarding the \textbf{first question}, a natural choice given the degenerate diffusion is to measure regularity only in the velocity variable (but this is not the only one) as the regularity in the transport direction is a consequence of the solution property. 
We develop an involved distributional argument that deeply uses the underlying kinetic structure to prove uniqueness of distributional solutions to \eqref{eq:kol} assuming only $f \in \L^p_{t,x}\Hdot^{\gamma,p}_{\vphantom{t}v}$, $p \in (1,\infty)$, $-\infty< \gamma<2\beta +\frac{d}{p}$.
The latter space is the homogeneous Sobolev space in the velocity variable $v$ with regularity $\gamma$ and $\L^p$-integrability in the other variables. 
The upper bound on $\gamma$ is there to ensure that the equation makes sense and will be explained later.

\bigskip

Concerning the \textbf{second question}, let us mention that there is a fundamental solution, with an explicit expression in physical variables when $\beta=1$ and a formula on the Fourier side in all cases $\beta \in (0,1]$. 
To this fundamental solution, we may associate the Kolmogorov operator, which is the convolution with respect to kinetic translation of the source term with the fundamental solution. 
A natural source space on which the Kolmogorov operator can act is $\L^p_{t,x}\Hdot^{\gamma-2\beta,p}_{\vphantom{t}v}$ with an expected range in $\L^p_{t,x}\Hdot^{\gamma,p}_{\vphantom{t}v}$ (and this is known for $\gamma=2\beta$),  but this choice is not the largest possible one. In fact, from the H\"ormander--Bouchut theory, one expects a transfer-of-regularity from the diffusive $v$- variable to the $x$-variable to gain $\L^p_{t,v}\Hdot_{\vphantom{t}x}^{\frac{\gamma}{2\beta+1},p}$. The exponent $\frac{\gamma}{2\beta+1}$ comes from scaling considerations 
Thus, dually, we want to put $\L^p_{t,v}\Hdot_{\vphantom{t}x}^{\frac{\gamma-2\beta}{2\beta+1},p}$ in the source space.  This leads us to introduce anisotropic Sobolev spaces in the $(x,v)$ variables. 

\noindent Furthermore, solutions should be in an anisotropic Besov space at any fixed time with, hopefully, continuity with respect to time, so one adds source terms being integrable in time and valued in this Besov space as a dual condition. With this large space of source terms, we prove an optimal boundedness result for the Kolmogorov operator for all possible exponents $\beta\in (0,1]$, $p\in (1,\infty)$ and $\gamma\in \R$. 

\bigskip

To understand the \textbf{third question}, a priori boundedness of the Kolmogorov operator does not guarantee that its extension (by density) maps into weak solutions (because we are using homogeneous spaces). This is where we need to formalise a definition of \emph{a kinetic Sobolev space}.  To illustrate our definition, we consider the two settings prominent in the literature: (variational/energy) weak solutions in $\L^2$ and strong solutions in $\L^p$. We formulate important results in the way they appear in the present article and compare them to the literature afterwards. 

\noindent We recall that in the local case ($\beta = 1$) one may rewrite a solution to \eqref{eq:kol} as
\begin{equation*}
	(\partial_t + v \cdot \nabla_x) f = \nabla_v \cdot S_0 \\
\end{equation*}
where $S_0 = \nabla_v f$. Thus, if $f \in \L^2_{t,x}\Hdot^{1,2}_v$ is a weak solution to the Kolmogorov equation, then the transport direction of $f$ is an element of a negative Sobolev space in the velocity variable. The kinetic Sobolev space should thus incorporate the regularity of the transport term in $\L^2_{t,x}\Hdot^{-1,2}_v$.

\noindent Back to the case $\beta \in (0,1]$ one defines the homogeneous kinetic Sobolev space as
\begin{equation*}
	\cFdot^{\beta,2}_\beta = \left\{ f \in \cD'(\R^{1+2d});  (\partial_t + v \cdot \nabla_x) f = D_v^\beta S_0, \quad S_0 \in \L^2_{t,x,v} \text{ and } D_v^\beta f \in \L^2_{t,x,v} \right\}.
\end{equation*}
Note that we do not require a priori that $f\in \L^2_{t,x,v}$ so that this space is invariant under kinetic scaling and its norm has a scaling relation $\|f(\lambda^{2\beta}t, \lambda^{2\beta+1}x, \lambda v)\|= \lambda^{\homd/2 -\beta}\|f(t,x,v)\|$, where 
\begin{equation*}
	\homd = 2\beta+ (2\beta+2)d
\end{equation*}
is the homogeneous dimension.
An important feature of kinetic equations is the transfer-of-regularity \`a la Bouchut--H\"ormander which manifests here in the embedding
\begin{equation*}
	\cFdot^{\beta,2}_\beta \ \hookrightarrow \ \L^2_{t,v}\Hdot_{\vphantom{t}x}^{\frac{\beta}{2\beta+1},2}.
\end{equation*}
Regarding the Cauchy problem, an embedding \`a la Lions is of utmost importance 
\begin{equation*}
	\cFdot^{\beta,2}_\beta \ \hookrightarrow \  \C^{}_0(\R;\L^2_{x,v}),
\end{equation*}
and in the kinetic De~Georgi--Nash--Moser theory the gain-of-integrability \`a la Sobolev  
\begin{equation*}
	\cFdot^{\beta,2}_\beta \ \hookrightarrow \ \L^{2\kappa}_{t,x,v}
\end{equation*}
with $\kappa = \frac{\homd}{\homd-2\beta}$,  is a central tool. We refer to \cite{AIN} for more information and proofs. 

Going to the $\L^p$-setting, which is the main purpose of the present article, the literature on kinetic equations studies strong solutions, for which we define the natural homogeneous kinetic Sobolev space as 
\begin{equation*}
	\cFdot^{2\beta,p}_\beta = \left\{ f \in \cD'(\R^{1+2d});  (\partial_t + v \cdot \nabla_x) f \in \L^p_{t,x,v} \text{ and } (-\Delta_v)^{\beta} f \in \L^p_{t,x,v} \right\}.
\end{equation*}
Note again that we do not require a priori that $f\in \L^p_{t,x,v}$ so that this space is invariant under kinetic scaling and its norm has a scaling relation $\|f(\lambda^{2\beta}t, \lambda^{2\beta+1}x, \lambda v)\|= \lambda^{\homd/p -2\beta}\|f(t,x,v)\|$.
In this situation, we prove the transfer-of-regularity  
$$
	\cFdot^{2\beta,p}_\beta  \hookrightarrow \L^p_{t,v}\Hdot_{\vphantom{t}x}^{\frac{2\beta}{2\beta+1},p},
$$
continuity-in-time 
$$
	\cFdot^{2\beta,p}_\beta \hookrightarrow \C^{}_0(\R;\Bdot_{\beta}^{2\beta(1-\frac{1}{p}),p}),
$$ 
and gain-of-integrability 
$$
	\cFdot^{2\beta,p}_\beta \hookrightarrow \L^{p\kappa}_{t,x,v}
$$ 
with $\kappa = \frac{\homd}{\homd-2\beta p}$, and provided $2\beta p <\homd$.
Here $\Bdot_{\beta}^{2\beta(1-\frac{1}{p}),p}$ is a homogeneous Besov space with anisotropy $(\frac 1{2\beta+1}, 1)$ in $(x,v)$ variables and  regularity  ${2\beta}(1-\frac{1}{p})$.

As mentioned above, one can allow for more source terms in \eqref{eq:kol}, and  in view of the transfer-of-regularity, it is natural to consider the anisotropic Sobolev space with the same anisotropy as above and regularity $\gamma$ as (the correct definition will be given later) 
\begin{equation*}
		\Xdot^{\gamma,p}_\beta = \left\{f \in \cS'(\R^{2d})\, ; \, \exists g \in \L^p_{x,v}, \ f = \cF^{-1}\left( \left( \abs{\varphi}^{\frac{1}{1+2\beta}}+\abs{\xi} \right)^{-\gamma} \hat{g}(\varphi,\xi) \right) \right\}.
\end{equation*}
In the case of weak solutions above, instead of $\L^2_{t,x}\Hdot^{-\beta,2}_v$ one considers data even in the larger space $\L^2_t\Xdot^{-\beta,2}_\beta+\L^1_t\L^2_{x,v}$ and this leads to the definition to a kinetic Sobolev space as
\begin{equation*}
	\cLdot^{\beta,2}_{\beta}= \left\{ f \in \cD'(\R^{1+2d})\, ; \,  f \in \L^2_{t,x}\Hdot^{\beta,2}_{v} \ \& \ (\partial_t + v \cdot \nabla_x)f \in \L^2_t\Xdot^{-\beta,2}_\beta+\L^1_t\L^2_{x,v} \right\}.
\end{equation*}
More generally, for fixed $\beta$, there is a corresponding kinetic Sobolev space $\cLdot^{\gamma,p}_{\beta}$ associated to a choice of solution space and source space given the regularity and integrability exponents $\gamma,p$. This is the object of our study here.

\bigskip

Let us also comment on our choice of working with \emph{homogeneous spaces}, for which we use the $\dot{}$ notation. When working on $\R^{1+2d}$ or even half-space $(0,\infty)\times \R^{2d}$, it is natural to  focus on homogeneous spaces. 
In fact, the Kolmogorov equation is not exponentially stable, so one cannot expect to find global solutions in $\L^p_{t,x,v}$ when the time interval is infinite (except in the case of regularity $\gamma=0$). The homogeneous setup allows us to work with global solutions nonetheless.
Moreover, in kinetic PDE, the velocity variable is often taken in $\R^d$ and thus considering the homogeneous Sobolev space in this variable is natural. 
Yet another advantage is that homogeneous spaces are adapted to scale-invariant estimates, which provide us with the sharp relations between exponents.

\bigskip 

First, we rigorously defining these spaces ($\Hdot^{\gamma,p}_v,\Xdot^{\gamma,p}_\beta,\Bdot_{\beta}^{\gamma,p},\cFdot^{\gamma,p}_\beta$, $\cLdot^{\gamma,p}_\beta$). Then, we obtain for  $p \in (1,\infty)$, $\beta \in (0,1]$ and $\gamma \in [0,2\beta]$ 
with $\gamma < \homd /p$,  concerning the kinetic Sobolev space $\cLdot^{\gamma,p}_\beta$ but also the smaller space $\cFdot^{\gamma,p}_\beta$,
\begin{itemize}
\item[{\tiny$ \bullet$}] sharp embeddings, which quantify
\subitem- the transfer-of-regularity \`a la Bouchut--H\"ormander,
\subitem- continuity-in-time in the spirit of Lions,
\subitem-  gain-of-integrability of Sobolev type, 
\item[{\tiny$ \bullet$}] the isomorphism property, implying it is a Banach space containing a  dense class of $\L^2$ functions,
\item[{\tiny$ \bullet$}] integral identities of energy type via absolute continuity.
\end{itemize}
This will allow us to prove well-posedness of Cauchy problems for the Kolmogorov equation \eqref{eq:kol} and also to establish  Hardy--Littlewood--Sobolev estimates of the Kolmogorov operators.

Also, we develop the corresponding theory for the inhomogeneous spaces. We emphasise that we provide for the first time the $\L^p$-estimates and a complete $\L^p$-theory for weak solutions to the Kolmogorov equation ($\gamma = \beta$) for $p \neq 2$. The manuscript encompasses a number of works in the literature, and the range on the coefficient $\gamma$ is sharp (for $\beta,p$ fixed).

\medskip

The focus on homogeneous Sobolev spaces has the advantage of naturally dealing with scale-invariance, but it introduces several technical difficulties which require careful treatment. 
This will be taken care of in Section \ref{sec:spaces}, where all the aforementioned  spaces will be defined. 
The study of the Kolmogorov operator is the core technical part of the paper. 
We introduce these operators in Section \ref{sec:forbackkol}. 
Their action on {appropriate source} spaces is the content of Section \ref{sec:towardsLp}, which we approach by stating $\L^p$-boundedness of a singular integral operator. To this end,
we establish novel (partially) integrated kernel estimates in Section \ref{sec:estimates}. 
These estimates will allow us to check H\"ormander's condition in the Coifman--Weiss theorem, that is, the analogue of Calder\'on--Zygmund weak $(1,1)$ extrapolation on spaces of homogeneous type, see Appendix \ref{sec:cw}, and deduce the $\L^p$-boundedness of the singular integral operator in Section \ref{sec:horm}. 
The initial value problem and corresponding temporal trace estimates are studied in Section \ref{sec:trace}. 
The latter three sections are calculation-heavy, and we advise a reader to skip the proofs on a first read. Next, we gather all the bounds obtained so far and, combined with embeddings for anisotropic Sobolev spaces, we prove Hardy--Littlewood--Sobolev estimates for the Kolmogorov operators with sharp exponents, see Section~\ref{sec:Lebesgueestimates}.
In Section \ref{sec:iso}, we prove that the Kolmogorov differential operators are isomorphisms from the kinetic Sobolev spaces $\cFdot$ or $\cLdot$ onto their corresponding source spaces. 
This goes by proving a novel uniqueness result, which is of interest itself. 
Section \ref{sec:cauchy} deals with the Cauchy problem for the Kolmogorov equation, and we provide a general existence and uniqueness result in Theorem \ref{thm:homCP-Lp}. 
Section \ref{sec:inhom} shows that our results continue to hold in inhomogeneous kinetic spaces.
Section~\ref{sec:NZ} discusses another possible choice for the kinetic Sobolev space which lifts the range restriction $[0,2\beta]$ on the regularity parameter $\gamma$.
In Section \ref{sec:local}, we explain how to prove one of the bounds on the Kolmogorov operator with local diffusion in the special case of weak solutions but without using the Fourier transform.

\bigskip
Let us now explain the related literature.
Fundamental are the works by H\"ormander \cite{hormander_hypoelliptic_1967}, Rothschild and Stein \cite{MR436223} and Folland \cite{folland_estimates_1974} on $\L^p$-estimates for hypoelliptic differential operators built from vector fields. The Kolmogorov equation is a particular example of this broad family of PDEs.

Let us focus on the literature concerned with the Kolmogorov equation. $\L^2$ estimates for the Kolmogorov equation \eqref{eq:kol} can be found in \cite{MR4444079} for $\beta=1$, in \cite{MR3906169,MR3826548} for strong solutions 
and $\beta\in(0,1]$, and in \cite{niebel_kinetic_2021} for a larger scale including weak solutions. 
Noteworthy is also the early work \cite{MR875086}.
Recently, together with C.~Imbert, we developed a complete $\L^2$ theory of weak solutions in \cite{AIN} including an analysis of the homogeneous kinetic Sobolev space $\cLdot^{\beta,2}_{\beta}$ mentioned above.
Our results may be viewed partly as an extension of \cite{AIN} to $\L^p$. We mention at this point that we get rid of the technical assumption $\beta < d/2$; compare \cite[Footnote 1 on p. 3]{AIN}.

$\L^p$ estimates for strong solutions in the case $\beta=1$ are classical \cite{MR436223,folland_estimates_1974}; another reference is \cite{MR1391154}. 
In \cite{MR2729292}, the estimates are obtained by a slightly different method; a proof of the homogeneous estimate can be found in \cite{hirao2026anisotropicmaximallpregularityestimates}.

For non-local diffusion $\beta\in(0,1)$, $\L^p$ estimates for strong solutions 
were proved in \cite{MR3906169,MR3826548} using different approaches. 
In both works, integrated kernel estimates (in $x$ and $v$) are derived via stochastic methods. 
The $\L^p$ bound in \cite{MR3826548} follows from interpolation of the $\L^2-\L^2$ and $\L^\infty-\mathrm{BMO}$ estimates via 
the Fefferman--Stein theorem; in \cite{MR3906169} the $\L^2-\L^2$ estimate is combined with the 
Coifman--Weiss theorem after verifying a H\"ormander condition on the kernel.

Concerning the Kolmogorov equation with variable rough coefficients, for $p=2$ and $\beta\in(0,1]$, 
 existence and uniqueness of weak solutions is established in \cite{AIN}; see also \cite{MR4312909,albritton2021variational} for $\beta = 1$. 
For strong solutions with $\beta=1$, $\L^p$-estimates for the Kolmogorov equation in non-divergence form have been proved under ellipticity, boundedness, and uniform continuity assumptions on the diffusion coefficients in \cite{MR3092273,niebel_kinetic_2022,niebel_analytic_2023},  the continuity assumptions being relaxed to $\mathrm{VMO}$ in \cite{MR4444079}.
An a priori $\L^p$ estimate for strong solutions with uniformly continuous jump kernel for $\beta\in(0,1)$ is proven in \cite{MR4299838}. Estimates in $\L^p$ spaces with spatial weights for strong solutions to the Kolmogorov equations are deduced in \cite{niebel_analytic_2023}.

We also mention \cite{niebel_kinetic_2022}, which treats the Cauchy problem for $\beta\in(0,1]$ 
and strong solutions with initial data in anisotropic Besov spaces, as well as mixed time/space integrability and temporal weights.
Moreover, a uniqueness result for strong solutions is proved; see also \cite{MR4444079} in the case $\beta = 1$. 

Estimates for the fundamental solution in the case $\beta=1$ with constant coefficients are classical, 
see \cite{MR1391154,lanconelli_class_1994}. 
Even for rough diffusion coefficients, a fundamental solution with two-sided pointwise bounds exists
\cite{auscher_fundamental_2024,dietert2025nashsgboundkolmogorov}. 
In the non-local case, integrated estimates appear in \cite{MR3826548,MR3906169}. 
First (non-optimal) pointwise estimates are derived in \cite{loher2024semilocal}; 
sharp pointwise bounds for the fundamental solution were recently obtained in \cite{hou_kernel_2024} (stochastic approach) 
and in \cite{grube2024pointwiseestimatesfundamentalsolution} for $d=1$ (analytic approach); 
for $d=1$ they can also be recovered from results on L\'evy measures \cite{MR3906169,marino2023weak}. 
A precise understanding of the fundamental solution is used in \cite{kassmann2024harnack} to show that the Harnack inequality fails for weak solutions to the {non-local} Kolmogorov equation ($\beta\in(0,1)$).

A space similar to the inhomogeneous version of $\cFdot^{\beta,2}_{2}$  but with Gaussian weight in the velocity variable appears in \cite{albritton2021variational}. The authors study the kinetic Fokker--Planck equation, i.e.\ \eqref{eq:kol} with $x$ in a torus or a bounded domain of $\R^d$ with an additional forcing term $v \cdot \nabla_v$ and bounded drift of gradient form. In this context, elements are $\L^2$ functions and transfer-of-regularity and  continuity-in-time were proven ($p=2$, $\beta=1$).
Results in standard Lebesgue spaces were obtained in \cite{niebel_kinetic_2021} ($p=2$, $\beta\in(0,1]$) 
in inhomogeneous spaces on finite time intervals. 
Sharp Lions-type statements in homogeneous spaces on possibly infinite time intervals 
appear in \cite{AIN} ($p=2$, $\beta\in(0,1]$). 
For strong solutions in inhomogeneous spaces, a related embedding (continuity in time with values in an anisotropic Besov space) 
is proved in \cite{niebel_kinetic_2022} ($p\in(1,\infty)$, $\beta\in(0,1]$).

H\"ormander's seminal work \cite{hormander_hypoelliptic_1967} quantifies the transfer of regularity in $\L^2$; 
see \cite{albritton2021variational} for a modern presentation. 
Averaging lemmas are a related line of research and originate in \cite{MR0808622,MR0923047}. 
Bouchut \cite{MR1949176} proves a scale-invariant transfer of regularity for strong solutions qualitatively in $\L^p$
with (non-)local diffusion ($p\in(1,\infty)$, $\beta\in(0,1]$) and for weak solutions ($p\in(1,\infty)$, $\beta=1$).
The transfer-of-regularity for weak solutions in $\L^2$ and $\beta\in(0,1]$ appears also in \cite{niebel_kinetic_2021}.
In our joint work with C.~Imbert \cite{AIN}, we establish transfer-of-regularity in a broad scale of $\L^2$-based 
homogeneous spaces for non-local diffusions ($\beta\in(0,1)$, and even $\beta\in (0,\infty)\setminus \N$) and the method applies to local differential operators of any order $(\beta\in \N)$.

A gain of integrability for weak solutions, although far from optimal, can already be deduced from 
H\"ormander's work ($p=2$, $\beta=1$). 
Even the presentation in \cite{albritton2021variational} does not yield the critical gain of integrability. 
Working with estimates for the fundamental solution, the first sharp kinetic Sobolev inequality is proved in
\cite{pascucci_mosers_2004} ($p=2$, $\beta=1$), and the method extends to all $p\in(1,\infty)$.  
The same method applies to non-local equations and is used in \cite{imbert_weak_2020} ($p=2$, $\beta \in (0,1)$); see also \cite{MR4688651,anceschi2025giorginashmosertheorykineticequations}. 
In the case of ultraparabolic equations with local diffusion, including the kinetic one, \cite{MR4700191} introduces a class of spaces called intrinsic Sobolev spaces roughly requiring iterated applications of the operators $|\partial_t +v \cdot \nabla_x |^{1/2}, \nabla_{v}$ in $ \L^p_{t,x,v}$ and obtains Sobolev embeddings for them. 
To compare their spaces to ours would probably require to show that any $f \in \cL^{1,p}_1$ satisfies $|\partial_t +v \cdot \nabla_x |^{1/2}f \in \L^p_{t,x,v}$ in a suitable sense; we leave this an interesting open problem.    
In \cite{AIN}, the kinetic Sobolev embedding is deduced from the transfer-of-regularity, combined with a 
Lions-type embedding ($p=2$, $\beta \in (0,1]$); 
see also \cite{golse_harnack_2019}. Sobolev-type embeddings for strong solutions are studied in \cite{MR4700191}. 
An alternative approach, not relying on the fundamental solution but on critical kinetic trajectories, is proposed in
 \cite{dietert2025criticaltrajectorieskineticgeometry} ($\beta=1$).

For Schauder estimates, we refer to 
\cite{di_francesco_schauder_2006,imbert_schauder_2021,imbert_schauder_2021-1,lunardi_schauder_1997,loher2023quantitativeschauderestimateshypoelliptic,menozzi_martingale_2018}. 

\medskip

\noindent \textbf{Notation.} 
We write $\lesssim$ ($ \gtrsim, \simeq$) whenever we estimate from above (below, or both ways) by a constant possibly
depending on the quantities $\beta,\gamma,d$ introduced below. 
If we want to highlight the dependence of the constant on the variable $\lambda$, 
we write $\lesssim_\lambda$ ($ \gtrsim_\lambda, \simeq_\lambda$). 
For $x \in \R^d$ and $r>0$, we denote the open ball by $B_r(x) \subset \R^d$. 
Convolution on $\R^d$ is denoted by $\ast$. 
We abbreviate $\R^* = \R \setminus \{0 \}$. 
Positive constants $C>0$ are assumed to be finite real numbers.  
{Moreover, $a \wedge b := \min \{ a,b \}$. }

\section{Kinetic spaces and Kolmogorov operators}
\label{sec:spaces}
\subsection{Main functional spaces}

We introduce the function spaces used throughout the paper.
Unless stated otherwise, the parameter ranges are $\gamma\in \R$, $\beta\in (0,\infty)$, $p \in (1,\infty)$.  
We use the following conventions. 
We indicate the variable $t\in \R$, $x\in \R^d$, $v\in \R^d$ by subscripts.
We let $D_{x}=(-\Delta_{x})^{1/2}$  and $D_{v}=(-\Delta_{v})^{1/2}$ where $\Delta_{x}$ and $\Delta_{v}$ 
are the Laplacians in the respective variable. 
{We will work with homogeneous norms and with (tempered) distributions. 
This is convenient for the transport term, which will be interpreted in the sense of distributions.}
We denote by $ \ \hat{} \ $ the Fourier transforms in $v$ or $(x,v)$ variables with dual variables $\xi$ or $(\varphi,\xi)$ 
and usually denote by $\cF^{-1}$ the inverse Fourier transforms.
We use the standard Fourier multiplication notation $m(D)f = \cF^{-1}(m \hat{f})$. 
 
\subsubsection{Homogeneous Sobolev spaces}\label{sec:homSobspaces} 
We begin with the Sobolev spaces on $\R^d$. 
We recall the following fact, which goes back to Peetre in the case of Besov spaces \cite{MR461123}. 
For $k\in \Z$, denote by $\cP_{k}$ the set of polynomials on $\R^d$ with degree $\deg P\le k$, 
with the convention that $\cP_{k}=\{0\}$ if $k<0$.

\begin{lem} \label{lem:convergenceLP} 
Let $\gamma\in \R$, $p \in (1,\infty)$. 
Let  $g_{j}\in \cS'(\R^d) \cap \C^\infty(\R^d)$, $j\in \Z$, with Fourier transform $\hat g_{j}$ 
supported in $C_{j}=\{\xi\in \R^d;  2^{j- 1} < |\xi|< 2^{j+1}\}$. 
If $\big\|\big(\sum_{j=-\infty}^\infty |2^{j\gamma} g_{j}|^2\big)^{1/2}\big\|_{\L^p_{v}}<\infty$ then 
$\sum_{j=1}^\infty g_{j}$ converges in $\cS'(\R^d)$ and $\sum_{j=-\infty}^0 g_{j}$ 
converges in $\cS'(\R^d)/\cP_{k}$ where $k=[\gamma-d/p]$, the greatest integer less than or equal to $\gamma-d/p$. 
\end{lem}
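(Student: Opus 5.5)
The first step is a pointwise bound on each block. Since $|g_j|\le (\sum_k |2^{k\gamma}g_k|^2)^{1/2}\,2^{-j\gamma}$ pointwise, we get $\|g_j\|_{\L^p}\le M 2^{-j\gamma}$, where $M$ denotes the square-function norm in the hypothesis. Because $\hat g_j$ is supported in the annulus $C_j$, Bernstein's inequality then gives
\begin{equation*}
\|\partial^\alpha g_j\|_{\L^\infty}\lesssim 2^{j(|\alpha|+d/p)}\|g_j\|_{\L^p}\lesssim M\,2^{j(|\alpha|+d/p-\gamma)}
\end{equation*}
for every multi-index $\alpha$, and the same bound holds for $\|\partial^\alpha g_j\|_{\L^p}$ with $d/p$ removed. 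These are the only estimates the argument needs.

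\emph{High frequencies ($j\ge1$).} Take $\psi\in\cS(\R^d)$. Since $\hat g_j$ vanishes near the origin for $j\ge 1$, we can write $\langle g_j,\psi\rangle=\langle (-\Delta)^{-N}g_j,(-\Delta)^N\psi\rangle$; equivalently we pair $\hat g_j$ with $|\xi|^{-2N}\widehat{\Delta^N\psi}$, which is smooth on $C_j$. The operator $(-\Delta)^{-N}$ acts on $g_j$ with norm $\simeq 2^{-2jN}$ on $\L^p$, because $\hat g_j$ lives in $C_j$. This yields
\begin{equation*}
|\langle g_j,\psi\rangle|\lesssim 2^{-2jN}M2^{-j\gamma}\|\Delta^N\psi\|_{\L^{p'}}.
\end{equation*}
Choosing $2N>-\gamma$ makes the series absolutely summable, with a bound by a Schwartz seminorm of $\psi$. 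Hence $\sum_{j\ge1}g_j$ converges in $\cS'$.

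\emph{Low frequencies ($j\le0$).} This part is the main obstacle: convergence can only hold modulo $\cP_k$, and we have to exhibit it concretely. For $j\le0$ the bound $\|g_j\|_{\L^p}\lesssim M2^{-j\gamma}$ is summable only if $\gamma<0$. In general we pass to derivatives. For $|\alpha|=k+1$ we have $|\alpha|+d/p-\gamma>0$ by the definition of $k$, so $\sum_{j\le0}\|\partial^\alpha g_j\|_{\L^\infty}<\infty$ and $\sum_{j\le 0}\partial^\alpha g_j$ converges uniformly to a bounded continuous function, hence in $\cS'$.

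We then use the standard fact that convergence in $\cS'/\cP_k$ of partial sums $S_n$ is equivalent to convergence of $\langle S_n,\psi\rangle$ for all $\psi$ orthogonal to $\cP_k$, i.e.\ with vanishing moments up to order $k$. Any such $\psi\in\cS$ can be written as $\psi=\sum_{|\alpha|=k+1}\partial^\alpha\psi_\alpha$ with $\psi_\alpha\in\cS$. This follows from $\hat\psi$ vanishing to order $k+1$ at the origin, via a Taylor decomposition $\hat\psi=\sum\xi^\alpha\hat\psi_\alpha$ using a cutoff near $0$. Consequently
\begin{equation*}
\langle S_n,\psi\rangle=\sum_{|\alpha|=k+1}(-1)^{k+1}\langle\partial^\alpha S_n,\psi_\alpha\rangle
\end{equation*}
converges. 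For $k<0$ (i.e.\ $\gamma<d/p$) no derivatives are needed: the $\L^\infty$ bound with $\alpha=0$ already sums, since $d/p-\gamma>0$.

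To obtain an honest limit class rather than merely a Cauchy property, we construct it explicitly. One option is to integrate the convergent derivatives back along Taylor remainders; the other is to note that $\cS'/\cP_k$ is the dual of the closed subspace $\cS_k\subset\cS$ of functions with vanishing moments up to order $k$. Since $\cS_k$ is a Fréchet space, Banach--Steinhaus shows that the pointwise limit is continuous, and Hahn--Banach then extends it to an element of $\cS'$, unique modulo $\cP_k$. The delicate point to verify is exactly this identification of $(\cS_k)'$ with $\cS'/\cP_k$, together with the decomposition of $\psi$ into $(k+1)$-th derivatives of Schwartz functions.
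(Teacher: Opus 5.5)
Your proof is correct. The paper does not prove this lemma at all; it recalls it as a known fact going back to Peetre. So the useful comparison is with the classical proof, not with an in-paper argument.

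\textbf{High frequencies.} Your step is the standard one: you extract $(-\Delta)^{-N}$, which gains a factor $2^{-2jN}$ on a block with spectrum in $C_j$, and take $2N+\gamma>0$.

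\textbf{Low frequencies, your route.} You obtain the limit abstractly. You decompose a test function with vanishing moments up to order $k$ as $\sum_{|\alpha|=k+1}\partial^\alpha\psi_\alpha$ and use the uniform convergence of $\sum_{j\le 0}\partial^\alpha g_j$. You then invoke Banach--Steinhaus on the Fr\'echet space $\cS_k$, Hahn--Banach, and the fact that the annihilator of $\cS_k$ in $\cS'$ is $\cP_k$.

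\textbf{Low frequencies, classical route.} The classical argument writes the limit down. Let $T_kg_j$ be the degree-$k$ Taylor polynomial of $g_j$ at the origin. Taylor's formula together with your Bernstein bound for $|\alpha|=k+1$ gives $|g_j(x)-T_kg_j(x)|\lesssim M|x|^{k+1}2^{j(k+1+d/p-\gamma)}$. Since the exponent is positive, $\sum_{j\le 0}(g_j-T_kg_j)$ converges locally uniformly under the majorant $M|x|^{k+1}$, hence in $\cS'$ by dominated convergence.

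\textbf{What each route buys.} The classical route gives a more concrete conclusion: $\sum_{j=-n}^{0}g_j-P_n$ converges in $\cS'$ for the explicit polynomials $P_n=\sum_{j=-n}^{0}T_kg_j\in\cP_k$. It also avoids the two points you flag as delicate, namely the decomposition of $\psi$ into $(k+1)$-th derivatives and the duality $(\cS_k)'\cong\cS'/\cP_k$. Your duality route instead shows convergence against every Schwartz function orthogonal to $\cP_k$ directly, without choosing normalising polynomials. Both establish the statement.
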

 
We introduce a normalised Littlewood--Paley decomposition   \begin{equation*}
	\sum_{j = -\infty}^\infty \hat{\psi}_j(\xi)=1, \qquad \xi\in \R^d\setminus \{0\},
\end{equation*}
with $\psi\in \cS(\R^d)$ where $\hat\psi$ is supported in $C_{0}$ and $\hat\psi>0$ if $1\le |\xi|\le 2$ and  $\hat{\psi}_j(\xi)= \hat\psi(2^{-j}\xi)$.  
With this lemma, one can give the following definition of the homogeneous Sobolev space via its Triebel--Lizorkin (semi-) norm.

\begin{defn}[Homogeneous Sobolev space]  
Let $\gamma\in \R$, $p \in (1,\infty)$. For $f\in \cS'(\R^d)$, we set
\[
	\|f\|_{\Hdot^{\gamma,p}_{v}} := \bigg\|\bigg(\sum_{j=-\infty}^\infty |2^{j\gamma} \psi_{j}\ast f|^2\bigg)^{1/2}\bigg\|_{\L^p_{v}}
\]
and we let 
\[
	\Hdot^{\gamma,p}_{v}:= \left\{ f\in \cS'(\R^d)\, ; \, \|f\|_{\Hdot^{\gamma,p}_{v}} <\infty\ \&\  f=\sum_{j=-\infty}^\infty  \psi_{j}\ast f \textrm{ \ in \ }  \cS'(\R^d)/\cP_{[\gamma-d/p]} \right\}.
\]
\end{defn}

Note that ${\Hdot^{\gamma,p}_{v}}$ is a normed space if $\gamma-d/p<0$ and a semi-normed space 
if $\gamma-d/p\ge 0$ which contains  $ \cP_{[\gamma-d/p]}$. 
It is complete for this semi-norm. 
This definition guarantees that we are dealing with tempered distributions, as $\Hdot^{\gamma,p}_{v} \subset \cS'(\R^d)$, 
but this inclusion is only continuous when $\gamma-d/p< 0$ (otherwise, if we argue modulo $\cP_{[\gamma-d/p]}$ it is continuous). 
It is known that changing the function $\psi$ to another one $\phi$ with the same support property 
and $\sum_{j = -\infty}^\infty |\hat{\phi}_j(\xi)|>0$, $  \xi\in \R^d\setminus \{0\},$ yields an equivalent (semi-)norm. And if the decomposition is normalized, i.e.\ $\sum_{j = -\infty}^\infty \hat{\phi}_j(\xi)=1$, $\xi\ne 0$, then the set defined using $\phi$ is the same. 

We observe that the  space of Schwartz functions $\cS$  with  Fourier transform supported in compact subsets of $\R^d\setminus \{0\}$
is dense in ${\Hdot^{\gamma,p}_{v}}$ for all $\gamma\in \R$, $p \in (1,\infty)$.   
The proof is the same for all parameters: first take an approximation by finite Littlewood--Paley sums by definition, and then take a convolution (in Fourier variable) with a mollifier.
Moreover, this space is preserved by the action of fractional powers of $-\Delta_{v}$, and their action is by Fourier multiplication
$D_{v}^\gamma S= \cF^{-1}(|\xi|^\gamma \hat S)$, where $\cF^{-1}$ denotes the inverse Fourier transform. 
In particular, for any such $S$, and all $\alpha,\gamma\in \R$, 
\[ 
	\|D_{v}^\alpha S\|_{{\Hdot^{\gamma,p}_{v}}}\sim \| S\|_{{\Hdot^{\gamma-\alpha,p}_{v}}}
\]
with constants that do not depend on $S$. This means that to prove estimates on such elements, we can use standard 
Fourier multiplication without using Littlewood--Paley series. 
We need, however, to define their action on general ${\Hdot^{\gamma,p}_{v}}$ elements. 

\begin{lem}	\label{lem:fractionalLaplacian} 
Let $\gamma\in \R$, $p \in (1,\infty)$. Let $f\in {\Hdot^{\gamma,p}_{v}}$.
\begin{enumerate}
	\item $D_{v}^\gamma f= (-\Delta_{v})^{\gamma/2}f$ is the unique $g\in \L^p_{v}$ defined by 
		$g =\sum_{j=-\infty}^\infty  (-\Delta_{v})^{\gamma/2} \psi_{j}(D)f$ in $ \cS'(\R^d)$ and we have 
		$\|f\|_{\Hdot^{\gamma,p}_{v}}\sim \| D_{v}^{\gamma} f\|_{\L^p_{v}}$.
	\item For $\alpha\in \R$, $\alpha>\gamma-d/p$, $D_{v}^\alpha f= (-\Delta_{v})^{\alpha/2}f$ is the unique 
		$g_{\gamma-\alpha}\in {\Hdot^{\gamma-\alpha,p}_{v}}$ defined by 
		$g_{\gamma-\alpha} =\sum_{j=-\infty}^\infty  (-\Delta_{v})^{\alpha/2} \psi_{j}(D)f$ in $\cS'(\R^d)$ 
		and we have  $${\|g_{\gamma-\alpha}\|_{\Hdot^{\gamma-\alpha,p}_v}\sim \|D_v^\gamma f\|_{L^p}\sim \|f\|_{\Hdot^{\gamma,p}}}.$$ 
		Moreover we have, with $g$ as in \emph{(i)},
$$
		(-\Delta_{v})^{\alpha/2}f= (-\Delta_{v})^{(\alpha-\gamma)/2} g = 
		(-\Delta_{v})^{(\alpha-\gamma)/2} (-\Delta_{v})^{\gamma/2}f.
		$$
		\end{enumerate}
\end{lem}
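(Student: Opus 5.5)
The plan is to work block by block with the Littlewood--Paley pieces $f_j:=\psi_j(D)f=\psi_j\ast f$, which are smooth tempered distributions with Fourier support in $C_j$. For any $\alpha\in\R$ the operator $(-\Delta_v)^{\alpha/2}$ acts on $f_j$ unambiguously, as multiplication by $|\xi|^\alpha$ (smooth on $C_j$) on the Fourier side. The idea is to show that the series $\sum_j(-\Delta_v)^{\alpha/2}f_j$ converges in the right sense, which amounts to checking the square-function hypothesis of Lemma \ref{lem:convergenceLP}, and then to read off the norm equivalences from standard Littlewood--Paley theory.

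For (i), set $g_j:=(-\Delta_v)^{\gamma/2}f_j$. Its Fourier support lies in $C_j$, and $g_j=2^{j\gamma}m_j(D)f_j$ with $m_j(\xi)=|2^{-j}\xi|^\gamma\tilde\chi(2^{-j}\xi)$, where $\tilde\chi$ equals $1$ on $C_0$ and is supported in a slightly larger annulus. The multipliers $m_j$ are dilates of a single Schwartz function. The vector-valued Fefferman--Stein maximal inequality, or equivalently a vector-valued Mikhlin theorem, then gives
\[
\Big\|\Big(\sum_j|g_j|^2\Big)^{1/2}\Big\|_{\L^p_v}\lesssim\|f\|_{\Hdot^{\gamma,p}_v}.
\]
The sequence $(g_j)$ therefore satisfies the hypothesis of Lemma \ref{lem:convergenceLP} with regularity $0$. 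Since $[0-d/p]<0$, we have $\cP_{[-d/p]}=\{0\}$, so $\sum_j g_j$ converges in $\cS'(\R^d)$ itself, and we define $g$ as this sum.

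The main point is to show $g\in\L^p_v$ with $\|g\|_{\L^p_v}\lesssim\|f\|_{\Hdot^{\gamma,p}_v}$. I would argue by duality. For $h\in\cS$ with compactly supported Fourier transform in $\R^d\setminus\{0\}$, only finitely many terms of $\langle g,h\rangle=\sum_j\langle g_j,\tilde\psi_j(D)h\rangle$ are nonzero. Cauchy--Schwarz in $j$ and H\"older then bound this by the square function of $(g_j)$ times $\|(\sum_j|\tilde\psi_j(D)h|^2)^{1/2}\|_{\L^{p'}}\lesssim\|h\|_{\L^{p'}}$. Since such $h$ are dense in $\L^{p'}$, $g$ extends to a bounded functional on $\L^{p'}$. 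Its representative in $\L^p$ agrees with $g$ as a distribution: both sides are continuous on $\cS$, and both vanish on the orthogonal complement of the dense class only modulo polynomials. This last point needs care, since a difference supported in Fourier at $0$ is a polynomial. A polynomial lying in $\L^p+\cS'$-limits of the above type must vanish, because $\sum_j g_j$ converges in $\cS'$ with no polynomial ambiguity and an $\L^p$ polynomial is $0$.

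For the reverse inequality, $\|f\|_{\Hdot^{\gamma,p}}\lesssim\|g\|_{\L^p}$, observe that $\psi_j(D)g=g_j'$ is a finite combination of the $g_k$ with $|k-j|\le1$. Hence $2^{j\gamma}\psi_j\ast f=n_j(D)\psi_j(D)g$ up to such neighbouring terms, with $n_j$ again uniformly Mikhlin. The Littlewood--Paley square function theorem for $\L^p$ then gives the bound. Uniqueness of $g$ is by construction, since the series determines it in $\cS'$.

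For (ii), the same argument with $g_j=(-\Delta_v)^{\alpha/2}f_j$ gives $\|(\sum_j|2^{j(\gamma-\alpha)}g_j|^2)^{1/2}\|_{\L^p}\sim\|f\|_{\Hdot^{\gamma,p}}$. Lemma \ref{lem:convergenceLP} with regularity $\gamma-\alpha$ applies, and the hypothesis $\alpha>\gamma-d/p$ gives $[\gamma-\alpha-d/p]<0$. Hence the full series converges in $\cS'$ with no polynomial quotient, and its sum $g_{\gamma-\alpha}$ belongs to $\Hdot^{\gamma-\alpha,p}_v$: indeed $\psi_k\ast g_{\gamma-\alpha}$ only involves $g_j$ with $|j-k|\le1$, which gives both the norm and the reconstruction condition. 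For the identity, $(-\Delta_v)^{(\alpha-\gamma)/2}g$ is defined by part (ii) applied to $g\in\L^p=\Hdot^{0,p}_v$, where the hypothesis is exactly $\alpha-\gamma>-d/p$. Its Littlewood--Paley pieces are $(-\Delta_v)^{(\alpha-\gamma)/2}\psi_k(D)g=\sum_{|j-k|\le1}\psi_k(D)(-\Delta_v)^{\alpha/2}f_j$, so both sides are $\cS'$-limits of the same series. I expect the main obstacle to be the identification step in (i), namely ruling out a polynomial discrepancy between the $\cS'$-sum and the $\L^p$ representative.
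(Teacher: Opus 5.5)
Your argument has the same structure as the paper's proof. The paper writes $(-\Delta_v)^{\gamma/2}\psi_j(D)=2^{j\gamma}\phi_j(D)$ with $\hat\phi=|\xi|^\gamma\hat\psi$ and uses that changing the Littlewood--Paley window gives an equivalent norm, which is what your Fefferman--Stein estimate proves. It then applies Lemma~\ref{lem:convergenceLP} at regularity $0$, asserts in one line that the $\cS'$-sum lies in $\L^p_v$, and treats (ii) the same way. Your reverse inequality, your treatment of (ii) and your derivation of the composition identity are fine; in fact $\psi_k\ast g_{\gamma-\alpha}=(-\Delta_v)^{\alpha/2}\psi_k\ast f$ exactly.

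The step that does not close as written is the identification in (i), the one you flag yourself. After the duality argument you have $G\in\L^p_v$ with $g-G=P$ a polynomial, and you conclude $P=0$ because ``an $\L^p$ polynomial is $0$''. But you never show $P\in\L^p_v$, so that reason does not apply. The $\cS'$-convergence of $\sum_j g_j$ still has to be tied to $G$ by an actual argument.

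Here is a fix using what you already have.
\begin{itemize}
\item $\psi_k\ast P=0$, since $\hat\psi_k$ vanishes near the origin.
\item By continuity of convolution on $\cS'(\R^d)$, and since $\sum_{|j-k|\le1}\hat\psi_j=1$ on $\supp\hat\psi_k$, you get $\psi_k\ast g=\sum_{|j-k|\le1}\psi_k\ast g_j=g_k$.
\item Hence $\psi_k\ast G=g_k$ for every $k$.
\item The reconstruction $G=\sum_k\psi_k\ast G$ in $\cS'(\R^d)$, valid for $G\in\L^p_v$ with $1<p<\infty$, then gives $G=\sum_k g_k=g$.
\end{itemize}

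Alternatively, you can avoid polynomials entirely. Each $g_j$ lies in $\L^p_v$, being pointwise dominated by $(\sum_k|g_k|^2)^{1/2}$. Your duality estimate applied to finite blocks gives $\|\sum_{M\le|j|\le N}g_j\|_{\L^p_v}\lesssim\|(\sum_{M\le|j|\le N}|g_j|^2)^{1/2}\|_{\L^p_v}$, which tends to $0$ by dominated convergence. So the series converges in $\L^p_v$, hence in $\cS'(\R^d)$, and its $\L^p$-limit is $g$. This is the standard Littlewood--Paley fact on which the paper's one-line assertion rests.
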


\begin{proof}
For the first item, we write $(-\Delta_{v})^{\gamma/2} \psi_{j}(D)= 2^{j\gamma} \phi_{j}(D)$ 
with $\hat\phi(\xi)=|\xi|^\gamma\hat\psi(\xi)$, and 
\begin{equation*}
	\bigg\|\bigg(\sum_{j=-\infty}^\infty |(-\Delta_{v})^{\gamma/2} \psi_{j}\ast f|^2\bigg)^{1/2}\bigg\|_{\L^p_{v}} 
	= \bigg\|\bigg(\sum_{j=-\infty}^\infty |2^{j\gamma} \phi_{j}\ast f|^2\bigg)^{1/2}\bigg\|_{\L^p_{v}} 
	\sim \|f\|_{\Hdot^{\gamma,p}_{v}}.
\end{equation*}
By the Lemma~\ref{lem:convergenceLP}, the series defining $g$ converges in $\cS'(\R^d)$ to an element in $\L^p_{v}$ 
and one easily checks that $g=\sum_{j=-\infty}^\infty   \psi_{j}\ast g$ in $\cS'(\R^d)$. 
The proof of the second item is similar.
\end{proof}

We note that we may take $\alpha<0$, but provided $\gamma-\alpha<d/p$.

\subsubsection{Homogeneous anisotropic Sobolev spaces}\label{sec:anhomSobspaces} 
We move to $\R^{2d}$ and fix $\beta>0$ (we will restrict to $\beta\le 1$ later).
For $k\in \Z$, denote by $\cP_{k}$ the set of polynomials on $\R^{2d}$ with $\deg P\le k$, with the convention that $\cP_{k}=\{0\}$ if $k<0$. We denote by $[a]$ the greatest integer less than or equal to $a$. 
 
We introduce the anisotropic {quasi-norm} on $\R^{2d}$: 
$$
 \abs{(\varphi,\xi)}_\beta = \abs{\varphi}^{\frac{1}{2\beta+1}} + \abs{\xi}.
$$
Note that $\abs{(\varphi,\xi)}_\beta>0$ if $(\varphi,\xi)\ne (0,0)$, 
$ \abs{(\lambda^{2\beta+1}\varphi,\lambda\xi)}_\beta= \lambda \abs{(\varphi,\xi)}_\beta$ for $\lambda>0$ and 
$ \abs{(\varphi+\varphi',\xi+\xi')}_\beta \le C_{\beta}(\abs{(\varphi,\xi)}_\beta + \abs{(\varphi',\xi')}_\beta)$.  
The anisotropy here is relative to the kinetic scaling with diffusion parameter $\beta$. 
 
We introduce homogeneous anisotropic Littlewood--Paley theory specified to the kinetic scaling.
{We draw inspiration from} \cite{MR417687} and refer to \cite{MR1950714} in the inhomogeneous case. 
First, we may replace $\abs{(\varphi,\xi)}_\beta$ by a smooth version \cite{MR417687}: 
there exists a function $d_{\beta}: \R^{2d} \to [0,\infty)$ in 
$\C^\infty(\R^{2d}\setminus\{(0,0)\})$, with 
\begin{itemize}
\item $d_{\beta}(\varphi,\xi) \sim \abs{(\varphi,\xi)}_\beta$,
\item $d_{\beta}(\varphi+\varphi',\xi+\xi') \le  d_{\beta}(\varphi,\xi)+d_{\beta}(\varphi',\xi')$ 
\item $d_{\beta}(\lambda^{2\beta+1}\varphi, \lambda\xi)=\lambda d_{\beta}(\varphi,\xi)$,
\item {$d_{\beta}(\varphi,\xi) = d_{\beta}(-\varphi,\xi)$,  $d_{\beta}(\varphi,\xi) = d_{\beta}(\varphi,-\xi)$}
\end{itemize}
for $(\varphi,\xi), (\varphi',\xi')\in \R^{2d}$ and $\lambda>0$.  Moreover,  
for all $s\in \R$, $\alpha_{1},\alpha_{2}\in \N^{d}$, there is a constant $C_{s,\alpha_{1}, \alpha_{2}}<\infty$ 
such that on  $\R^{2d}\setminus \{ (0,0)\}$,
$$
	|\partial^{\alpha_{1}}_{\varphi}\partial^{\alpha_{2}}_{\xi} d_{\beta}^s| \le C_{s,\alpha_{1}, \alpha_{2}}\ d_{\beta}^{s-(2\beta+1)|\alpha_{1}|- |\alpha_{2}|}
$$
using the standard notation for differentiation.

We consider any function $\theta \in \cS(\R^{2d})$ with Fourier transform supported in 
$ \{(\varphi,\xi)\in \R^{2d}\, ; \, \frac 1 2 < d_{\beta}(\varphi,\xi)< 4\}$ and $\hat \theta(\varphi,\xi)>0$ when 
$ 1  \le d_{\beta}(\varphi,\xi)\le 2$. 
Define $\theta_{j}$ with $\hat{\theta}_j(\varphi,\xi) = \hat{\theta}\left(2^{-(2\beta+1)j}\varphi,2^{-j}\xi\right)$, 
that is,  $\theta_{j}(x,v)=r^{(2\beta+2)d} \theta(r^{(2\beta+1)}x,r v)$ for $r=2^j$.  
We call $(\theta_{j})$ an anisotropic Littlewood--Paley family. 
In what follows, $\ast$ is standard convolution on $\R^{2d}$.

\begin{lem} \label{lem:anisoconvergenceLP} 
Let $\gamma\in \R$, $p \in (1,\infty)$. Let $(\theta_{j})$ be any anisotropic Littlewood--Paley family.
\begin{enumerate}
	\item For $\chi\in \cS(\R^{2d})$ with $\int_{\R^{2d}} \chi P \, \dd(x,v)=0$ for all  $P\in \cP_{k}$, 
	$k= [\gamma-(2\beta+2) d/p]$, then 
	\[
		\bigg\|\bigg(\sum_{j=-\infty}^\infty |2^{-j\gamma}\, \theta_{j}\ast\chi|^2\bigg)^{1/2}\bigg\|_{\L^{p'}_{x,v}}<\infty.
	\]
	\item Let  $g_{j}\in \cS'(\R^{2d}) \cap \C^\infty(\R^{2d})$, $j\in \Z$, with Fourier transform $\hat g_{j}$ supported in 
	$C_{\beta, j}=\{(\varphi,\xi)\in \R^{2d}\, ;\,  2^{j-1} < d_{\beta}(\varphi,\xi) < 2^{j+1}\}$. 
	If $\big\|\big(\sum_{j=-\infty}^\infty |2^{j\gamma} g_{j}|^2\big)^{1/2}\big\|_{\L^p_{x,v}}<\infty$ then 
	$\sum_{j=1}^\infty g_{j}$ converges in $\cS'(\R^{2d})$ and $\sum_{j=-\infty}^0 g_{j}$ converges in 
	$\cS'(\R^{2d})/\cP_{k}$ where $k=[\gamma-(2\beta+2) d/p]$.
\end{enumerate}
\end{lem}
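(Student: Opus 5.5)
Write $Q=(2\beta+2)d$ for the homogeneous dimension of $\R^{2d}$ under the dilations $\delta_r(x,v)=(r^{2\beta+1}x,rv)$. I will prove both items by adapting Peetre's argument (the isotropic Lemma~\ref{lem:convergenceLP}) to this scaling. The two basic inputs are pointwise size bounds on the pieces and a duality pairing with Schwartz test functions.

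\textbf{Item (i).} I split the sum at $j=0$.

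For $j\ge 0$, I use that $\hat\theta_j$ is supported where $d_\beta\sim 2^j$. Since $\hat\chi$ is Schwartz, $|\hat\chi|\lesssim_N d_\beta^{-N}$ for $d_\beta\ge 1$. Write $\theta_j\ast\chi=\theta_j\ast\tilde\chi_j$, where $\hat{\tilde\chi}_j=\hat\chi\,\tilde\eta(\delta_{2^{-j}}\cdot)$ and $\tilde\eta$ is a fattened cutoff. Integrating by parts in the anisotropic sense then gives
\[
	|\theta_j\ast\chi(x,v)|\lesssim_N 2^{-jN}(1+|x|^{\frac1{2\beta+1}}+|v|)^{-N}.
\]
Its $\L^{p'}$ norm is $O(2^{-jN})$, which beats the factor $2^{-j\gamma}$. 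Summing in $\ell^1$ dominates the $\ell^2$ sum.

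For $j<0$, I rescale. Write $\theta_j\ast\chi=2^{jQ}\big(\theta\ast\chi(\delta_{2^{-j}}\cdot)\big)(\delta_{2^j}\cdot)$. The moment conditions on $\chi$ up to degree $k$ give vanishing of $\hat\chi$ at the origin. In anisotropic Taylor expansion this yields $|\hat\chi(\varphi,\xi)|\lesssim d_\beta(\varphi,\xi)^{k+1}$ near $0$. Note that an isotropic order $k+1$ in $(\varphi,\xi)$ gives at least anisotropic order $k+1$, since $|\varphi|\le d_\beta^{2\beta+1}\le d_\beta$ for small $d_\beta$. On the support of $\hat\theta_j$ this gives a factor $2^{j(k+1)}$. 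Combined with the same decay argument, I get
\[
	|\theta_j\ast\chi(x,v)|\lesssim 2^{j(k+1)}\,2^{jQ}\big(1+2^j(|x|^{\frac1{2\beta+1}}+|v|)\big)^{-N}.
\]
Its $\L^{p'}$ norm is $\lesssim 2^{j(k+1)}2^{jQ(1-1/p')}=2^{j(k+1+Q/p)}$. After multiplying by $2^{-j\gamma}$, the exponent is $j(k+1-\gamma+Q/p)$. Since $k+1>\gamma-Q/p$, this is geometrically summable as $j\to-\infty$.

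\textbf{Item (ii).} Take $\chi\in\cS(\R^{2d})$ and a fattened family $\tilde\theta_j$ with $\hat{\tilde\theta}_j=1$ on $C_{\beta,j}$. Then $\langle g_j,\chi\rangle=\langle g_j,\tilde\theta_j^\ast\ast\chi\rangle$, where $\tilde\theta_j^\ast$ is the reflection of $\tilde\theta_j$. Hölder's inequality in $(x,v)$ and Cauchy--Schwarz in $j$ give
\[
	\sum_j|\langle g_j,\chi\rangle|\le\Big\|\Big(\sum_j|2^{j\gamma}g_j|^2\Big)^{1/2}\Big\|_{\L^p}\,\Big\|\Big(\sum_j|2^{-j\gamma}\tilde\theta^\ast_j\ast\chi|^2\Big)^{1/2}\Big\|_{\L^{p'}}.
\]
The second factor is handled by the item (i) estimates, which apply to any Schwartz family with the stated support, not only to genuine Littlewood--Paley families.

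For $j\ge1$, no moment condition is needed, because the $j\ge0$ bound used none. Hence $\sum_{j\ge1}\langle g_j,\chi\rangle$ converges absolutely for every $\chi\in\cS$, with a bound by a Schwartz seminorm of $\chi$. This gives convergence in $\cS'$.

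For $j\le0$, the same bound holds for $\chi$ orthogonal to $\cP_k$. That is exactly convergence in the dual of $\{\chi:\chi\perp\cP_k\}$, i.e.\ in $\cS'/\cP_k$. Continuity of the limit follows from the quantitative bound by finitely many seminorms.

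\textbf{Main obstacle.} The delicate step is the low-frequency estimate for $j<0$. It requires two things. First, translating the vanishing moments into decay $d_\beta^{k+1}$ of $\hat\chi$ at the origin. Second, obtaining uniform spatial decay of the rescaled functions $\theta\ast\chi(\delta_{2^{-j}}\cdot)$ without losing powers of $2^{j}$. I would handle this by writing $\hat\chi=\sum_{|\alpha|=k+1}(\varphi,\xi)^\alpha h_\alpha$ with $h_\alpha$ Schwartz. This is possible because $\hat\chi$ vanishes to order $k+1$ at $0$, using Taylor's formula with a smooth cutoff. Then each term $2^{j(\ldots)}$ times a Schwartz function is controlled uniformly.
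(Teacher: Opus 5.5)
Your proof is correct. It is the standard Peetre duality argument transposed to the kinetic dilations: a fattened Littlewood--Paley family, H\"older in $(x,v)$ combined with Cauchy--Schwarz in $j$, and at low frequencies a rescaling plus Taylor expansion of $\hat\chi$ at the origin, which gives the gain $2^{j(k+1)}$ because isotropic vanishing order implies at least the same anisotropic order. This is exactly what the paper relies on, since it states this lemma without proof, in analogy with the isotropic Lemma~\ref{lem:convergenceLP} attributed to Peetre.

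The only case your write-up glosses over is $k<0$. There your Taylor decomposition is vacuous and no cancellation is available. None is needed, however, since $(2\beta+2)d/p-\gamma>0$ already makes $\sum_{j<0}2^{j((2\beta+2)d/p-\gamma)}$ converge.
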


We say that the anisotropic Littlewood--Paley family is normalised if it satisfies the identity   
\begin{equation*}
	\sum_{j = -\infty}^\infty \hat{\theta}_j(\varphi,\xi)=1, \qquad (\varphi,\xi)\in \R^{2d}\setminus \{(0,0)\},
\end{equation*}

\begin{defn}[Homogeneous anisotropic Sobolev space] \label{defn:HASS}  
Let $\beta>0$, $\gamma\in \R$, $p \in (1,\infty)$. Fix $(\theta_{j})$  a normalised anisotropic Littlewood--Paley family. 
For $f\in \cS'(\R^{2d})$, we set
\[
	\|f\|_{\Xdot^{\gamma,p}_{\beta}} := \bigg\|\bigg(\sum_{j=-\infty}^\infty |2^{j\gamma} \theta_{j}\ast f|^2\bigg)^{1/2}\bigg\|_{\L^p_{x,v}}
\]
and we let 
\[
	\Xdot^{\gamma,p}_{\beta}:= \left\{ f\in \cS'(\R^{2d})\, ; \, \|f\|_{\Xdot^{\gamma,p}_{\beta}} <\infty\ \&\  f=\sum_{j=-\infty}^\infty  \theta_{j}\ast f \textrm{ \ in \ } 
	\cS'(\R^{2d})/\cP_{[\gamma-(2\beta+2)d/p]} \right\}.
\]
\end{defn}

Note that ${\Xdot^{\gamma,p}_{\beta}}$ is a normed space if $\gamma-(2\beta+2)d/p<0$ and a semi-normed space which contains 
$ \cP_{[\gamma-(2\beta+2)d/p]}$  if $\gamma-(2\beta+2)d/p\ge 0$. 
It is complete for this semi-norm. Note that the  partial sums of the Littlewood--Paley series in the definition also converge to $f$ in this semi-norm.
This definition guarantees also that we have $\Xdot_\beta^{\gamma,p} \subset \cS'(\R^{2d})$ as a subspace, and the inclusion is continuous  if and only if 
$\gamma-(2\beta+2)d/p<0$.  
Changing the function $\theta$ to another one $\phi$ with the same support property and the non-degeneracy condition 
$\sum_{j = -\infty}^\infty |\hat{\phi}_j(\varphi,\xi)|>0,  (\varphi,\xi)\in \R^{2d}\setminus \{(0,0)\}$, 
yields an equivalent (semi-)norm. 
And if the decomposition is normalized ($\sum_{j = -\infty}^\infty \hat{\phi}_j(\varphi, \xi)=1$, $(\varphi,\xi)\ne (0,0)$), then the set defined using $\phi$ is the same. 

It will be convenient to have a universal dense subspace.

\begin{lem}
\label{lem:denseanisotropic}
 The  space $\dense$ of Schwartz functions in $\R^{2d}$  with  Fourier transform supported in compact subsets of 
$(\R_{\varphi}^{d}\setminus \{0\})\times (\R_{\xi}^{d}\setminus \{0\})$ is dense in ${\Xdot^{\gamma,p}_{\beta}}$ 
for all $\beta>0$,  $\gamma\in \R$ and $p \in (1,\infty)$.
\end{lem}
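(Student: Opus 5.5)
The argument has two stages: first approximate by finite Littlewood--Paley sums, then remove the coordinate hyperplanes $\{\varphi=0\}$ and $\{\xi=0\}$ from the Fourier support by a multiplier cut-off whose operator norm is controlled uniformly.

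\textbf{Stage 1 (finite sums).} Let $f\in\Xdot^{\gamma,p}_\beta$. By definition, together with the remark after Definition~\ref{defn:HASS}, the partial sums $f_N=\sum_{|j|\le N}\theta_j\ast f$ converge to $f$ in the (semi-)norm. Each $f_N$ is a smooth tempered function with Fourier transform supported in the anisotropic annulus $\{2^{-N-1}<d_\beta<2^{N+2}\}$, which is compact and avoids the origin. On this annulus the norm of $\Xdot^{\gamma,p}_\beta$ is equivalent to the $\L^p_{x,v}$ norm, since only finitely many $\theta_j$ interact and $\theta_j\ast$ is bounded on $\L^p$. It therefore suffices to approximate, in $\L^p_{x,v}$, a function $g=f_N\in \L^p$ with $\hat g$ supported in a fixed anisotropic annulus $A$ by elements of $\dense$ whose Fourier support stays in a slightly larger annulus $A'$. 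Note that $g\in\L^p$ because each $\theta_j\ast f\in \L^p$: this follows from the square function bound and the fact that $\theta_j\ast f=\sum_{|k-j|\le 2}\theta_k\ast\theta_j\ast f$ up to the normalisation.

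\textbf{Stage 2 (removing the hyperplanes).} Pick $\eta\in\C^\infty(\R^d)$ with $\eta=0$ on $B_1$ and $\eta=1$ outside $B_2$, and set $m_\epsilon(\varphi,\xi)=\eta(\varphi/\epsilon)\,\eta(\xi/\epsilon)$. Also pick $\rho\in\cS(\R^{2d})$ with $\hat\rho$ compactly supported and equal to $1$ on a neighbourhood of $A$, so that $g=\rho\ast g$. Set $g_\epsilon=m_\epsilon(D)g=\big(m_\epsilon\hat\rho\big)(D)g$. Then $g_\epsilon$ is Schwartz, its Fourier transform lies in $\dense$'s support class, and its support is in $A'$.

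To show $g_\epsilon\to g$ in $\L^p$, observe that $m_\epsilon(D)=\eta(D_x/\epsilon)\otimes\eta(D_v/\epsilon)$ is a tensor product of operators of the form $I-\chi(D/\epsilon)$ with $\chi=1-\eta\in C_c^\infty$. Each such operator is convolution with $\delta$ minus an $\L^1$-normalised dilate of $\check\chi$, so it is bounded on $\L^p$ uniformly in $\epsilon$. Hence $\|g-g_\epsilon\|_{\L^p}\le \|\chi(D_x/\epsilon)g\|_{\L^p}+C\|\chi(D_v/\epsilon)g\|_{\L^p}$. Now $\chi(D_x/\epsilon)g=\epsilon^{d}\check\chi(\epsilon\,\cdot)\ast_x g$ is an average of $g$ at scale $\epsilon^{-1}$ in $x$. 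For fixed $v$, Young's inequality in $x$ combined with $p>1$ gives $\|\epsilon^d\check\chi(\epsilon\cdot)\ast_x g\|_{\L^p_x}\le \epsilon^{d(1-1/p)}\|\check\chi\|_{\L^{p}}\|g(\cdot,v)\|_{\L^1_x}$. Integrating in $v$ requires $g\in\L^p_v\L^1_x$, which does not hold in general, so this route does not close directly. Instead I will use density plus uniform boundedness: for $h\in\cS$ with $\hat h$ compactly supported away from $\{\varphi=0\}$, we have $\chi(D_x/\epsilon)h=0$ for small $\epsilon$. Such $h$ exist with $\|h-g\|_{\L^p}$ small, for instance by approximating $g$ in $\L^p$ by Schwartz functions and then applying $\rho\ast$. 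The remaining issue is that this reduces the problem back to the original statement. The clean argument is therefore the standard one: $\chi(D_x/\epsilon)u\to0$ in $\L^p$ for every $u\in\L^p$, $1<p<\infty$. This holds for $u\in\L^1\cap\L^p$ by the Young estimate above applied on $\R^{2d}$ (treating the $v$-variable by Minkowski's inequality), and extends to all of $\L^p$ by the uniform bound and density of $\L^1\cap\L^p$. The same argument applies in $v$.

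The main obstacle is exactly this step: showing that low-frequency cut-offs in only the $x$ (or only the $v$) variable vanish in $\L^p$ as $\epsilon\to0$. Its essential ingredient is $p>1$; the argument fails at $p=1$. The rest is bookkeeping with Fourier supports.
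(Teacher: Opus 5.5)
Your overall route matches the paper's: truncate the anisotropic Littlewood--Paley series, then multiply by a product cut-off $\eta(\varphi/\epsilon)\eta(\xi/\epsilon)$ that vanishes near the axes. However, you skip the paper's middle step (mollification in the frequency variables $(\varphi,\xi)$), and without it your claim that ``$g_\epsilon$ is Schwartz'' is false in general.

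\textbf{Main gap.} Your $g_\epsilon=\cF^{-1}(m_\epsilon\hat\rho)\ast g$ is a Schwartz function convolved with an $\L^p$ function. It is smooth, with all derivatives in $\L^p\cap\L^\infty$. But its Fourier transform $m_\epsilon\hat g$ is only a compactly supported distribution. For instance, if $\hat f$ contains a smooth multiple of the surface measure of a sphere lying in the annulus (allowed in $\Xdot^{\gamma,p}_\beta$ for $p$ large), then $m_\epsilon\hat g$ is a singular measure and $g_\epsilon$ decays only like a power. So $g_\epsilon\notin\dense$, and your construction does not produce elements of the class in the statement.

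The repair is the device you mention and then discard. It is not circular, because it only uses density of $\cS$ in $\L^p$; the axes are removed afterwards by $m_\epsilon$.
\begin{itemize}
\item Choose $h_0\in\cS(\R^{2d})$ with $\|h_0-g\|_{\L^p}<\delta$, and replace $g$ by $\rho\ast h_0$, where $\supp\hat\rho$ lies in a slightly larger compact annulus avoiding the origin.
\item Then $\widehat{\rho\ast h_0}=\hat\rho\,\hat h_0\in\C_c^\infty$, so $m_\epsilon(D)(\rho\ast h_0)\in\dense$.
\item Since $\rho\ast g=g$ and the $m_\epsilon(D)$ are uniformly bounded on $\L^p$, you get $\|m_\epsilon(D)(\rho\ast h_0)-g\|_{\L^p}\le C\|\rho\|_{\L^1}\delta+\|m_\epsilon(D)g-g\|_{\L^p}$.
\item Both functions have Fourier support in the fixed annulus $\supp\hat\rho$, so this $\L^p$ bound controls the $\Xdot^{\gamma,p}_\beta$ distance.
\end{itemize}
Multiplying $g$ by $\check\omega(\delta x,\delta v)$ for a frequency mollifier $\omega$, as the paper does, achieves the same thing.

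\textbf{Smaller slip.} Your Young estimate bounds $\|\chi(D_x/\epsilon)u\|_{\L^p}$ by $\epsilon^{d(1-1/p)}\|\check\chi\|_{\L^p}\|u\|_{\L^p_v\L^1_x}$. But $\|u\|_{\L^p_v\L^1_x}$ is not controlled by $\|u\|_{\L^1}+\|u\|_{\L^p}$ on $\R^{2d}$. The indicator of $[0,R]^d\times[0,R^{-1}]^d$ has $\L^1$ and $\L^p$ norms equal to $1$ but mixed norm $R^{d(1-1/p)}$. Minkowski's inequality goes the wrong way here. Use $u\in\C_c^\infty(\R^{2d})$ as the dense class instead; uniform boundedness then extends $\chi(D_x/\epsilon)u\to 0$ to all $u\in\L^p$, and likewise in $v$.

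With these two repairs your argument is complete. Your elementary $\L^1$-dilation argument for convergence of the cut-offs is a clean substitute for the ``Mikhlin symbols'' step whose verification the paper leaves to the reader.
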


\begin{proof}
 First, we can do, by definition, a truncation of the anisotropic Littlewood--Paley series, which gives a compactly supported Fourier transform away from the origin. 
 Then, we may mollify in the variables $(\varphi,\xi)$ to have $\C^\infty$ regularity and still the support property.  Next, we may multiply by smooth Mikhlin symbols having support away from the axes $\varphi=0$ and $\xi=0$. We leave the verification of the details to the reader.  
\end{proof}

This dense space is preserved by the action of fractional powers $D_{x}^a$, $D_{v}^b$ of   $-\Delta_{x}$, $-\Delta_{v}$ 
and also by combinations 
$(\lambda D_{x}^a+\mu D_{v}^b)^c$, $\lambda,\mu\ge 0$, $a,b,c\in \R$  and their action is by 
Fourier multiplication  $(\lambda D_{x}^a+\mu D_{v}^b)^c S= \cF^{-1}((\lambda|\varphi|^a+ \mu|\xi|^b)^c \hat S)$. 
This means that to prove a priori estimates on such elements, we can use standard Fourier multiplication 
instead of the Littlewood--Paley series.

\begin{lem}\label{lem:multipliers} Let $\beta>0$, $\gamma>0$ and $p \in (1,\infty)$. 
	The symbols $m_{1}(\varphi,\xi)=(|\xi|/d_{\beta}(\varphi,\xi))^\gamma$,  
	$m_{2}(\varphi,\xi)=(|\varphi|^{\frac {1}{2\beta+1}}/d_{\beta}(\varphi,\xi))^\gamma$,  
	$m_{3}(\varphi,\xi)=d_{\beta}(\varphi,\xi)^\gamma/  (|\varphi|^{\frac {\gamma}{2\beta+1}}+|\xi|^\gamma)$
	and $m_{4}(\varphi,\xi)=1/m_{3}(\varphi,\xi)$  are  $\L^p_{x,v} = \L^p(\R^d\times \R^d)$ Fourier multipliers.
\end{lem}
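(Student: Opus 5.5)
The plan is to apply a \emph{product-type} (bi-parameter) Mikhlin--Marcinkiewicz multiplier theorem on $\R^d_x\times\R^d_v$, rather than an anisotropic Mikhlin theorem adapted to $d_\beta$. The anisotropic theorem would need smoothness on $\R^{2d}\setminus\{(0,0)\}$, and the symbols fail this: for instance $m_1$ contains $|\xi|^\gamma$ with $\gamma>0$ not necessarily an even integer, so it is not smooth across $\{\xi=0\}$, and likewise $m_2$ across $\{\varphi=0\}$. The result I would invoke is the classical one (Lizorkin's theorem, or the multi-parameter Hörmander--Mikhlin theorem, proved by applying Littlewood--Paley theory separately in $x$ and in $v$). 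It states that $m$ is an $\L^p(\R^d\times\R^d)$ multiplier for all $p\in(1,\infty)$ if, for $|\alpha_1|,|\alpha_2|\le d$ (or $[d/2]+1$ in the sharper versions), on $(\R^d\setminus\{0\})\times(\R^d\setminus\{0\})$ we have
\[
|\partial^{\alpha_1}_\varphi\partial^{\alpha_2}_\xi m(\varphi,\xi)|\lesssim |\varphi|^{-|\alpha_1|}|\xi|^{-|\alpha_2|}.
\]
The axes have measure zero, so it suffices to verify this off the axes.

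\textbf{Step 1: elementary bounds.} I would record three facts, all valid on $(\R^d\setminus\{0\})^2$. First, $|\partial_\xi^{b}|\xi|^{s}|\lesssim|\xi|^{s-|b|}$ and $|\partial_\varphi^{a}|\varphi|^{s/(2\beta+1)}|\lesssim|\varphi|^{s/(2\beta+1)-|a|}$ for any real $s$. Second, from the stated derivative bounds on $d_\beta$, together with $d_\beta\gtrsim|\varphi|^{1/(2\beta+1)}$ and $d_\beta\gtrsim|\xi|$,
\[
|\partial^{a}_\varphi\partial^{b}_\xi d_\beta^{s}|\lesssim d_\beta^{s-(2\beta+1)|a|-|b|}\lesssim d_\beta^{s}|\varphi|^{-|a|}|\xi|^{-|b|}.
\]
Third, $|\xi|\le d_\beta$ and $|\varphi|^{1/(2\beta+1)}\lesssim d_\beta$, so $0\le m_1,m_2\lesssim1$ since $\gamma>0$.

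\textbf{Step 2: the four symbols.} For $m_1=|\xi|^\gamma d_\beta^{-\gamma}$, the Leibniz rule gives terms bounded by $|\xi|^{\gamma-|b_1|}\,d_\beta^{-\gamma}|\varphi|^{-|a|}|\xi|^{-|b_2|}=m_1|\varphi|^{-|a|}|\xi|^{-|b|}$, and $m_1\lesssim1$ closes the estimate. For $m_2$ the argument is symmetric, with the derivatives of $|\varphi|^{\gamma/(2\beta+1)}$ producing $|\varphi|^{-|a_1|}$. For $m_3=d_\beta^\gamma\cdot A^{-1}$ with $A=|\varphi|^{\gamma/(2\beta+1)}+|\xi|^\gamma\simeq d_\beta^\gamma$, I would use the Faà di Bruno formula for $A^{-1}$. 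Each term is a product $A^{-1-k}\prod_i\partial^{(a_i,b_i)}A$, and each factor satisfies $|\partial^{(a_i,b_i)}A|\lesssim A\,|\varphi|^{-|a_i|}|\xi|^{-|b_i|}$. This holds because a mixed derivative of $A$ vanishes, and a pure derivative is bounded by $|\varphi|^{\gamma/(2\beta+1)-|a_i|}\le A|\varphi|^{-|a_i|}$, or by the analogous bound in $\xi$. It follows that $|\partial^{a}_\varphi\partial^{b}_\xi A^{-1}|\lesssim A^{-1}|\varphi|^{-|a|}|\xi|^{-|b|}$. Combining this with Step 1 for $d_\beta^\gamma$ and using $d_\beta^\gamma A^{-1}\simeq1$ gives the claim for $m_3$. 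For $m_4=A\,d_\beta^{-\gamma}$ the same argument applies with the roles exchanged.

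\textbf{Main obstacle.} The only delicate point is the choice of multiplier theorem. The symbols are zero-homogeneous for the kinetic dilations, but they are genuinely singular on the coordinate axes, so an anisotropic Mikhlin theorem does not apply directly. Once one sees that the product-type condition $|\varphi|^{-|\alpha_1|}|\xi|^{-|\alpha_2|}$ is weaker than the anisotropic one, thanks to $d_\beta\gtrsim|\varphi|^{1/(2\beta+1)}$ and $d_\beta\gtrsim|\xi|$, the verification reduces to the routine Leibniz and chain-rule bookkeeping above. I would also remark that the bounded-symbol requirement is exactly where the hypothesis $\gamma>0$ enters for $m_1$ and $m_2$.
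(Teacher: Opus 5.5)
Your proposal is correct and takes the paper's route. The paper likewise applies the Marcinkiewicz multiplier theorem, using the product-type bounds $|\partial_\varphi^{\alpha_1}\partial_\xi^{\alpha_2}m_i|\lesssim|\varphi|^{-|\alpha_1|}|\xi|^{-|\alpha_2|}$ on $(\R^d\setminus\{0\})\times(\R^d\setminus\{0\})$, which imply the coordinatewise conditions because $|\varphi_j|\le|\varphi|$ and $|\xi_j|\le|\xi|$. The difference is that the paper only asserts these bounds, while your Leibniz and Fa\`a di Bruno bookkeeping actually verifies them.
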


\begin{proof}
This is an application of the Marcinkiewicz multiplier theorem \cite[Theorem 5.2.4]{MR2445437} as the four $\L^\infty$ symbols 
are $\C^\infty$ functions on $(\R_{\varphi}^{d}\setminus \{0\})\times (\R_{\xi}^{d}\setminus \{0\})$ and satisfy
\[
	|\partial_{\varphi}^{\alpha_{1}}   \partial_{\xi}^{\alpha_{2}} m_{i}(\varphi,\xi)| \le \frac{C}{|\varphi|^{|\alpha_{1}|}|\xi|^{|\alpha_{2}|}}
\]
for some constant $C = C(\alpha_{1},\alpha_{2})>0$.
\end{proof}

\begin{prop}[characterisation] \label{prop:characterisation}
Let $\beta>0$, $p \in (1,\infty)$ and $\gamma\in \R$.  
\begin{itemize}
	\item If $\gamma \ge 0$, then 
	$\Xdot^{\gamma,p}_{\beta} = \L^p_{x}\Hdot_{\vphantom{x} v}^{\gamma,p} \cap \L^p_{\vphantom{x} v}\Hdot_{\vphantom{x} x}^{\frac{\gamma}{2\beta+1},p}$
	with  
	\begin{equation} \label{eq:Xdotgammabetanorm>0}
 		 \|f\|_{\Xdot^{\gamma,p}_{\beta}}\sim_{d,\beta,\gamma}\| D_{v}^{\gamma} f\|_{\L^p_{\vphantom{\beta} x,v}}+ \| D_{ x}^{\frac{\gamma}{2\beta+1}} f\|_{\L^p_{\vphantom{\beta} x,v}}  .
 	\end{equation}
	\item  If $\gamma \le 0$, then
	$\Xdot^{\gamma,p}_{\beta}=  \L^p_{x}\Hdot^{\gamma,p}_{\vphantom{x} v} + \L^p_{\vphantom{x} v}\dot\H_{\vphantom{x} x}^{\frac{\gamma}{2\beta+1},p}$ with 
\begin{equation}
 \label{eq:Xdotgammabetanorm<0} \|f\|_{\Xdot^{\gamma,p}_{\beta}} \sim_{d,\beta,\gamma}\inf_{f=f_{1}+f_{2}}\| D_{v}^{\gamma} f_{1}\|_{\L^p_{\vphantom{\beta}x,v}}+ \| D_{x}^{\frac{\gamma}{2\beta+1}} f_{2}\|_{\L^p_{\vphantom{\beta}x,v}}.
  \end{equation}
\end{itemize}

\end{prop}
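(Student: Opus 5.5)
The plan is to prove both norm equivalences first on the dense class $\dense$ of Lemma~\ref{lem:denseanisotropic}, using the Fourier multipliers of Lemma~\ref{lem:multipliers}. We then pass to general elements by density and by the convergence statements of Lemma~\ref{lem:anisoconvergenceLP} and Lemma~\ref{lem:convergenceLP}. The basic tool is the anisotropic analogue of Lemma~\ref{lem:fractionalLaplacian}(i): for $f\in\Xdot^{\gamma,p}_\beta$ one has $\|f\|_{\Xdot^{\gamma,p}_\beta}\sim \|d_\beta(D)^\gamma f\|_{\L^p_{x,v}}$, with $d_\beta(D)^\gamma f$ defined as an $\L^p$ function through the Littlewood--Paley series. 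The proof is identical: write $d_\beta^\gamma\hat\theta_j = 2^{j\gamma}\hat\phi_j$ with $\hat\phi=d_\beta^\gamma\hat\theta$, and use that changing the family gives an equivalent norm. The same reasoning gives $\|D_v^\gamma f\|_{\L^p}\sim\|f\|_{\L^p_x\Hdot^{\gamma,p}_v}$ by the vector-valued (in $x$) Littlewood--Paley characterisation in $v$ alone, and similarly for $D_x^{\gamma/(2\beta+1)}$.

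For $\gamma\ge0$ and $S\in\dense$, we write
\[
D_v^\gamma S = m_1(D)\,d_\beta(D)^\gamma S,\qquad D_x^{\frac{\gamma}{2\beta+1}}S = m_2(D)\,d_\beta(D)^\gamma S.
\]
This gives $\lesssim$ in \eqref{eq:Xdotgammabetanorm>0}. Conversely,
\[
d_\beta(D)^\gamma S = m_3(D)\bigl(D_x^{\frac{\gamma}{2\beta+1}}S + D_v^\gamma S\bigr),
\]
which gives $\gtrsim$. For $\gamma=0$ the statement is trivial. To extend to general $f$, we need set equality, not just norm comparability. Given $f\in\Xdot^{\gamma,p}_\beta$, the partial Littlewood--Paley sums converge in the $\Xdot$ seminorm; they are Cauchy in both $\L^p_x\Hdot^{\gamma,p}_v$ and $\L^p_v\Hdot^{\gamma/(2\beta+1),p}_x$ by the estimate. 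Their limits coincide with $f$ in $\cS'$ modulo polynomials.

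The reverse inclusion is the step we expect to be the main obstacle. Take $f$ in the intersection, with $F_1 = D_v^\gamma f$ and $F_2 = D_x^{\gamma/(2\beta+1)} f$ in $\L^p$. We then estimate the anisotropic square function of $f$ by $\|m_3(D)(F_1+F_2)\|_{\L^p}$. The delicate point is to check that $f$ is recovered as $\sum_j\theta_j\ast f$ modulo $\cP_{[\gamma-(2\beta+2)d/p]}$. Here the polynomial classes in the three spaces differ (degrees $[\gamma-d/p]$ in $v$, $[\gamma/(2\beta+1)-d/p]$ in $x$, and $[\gamma-(2\beta+2)d/p]$ jointly). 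We would handle this by pairing with test functions $\chi$ satisfying the moment conditions of Lemma~\ref{lem:anisoconvergenceLP}(i). The high-frequency part converges in $\cS'$. For the low-frequency part, we would compare $f$ with its reconstruction: their difference has Fourier support at the origin, and being simultaneously controlled in the two mixed spaces it must be a polynomial of the allowed degree. We would first try to argue this by the fact that a polynomial in $\L^p_x\Hdot^{\gamma,p}_v$ has $v$-degree $\le[\gamma-d/p]$ with $x$-coefficients in $\L^p$, hence zero $x$-dependence, and symmetrically.

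For $\gamma\le0$ we use duality. With the convention $\Xdot^{-\gamma,p'}_\beta$ as dual of $\Xdot^{\gamma,p}_\beta$ via the $\L^2$ pairing on $\dense$, which follows from the Littlewood--Paley description and part (i) of Lemma~\ref{lem:anisoconvergenceLP}, the dual of an intersection with the sum norm of the $\gamma\ge 0$ case is the sum of duals. Alternatively, and more concretely, the decomposition can be done directly. Given $f\in\Xdot^{\gamma,p}_\beta$, put $g=d_\beta(D)^\gamma f\in\L^p$ and set
\[
f_1 = D_v^{-\gamma}\,\frac{|\xi|^{-\gamma}\,\,}{|\xi|^{-\gamma}+|\varphi|^{\frac{-\gamma}{2\beta+1}}}\Big|_{D}\,m_4(D)^{-1}\,\cdots
\]
More cleanly, with $s=-\gamma\ge0$, let
\[
f_1=\cF^{-1}\Bigl(\tfrac{|\xi|^{s}}{|\xi|^s+|\varphi|^{s/(2\beta+1)}}\hat f\Bigr),\qquad f_2=f-f_1.
\]
Then $D_v^{\gamma}f_1 = \cF^{-1}\bigl(m_3^{-1}\ldots\bigr)g$, which is a Marcinkiewicz multiplier (same symbol class as in Lemma~\ref{lem:multipliers}) applied to $g$, and likewise for $f_2$. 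This gives $\lesssim$ in \eqref{eq:Xdotgammabetanorm<0}. The converse follows from $d_\beta^{\gamma}|\xi|^{-\gamma}=m_1^{-\gamma/\gamma}$-type bounds: $d_\beta(D)^\gamma f_1 = m_1(D)^{s/s}$ with symbol $(|\xi|/d_\beta)^{s}$ applied to $D_v^\gamma f_1$, and similarly with $m_2$ for $f_2$. Density and the convergence lemmas then transfer these identities to general distributions, again with the polynomial bookkeeping being the only subtle point (for $\gamma\le 0$ there are no polynomials when $p<\infty$, so it is easier).
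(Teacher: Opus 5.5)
Your treatment of the norm equivalences is the paper's: the symbols $m_1,m_2,m_3$ of Lemma~\ref{lem:multipliers} on $\dense$ for $\gamma>0$, and the same splitting $\hat f_1=|\xi|^{-\gamma}\hat f/(|\xi|^{-\gamma}+|\varphi|^{-\gamma/(2\beta+1)})$ for $\gamma<0$. There, $|\xi|^{\gamma}\hat f_1=m_3\hat g$ with $m_3$ taken at exponent $-\gamma$, not $m_3^{-1}$; this is harmless, since both are multipliers. For $\gamma\le0$ you also match the paper in passing to general elements through Banach spaces continuously embedded in $\cS'(\R^{2d})$.

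The gap is your plan for the reverse inclusion when $\gamma>0$. Put $k=[\gamma-(2\beta+2)d/p]$, $k_v=[\gamma-d/p]$, $k_x=[\frac{\gamma}{2\beta+1}-\frac dp]$. Nothing puts the $v$-coefficients of a polynomial in $\L^p_x\Hdot^{\gamma,p}_v$ into $\L^p_x$. The seminorm $\|D_v^\gamma f\|_{\L^p_{x,v}}$ annihilates every $\sum_{|\alpha|\le k_v}a_\alpha(x)v^\alpha$. Moreover, the mixed space must contain such elements with non-integrable coefficients, both by analogy with the definition of $\Hdot^{\gamma,p}_v$ and because your forward inclusion requires it when $k\ge0$ (e.g.\ $1\in\cP_k\subset\Xdot^{\gamma,p}_\beta$). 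The step then fails concretely. If $(2\beta+1)\frac dp\le\gamma<(2\beta+2)\frac dp$, then $k_x,k_v\ge0>k$, and the constant $1$ lies in both mixed spaces with zero seminorm. But $\theta_j\ast1=0$ for every $j$, so $1-\sum_j\theta_j\ast1=1\notin\cP_k=\{0\}$, and $1\notin\Xdot^{\gamma,p}_\beta$. Hence the difference between an element of the intersection and its anisotropic reconstruction need not be an admissible polynomial, and the identification only holds modulo polynomials.

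The paper does not try to decompose an arbitrary element of the intersection. It takes a sequence in $\dense$ that is Cauchy for both sides. Such a sequence can be obtained from Littlewood--Paley truncations in $x$ and in $v$ separately, which commute with $D_x^{\gamma/(2\beta+1)}$ and $D_v^\gamma$, followed by the Fourier-side regularisation in the proof of Lemma~\ref{lem:denseanisotropic}. The paper then applies the equivalence proved on $\dense$ and matches the limits $f$ (modulo $\cP_k$), $\tilde f_1$ (modulo $v$-polynomials of degree $\le k_v$) and $\tilde f_2$ (modulo $x$-polynomials of degree $\le k_x$), using $k\le\min(k_x,k_v)$. In effect the spaces are identified as completions of $\dense$, up to polynomials; you should replace your polynomial argument by this one.

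A smaller point concerns the forward direction. The partial sums $\sum_{|j|\le n}\theta_j\ast f$ are not in $\dense$, since their spectra meet $\{\varphi=0\}\cup\{\xi=0\}$. Before invoking the estimate proved on $\dense$, either approximate by elements of $\dense$, or check the multiplier identities directly for $\L^p$ functions with spectrum in an anisotropic annulus.
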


\begin{proof} When $\gamma=0$, there is nothing to prove. 
Let us first prove the equivalence when $f=S$ belongs to the dense class above, 
remarking that it is dense in each of the spaces involved. 
When $\gamma> 0$, then  we have 
\[
	d_{\beta}^\gamma \hat S= m_{3}\cdot (|\varphi|^{\frac{\gamma}{2\beta+1}}+|\xi|^{\gamma}) \hat S, \quad 
	|\varphi|^{\frac {\gamma}{2\beta+1}}\hat S= m_{2} \cdot d_{\beta}^\gamma \hat S, \quad 
	|\xi|^\gamma \hat S= m_{1} \cdot d_{\beta}^\gamma \hat S.
\]
From this and Lemma \ref{lem:multipliers}, the equivalence follows. 

When $\gamma<0$, then write $\hat T= d_{\beta}^\gamma \hat S$, which satisfies 
$\|T\|_{\L^p_{x,v}} \sim \|S\|_{\Xdot^{\gamma,p}_{\beta}}$ and 
decompose 
\[ 
	\hat S=   
	\frac{|\xi|^{-\gamma}\hat S}
	{|\varphi|^{\frac {-\gamma}{2\beta+1}}+|\xi|^{-\gamma}}
	+ \frac{|\varphi|^{\frac {-\gamma}{2\beta+1}} \hat S}{|\varphi|^{\frac {-\gamma}{2\beta+1}}+|\xi|^{-\gamma}}
	=:\hat S_{1}+\hat S_{2}. 
\]
Then  $\hat T_{1}:=|\xi|^\gamma \hat S_{1}=  m_{3}\hat T$ and 
$\hat T_{2}:=|\varphi|^{\frac {\gamma}{2\beta+1}} \hat S_{2}=  m_{3}\hat T$, 
where $m_{3}$ is as in Lemma \ref{lem:multipliers} with $\gamma$ changed to $-\gamma$. 
Thus, $\|T_{2}\|_{\L^p_{x,v}}\sim \| D_{x}^{\frac{\gamma}{2\beta+1}} S_{2}\|_{\L^p_{x,v}}$, $\|T_{1}\|_{\L^p_{x,v}}
\sim \| D_{v}^{\gamma} S_{1}\|_{\L^p_{x,v}}$ and the equivalence follows. 

Let us continue with proving the equality of sets. We go through Cauchy sequences of elements in the dense class. 
When $\gamma<0$, all the spaces involved are Banach spaces and convergence in them implies 
convergence in $\cS'_{x,v}=\cS'(\R^{2d})$. 
Equality is a direct consequence. 

When $\gamma>0$, we proceed as follows. Let $k=[\gamma-(2\beta+2)d/p]$, $k_{v}=[\gamma-d/p]$, $k_{x}=[\gamma-(2\beta+1)d/p]$. 
Observe that $k\le \min (k_{x},k_{v})$. Let us take a Cauchy sequence $(S_{j})$ for the two sides of the equivalence. 
It is not too hard to establish that there exist $f,\tilde f_{1}, \tilde f_{2} \in \cS'_{x,v}$ with 
$S_{j}\to f$ in  $\cS'_{x,v}/\cP_{k}[x,v]$, $S_{j} \to \tilde f_{1}$ in $\cS'_{x,v}/\cP_{k_{v}}[v]$ and  
$S_{j} \to \tilde f_{2}$ in $\cS'_{x,v}/\cP_{k_{x}}[x]$ (we indicated the variables of the polynomials for convenience). 
Fixing $f$, as $k\le k_{v}$,  we deduce that $f=\tilde f_{1}+P_{1}(v)$ with $\deg P_{1}\le k_{v}$. 
Similarly $f=\tilde f_{2}+P_{2}(x)$ with $\deg P_{2}\le k_{x}$. 
Setting $f_{1}=  \tilde f_{1}+P_{1}(v)$ and $f_{2}=\tilde f_{2}+P_{2}(x)$, 
we have shown $f=f_{1}=f_{2}$ in $\cS'(\R^{2d})$ and the set  equality follows. 
\end{proof} 

\subsubsection{Homogeneous anisotropic Besov spaces} 
\label{sec:HomAnBesov}
We follow the same strategy and start with the following lemma. 

\begin{lem} \label{lem:convergenceLPBeov} 
Let $\beta>0$, $\gamma\in \R$, $p \in (1,\infty)$. 
Let $(\theta_{j})$ be an anisotropic Littlewood--Paley family. 
\begin{enumerate}
	\item For $\chi\in \cS(\R^{2d})$ with $\int_{\R^{2d}} \chi P \, \dd(x,v) =0$ for all $P\in \cP_{k}$, 
	$k= [\gamma-(2\beta+2) d/p]$, we have 
	\[ 
		\sum_{j = - \infty}^\infty 2^{ -j \gamma p'} \norm{ \theta_j\ast \chi}_{\L^{p'}_{x,v}}^{p'} <\infty.
	\]
	\item Let  $g_{j}\in \cS'(\R^{2d}) \cap \C^\infty(\R^{2d})$, $j\in \Z$, with Fourier transform $\hat g_{j}$ supported in $C_{\beta, j}=\{(\varphi,\xi)\in \R^{2d}\, ;\,  2^{j-1} < d_{\beta}(\varphi,\xi) < 2^{j+1}\}$. 
	If $ \sum_{j = - \infty}^\infty 2^{ j \gamma p} \norm{ g_j}_{\L^p_{x,v}}^p <\infty$ then 
	$\sum_{j=1}^\infty g_{j}$ converges in $\cS'(\R^{2d})$ and $\sum_{j=-\infty}^0 g_{j}$ converges in 
	$\cS'(\R^{2d})/\cP_{k}$ where $k=[\gamma-(2\beta+2) d/p]$.
\end{enumerate}
\end{lem}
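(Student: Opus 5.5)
The plan is to prove the Besov analogue exactly as one proves Peetre's lemma (Lemma~\ref{lem:convergenceLP}, Lemma~\ref{lem:anisoconvergenceLP}), but using $\ell^p$ sums of $\L^p$ norms instead of square functions. Throughout we use the anisotropic scaling $\theta_j(x,v)=2^{j\homd_0}\theta(2^{(2\beta+1)j}x,2^jv)$ with $\homd_0:=(2\beta+2)d$. Then $\|\theta_j\ast\chi\|_{\L^{q}}$ and Bernstein-type inequalities scale with the homogeneous dimension $(2\beta+2)d$, and no square-function estimate is needed.

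For item (i), we estimate $\|\theta_j\ast\chi\|_{\L^{p'}}$ separately for $j\ge 0$ and $j<0$.

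\emph{High frequencies ($j\ge0$).} Since $\hat\theta_j$ vanishes near the origin, we write $\hat\theta_j\hat\chi=\hat\theta_j d_\beta^{-2N}\,d_\beta^{2N}\hat\chi$, or more simply use that $\hat\chi$ is Schwartz. This gives $\|\theta_j\ast\chi\|_{\L^{p'}}\lesssim_N 2^{-jN}$ for every $N$, so $\sum_{j\ge0}2^{-j\gamma p'}\|\theta_j\ast\chi\|^{p'}_{\L^{p'}}<\infty$.

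\emph{Low frequencies ($j<0$).} Here we use the moment condition. Write $\theta_j\ast\chi(x,v)=\int\theta_j(x-y,v-w)\chi(y,w)\dd(y,w)$ and Taylor expand $\theta_j(\cdot-y,\cdot-w)$ in $(y,w)$ to order $k$. The polynomial part is killed by the vanishing moments of $\chi$. In the remainder, a derivative of order $(\alpha_1,\alpha_2)$ falling on $\theta_j$ produces $2^{j((2\beta+1)|\alpha_1|+|\alpha_2|)}\ge 2^{j(k+1)}$, since $j<0$ and the Euclidean order is $k+1$ (anisotropic weights only help). The remainder is controlled by the weight $(1+|y|+|w|)^{k+1}$, which is integrable against $\chi$. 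We also use $\|(\partial^\alpha\theta)_j\|_{\L^{p'}}=2^{j\homd_0/p}\|\partial^\alpha\theta\|_{\L^{p'}}$, since $1-1/p'=1/p$. This yields
\[
\|\theta_j\ast\chi\|_{\L^{p'}}\lesssim 2^{j(k+1)}2^{j(2\beta+2)d/p}.
\]
The sum $\sum_{j<0}2^{-j\gamma p'}2^{jp'(k+1+(2\beta+2)d/p)}$ then converges because $k+1>\gamma-(2\beta+2)d/p$. This exponent bookkeeping is the one delicate point: the anisotropic scaling of the Taylor remainder must give at least $2^{j(k+1)}$.

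For item (ii), the high part is handled as follows. For $\chi\in\cS$ we pair $\langle g_j,\chi\rangle=\langle g_j,\tilde\theta_j\ast\tilde\chi\rangle$, where $\tilde\theta$ is a fixed bump equal to $1$ on the annuli $C_{\beta,j}$ (rescaled). Hölder gives
\[
|\langle g_j,\chi\rangle|\le 2^{j\gamma}\|g_j\|_{\L^p}\,2^{-j\gamma}\|\tilde\theta_j\ast\chi\|_{\L^{p'}},
\]
and another application of Hölder in $\ell^p$–$\ell^{p'}$ over $j\ge1$, together with the high-frequency estimate from (i), gives absolute convergence. The bound is uniform over $\chi$ in a Schwartz seminorm ball, so the series converges in $\cS'$.

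For the low part, $\sum_{j\le0}g_j$ converges in $\cS'/\cP_k$, which means that $\langle\sum_{j\le0}g_j,\chi\rangle$ converges for every $\chi$ orthogonal to $\cP_k$. For such $\chi$ the same pairing argument works, with the estimate of (i) for $j<0$ and the factor $2^{-j\gamma}$ giving summability. Since $\cS'/\cP_k$ is the dual of the closed subspace of such $\chi$, this proves the claim.

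The main obstacle is the Taylor step in (i). There we must check that the anisotropic homogeneity of derivatives of $\theta_j$ always gives decay at least $2^{j(k+1)}$ for $j<0$, uniformly in $x,v$, with an $\L^{p'}$ bound. I would handle the remainder in integral form and use $|\partial^\alpha\theta(z)|\lesssim(1+|z|)^{-M}$ to bound it in $\L^{p'}$ by a rescaled Peetre-type maximal function.
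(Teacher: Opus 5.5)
Your argument is correct. It is the classical Peetre duality argument that the paper relies on: the lemma is stated there without proof, as the Besov counterpart of Lemmas~\ref{lem:convergenceLP} and~\ref{lem:anisoconvergenceLP}. The ingredients match:
\begin{itemize}
\item rapid decay of $\|\theta_j\ast\chi\|_{\L^{p'}}$ for $j\ge 0$;
\item vanishing moments plus Taylor expansion for $j<0$;
\item H\"older in $\ell^p$--$\ell^{p'}$ after writing $g_j=\tilde\theta_j\ast g_j$;
\item the identification of $\cS'/\cP_k$ with the dual of the Schwartz functions orthogonal to $\cP_k$.
\end{itemize}

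Only small corrections are needed.
\begin{itemize}
\item For $j<0$ the inequality should read $2^{j((2\beta+1)|\alpha_1|+|\alpha_2|)}\le 2^{j(k+1)}$, not $\ge$.
\item The $\L^{p'}$ bound on the integral Taylor remainder follows directly from Minkowski's inequality and translation invariance, so no maximal function is needed.
\item For $k<0$ no expansion is needed, since $\|\theta_j\ast\chi\|_{\L^{p'}}\le\|\theta_j\|_{\L^{p'}}\|\chi\|_{\L^1}$ and $\gamma<(2\beta+2)d/p$.
\item The high-frequency decay is cleaner via $\|\theta_j\ast\chi\|_{\L^{p'}}\le\|\theta_j\ast\chi\|_{\L^1}^{1/p'}\|\theta_j\ast\chi\|_{\L^\infty}^{1/p}$ with $\|\theta_j\ast\chi\|_{\L^\infty}\lesssim\|\hat\theta_j\hat\chi\|_{\L^1}\lesssim_N 2^{-jN}$. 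Factoring through $d_\beta^{2N}$ is awkward because it is not smooth at the origin.
\end{itemize}
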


\begin{defn}[Homogeneous anisotropic Besov space] \label{defn:HABS}  
Let $\beta>0$, $\gamma\in \R$, $p \in (1,\infty)$. 
Let $(\theta_{j})$ be an anisotropic Littlewood--Paley family.
For $g\in \cS'(\R^{2d})$, we set
\[
	\norm{g}_{\Bdot_{\beta}^{\gamma,p}} = \bigg( \sum_{j = - \infty}^\infty 2^{ j \gamma p} \norm{ \theta_j\ast g}_{\L^p_{x,v}}^p \bigg)^\frac{1}{p} 
\]
and we let 
\[
	\Bdot^{\gamma,p}_{\beta}:= \left\{ g\in \cS'(\R^{2d})\, ; \, \|g\|_{\Bdot^{\gamma,p}_{\beta}} <\infty\ 
	\&\  g=\sum_{j=-\infty}^\infty  \theta_{j}\ast g \textrm{\ in \ } 
 	\cS'(\R^{2d})/\cP_{[\gamma-(2\beta+2)d/p]} \right\}.
\]
\end{defn}

The same paragraph and the density lemma after Definition~\ref{defn:HASS} apply to the space $\Bdot^{\gamma,p}_{\beta}$. 
For $p=2$, we have $\Bdot^{\gamma,2}_{\beta}=\Xdot^{\gamma,2}_{\beta}$.
For more information on anisotropic Besov spaces, we refer to \cite{MR2250142,MR3839617,MR4065179,MR2768550}. 

\subsubsection{Duality pairings}

We recall that there are duality pairings extending the $\L^2_{x,v}$ duality. 
For all $\beta\in (0,\infty)$, $p \in (1,\infty)$ and $\gamma\in \R$
\begin{align*}
| \angle f g | \lesssim \|f\|_{\Adot^{\gamma,p}_{\beta}}\|g\|_{\Adot^{-\gamma,p'}_{\beta}}
\end{align*}
for all $f\in \Adot^{\gamma,p}_{\beta}$ and $g\in \Adot^{-\gamma,p'}_{\beta}$,
where $\Adot=\Xdot$ or $\Bdot$ and $p'$ is the H\"older conjugate to $p$. When $f$ and $g$ are also $\L^2_{x,v}$ 
functions the bracket reduces to $\angle f g= \int_{\R^{2d}} f(x,v) \overline{g(x,v)} \dd(x,v) $.

\subsubsection{Kinetic spaces}
\label{sec:kineticspaces}

Fix $\beta>0$. Let $\gamma\in \R$ and $p\in (1,\infty)$. Having a definition of homogeneous anisotropic Sobolev and Besov spaces within tempered distributions, 
the precise definition of  kinetic spaces of regularity and integrability orders $\gamma, p$    on $\Omega=\R^{1+2d}$ is as follows. 
First, we introduce two spaces which will play a central role:  
\begin{equation}\label{eq:W}
	\Zdot^{\gamma,p}_{\beta}:= \L^p_{t}\Xdot^{\gamma-2\beta,p}_{\beta} + \L^1_{t}\Bdot^{\gamma-2\beta/p,p}_{\beta},
\end{equation}
\begin{equation}\label{eq:V}
	\Ydot^{\gamma,p}_{\beta}:= \L^p_{t}\Xdot^{\gamma,p}_{\beta} \cap \C^{}_{0}(\R^{}_{t}\, ;\, \Bdot^{\gamma-2\beta/p,p}_{\beta}).
\end{equation}
{They are equipped with the standard norms for sums and intersections of spaces.}
By construction of the anisotropic spaces, both $\Ydot^{\gamma,p}_{\beta} $ and $\Zdot^{\gamma,p}_{\beta}$ are subsets of $\cS'(\Omega)$ without restriction on $\gamma$. However, the inclusions in $\cS'(\Omega)$ are continuous if and only if 
$\gamma< \homd/p$, where 
\begin{equation} \label{eq:homodim}
	\homd:= 2\beta+ (2\beta+2)d
\end{equation}
is the homogeneous dimension for the kinetic dilations $(t,x,v) \to (\lambda^{2\beta} t, \lambda^{2\beta+1} x, \lambda v)$ 
for $\lambda>0$.
For $\Ydot^{\gamma,p}_{\beta}$, this is because it is an intersection, and the equivalence is already true for the second factor as 
$\gamma-2\beta/p< (2\beta+2)d/p$ exactly when $\gamma<\homd/p$, see Lemma~\ref{lem:convergenceLPBeov}; for $\Zdot^{\gamma,p}_{\beta} $, this is because the equivalence is true for the second factor as before and also because the condition $\gamma-2\beta  < (2\beta+2)d/p$ which is necessary and sufficient for the continuity of the inclusion of the first factor from Lemma~\ref{lem:anisoconvergenceLP}, is implied by $\gamma<\homd/p$.

 The case of weak solutions in $\L^2$, i.e. $p=2$ and $\gamma=\beta$, is very special as $\Bdot^{\gamma-2\beta/p,p}_{\beta}=\L^2_{x,v}$, so with the notation of \cite{AIN} and using Proposition~\ref{prop:characterisation}, $\Zdot^{\beta,2}_{\beta}=\Zdot^{\beta}$  and $\Ydot^{\beta,2}_{\beta}=\Ydot^{\beta}$.  The $\Zdot$ spaces are seen as source spaces and the $\Ydot$ spaces as target spaces for the Kolmogorov operators, below.
 
We introduce the kinetic Sobolev spaces by
\begin{equation}
\label{eq:Fdotgammabeta}
	\cFdot^{\gamma,p}_{\beta}= \left\{f\in \cD'(\Omega)\, ; \,  f \in \L^p_{t,x}\Hdot^{\gamma,p}_{v} \ \& \ (\partial_t + v \cdot \nabla_x)f \in \L^p_{t,x} \Hdot^{\gamma -2\beta,p}_{\vphantom{t,x} v} \right\}
\end{equation}
with (semi-)norm $ \|f\|_{\cFdot^{\gamma,p}_{\beta}}$ defined by 
\begin{equation}
\label{eq:Fdotgammabetanorm}
	\|f\|_{\cFdot^{\gamma,p}_{\beta}}^p= \| D_{v}^{\gamma} f\|_{\L^p_{t,x,v}}^p + \|(\partial_t + v \cdot \nabla_x)f\|_{\L^p_{t,x}\Hdot^{\gamma-2\beta,p}_{\vphantom{t,x} v}}^p.
 \end{equation}
Furthermore, we set
\begin{equation}
\label{eq:Kdotgammabeta}
	\cGdot^{\gamma,p}_{\beta}= \left\{ f \in \cD'(\Omega)\, ; \,  f \in \L^p_{t,x}\Hdot^{\gamma,p}_{v} \ \& \ (\partial_t + v \cdot \nabla_x)f \in \L^p_{t} \Xdot^{\gamma -2\beta,p}_\beta \right\}
\end{equation}
with (semi-)norm $ \|f\|_{\cGdot^{\gamma,p}_{\beta}}$ defined by
\begin{equation}
\label{eq:Gdotgammabetanorm}
 	\|f\|_{\cGdot^{\gamma,p}_{\beta}}^p= \| D_{v}^{\gamma} f\|_{\L^p_{t,x,v}}^p + \|(\partial_t + v \cdot \nabla_x)f\|_{\L^p_{t}\Xdot^{\gamma-2\beta,p}_\beta}^p,
\end{equation}
and
\begin{equation}
\label{eq:Ldotgammabeta}
	\cLdot^{\gamma,p}_{\beta}= \left\{ f \in \cD'(\Omega)\, ; \,  f \in \L^p_{t,x}\Hdot^{\gamma,p}_{v} \ \& \ (\partial_t + v \cdot \nabla_x)f \in \Zdot^{\gamma,p}_\beta \right\}
\end{equation}
with (semi-)norm $ \|f\|_{\cLdot^{\gamma,p}_{\beta}}$ defined by
\begin{equation}
\label{eq:Ldotgammabetanorm}
	\|f\|_{\cLdot^{\gamma,p}_{\beta}}^p= \| D_{v}^{\gamma} f\|_{\L^p_{t,x,v}}^p + \|(\partial_t + v \cdot \nabla_x)f\|_{\Zdot^{\gamma,p}_\beta}^p.
\end{equation}

At this stage, it is not clear whether they are normed spaces or complete. But they are all subsets of $\cS'(\Omega)$. 
Moreover, the following inclusions are continuous for the defining semi-norms. 
For all $\gamma\in \R$, we have $\cGdot^{\gamma,p}_{\beta}\subset \cLdot^{\gamma,p}_{\beta}$. 
Also by Proposition~\ref{prop:characterisation}, we have $\cFdot^{\gamma,p}_{\beta} \subset \cGdot^{\gamma,p}_{\beta}$ 
if $\gamma\le 2\beta$.

On $\R^{1+2d}$, we shall use the partial Fourier transform in the variables $(\varphi,\xi)$ dual to $(x,v)$ of a 
function $f= f(t,x,v)$  defined by
\[
	\widehat{f}(t,\varphi,\xi)=\int_{\R^{2d}} e^{-i(x\cdot \varphi+v\cdot\xi)}f(t,x,v) \dd(x,v) .
\]

\begin{lem}
\label{lem:density} 
Let $p \in (1,\infty)$, $\gamma\in \R$, $\beta>0$.
The space 
\[
	\cS_{K}= \left\{S\in \cS(\Omega)\, ;\, \widehat S \textrm{ has  compact support in  } \R^{}_{t}\times (\R_{\varphi}^{d}\setminus \{0\})\times (\R_{\xi}^{d}\setminus \{0\}) \right\}
\]
is dense in $\L^p_{t}\Xdot^{\gamma,p}_{\vphantom{t} \beta}$, in $\L^p_{t,x}\Hdot^{\gamma,p}_{v}$,  
in $\L^1_{t}\Bdot^{\gamma,p}_{\vphantom{t} \beta}$, in $\Zdot^{\gamma,p}_{\beta}$ and in $\Ydot^{\gamma,p}_{\beta}$.
\end{lem}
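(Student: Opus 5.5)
The plan is to handle the spaces one at a time, in order of increasing difficulty, using a few standard operations that keep us inside $\cS_K$. The basic tool is Lemma~\ref{lem:denseanisotropic} and its analogues for $\Hdot^{\gamma,p}_v$ and $\Bdot^{\gamma,p}_\beta$, which we apply fibrewise in $t$. Alongside it we use truncation and mollification in the time variable. An element $S\in\cS_K$ is a Schwartz function in all variables whose partial Fourier transform $\widehat S(t,\cdot,\cdot)$ is supported in a fixed compact set of $(\R^d_\varphi\setminus\{0\})\times(\R^d_\xi\setminus\{0\})$ for $t$ in a compact interval.

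For $\L^p_t\Xdot^{\gamma,p}_\beta$, take $f$ in this space. First approximate $f$ by simple functions $\sum_i \mathds{1}_{I_i}(t)g_i$ with $g_i\in\Xdot^{\gamma,p}_\beta$, which is possible since this is a Bochner space over a Banach (or semi-Banach) space. Then replace each $g_i$ by an element of the dense class $\dense$ from Lemma~\ref{lem:denseanisotropic}, and each $\mathds{1}_{I_i}$ by some $\eta_i\in\C^\infty_c(\R)$ close in $\L^p_t$. Products $\eta(t)h(x,v)$ with $\eta\in \C_c^\infty$ and $h\in\dense$ lie in $\cS_K$, provided we read ``compact support in $\R_t\times\dots$'' as compact support in $t$ on the physical side. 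Otherwise we would take $\eta\in\cS(\R)$ with compactly supported Fourier transform, which is still dense in $\L^p_t$ and $\L^1_t$.

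The same argument works for $\L^1_t\Bdot^{\gamma,p}_\beta$, using the Besov version of Lemma~\ref{lem:denseanisotropic}. For $\L^p_{t,x}\Hdot^{\gamma,p}_v$ it also applies, provided we approximate with $\dense$ in the $v$-Sobolev norm. To see this, truncate the $v$-Littlewood--Paley series so that $\xi$ lies in a compact annulus. Then cut off in $\varphi$ with a Mikhlin multiplier $\chi(\varphi/R)(1-\chi(R\varphi))$, which converges strongly to the identity on $\L^p_{x,v}$ as $R\to\infty$, and mollify as before. For $\Zdot^{\gamma,p}_\beta$, density follows immediately, because it is a sum of two spaces in each of which $\cS_K$ is dense.

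The main obstacle is $\Ydot^{\gamma,p}_\beta$, an intersection with a space of continuous functions, where the approximation must work simultaneously in both norms. Given $f\in \Ydot$, I would apply the anisotropic Littlewood--Paley truncation $f_N=\sum_{|j|\le N}\theta_j\ast f$ in $(x,v)$ for each $t$. This commutes with $t$, converges in $\L^p_t\Xdot^{\gamma,p}_\beta$ by dominated convergence, and converges in $\C_0(\R;\Bdot^{\gamma-2\beta/p,p}_\beta)$. For the latter, it converges pointwise in $t$, and uniformly in $t$ because $t\mapsto f(t)$ has compact closure in the Besov space: its range is relatively compact, since the function is continuous and vanishes at infinity. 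Next apply a time cutoff $\eta(t/R)$, which converges in both norms thanks to the decay at infinity. Then apply a time mollification $\rho_\epsilon\ast_t$, which converges uniformly by uniform continuity and in $\L^p_t$ by standard results. After that, multiply by Mikhlin symbols supported away from the axes, converging strongly on both $\L^p$ and Besov norms at each frequency band. These operators are bounded uniformly on $\Xdot$ and $\Bdot$ by Lemma~\ref{lem:multipliers}, and they converge uniformly on the compact range. Finally mollify in Fourier variables as in Lemma~\ref{lem:denseanisotropic}. Each step is a contraction-type operator uniformly bounded on both components, so the uniform-on-compacts argument controls the $\C_0$ part.
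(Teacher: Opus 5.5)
Your argument is correct. For $\L^p_t\Xdot^{\gamma,p}_\beta$, $\L^1_t\Bdot^{\gamma,p}_\beta$ and $\Zdot^{\gamma,p}_\beta$ it coincides with the paper's proof. The paper only asserts that $\cD(\R)\otimes\dense\subset\cS_K$ is dense, by classical density results for Bochner spaces. Your reading of the support condition is the right one, since $\widehat S$ is the partial Fourier transform in $(x,v)$ only.

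You depart from the paper in two places. The first is minor: for $\L^p_{t,x}\Hdot^{\gamma,p}_v$ you prove that $\dense$ is dense in $\L^p_x\Hdot^{\gamma,p}_v$. The paper uses this tacitly, since Lemma~\ref{lem:denseanisotropic} is stated only for $\Xdot$.

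The main departure is $\Ydot^{\gamma,p}_\beta$. There the paper appeals to ``classical results'' for $\L^p(\R;A)$ and $\C_0(\R;A)$ and leaves the details. Density in each factor separately does not give density in an intersection, so filling this in would require three steps:
\begin{itemize}
\item a time mollification, making $f$ continuous with values in $\Xdot\cap\Bdot$;
\item a piecewise-linear interpolation in $t$, producing $\sum_i\eta_i(t)a_i$ with $a_i\in\Xdot\cap\Bdot$;
\item the proof of Lemma~\ref{lem:denseanisotropic}, run simultaneously in both norms.
\end{itemize}
You instead apply the regularising operators directly to the vector-valued $f$. Uniformity in $t$ then comes from uniform boundedness together with strong convergence on the relatively compact range $\{f(t)\}$. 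This avoids the tensor-product step, gives approximants in $\cS_K$ directly, and makes the simultaneous control of both components explicit.

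Two small points of attribution:
\begin{itemize}
\item The uniform bounds for the cutoffs $(1-\chi(R\varphi))(1-\chi(R\xi))$ come from the Marcinkiewicz theorem with $R$-independent constants, not from Lemma~\ref{lem:multipliers} itself.
\item The final Fourier-side mollification is multiplication by a Schwartz function $\varrho(\epsilon x,\epsilon v)$ in physical space. That operator is not bounded on $\Bdot$ in general, so it is not ``contraction-type'' in the sense you claim. It is harmless only because, at that stage, all frequency supports lie in a fixed compact set away from the axes, where the $\Xdot$, $\Bdot$ and $\L^p_{x,v}$ norms are equivalent.
\end{itemize}
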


\begin{proof} 
The subspace $\cD(\R)\otimes \dense \subset \cS_{K}$, where $\dense$ is the subspace of Lemma~\ref{lem:denseanisotropic}, is dense by classical results for 
Bochner spaces $\L^p(\R;A)$ and for $\C_{0}^{}(\R; A)$ when $A$ is a Banach space and $\R$ equipped with the Lebesgue measure (here, we argue modulo polynomials to have complete normed spaces). 
Details are left to the reader.
 \end{proof}

\subsection{Forward and backward Kolmogorov operators of order $\beta$}
\label{sec:forbackkol}
First, we recall how one can construct solutions using a fundamental solution to \eqref{eq:kol}. 
Manipulations here are formal. Transforming the Kolmogorov equation in Fourier variables $(x,v) \mapsto (\varphi,\xi)$ yields  
\begin{equation*}
 	(\partial_t - \varphi \cdot \nabla_\xi) \hat{f} = - \abs{\xi}^{2\beta} \hat{f} + \hat{S}. 
\end{equation*}
Solving along characteristics yields the representation formula, 
\begin{equation*}
	\hat{f}(t,\varphi,\xi) = \int_{-\infty}^t \exp\left(-\int_s^{t} \abs{\xi +(t-\tau) \varphi}^{2\beta} \dd \tau \right) \hat{S}(s,\varphi,\xi+(t-s)\varphi) \ds, 
\end{equation*}
which corresponds to the kinetic convolution of Kolmogorov's fundamental solution with $S$.
We study this action in shifted variables under the Galilean change of variables $\Gamma$ (kinetic shift) defined 
by
\begin{equation}
\label{eq:kineticshift}
	[\Gamma f](t,x,v)=f(t,x+tv,v). 
\end{equation} 
As in  Fourier variables 
\begin{equation*}
	\widehat{\Gamma f}(t,\varphi,\xi) = \hat{f}(t,\varphi,\xi-t \varphi) 
\end{equation*}
we obtain
\begin{equation}
\label{e:conjugaison}
	\widehat{\Gamma f}= \frT_{\beta}^+(\widehat {\Gamma S}), 
\end{equation}
with
\begin{equation}
\label{eq:kolmogorovoperator}
	[\frT_{\beta}^+u](t,\varphi,\xi)= \int_{-\infty}^t \widehat{E}_{\beta}(t,s,\varphi,\xi)u(s,\varphi,\xi)\ds,
\end{equation}
where for $s,t\in \R$ and $(\varphi,\xi)\in \R^{2d}$, we set
\begin{equation}\label{eq:shiftedfundamentalsolution}
	\widehat{E}_\beta(t,s,\varphi,\xi) = \exp\bigg(-\int_s^{t} \abs{\xi -\tau \varphi}^{2\beta} \dd \tau \bigg).
\end{equation}
Of course, the integral in \eqref{eq:kolmogorovoperator} will be defined for some measurable functions $u$ of $(t,\varphi,\xi)$, 
but $S$ can be a distribution. 
As $\cS'(\Omega) \to \cS'(\Omega)$, $f \mapsto \widehat{\Gamma f}$ is an isomorphism, we set for $S \in \cS'(\Omega)$, 
\[ 
	f =\cK^{+}_{\beta} S 
\]  
whenever
\begin{equation}
\label{e:conjugaison1}
	 \frT_{\beta}^+(\widehat {\Gamma S}) \in \cS'(\Omega) \textrm{\ and\ } \widehat{\Gamma f}=\frT_{\beta}^+(\widehat {\Gamma S}).
\end{equation}
We define the operator $\cK^{-}_{\beta}$ as the formal adjoint to $\cK^{+}_{\beta}$ for the $\L^2_{t,x,v}$ sesquilinear duality. 
It is associated in the same fashion to the adjoint $\frT_{\beta}^-$ of $\frT_{\beta}^+$ given by the following formula,
\begin{equation}
\label{eq:kolmogorovoperatoradjoint}
	[\frT_{\beta}^-h](s,\varphi,\xi)= \int^{+\infty}_{s} \widehat{E}_\beta(t,s,\varphi, \xi)h(t,\varphi,\xi)\dd t.
\end{equation}
We call $\cK^{+}_\beta$ and $\cK^{-}_\beta$ the forward and backward Kolmogorov operators of order $\beta$, respectively.

We recall a result of \cite{AIN}, which allows us to perform calculations.

\begin{lem}\label{lem:equation}
If $S\in \cS_{K}$,  then $\cK^{\pm}_{\beta}S\in\C^{}_{0}(\R^{}_{t}\,;\, \L^2_{x,v})$, 
$ (\partial_{t}+v\cdot\nabla_{x})\cK^{\pm}_{\beta}S,  (-\Delta_{v})^\beta\cK^{\pm}_{\beta}S  \in \L^2_{t,x,v}$ and 
$\pm (\partial_{t}+v\cdot\nabla_{x})\cK^{\pm}_{\beta}S +(-\Delta_{v})^\beta\cK^{\pm}_{\beta}S=S$ in $\cS'(\Omega)$ 
$($even in the sense of strong solutions in $\L^2_{x,v})$. 
\end{lem}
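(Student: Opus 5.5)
We will argue entirely on the Fourier side in $(x,v)$, treating $\cK^+_\beta$; the case of $\cK^-_\beta$ is symmetric after reversing time. Fix $S\in\cS_K$, so $\widehat{\Gamma S}(s,\varphi,\xi)=\widehat S(s,\varphi,\xi-s\varphi)$. Choose a compact $K\subset(\R^d_\varphi\setminus\{0\})\times(\R^d_\xi\setminus\{0\})$ and $T>0$ with $\supp\widehat S\subset[-T,T]\times K$. Then $u:=\widehat{\Gamma S}$ is smooth and bounded, and it vanishes for $|s|>T$. For each fixed $s$ it is supported in $\{|\varphi|\in[a,b]\}$ with $a>0$.

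\textbf{Step 1: $\frT^+_\beta u$ is bounded in $t$ with values in $\L^2_{\varphi,\xi}$.} Since $0\le\widehat E_\beta\le1$,
\[
\|\frT^+_\beta u(t)\|_{\L^2_{\varphi,\xi}}\le\int_{-T}^{T}\|u(s)\|_{\L^2}\ds<\infty .
\]
Moreover, for $t\le -T$ the integral is empty, so $\frT^+_\beta u(t)=0$.

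\textbf{Step 2: decay as $t\to+\infty$.} For $t>T$ and $s\in[-T,T]$ we use the lower bound
\[
\int_s^t|\xi-\tau\varphi|^{2\beta}\dd\tau\ge\int_{T}^{t}|\xi-\tau\varphi|^{2\beta}\dd\tau .
\]
The point $\tau\mapsto\xi-\tau\varphi$ moves along a line at speed $|\varphi|\ge a$. It can therefore stay in a ball of radius $r$ for a $\tau$-length of at most $2r/a$. This gives
\[
\int_T^t|\xi-\tau\varphi|^{2\beta}\dd\tau\gtrsim a^{2\beta}\,(t-T)^{2\beta+1}
\]
uniformly in $(\varphi,\xi)$. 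Hence $\widehat E_\beta(t,s)\le e^{-c(t-T)^{2\beta+1}}$ on the support, and dominated convergence gives $\|\frT^+_\beta u(t)\|_{\L^2}\to0$. Continuity in $t$ follows in the same way, because the integrand is continuous in $t$ and dominated. Undoing the isometries $\Gamma$ (which acts isometrically on $\L^2_{x,v}$ at fixed $t$) and Plancherel yields $\cK^+_\beta S\in\C_0(\R_t;\L^2_{x,v})$. Continuity of $t\mapsto\Gamma^{-1}$ applied to $\L^2$-valued continuous functions is checked by strong continuity of translations.

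\textbf{Step 3: the equation.} For fixed $(\varphi,\xi)$, the function $F=\frT^+_\beta u$ is absolutely continuous in $t$ and satisfies
\[
\partial_tF=-|\xi-t\varphi|^{2\beta}F+u .
\]
Conjugating back by $\Gamma$ turns $\partial_t$ into $\partial_t+v\cdot\nabla_x$, i.e. $\partial_t-\varphi\cdot\nabla_\xi$ on the Fourier side. This yields $(\partial_t+v\cdot\nabla_x)f+(-\Delta_v)^\beta f=S$ with $f=\cK^+_\beta S$, first pointwise in Fourier variables and then in $\cS'$ after pairing with test functions and using Fubini.

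\textbf{Step 4: square integrability.} It remains to show $|\xi-t\varphi|^{2\beta}F\in\L^2_{t,\varphi,\xi}$; the transport term then lies in $\L^2$ by the equation. For $t\le T$ the factor $|\xi-t\varphi|^{2\beta}$ is bounded on the relevant support, since $\widehat{\Gamma S}$ has compact support there. For $t>T$ we need the weight to be dominated by the exponential. We bound $|\xi-t\varphi|\le|\xi-s\varphi|+(t-s)|\varphi|$, where $|\xi-s\varphi|$ is bounded on $\supp u$ and $(t-s)|\varphi|\lesssim b(t+T)$. This makes the weight polynomial in $t$, while Step 2 gives decay like $e^{-c(t-T)^{2\beta+1}}$. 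So the product is in $\L^2_t$ with values in $\L^2$.

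\textbf{Main obstacle.} The key step is the uniform lower bound in Step 2. This is exactly where the support of $\widehat S$ away from $\varphi=0$ is essential: without it there is no decay and $\cK^+_\beta S\notin\C_0$.
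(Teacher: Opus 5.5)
Your proof is correct. The paper gives no proof of this lemma and simply recalls it from \cite{AIN}. Your argument is essentially the Fourier-side mechanism the paper uses for the parallel statement, Lemma~\ref{lem:initialvalue}: kinetic shift, Plancherel, the ODE $\partial_t F=-|\xi-t\varphi|^{2\beta}F+u$ obtained from $\partial_t\widehat E_\beta=-|\xi-t\varphi|^{2\beta}\widehat E_\beta$, and exponential decay of $\widehat E_\beta$. The one genuine difference is how you get the decay. You derive it on $\supp u$ by the elementary ``time spent in a ball'' bound, using $|\varphi|\ge a$. The paper there instead imports the bound $\widehat E_\beta(t,0,\varphi,\xi)\le e^{-c_\beta t(|\xi-t\varphi|^{2\beta}+(t|\varphi|)^{2\beta})}$ from \cite[Lemma~2.19]{AIN}. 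Your route is self-contained; the imported bound is sharper and does not need $\varphi$ bounded away from $0$.

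Your closing remark overstates things. The support away from $\varphi=0$ is not what makes $\cK^+_\beta S\in\C_0(\R_t;\L^2_{x,v})$. For any Schwartz $S$, $\widehat E_\beta(t,s,\varphi,\xi)\to 0$ as $t\to\infty$ for every $(\varphi,\xi)\neq(0,0)$. Dominated convergence with the majorant $\int_\R|u(s,\cdot)|\,\mathrm{d}s\in\L^2_{\varphi,\xi}$ then already gives both limits at $\pm\infty$ and the continuity. What your support assumption actually buys is Step~4: uniform exponential decay cheaply dominates the polynomial weight $|\xi-t\varphi|^{2\beta}$. For general sources this step would instead require the $\L^2$ maximal-regularity bound, i.e.\ Lemma~\ref{lem:TgammaL2} with $\gamma=2\beta$.
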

In a first step, we may forget about the Kolmogorov equation and just study the boundedness properties of the Kolmogorov operators.

\subsection{Towards the $\L^p$ boundedness for Kolmogorov operators in the kinetic scale} \label{sec:towardsLp}
For $\gamma \in \R$, we are concerned with data $S \in \L^p_t\Xdot^{\gamma-2\beta,p}_\beta$, that is (again formally) 
\begin{equation*}
	\hat{S}(t,\varphi,\xi) = d_{\beta}(\varphi,\xi)^{2\beta-\gamma}	\hat{h}(t,\varphi,\xi),
\end{equation*} 
for some function $h \in \L^p_{t,x,v}$ and want to obtain  $f =\cK_{\beta}^+S \in \L^p_t\Xdot^{\gamma,p}_\beta$, that is, 
\begin{equation*}
	\cF^{-1}\left(d_{\beta}(\varphi,\xi)^\gamma \hat{f}(t,\varphi,\xi) \right) \in \L^p_{t,x,v}.
\end{equation*}
In other words, we are interested in the $\L^p_{t,x,v}$ boundedness of the operator $A$ formally defined as
\begin{align*}
	[\widehat{Ah}](t,\varphi,\xi)  &= \int_{-\infty}^t
	d_{\beta}(\varphi,\xi)^\gamma
	 \exp\left(-\int_s^{t} \abs{\xi +(t-\tau) \varphi}^{2\beta} \dd\tau \right) \\
	&\hspace{2cm} 
	\cdot d_{\beta}(\varphi,\xi+(t-s)\varphi)^{2\beta-\gamma}
	\hat{h}(s,\varphi,\xi+(t-s)\varphi) \ds. 
\end{align*}
Using the kinetic shift (with $g=\Gamma h$), we obtain with $m_{\beta}(t,\varphi,\xi)=d_{\beta}(\varphi,\xi-t\varphi)$ the relation
\[
	[\widehat{\Gamma A\Gamma^{-1} g}]  = m_{\beta}^\gamma \frT_{\beta}^+ (m_{\beta}^{2\beta-\gamma} \hat g),
\]
that is,
\begin{align}
\label{eq:Tgammahat}
	[\widehat{\Gamma A\Gamma^{-1} g}](t,\varphi,\xi)  
	&= \int_{-\infty}^t  d_{\beta}(\varphi,\xi-t\varphi)^\gamma d_{\beta}(\varphi,\xi-s\varphi)^{2\beta-\gamma}\\
	&\nonumber\hspace{2cm} 
	\cdot \exp\left(-\int_s^{t} \abs{\xi  -\tau \varphi}^{2\beta} \dd \tau \right) [\widehat{ \Gamma h}](s,\varphi,\xi) \ds. 
\end{align}

The interest of shifting variables is that for fixed $s<t$ we have a multiplier in the $(\varphi,\xi)$ variables,
hence to obtain an operator of convolution type in the spatial variables $(x,v)$. 
However, it is not of a convolution type in the time variable.
Setting $T_{\gamma}\coloneqq \Gamma A\Gamma^{-1}$, we have formally  
\begin{equation} \label{eq:defTgamma}
	[T_\gamma g](t,x,v) = \int_{\R^{1+2d}} \mathds{1}_{t-s>0}K_\gamma(t,s,x-y,v-w) g(s,y,w) \dd (s,y,w) 
\end{equation}  where
\begin{equation}
\label{eq:Kgamma}
	\widehat{K}_\gamma(t,s,\varphi,\xi)  = d_{\beta}(\varphi,\xi-t\varphi)^\gamma d_{\beta}(\varphi,\xi-s\varphi)^{2\beta-\gamma} \exp\left(-\int_s^{t} \abs{\xi  -\tau \varphi}^{2\beta} \dd\tau \right). 
\end{equation}
We have thus reduced our problem to proving $\L^p_{t,x,v}$ boundedness of $T_{\gamma}$. 
As said, we cannot apply multiplier theory in all variables simultaneously. 
But, we can apply the theory of singular integral operators on $\R^{1+2d}$ equipped with a (shifted) kinetic distance, 
and in particular the theorem of Coifman--Weiss in spaces of homogeneous type, Theorem~\ref{thm:CW}, to    show 
\begin{thm}\label{thm:boundednessTgamma}
	For $\beta \in (0,1]$ and $\gamma \in \R$, the operator $T_{\gamma}$ is bounded on $\L^p_{t,x,v}$, $p \in (1,\infty)$.
\end{thm}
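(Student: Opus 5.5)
We follow the route the paper announces: $\L^2$ boundedness, then H\"ormander's condition for the Coifman--Weiss theorem (Theorem~\ref{thm:CW}), then duality.

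\textbf{Step 1: $\L^2$ bound.} By Plancherel in $(x,v)$ it suffices to bound, for a.e.\ fixed $(\varphi,\xi)$, the one-dimensional integral operator in time with kernel $\mathds{1}_{s<t}\widehat K_\gamma(t,s,\varphi,\xi)$ on $\L^2_t$, uniformly in $(\varphi,\xi)$. Use the Schur test. By homogeneity (kinetic scaling) and a time translation we may assume $|\varphi|=1$ or $\varphi=0$. The case $\varphi=0$ is the heat-type kernel $|\xi|^{2\beta}e^{-(t-s)|\xi|^{2\beta}}$, whose integral in $s$ or in $t$ equals $1$. For $\varphi\neq0$, set $\xi-\tau\varphi=\eta-\tau e$ with $\eta\perp e$ after translation. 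Then $m_\beta(\tau)\simeq |\varphi|^{1/(2\beta+1)}+|\xi-\tau\varphi|$, and we need the uniform bounds
\[
\sup_t\int_{-\infty}^t m(t)^\gamma m(s)^{2\beta-\gamma}e^{-\int_s^t|\xi-\tau\varphi|^{2\beta}\dd\tau}\ds\lesssim1
\]
and the symmetric one in $t$. The key elementary fact is the lower bound
\[
\int_s^t|\xi-\tau\varphi|^{2\beta}\dd\tau\gtrsim (t-s)\big(\max(m(s),m(t))\big)^{2\beta}-C,
\]
together with an interval-length estimate. These make the ratio $m(t)^\gamma m(s)^{-\gamma}$ harmless against the exponential decay, for every real $\gamma$.

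\textbf{Step 2: kernel regularity.} Equip $\R^{1+2d}$ with the kinetic quasi-distance for which $(t,x,v)\mapsto$ translations of $T_\gamma$ act as convolution. In shifted variables this is
\[
\rho\big((t,x,v),(s,y,w)\big)=|t-s|^{1/(2\beta)}+|x-y+(t-s)\cdots|^{1/(2\beta+1)}+|v-w|,
\]
adapted to $\Gamma$. Lebesgue measure is doubling for it, with homogeneous dimension $\homd$. We then verify H\"ormander's integral condition
\[
\sup_{z_0,\,z_1:\rho(z_0,z_1)<r}\int_{\rho(z,z_0)>2r}\big|K(z,z_1)-K(z,z_0)\big|\dd z\lesssim 1,
\]
and the same condition in the other variable. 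Scaling reduces this to $r=1$. Decompose $\widehat K_\gamma$ with the anisotropic Littlewood--Paley family $\theta_j$ in $(\varphi,\xi)$. Each piece $K_\gamma^j(t,s,\cdot)$ should satisfy integrated bounds
\[
\|K^j_\gamma(t,s)\|_{\L^1_{x,v}}\lesssim 2^{2\beta j}e^{-c2^{2\beta j}(t-s)}
\]
with extra polynomial decay weights. Derivatives in $t$, $s$, $x$, $v$ should cost the corresponding powers of $2^j$. These follow from $(\varphi,\xi)$-derivative bounds on the multiplier, via integration by parts, using Step 1's lower bound on the exponent. Summing over $j$, split at $2^j\sim r^{-1}$: smoothness (mean value) controls small $j$, and the size bounds outside the ball control large $j$.

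\textbf{Step 3: conclusion.} Coifman--Weiss gives weak $(1,1)$, and interpolation gives $p\in(1,2]$. For $p>2$, observe that $T_\gamma^*$ has the same structure, with roles of $s,t$ exchanged and $\gamma$ replaced by $2\beta-\gamma$. Steps 1 and 2 apply to it verbatim, so duality concludes.

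\textbf{Main obstacle.} Step 2 is the hardest. The multiplier is not smooth in $(\varphi,\xi)$ uniformly: $d_\beta$ and $|\xi-\tau\varphi|^{2\beta}$ with $\beta<1$ are singular near $\xi=\tau\varphi$. Moreover, $\varphi$-derivatives bring down factors $(t-s)$ and $\tau$ from the shift. One must show that these factors are absorbed by the exponential decay and the geometry of the line $\tau\mapsto\xi-\tau\varphi$. We would first try to handle the nonsmoothness by only differentiating finitely many times (fewer than $2\beta$ plus a fractional Hölder argument). If that fails, we would take $\L^2$-based bounds on $\widehat K$ derivatives and convert them to weighted $\L^1$ via Cauchy--Schwarz on annuli.
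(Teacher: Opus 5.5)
Your Steps~1 and~3 are sound. The $\L^2$ bound is what the paper imports from \cite{AIN}, and your adjoint observation is exactly the identity $K_{\gamma,\epsilon}(s,t,x,v)=K_{2\beta-\gamma,\epsilon}(-t,-s,-x,v)$ that the paper uses for $p>2$.

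The genuine gap is in Step~2 for $\beta<1$, at the very point you call the main obstacle, and neither remedy you propose can close it.
\begin{itemize}
\item After rescaling to $t-s=1$, the multiplier contains $\exp\bigl(-\int_0^1|\xi-r\varphi|^{2\beta}\dd r\bigr)$. Its exponent has regularity only of order $2\beta+1$ transversally to the cone $\{\xi=r\varphi,\ r\in[0,1]\}$: it behaves like $|\xi_\perp|^{2\beta+1}$, with $\xi_\perp$ the part of $\xi$ orthogonal to $\varphi$.
\item This cone has codimension $d-1$ in $\R^{2d}$ and meets every Littlewood--Paley annulus, so frequency localisation does not help. The multiplier therefore has Sobolev regularity only below $2\beta+\frac{d+1}{2}$.
\item Converting $\L^2$ bounds on derivatives into an $\L^1_{x,v}$ bound by Cauchy--Schwarz on annuli (with isotropic, anisotropic or mixed weights) needs more than $d$ derivatives. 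This is impossible once $d\ge 4\beta+1$: for every $\beta<1$ if $d\ge5$, and already for $d=2$ if $\beta\le\frac14$.
\item A H\"older argument with fewer than $2\beta+1$ derivatives gives even less.
\end{itemize}
This is not a technical weakness of your method. The decay one can extract from smoothness, $(1+|x|+|v|)^{-d-1-2\beta}$ (Remark~\ref{rem:analyticdecay}), is not integrable on $\R^{2d}$ when $d\ge 1+2\beta$. $\tilde E_\beta$ is in $\L^1$ only because it is a probability density. (For $\beta=1$ the heat factor is Gaussian and this obstruction disappears.)

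The missing idea is positivity.
\begin{itemize}
\item The paper splits the L\'evy exponent as $\psi_g+\psi_b$, keeping half of the small-jump part inside $\psi_b$. Then $e^{-\psi_g}$ is a Schwartz function, while $e^{-\psi_b}$ is, by L\'evy--Khintchine, the Fourier transform of a probability measure $\Psi_b$ with coercive symbol.
\item All rough or singular factors are put on $\Psi_g$: $\tilde p_\gamma$, $\nabla_v$, and the symbols $m_1,m_2$ produced by $\partial_s$. There a kinetic Littlewood--Paley lemma gives decay $(1+|x|^{\frac1{2\beta+1}}+|v|)^{-\homd}$.
\item $\Psi_b$ enters only through Fubini and its $x$- and $v$-marginals. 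These are densities of nondegenerate stable-like L\'evy measures on $\R^d$, decaying like $(1+|\cdot|)^{-d-2\beta}$ by Sztonyk's estimates.
\end{itemize}
What comes out are \emph{marginal} bounds only: the $x$-marginal as a function of $v$, and the $v$-marginal along characteristics as a function of $x$. These are not the weighted $\L^1_{x,v}$ bounds your Step~2 postulates. H\"ormander's condition then has to be checked by a case analysis adapted to those marginals.

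There is also a second problem with the differentiations: they must be taken along $\partial_s$ and $\nabla_v-s\nabla_x$. In the shifted frame, a plain $\nabla_v$ produces the symbol $\xi$, which is not $\lesssim 2^j$ on pieces localised at $d_\beta(\varphi,\xi-s\varphi)\sim2^j$. Localising at $d_\beta(\varphi,\xi)\sim 2^j$ instead breaks your size bound, since $d_\beta(\varphi,\xi-s\varphi)$ can then be of order $|s|\,2^{(2\beta+1)j}$.
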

Of course,  even the meaning of the representation of $T_{\gamma}$ is an issue. 
The first thing to address is $\L^2_{t,x,v}$ boundedness, which is already known from \cite{AIN}.

\begin{lem} \label{lem:TgammaL2}
	For all $\gamma\in \R$ and $\beta>0$, $T_{\gamma}$ is well-defined and bounded on $\L^2_{t,x,v}$. 
\end{lem}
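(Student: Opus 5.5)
Since $\Gamma$ and the partial Fourier transform in $(x,v)$ are isometries on $\L^2_{t,x,v}$ (by Plancherel), it suffices to show that the operator $g\mapsto m_\beta^\gamma \frT_\beta^+(m_\beta^{2\beta-\gamma}\hat g)$ is bounded on $\L^2_{t,\varphi,\xi}$. For fixed $(\varphi,\xi)$ this is an integral operator in time with kernel
\[
k(t,s)=\mathds{1}_{s<t}\,d_\beta(\varphi,\xi-t\varphi)^\gamma d_\beta(\varphi,\xi-s\varphi)^{2\beta-\gamma}\exp\Big(-\int_s^t\abs{\xi-\tau\varphi}^{2\beta}\dd\tau\Big).
\]
Hence the plan is to prove a Schur-type bound $\sup_{t}\int |k(t,s)|\ds+\sup_s\int|k(t,s)|\dt\le C$ with $C$ independent of $(\varphi,\xi)$. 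Well-definedness then follows: for $\hat g(\cdot,\varphi,\xi)\in\L^2_t$ the integral converges absolutely for a.e.\ $(\varphi,\xi)$, and measurability is clear. Equivalently, one may cite \cite{AIN}, where this is done, but I would reprove the Schur bound.

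\textbf{Reductions.} The case $\varphi=0$ is a null set, so assume $\varphi\neq0$. Write $\xi=\xi_\perp+a\varphi/\abs{\varphi}$ and shift time by $a/\abs{\varphi}$; this changes $\xi-\tau\varphi$ into $\xi_\perp-\tau\varphi$ and leaves the Schur integrals invariant. Next use the scaling $d_\beta(\lambda^{2\beta+1}\varphi,\lambda\xi)=\lambda d_\beta(\varphi,\xi)$ together with $t\mapsto\lambda^{-2\beta}t$ to normalise $\abs{\varphi}=1$. The kernel is homogeneous of total degree $2\beta$ in $d_\beta$, which is exactly compensated by the $\dd s$ scaling. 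We are reduced to $\varphi=e$ a unit vector and $\xi\perp e$, with $\rho(\tau):=\abs{\xi-\tau e}=(\abs{\xi}^2+\tau^2)^{1/2}$ and $d_\beta(e,\xi-\tau e)\simeq 1+\rho(\tau)$.

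\textbf{Main estimate.} Set $w(\tau)=(1+\rho(\tau))^{2\beta}\simeq 1+\rho(\tau)^{2\beta}$. Then
\[
|k(t,s)|\lesssim w(t)^{\gamma/2\beta}w(s)^{1-\gamma/2\beta}e^{-\int_s^t\rho^{2\beta}}.
\]
Since $\int_s^t\rho^{2\beta}\ge\int_s^t w-(t-s)$, the term $e^{t-s}$ must be handled separately. For $\abs{\xi}\ge1$ we have $w\simeq\rho^{2\beta}$ and this issue disappears. For $\abs{\xi}\le1$, the region $\abs{\tau}\le 2$ contributes a bounded kernel over a bounded time window, while for $\abs{\tau}\ge2$ we again have $\rho^{2\beta}\simeq w$. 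So, up to harmless pieces, $|k|\lesssim w(t)^{\theta}w(s)^{1-\theta}e^{-c\int_s^t w}$ with $\theta=\gamma/2\beta$.

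The key point is comparing $w(t)$ and $w(s)$ when the exponential is not small. Introduce $W(t)=\int_0^t w$. Then $\int w(s)e^{-c(W(t)-W(s))}\ds\le 1/c$ exactly, which handles $\theta=0$; the case $\theta=1$ is symmetric in $t$. For general $\theta$, I would use that $\rho$ is $1$-Lipschitz and $\rho\ge\abs{\tau}$: on the range where $\int_s^t w\lesssim1$ one has $\abs{t-s}\lesssim (1+\rho)^{-2\beta}$, which is at most $1+\rho$ since $\beta>0$. Thus $w(t)\simeq w(s)$ there. On dyadic shells $\int_s^t w\in[2^k,2^{k+1}]$, the ratio $w(t)/w(s)$ grows at most polynomially in $2^k$, because $\rho$ changes by at most $\abs{t-s}$ and $\abs{t-s}\lesssim 2^{k}$, while the exponential gives $e^{-c2^k}$. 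Summing the shells yields the Schur bounds uniformly. I expect this polynomial-versus-exponential comparison, uniform in $\abs{\xi}$ (in particular near the turning point $\tau=0$, where $\rho$ is smallest and $w$ does not dominate $\abs{t-s}$), to be the main obstacle. Once it is established, Schur's test gives boundedness on $\L^2_t$ for each $(\varphi,\xi)$ with a uniform constant, and Fubini plus Plancherel conclude.
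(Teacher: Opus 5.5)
Your proof is correct. It shares the paper's first step: Plancherel reduces the lemma to boundedness on $\L^2_{t,\varphi,\xi}$ of the frequency-wise integral operator in time. From there the paper simply invokes \cite[Prop.~2.18]{AIN} and its proof. It notes that $d_\beta(\varphi,\xi-t\varphi)\sim W(t,\varphi,\xi)$ and that the argument there uses only size estimates on $W$ and on the exponential factor. You instead prove the uniform bound from scratch: an exact time shift and anisotropic scaling reduce to $\abs{\varphi}=1$, $\xi\perp\varphi$, and then you apply Schur's test.

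The paper's route is shorter. Yours is self-contained and shows that only three facts matter: $w\ge 1$, the $1$-Lipschitz property of $\rho$, and a lower bound $\int_s^t\rho^{2\beta}\ge c\int_s^t w-C$. This makes the validity for every $\beta>0$ and $\gamma\in\R$ transparent.

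Three places should be tightened.

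\emph{The comparison inequality.} The bound $\int_s^t\rho^{2\beta}\ge\int_s^t w-(t-s)$ relies on $(1+\rho)^{2\beta}\le 1+\rho^{2\beta}$, which holds only for $2\beta\le 1$. In general, use $\rho^{2\beta}\ge 2^{-2\beta}w$ wherever $\rho\ge 1$.

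\emph{The ``harmless pieces''.} These should be treated as a bounded loss in the exponent, not as a bounded kernel on a bounded window; intervals $[s,t]$ straddling $[-2,2]$ are not covered by the latter. If $\abs{\xi}\le 1$, then $\rho\ge 1$ for $\abs{\tau}\ge 1$, so $\int_s^t\rho^{2\beta}\ge 2^{-2\beta}\big(\int_s^t w-\int_{-1}^1 w\big)$ for all $s<t$. If $\abs{\xi}\ge 1$, the bound holds with no loss.

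\emph{The ``main obstacle''.} What you flag as the main obstacle is not one. Since $w\ge 1$, you have $t-s\le u:=W(t)-W(s)$, hence $(1+u)^{-1}\le (1+\rho(t))/(1+\rho(s))\le 1+u$ uniformly in $\xi$, including near the turning point. The substitution $u=W(t)-W(s)$ then gives directly $\int_{-\infty}^t\abs{k(t,s)}\ds\lesssim\int_0^\infty(1+u)^{2\beta\abs{\theta}}e^{-cu}\dd u$. Likewise $\int_s^\infty\abs{k(t,s)}\dt\lesssim\int_0^\infty(1+u)^{2\beta\abs{1-\theta}}e^{-cu}\dd u$, so the dyadic shells are unnecessary.
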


\begin{proof} 
By Plancherel, it suffices to show that \eqref{eq:Tgammahat} defines a bounded operator on $\L^2_{t,\varphi,\xi}$, 
thus defining $T_{\gamma}$ by conjugation with the Fourier transform.  
This is a direct corollary of \cite[Prop~2.18]{AIN} and its proof. 
Indeed, observe that  
$ d_{\beta}(\varphi,\xi-t\varphi)^\gamma\sim  (|\varphi|^\frac{1}{2\beta+1}+ |\xi-t\varphi|)^{\gamma} \sim W(t,\varphi,\xi)^\gamma$
and 
$ d_{\beta}(\varphi,\xi-s\varphi)^{2\beta-\gamma} \sim (|\varphi|^\frac{1}{2\beta+1}+ |\xi-s\varphi|)^{2\beta-\gamma} \sim  W(s,\varphi,\xi)^{2\beta-\gamma}$
where $W$ is the weight used in \cite{AIN}. 
The proof of the first item of this proposition (even for all $\beta>0$) is done exclusively using size estimates 
for $W$ and the exponential factor, so the same proof applies with $d_{\beta}$ replacing $W$ and exactly gives 
the $\L^2_{t,x,v}$ boundedness of $T_{\gamma}$ by taking the inverse Fourier transform.  
\end{proof}

Next, we prove the required estimates for $K_{\gamma}$ to apply Theorem~\ref{thm:CW}.   

\begin{rem}
Let us also make a comment for experts on $\L^p$-estimates for autonomous parabolic PDEs. 
Here, the $\L^p$-boundedness at the level of strong solutions implies an $\L^p$ theory on the full Sobolev scale by ``differentiating'' the equation. 
This approach is not feasible in the kinetic setting due to derivatives in $v$ not commuting with the equation, see \cite[p.~59]{niebel_kinetic_2022}. 
\end{rem}

\section{Integrated kernel estimates}
\label{sec:estimates}

We prove pointwise estimates for the $x$-marginal and the $v$-marginal along characteristics of the kernel $K_\gamma(t,s,x,v)$ 
and its derivatives $\partial_s K_\gamma$ as well as $(\nabla_v -s \nabla_x)K_\gamma$. 
These estimates will be easy to manipulate and are sufficient for proving $\L^p$ estimates. 
Of course, it would be desirable to obtain pointwise estimates with good decay, 
but this seems out of reach. We comment on this in Remark \ref{rem:pointwiseest}.

In this section, we consider $\beta \in (0,1]$. The integrated estimates are valid for all $\beta\in \N$ though. We use L\'evy measures when $\beta<1$ and do not know what happens when $\beta\in (1,\infty)\setminus \N$.  We leave as an open problem to find analytic proofs of estimates that apply to all fractional $\beta$'s. 
 
\begin{prop}
\label{prop:intkernel}
	Let $\beta \in (0,1]$ and $\gamma \in \R$. 
	{For fixed $s<t$ the kernel is smooth, i.e.\ $K_\gamma(t,s,\cdot) \in \C^\infty(\R^{2d})$.}
	For any $\mathsf K \in \{ K_\gamma, (t-s)\partial_s K_\gamma, (t-s)^{\frac{1}{2\beta}}(\nabla_v- s \nabla_x) K_\gamma \}$ 
	{we have for the $x$-marginal}
	\begin{equation} \label{eq:Kxmarginal}
		\int_{\R^d} \abs{\mathsf K (t,s,x,v)}  \dx  \lesssim_{\beta,\gamma,d}
		\frac{1}{t-s} \frac{t-s}{(\abs{t-s}^{\frac{1}{2\beta}}+\abs{v})^{d+2\beta}},
	\end{equation}
	{as well as for the $v$-marginal along characteristics}
	\begin{equation}  \label{eq:Kvmarginal}
		\int_{\R^d} \abs{\mathsf K (t,s,x-sv,v)} \dv \lesssim_{\beta,\gamma,d} 
		\frac{1}{t-s} \frac{t-s}{(\abs{t-s}^{\frac{1}{2\beta}}+\abs{x}^{\frac{1}{2\beta+1}})^{(2\beta+1)d+2\beta}}
	\end{equation}
	for all $s<t$ and any $x,v \in \R^d$. 
	As a consequence, 
	\begin{equation}  \label{eq:Kxvmarginal}
		\int_{\R^{2d}} \abs{\mathsf K (t,s,x,v)}  \dd(x,v) \lesssim_{\beta,\gamma,d}\frac{1}{t-s}.
	\end{equation}
\end{prop}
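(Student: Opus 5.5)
The plan is to reduce, by symmetry, to a single normalized time gap, and then to factor $K_\gamma$ into a positive probability kernel and a family of Littlewood--Paley pieces coming from the two weights $d_\beta^\gamma$, $d_\beta^{2\beta-\gamma}$.

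\emph{Step 1: symmetry reduction.} By \eqref{eq:Kgamma}, the change $\xi\mapsto\xi+s\varphi$ (a kinetic shift, which in physical variables is the change $x\mapsto x-sv$ in the $v$-marginal) turns $\widehat K_\gamma(t,s,\cdot)$ into the symbol with $(s,t)$ replaced by $(0,t-s)$. The kinetic dilations $(\varphi,\xi)\mapsto(\lambda^{2\beta+1}\varphi,\lambda\xi)$ with $\lambda=(t-s)^{-1/(2\beta)}$ reduce further to $t-s=1$, with an overall factor $(t-s)^{-1}$ coming from the homogeneity $\gamma+(2\beta-\gamma)=2\beta$. Both right-hand sides of \eqref{eq:Kxmarginal} and \eqref{eq:Kvmarginal} transform exactly by this rule. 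The two derivatives are treated at the symbol level:
\begin{itemize}
\item $(t-s)\partial_s$ produces either $|\xi-s\varphi|^{2\beta}$ or $\partial_s d_\beta(\varphi,\xi-s\varphi)^{2\beta-\gamma}$, which is $O\big(d_\beta^{2\beta-\gamma}\cdot d_\beta^{2\beta}\big)$ by the derivative bounds on $d_\beta$;
\item $(\nabla_v-s\nabla_x)$ corresponds to multiplication by $i(\xi-s\varphi)$.
\end{itemize}
So all three choices of $\mathsf K$ have symbols of the form $(\text{weight})\cdot \widehat E_\beta(1,0,\varphi,\xi)$, with weights of the same type. It thus suffices to prove the estimates for $s=0$, $t=1$, with $\widehat{\mathsf K}=a(\varphi,\xi)\,b(\varphi,\xi-\varphi)\,\widehat E_\beta(1,0,\varphi,\xi)$, where $a,b$ are $d_\beta$-homogeneous symbols smooth away from the origin. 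Once \eqref{eq:Kxmarginal} is proved, \eqref{eq:Kxvmarginal} follows by integrating it in $v$.

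\emph{Step 2: positive part.} For $\beta<1$, $\xi\mapsto|\xi|^{2\beta}$ is a Lévy exponent, so $\xi\mapsto e^{-\int_0^1|\xi-\tau\varphi|^{2\beta}\dd\tau}$ is the characteristic function of $\big(\int_0^1 L_\tau\,\dd\tau,\ L_1\big)$ for a $2\beta$-stable process $L$. Hence $E:=\cF^{-1}\widehat E_\beta(1,0)$ is a probability density, and for $\beta=1$ it is the explicit Gaussian of Kolmogorov. Its marginals satisfy the stable-type tails
\[
\int_{\R^d} E\dx\lesssim (1+|v|)^{-d-2\beta},\qquad \int_{\R^d} E\dv\lesssim (1+|x|^{\frac1{2\beta+1}})^{-(2\beta+1)d-2\beta},
\]
by standard estimates for the stable law and its time integral. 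These tails are the source of the right-hand sides of \eqref{eq:Kxmarginal} and \eqref{eq:Kvmarginal}.

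\emph{Step 3: the weights.} The weights are not bounded multipliers: the product $a\,b$ is homogeneous of degree $2\beta$ and unbounded at infinity. The exponential factor has to supply the decay. I would split $\widehat E_\beta(1,0)=\widehat E_\beta(1,0,\cdot)^{1/2}\cdot\widehat E_\beta(1,0,\cdot)^{1/2}$; the second factor has the same Lévy structure, with $|\cdot|^{2\beta}$ replaced by $\tfrac12|\cdot|^{2\beta}$. The remaining symbol $a\,b\,\widehat E^{1/2}$ is then decomposed with an anisotropic Littlewood--Paley family $(\theta_j)$, writing $\mathsf K=\sum_j k_j * E'$, where $E'$ is the kernel of $\widehat E^{1/2}$ and $\widehat{k_j}=\widehat\theta_j\, a\, b\,\widehat E^{1/2}$.
\begin{itemize}
\item For $j\le0$ the block is a smooth bump at scale $2^j$, times $2^{2\beta j}$, so $\|k_j\|_{\L^1_{x,v}}\lesssim 2^{2\beta j}$, which is summable.
\item For $j>0$, on the annulus $d_\beta\sim2^j$ one has $\int_0^1|\xi-\tau\varphi|^{2\beta}\dd\tau\gtrsim 2^{2\beta j}$, because the segment $\{\xi-\tau\varphi;\ \tau\in[0,1]\}$ has average size $\gtrsim d_\beta$ once $d_\beta\ge1$. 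So the block carries $e^{-c2^{2\beta j}}$, and together with the bounds on its $\varphi$- and $\xi$-derivatives this gives $\|k_j\|_{\L^1}$ summable, with polynomial moments at the block scale.
\end{itemize}
Finally, $|\mathsf K|\le \sum_j|k_j|*E'$. Taking $x$-marginals, $\int|k_j|*E'\dx$ is the $v$-convolution of two marginals; the marginal of $k_j$ is concentrated at $v$-scale $\lesssim 2^{-j}\vee 1$ for $j>0$, so the tails of Step 2 are preserved. For the $v$-marginal along characteristics the argument is the same in the shifted variable.

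\emph{Main obstacle.} For $\beta<1$, $\widehat E^{1/2}$ is not smooth on the line $\{\xi=\tau\varphi\}$, since $|\cdot|^{2\beta}$ is singular at $0$. Derivative bounds on the blocks $\widehat{k_j}$, as used in Step 3, are therefore delicate. I expect this to be the main difficulty, and would first try to control only a limited number of derivatives, at most $d+1$ in $\varphi$ and in $\xi$, using integrability of $|\xi-\tau\varphi|^{2\beta-m}$ in $\tau$. If that fails, I would avoid differentiating the exponential altogether: keep the whole of $\widehat E$ inside the Lévy density $E$, and treat only the smooth block $\widehat\theta_j\, a\, b$, with the needed $e^{-c2^{2\beta j}}$ smallness replaced by an $\L^1$ estimate of $D^{2\beta}$-type operators applied to $E$, using the subordination structure.
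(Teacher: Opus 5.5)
Your architecture is the same as the paper's: normalise to $s=0$, $t=1$, bound $|\mathsf K|$ by (pieces carrying the weights) $*$ (a positive kernel with stable-type marginal tails), and use that the marginal of a convolution is the convolution of the marginals. For $\beta=1$, where everything is Gaussian, your argument closes. For $\beta\in(0,1)$, however, Step~3 has a genuine gap, namely the one you flag yourself, and neither remedy closes it.

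Taking a square root does not separate smoothness from positivity. The factor $\widehat E^{1/2}=\exp\bigl(-\tfrac12\int_0^1|\xi-\tau\varphi|^{2\beta}\,\dd\tau\bigr)$ is exactly as rough as $\widehat E$ on the set $\{\xi=\tau\varphi,\ \tau\in[0,1]\}$, and this set meets every annulus $d_\beta\sim 2^j$. So you get no derivative bounds on $\widehat{k_j}$, hence neither the summable $\L^1$ bounds nor the moments your Step~3 needs.

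Remedy (a) fails as stated. On that set $\int_0^1|\xi-\tau\varphi|^{2\beta-m}\,\dd\tau=\infty$ for every $m\ge 2\beta+1$. Indeed, derivatives of the exponent of order $m<2\beta+1$ are bounded there, but those of order $m>2\beta+1$ are not. In general this leaves far fewer than the $d+1$ derivatives you ask for.

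Remedy (b) only relocates the difficulty. For $j>0$ the kernels of $\widehat\theta_j\,a\,b$ have $\L^1$ norms growing like a power of $2^j$. You would therefore need $\widetilde\theta_j*E$ to be $O(2^{-Nj})$ in the marginal norms, i.e.\ all derivatives of $E$ integrable with stable-type marginal tails. That regularity statement is the crux, and the proposal does not supply it. Pointwise arguments cannot replace it either: $E$ itself decays only like $(1+|x|+|v|)^{-d-1-2\beta}$ in some directions (cf.\ Remark~\ref{rem:analyticdecay}).

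The missing idea is to split the exponent by \emph{jump size} rather than by a power. Write $|a|^{2\beta}=c_{\beta,d}\int_0^\infty\int_{\S^{d-1}}(1-\cos(\rho\, a\cdot\omega))\,\rho^{-1-2\beta}\,\dd\omega\,\dd\rho$ with $a=\xi-r\varphi$, and integrate in $r\in[0,1]$. Let $\psi_g$ be one half of the small-jump part ($\rho\le 1$), and $\psi_b$ the other half plus the whole large-jump part.
\begin{itemize}
\item $\psi_g$ is $\C^\infty$, since the truncated L\'evy measure has moments of every order, and its derivatives grow at most linearly. It is also coercive, $\psi_g\gtrsim(|\varphi|^2+|\xi|^2)\wedge(|\varphi|^{2\beta}+|\xi|^{2\beta})$. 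Hence $\Psi_g=\cF^{-1}e^{-\psi_g}$ is a Schwartz function.
\item $\psi_b$ is still a L\'evy exponent on $\R^{2d}$, so $\Psi_b=\cF^{-1}e^{-\psi_b}$ is a probability measure. It is absolutely continuous because $\psi_b\ge\frac12\int_0^1|\xi-r\varphi|^{2\beta}\dr$; this is why half of the small jumps is kept in $\psi_b$.
\item The $x$- and $v$-marginals of $\Psi_b$ are $d$-dimensional laws whose L\'evy measures are comparable to that of the fractional Laplacian of order $2\beta$. By Sztonyk's estimates their densities have tails $(1+|\cdot|)^{-d-2\beta}$.
\end{itemize}

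All weights, and the multipliers $\xi$, $|\xi|^{2\beta}$ and $\varphi\cdot\nabla_\xi d_\beta/d_\beta$ coming from the derivatives, are then applied to $\Psi_g$ only. The function $\tilde p_\gamma(D)\Psi_g$ is bounded pointwise by $(1+|x|^{\frac1{2\beta+1}}+|v|)^{-\homd}$. High frequencies are Schwartz; low frequencies are summed with Lemma~\ref{lem:kinetic}, applied to the family $\eta_\lambda$, $\lambda=2^{2\beta j}\in[0,1]$, which is controlled through Lemma~\ref{lem:xi-phi}. Finally $|\tilde K_\gamma|\le|\tilde p_\gamma(D)\Psi_g|*\Psi_b$.

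In your notation: replace $E'$ by $\Psi_b$, and the factor $\widehat E^{1/2}$ inside your blocks by $e^{-\psi_g}$. With that change your Step~3 goes through.
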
 

Another implication of our proof is the following upper bound.
\begin{prop} \label{prop:upper}
	We have the pointwise upper bound
\begin{equation*}
	\abs{K_\gamma(t,s,x,v)} \lesssim_{\beta,\gamma,d} \frac{1}{(t-s)^{d+\frac{d}{\beta}+1}}
\end{equation*}
for all $s<t$ and any $x,v \in \R^{d}$. 
\end{prop}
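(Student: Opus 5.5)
The pointwise bound follows from the trivial estimate $|K_\gamma(t,s,x,v)|\le (2\pi)^{-2d}\|\widehat K_\gamma(t,s,\cdot,\cdot)\|_{\L^1_{\varphi,\xi}}$. So I would estimate the $\L^1$ norm of the explicit symbol \eqref{eq:Kgamma} in $(\varphi,\xi)$ and track its dependence on $t-s$ by scaling.

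\textbf{Step 1: normalise.} Translate in $\tau$, setting $\tau = s+\sigma(t-s)$ and replacing $\xi-s\varphi$ by $\xi$; this is a change of variables in $\xi$ with unit Jacobian. Then rescale $\varphi\mapsto (t-s)^{-1-\frac1{2\beta}}\varphi$ and $\xi\mapsto (t-s)^{-\frac1{2\beta}}\xi$. By the homogeneity of $d_\beta$, the factor $d_\beta(\varphi,\xi-t\varphi)^\gamma d_\beta(\varphi,\xi-s\varphi)^{2\beta-\gamma}$ produces $(t-s)^{-1}$ times the same expression at unit time. The exponent $\int_s^t|\xi-\tau\varphi|^{2\beta}\,\mathrm d\tau$ becomes $\int_0^1|\xi-\sigma\varphi|^{2\beta}\,\mathrm d\sigma$. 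The Jacobian contributes $(t-s)^{-d(1+\frac1{2\beta})-\frac{d}{2\beta}}=(t-s)^{-d-\frac d\beta}$. Altogether
\[
\|\widehat K_\gamma(t,s)\|_{\L^1} = (t-s)^{-d-\frac d\beta-1}\, I,\qquad I=\int_{\R^{2d}} d_\beta(\varphi,\xi-\varphi)^\gamma d_\beta(\varphi,\xi)^{2\beta-\gamma} e^{-\int_0^1|\xi-\sigma\varphi|^{2\beta}\mathrm d\sigma}\,\mathrm d(\varphi,\xi).
\]
It then suffices to show that $I<\infty$.

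\textbf{Step 2: coercivity of the exponent.} The key lower bound is
\[
\int_0^1|\xi-\sigma\varphi|^{2\beta}\,\mathrm d\sigma \gtrsim |\xi|^{2\beta}+|\varphi|^{2\beta}.
\]
The left side is continuous, positive away from the origin and $2\beta$-homogeneous in $(\varphi,\xi)$, so the bound follows by compactness of the unit sphere. Since $d_\beta(\varphi,\eta)\lesssim |\varphi|^{\frac1{2\beta+1}}+|\eta|$, the prefactor grows at most polynomially at infinity when the exponents are nonnegative, and the Gaussian-type decay controls it.

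\textbf{Step 3: local integrability near the origin.} This is the only delicate point, because one of the exponents $\gamma$ or $2\beta-\gamma$ may be negative. The two factors are comparable to $(|\varphi|^{\frac1{2\beta+1}}+|\xi-\varphi|)^\gamma$ and $(|\varphi|^{\frac1{2\beta+1}}+|\xi|)^{2\beta-\gamma}$. For $|(\varphi,\xi)|\le1$ we have $|\xi-\varphi|\le |\xi|+|\varphi|\lesssim |\varphi|^{\frac1{2\beta+1}}+|\xi|$, because $|\varphi|\le|\varphi|^{\frac1{2\beta+1}}$ there. The reverse comparison holds in the same way, so both factors are comparable to powers of $d_\beta(\varphi,\xi)$. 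The product is therefore $\simeq d_\beta(\varphi,\xi)^{2\beta}$, which is bounded near the origin.

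In the region $|(\varphi,\xi)|\ge1$, the quantity $d_\beta$ is bounded below by a constant. Hence any negative power is bounded, and any positive power grows at most polynomially, which Step 2 absorbs. This gives $I<\infty$ and completes the proof. The constant depends only on $\beta,\gamma,d$, and the argument in fact works for all $\beta>0$.
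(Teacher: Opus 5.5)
Your proof is correct and follows the paper's route. The paper also reduces to the unit-time kernel $\tilde K_\gamma$ through the scaling identity \eqref{eq:Kg=Kgtilde}, which gives the factor $(t-s)^{-d-\frac{d}{\beta}-1}$, and then bounds $\tilde K_\gamma$ in $\L^\infty$ because $\hat{\tilde K}_\gamma$ is integrable, using \eqref{eq:nondegen} and the at most polynomial growth of $\tilde p_\gamma$. You carry out the rescaling on the Fourier side instead and spell out the integrability near the origin, where a negative exponent on one factor is compensated by the other; the paper leaves that point implicit.
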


The following subsections are concerned with parts of the proof of Proposition \ref{prop:intkernel}, 
which is concluded in Section \ref{sec:proofintest}. 

\subsubsection{Rescaled variables}
We decompose  ${K}_\gamma = {p}_\gamma(D){E}_\beta $, where ${p}_\gamma(D)$ has symbol
\begin{equation*}
	{p}_\gamma(t,s,\varphi,\xi) = 
	d_{\beta}(\varphi,\xi-t\varphi)^\gamma
	d_{\beta}(\varphi,\xi-s\varphi)^{2\beta-\gamma}, 
\end{equation*} 
and is applied to the shifted  fundamental solution $E_{\beta}$, whose Fourier transform is 
\begin{equation*}
	\widehat{E}_\beta(t,s,\varphi,\xi) = \exp\bigg(-\int_s^{t} \abs{\xi -\tau \varphi}^{2\beta} \dd \tau \bigg).
\end{equation*}
Next, we introduce the new \textit{rescaled} variables $(\varphi',\xi')$ as
\begin{equation} \label{eq:rescaledvar}
	\xi' = (t-s)^{\frac{1}{2\beta}} (\xi-s\varphi) \;\mbox{ and }\; \varphi' = (t-s)^{1+\frac{1}{2\beta}} \varphi,
\end{equation}
then
\begin{equation*}
	(t-s) p_\gamma(t,s,\varphi,\xi) = \tilde{p}_\gamma(\varphi',\xi') = 
	d_{\beta}(\varphi',\xi'-\varphi')^\gamma
	d_{\beta}(\varphi',\xi')^{2\beta-\gamma}
\end{equation*}
and
\begin{equation} \label{eq:tildem}
	\widehat{E}_\beta(t,s,\varphi,\xi) = \hat{\tilde{E}}_\beta(\varphi',\xi') = \exp\bigg(- 	\int_0^1 \abs{\xi'-r\varphi'}^{2\beta}  \dr\bigg).
\end{equation} 
By abuse of notation, we {drop primes and} write $\varphi,\xi$ from now on (think $t = 1$, $s = 0$) 
and we set $\hat{\tilde{K}}_\gamma = {\tilde{p}}_\gamma \hat{\tilde{E}}_\beta$.

\medskip

Let us briefly explain the difficulties with $\tilde{K}_\gamma$ and the strategy to obtain integrated estimates. 
The symbol of $\tilde{p}_\gamma$ is not smooth for small frequencies ${(\varphi,\xi)} \approx (0,0)$. 
This limits the decay which we can expect in physical variables. 
Moreover, the rescaled and shifted fundamental solution $\tilde{E}_\beta$ has problems with differentiability 
at points in $\{ (\varphi,\xi); \xi = r\varphi, r \in [0,1] \}$. We make use of a decomposition of the exponential term into a Schwartz function $\Psi_g$ 
and measure part $\Psi_b$. 
The measure part introduces another limiting factor to the pointwise decay. We estimate $\tilde{p}_\gamma \Psi_g$ 
and combine it with estimates for the measure part to obtain the desired decay of the kernel integrated in either $x$ or $v$. 
	
After deriving integrated kernel estimates for $\tilde{K}_\gamma$, 
we deduce estimates for the kernel $K_\gamma$ by reversing the change of variables in \eqref{eq:rescaledvar}. 
	
Furthermore, we collect two observations. 
The symbol in the fundamental solution is nondegenerate  in the sense that 
\begin{equation} \label{eq:nondegen}
	\int_0^1 \abs{\xi-r\varphi}^{2\beta}  \dr \approx \abs{\varphi}^{2\beta}+\abs{\xi}^{2\beta}
\end{equation}
for all $(\varphi,\xi) \in \R^{2d}$; see \cite[Lemma 2.14]{AIN}. 
The symbol of $\tilde{p}_\gamma$ is of at most polynomial growth {(compare Lemma \ref{lem:xi-phi})} and hence 
$\tilde{K}_\gamma \in \C^\infty(\R^{2d})$ with an $\L^\infty$ bound due to the exponential decay of $\tilde{E}_\beta$. 

\subsubsection{Decomposition of the kernel into a good and a bad part}
If $\beta = 1$, then the exponent of $\tilde{E}_\beta$ is a coercive quadratic polynomial and 
hence $\tilde{E}_\beta \in \cS(\R^{2d})$ is a Schwartz function. 
In the case $\beta \in (0,1)$, the situation is more complicated.  
In this section, we will decompose the rescaled shifted fundamental solution as $\tilde{E}_\beta = \Psi_g \ast \Psi_b$, 
i.e.\ the convolution of a Schwartz function $\Psi_g$ and a probability measure $\Psi_b$.

Let $N \in \N$. We abbreviate 
$$
\omega^{(\beta)}:= \begin{cases}
	0 & \beta < \frac{1}{2} \\
	\omega \mathds{1}_{\abs{\omega} \le 1} & \beta = \frac{1}{2} \\
	\omega & \frac{1}{2} < \beta < 1
\end{cases},
$$
for any $\omega \in \R^{N}$. Moreover, we set 
\begin{equation*}
	\frL_\beta(a,b) = 1+ia \cdot b^{(\beta)} -\exp(i a \cdot b). 
\end{equation*}
for $a,b \in \R^N$. We start by recalling  
\begin{equation} \label{eq:charexpfraclaplace}
	\abs{a}^{2\beta} = c_{\beta,d}\int_{0}^\infty \int_{\S^{d-1}} \frL_\beta(a,\rho\omega) \rho^{-1-2\beta} \dd \omega \dd \rho 
\end{equation}
for all $a \in \R^d$ and some constant $c_{\beta,d}>0$, see \cite[Section 14]{sato_levy_1999}. 
{By symmetry the imaginary terms vanish, so effectively $\frL_\beta(a,b)$ can be replaced by $1-\cos(a\cdot b)$ in our applications.
We will later use this formula with $N = d$ and $N = 2d$.}

Inspired by \cite{MR3826548,hou_kernel_2024}, we split up the characteristic exponent with the new idea 
to keep some of the good coercive term in the bad term. We write
\begin{align*}
	\int_0^1 \abs{\xi-r\varphi}^{2\beta}  \dr &= c_{\beta,d}\int_0^1\int_{0}^\infty \int_{\S^{d-1}} \frL_\beta(\xi-r\varphi,\rho\omega) \rho^{-1-2\beta} \dd \omega \dd \rho \dr \\
	&= c_{\beta,d}\int_0^1\int_{0}^1 \int_{\S^{d-1}} \frL_\beta(\xi-r\varphi,\rho\omega) \rho^{-1-2\beta} \dd \omega \dd \rho \dr \\
	&\hphantom{=}+ c_{\beta,d}\int_0^1\int_{1}^\infty \int_{\S^{d-1}} \frL_\beta(\xi-r\varphi,\rho\omega) \rho^{-1-2\beta} \dd \omega \dd \rho \dr \\
	&= \frac{1}{2}c_{\beta,d}\int_0^1\int_{0}^1 \int_{\S^{d-1}} \frL_\beta(\xi-r\varphi,\rho\omega) \rho^{-1-2\beta} \dd \omega \dd \rho \dr \\
	&\hphantom{=}+\left( \frac{1}{2}c_{\beta,d}\int_0^1\int_{0}^1 \int_{\S^{d-1}} \frL_\beta(\xi-r\varphi,\rho\omega) \rho^{-1-2\beta} \dd \omega \dd \rho \dr \right. \\
	&\hphantom{=}+\left. c_{\beta,d}\int_0^1\int_{1}^\infty \int_{\S^{d-1}} \frL_\beta(\xi-r\varphi,\rho\omega) \rho^{-1-2\beta} \dd \omega \dd \rho \dr \right)\\
	&=: \psi_g(\varphi,\xi)+\psi_b(\varphi,\xi)
\end{align*}
for all $(\varphi,\xi) \in \R^{2d}$. 
In order to streamline notation we set $\psi_g(\varphi,\xi) = \int_0^1 \abs{\xi-r\varphi}^{2}  \dr$ 
in the case $\beta = 1$ with the convention that $\psi_b = 0$.

\begin{lem}
	The function $\Psi_g := \cF^{-1}(\exp(-\psi_g))$ is a Schwartz function and we have the estimate 
	\begin{equation} \label{eq:psigcoercive}
		\psi_g(\varphi,\xi) \ge c_0 (\abs{\varphi}^2+\abs{\xi}^2) \wedge (\abs{\varphi}^{2\beta}+\abs{\xi}^{2\beta})
	\end{equation}
	for some $c_0 > 0$ and any $(\varphi,\xi) \in \R^{2d}$. 
\end{lem}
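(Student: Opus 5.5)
We treat $\beta=1$ and $\beta\in(0,1)$ separately and prove two things: the coercivity bound \eqref{eq:psigcoercive}, and the Schwartz property of $\Psi_g$.

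For $\beta=1$, $\psi_g(\varphi,\xi)=\int_0^1|\xi-r\varphi|^2\dr=|\xi|^2-\xi\cdot\varphi+\tfrac13|\varphi|^2$ is a positive definite quadratic form, which is \eqref{eq:nondegen} with $\beta=1$. Hence $\psi_g\ge c_0(|\varphi|^2+|\xi|^2)$, and $\exp(-\psi_g)$ is a Gaussian, so $\Psi_g\in\cS(\R^{2d})$.

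For $\beta\in(0,1)$, the imaginary part of $\frL_\beta$ vanishes by symmetry in $\omega$, so
\[
\psi_g(\varphi,\xi)=\tfrac12 c_{\beta,d}\int_0^1\int_0^1\int_{\S^{d-1}}\bigl(1-\cos((\xi-r\varphi)\cdot\rho\omega)\bigr)\rho^{-1-2\beta}\dd\omega\dd\rho\dr .
\]
The plan has four steps.

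\emph{Smoothness.} Since the $\rho$-integration is over the bounded range $(0,1]$ and $\omega$ ranges over a compact set, we may differentiate under the integral sign in $(\varphi,\xi)$. Every derivative of order at least $2$ brings down factors $\rho^{k}$ with $k\ge2$, so the integrand is bounded by $\rho^{1-2\beta}$, which is integrable. The first derivatives are $\int\sin((\xi-r\varphi)\cdot\rho\omega)\,\rho\,\omega\cdots$, bounded by $\rho^{-2\beta}$, also integrable since $2\beta<2$. Hence $\psi_g\in\C^\infty(\R^{2d})$, and each derivative of order $k\ge1$ grows at most like $(1+|\varphi|+|\xi|)$ for $k=1$ and is bounded for $k\ge2$.

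\emph{Coercivity.} Set $a=\xi-r\varphi$. For $|a|\le1$ and $\rho\le1$ we have $|a\rho|\le1$, so $1-\cos(a\cdot\rho\omega)\gtrsim(a\cdot\omega)^2\rho^2$. Integrating over $\omega$ and then $\rho\in(0,1)$ gives $\gtrsim|a|^2$. For $|a|\ge1$, we restrict to $\rho\le1$ and rescale $\rho=\sigma/|a|$. This gives $|a|^{2\beta}\int_0^{|a|}\int_{\S^{d-1}}(1-\cos(\sigma\hat a\cdot\omega))\sigma^{-1-2\beta}\dd\omega\dd\sigma\ge c|a|^{2\beta}$, because the integral over $\sigma\in(0,1)$ is already a positive constant. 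Thus the inner integral is $\gtrsim|a|^2\wedge|a|^{2\beta}$, i.e.\ $\gtrsim h(|a|)$ with $h(s)=s^2\wedge s^{2\beta}$. Then we integrate in $r$. By the $\beta$-analogue of \eqref{eq:nondegen}, and its exponent-$2$ version, we have $\int_0^1|\xi-r\varphi|^{q}\dr\simeq|\varphi|^q+|\xi|^q$ for $q=2,2\beta$. Moreover, for a set of $r$ of measure at least $1/2$ one has $|\xi-r\varphi|\gtrsim|\varphi|+|\xi|$; this holds because the set where $|\xi-r\varphi|\le\epsilon(|\varphi|+|\xi|)$ is an interval of length $\lesssim\epsilon$. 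Since $h$ is increasing, we obtain $\psi_g\gtrsim h(|\varphi|+|\xi|)$, which is equivalent to \eqref{eq:psigcoercive}.

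\emph{Schwartz property.} By the coercivity bound, $e^{-\psi_g}$ decays like $e^{-c(|\varphi|+|\xi|)^{2\beta}}$ at infinity. By the chain rule (Faà di Bruno), every derivative of $e^{-\psi_g}$ is $e^{-\psi_g}$ times a polynomial in the derivatives of $\psi_g$, hence bounded by a polynomial in $|(\varphi,\xi)|$. It follows that $e^{-\psi_g}\in\cS(\R^{2d})$, and therefore so is its inverse Fourier transform $\Psi_g$.

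The main obstacle is the coercivity step, specifically uniformity when $\xi\approx r\varphi$ for some $r\in[0,1]$. The step handles this by using monotonicity of $h$ together with the measure estimate on $r$, rather than a pointwise bound.
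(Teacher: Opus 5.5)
Your proof is correct, and it differs from the paper mainly in being self-contained: the paper gives no argument here and simply cites the proof of Lemma~2.5(iii) in \cite{MR3826548}. Elsewhere the paper imports the same ingredients from the literature: smoothness of $\psi_g$ from \cite{jacob_pdo_2005}, nondegeneracy from \cite[Lemma A.7]{MR3826548}, and the bounds $|\partial_\varphi^\alpha\partial_\xi^\sigma\psi_g|\lesssim 1+|\varphi|+|\xi|$ from \cite[Lemma 7.5]{MR4709546} (in the proof of Lemma~\ref{lem:helpMLambda}).

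Your argument recovers all three by elementary means:
\begin{itemize}
\item Differentiation under the integral gives exactly those derivative bounds.
\item The split $|a|\le 1$ versus $|a|\ge 1$, with the rescaling $\rho=\sigma/|a|$, gives the pointwise-in-$r$ lower bound by $h(|\xi-r\varphi|)$.
\item Your measure estimate on the bad set of $r$ replaces the nondegeneracy lemma.
\end{itemize}

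You are right that \eqref{eq:nondegen} cannot simply be quoted here. The inequality $\int_0^1\min(F,G)\dr\le\min(\int_0^1F\dr,\int_0^1G\dr)$ goes the wrong way, so your sentence invoking $\int_0^1|\xi-r\varphi|^q\dr\simeq|\varphi|^q+|\xi|^q$ is superfluous and can be dropped.

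The bad-set claim is sound. For $(\varphi,\xi)\neq(0,0)$ and $\epsilon<1$, the set $\{r\in[0,1]: |\xi-r\varphi|\le\epsilon(|\varphi|+|\xi|)\}$ is empty unless $(1-\epsilon)|\xi|\le(1+\epsilon)|\varphi|$. In that case it is an interval of length at most $4\epsilon/(1-\epsilon)$. Combined with the trivial bound $h(cs)\ge c^2h(s)$ for $c\le 1$, which you use implicitly, this gives \eqref{eq:psigcoercive}.

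One justification needs fixing. For the first derivatives you dominate the integrand by $\rho^{-2\beta}$ and call it integrable because $2\beta<2$. But $\int_0^1\rho^{-2\beta}\dd\rho<\infty$ only when $\beta<\frac12$, so this fails for $\beta\in[\frac12,1)$. Use $|\sin(a\cdot\rho\omega)|\le|a|\rho$ instead. This gives the dominating function $(|\varphi|+|\xi|)\rho^{1-2\beta}$, which is integrable for $\beta<1$ and locally uniform in $(\varphi,\xi)$. It is also what actually produces the linear growth of the first derivatives that you state next.
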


\begin{proof}
	This has been proven in \cite[Proof of Lemma 2.5 part (iii)]{MR3826548}.
\end{proof}

\begin{lem}
	For $\beta \in (0,1)$ the function $\exp(-\psi_b)$ is the Fourier transform of a probability measure $\Psi_b$. 
	The probability measure is absolutely continuous with respect to the Lebesgue measure with a $\C^\infty$ density. 
	For $\beta = 1$, the function $1 \equiv \exp(-\psi_b)$ is the Fourier transform 
	of the Dirac measure with mass in $(x,v) = (0,0)$.
\end{lem}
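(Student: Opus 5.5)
The plan is to read $\psi_b$ as the characteristic exponent of an infinitely divisible law on $\R^{2d}$ and invoke the L\'evy--Khintchine theorem. Fix $\beta\in(0,1)$. Using \eqref{eq:charexpfraclaplace} with $a=\xi-r\varphi$, observe that
\[
(\xi-r\varphi)\cdot\rho\omega=(\varphi,\xi)\cdot(-r\rho\omega,\rho\omega).
\]
Thus each integrand $\frL_\beta(\xi-r\varphi,\rho\omega)$ equals $\frL_\beta\big((\varphi,\xi),(-r\rho\omega,\rho\omega)\big)$ with $N=2d$, up to how the compensator $b^{(\beta)}$ is handled; that term is an odd function integrated against a symmetric measure and drops out, as remarked after \eqref{eq:charexpfraclaplace}. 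Consequently
\[
\psi_b(\varphi,\xi)=\int_{\R^{2d}}\big(1-\cos((\varphi,\xi)\cdot z)\big)\,\nu(\dd z),
\]
where $\nu$ is the push-forward of the measure
\[
c_{\beta,d}\big(\tfrac12\mathds{1}_{\rho<1}+\mathds{1}_{\rho\ge1}\big)\rho^{-1-2\beta}\dd\omega\dd\rho\dr
\]
on $[0,1]\times(0,\infty)\times\S^{d-1}$ under the map $(r,\rho,\omega)\mapsto(-r\rho\omega,\rho\omega)$.

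The first step is to check that $\nu$ is a symmetric L\'evy measure. Since $|z|\simeq\rho$, we have $\int(1\wedge|z|^2)\,\dd\nu\lesssim\int_0^\infty(1\wedge\rho^2)\rho^{-1-2\beta}\dd\rho<\infty$ for $\beta\in(0,1)$. Symmetry follows from $\omega\mapsto-\omega$. Hence $e^{-\psi_b}$ is the Fourier transform of a symmetric infinitely divisible probability measure $\Psi_b$ with no Gaussian part and zero drift.

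The main step is absolute continuity with a $\C^\infty$ density. For this I would show that $e^{-\psi_b}$ decays fast enough to be integrable against every polynomial weight, so that Fourier inversion gives a smooth density. The kept half of the small-jump part is the key: it satisfies
\[
\psi_b\ge\tfrac12 c_{\beta,d}\int_0^1\int_0^1\int_{\S^{d-1}}\big(1-\cos((\xi-r\varphi)\cdot\rho\omega)\big)\rho^{-1-2\beta}\dd\omega\dd\rho\dr=\psi_g.
\]
By \eqref{eq:psigcoercive}, $\psi_g\ge c_0(|\varphi|^{2\beta}+|\xi|^{2\beta})$ for large frequencies. Therefore $e^{-\psi_b}\le e^{-c_0(|\varphi|^{2\beta}+|\xi|^{2\beta})}$ there, which gives $(1+|\varphi|+|\xi|)^k e^{-\psi_b}\in\L^1$ for every $k$. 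Fourier inversion then yields a density in $\C^\infty$, in fact with all derivatives bounded, and it is nonnegative with integral $1$ because $\Psi_b$ is a probability measure. This is exactly where keeping half of the coercive term in $\psi_b$ pays off. I expect the only delicate point to be the borderline case $\beta=\tfrac12$, where $b^{(\beta)}$ carries the cutoff $\mathds{1}_{|b|\le1}$; there the odd term must cancel by symmetry in $\omega$ rather than be absorbed into a drift.

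The case $\beta=1$ is immediate from the convention $\psi_b=0$: the constant function $1$ is the Fourier transform of $\delta_{(0,0)}$.
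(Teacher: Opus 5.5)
Your proposal is correct and follows essentially the paper's route. Like the paper, you realise $\psi_b$ as the exponent of a symmetric L\'evy measure on $\R^{2d}$, obtained by pushing the radial measure forward under $(r,\rho,\omega)\mapsto(-r\rho\omega,\rho\omega)$ (the paper does this computation in polar coordinates on the unit sphere of $\R^{2d}$), apply L\'evy--Khintchine, and read off the smooth density from the coercivity of the retained half of the small-jump part. The only cosmetic difference is the lower bound: you use $\psi_b\ge\psi_g$ with \eqref{eq:psigcoercive}, while the paper uses $\psi_b\ge\frac12\int_0^1|\xi-r\varphi|^{2\beta}\dr$ with \eqref{eq:nondegen}, which gives coercivity at all frequencies rather than only large ones; either suffices.
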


\begin{proof}
	The case $\beta = 1$ is trivial.
	
	Let $\beta \in (0,1)$. We write
	\begin{align*}
		\psi_b(\varphi,\xi)&=\left( \frac{1}{2}c_{\beta,d}\int_0^1\int_{0}^\infty \int_{\S^{d-1}} \frL_\beta(\xi-r\varphi,\rho\omega) \rho^{-1-2\beta} \dd \omega \dd \rho \dr \right. \\
		&\hphantom{=}+\left. \frac{1}{2}c_{\beta,d}\int_0^1\int_{1}^\infty \int_{\S^{d-1}} \frL_\beta(\xi-r\varphi,\rho\omega) \rho^{-1-2\beta} \dd \omega \dd \rho \dr \right) =:I+I\!I.
	\end{align*}
	The first term is coercive by \eqref{eq:nondegen}. The second term is bounded. We know that 
	\begin{equation*}
		0 \le c_{\beta,d}\int_{1}^\infty \int_{\S^{d-1}} \frL_\beta(\xi-r\varphi,\rho\omega) \rho^{-1-2\beta} \dd \omega \dd \rho  \lesssim 1
	\end{equation*}
	for all $(\varphi,\xi) \in \R^{2d}$, $r \in [0,1]$. Indeed, due to the symmetry of $\dd \omega$ we may rewrite 
	\begin{align*}
		\int_{\S^{d-1}}\frL_\beta(\xi-r\varphi,\rho \omega) \dd \omega &= \mathrm{Re}\int_{\S^{d-1}}\left( 1+i(\xi-r\varphi)\cdot (\rho\omega)^{(\beta)}-\exp\left(i(\xi-r\varphi)\cdot (\rho\omega) \right) \right) \dd \omega \\
		&= \int_{\S^{d-1}} 1-\cos((\xi-r\varphi)\cdot \omega \rho) \dd \omega,	
	\end{align*}
	for any $\rho>0$, $(\varphi,\xi) \in \R^{2d}$. This can be seen by looking at the series representation of the exponential function. 
	The term $II$ is bounded, while $I$ is coercive. Consequently, $\psi_b$ satisfies
	\begin{equation} \label{eq:psibcoercive}
		\psi_b(\varphi,\xi) \gtrsim \abs{\varphi}^{2\beta}+\abs{\xi}^{2\beta}.
	\end{equation}
	Furthermore, as $\psi_b(0,0) = 0$ the function $\exp(-\psi_b)$ is the Fourier transform 
	of a (sign-changing) function with integral equal to one. 
	
	In order to prove the positivity of this function, we aim to apply the L\'evy-Khintchine theorem, \cite[Theorem 1.2.14]{MR2512800}. 
	For that, we rewrite the symbol as an effective $2d$-L\'evy measure; compare \cite{MR3906169}. 

We calculate  
\begin{align*}
	I\!I&=\frac{1}{2}c_{\beta,d}\int_0^1\int_{1}^\infty \int_{\S^{d-1}} \frL_\beta\left(\xi-r\varphi,\rho\omega\right) \rho^{-1-2\beta} \dd \omega \dd \rho \dr \\
	&= \frac{1}{2}c_{\beta,d}\int_0^1\int_{1}^\infty \int_{\S^{d-1}} \frL_\beta\left({\varphi \choose \xi },\rho {-r\omega \choose  \omega}\right) \rho^{-1-2\beta} \dd \omega \dd \rho \dr \\
	&= \frac{1}{2}c_{\beta,d}\int_0^1\int_{{\sqrt{1+r^2}}}^\infty \int_{\S^{d-1}} \frL_\beta\left({\varphi \choose \xi },\frac{\tilde{\rho}}{\sqrt{1+r^2}} {-r\omega \choose  \omega}\right) \tilde{\rho}^{-1-2\beta} (1+r^2)^{\beta} \dd \omega \dd \tilde{\rho} \dr \\
	&= \frac{1}{2}c_{\beta,d}\int_1^{{\sqrt{2}}} \int_{0}^{\sqrt{\rho^2-1}} \int_{\S^{d-1}} \frL_\beta\left({\varphi \choose \xi },\frac{{\rho}}{\sqrt{1+r^2}} {-r\omega \choose  \omega}\right) {\rho}^{-1-2\beta} (1+r^2)^{\beta} \dd \omega \dr  \dd {\rho} \\
	&\hphantom{=}+\frac{1}{2}c_{\beta,d}\int_{\sqrt{2}}^\infty \int_0^1 \int_{\S^{d-1}} \frL_\beta\left({\varphi \choose \xi }, \frac{{\rho}}{\sqrt{1+r^2}} {-r\omega \choose  \omega}\right) {\rho}^{-1-2\beta} (1+r^2)^{\beta} \dd \omega \dr  \dd {\rho} \\
	&=  \int_1^{{\sqrt{2}}} \int_{\S^{2d-1}} \frL_\beta\left({\varphi \choose \xi },{\rho}\theta \right) {\rho}^{-1-2\beta} \dd \bar{\mu}_1(\rho,\theta)  \dd {\rho} \\
	&\hphantom{=}+ \int_{\sqrt{2}}^\infty \int_{\S^{2d-1}} \frL_\beta\left({\varphi \choose \xi },{\rho}\theta \right) {\rho}^{-1-2\beta} \dd \bar{\mu}_2(\theta)  \dd {\rho}.
\end{align*}
Here, we substitute $\tilde{\rho} = \sqrt{1+r^2}\rho$ in the second equation and use Fubini's theorem in the third equation 
and drop the tildes in the fourth equality. 

We introduce $f \colon [0,1] \times \S^{d-1} \to \S^{2d-1}$ as 
$$
	f(r,\omega) = \frac{1}{\sqrt{1+r^2}}{-r\omega\choose \omega}
$$
and measures on $[0,1] \times \S^{d-1}$ as
\begin{equation*}
	\dd\mu_1(\rho;r,\omega) = \frac{1}{2}c_{\beta,d}(1+r^2)^{\beta} \mathds{1}_{\left[ 0,{\sqrt{\rho^2-1}} \right]}(r)\dd \omega \dr, \quad 
	\dd\mu_2(r,\omega) = \frac{1}{2}c_{\beta,d}(1+r^2)^{\beta} \dd \omega \dr
\end{equation*}
in order to define $\bar{\mu}_1(\rho;\cdot) = \mu_1(\rho;\cdot) \circ f^{-1}$, $\rho \in [1,\sqrt{2}]$, 
and $\bar{\mu}_2 = \mu_2 \circ f^{-1}$ as the pushforward measures on $\S^{2d-1}$ of $\mu_1(\rho,\cdot)$ and $\mu_2$ under $f$. 

Similarly, we derive a representation for 
\begin{align*}
	I&=\frac{1}{2}c_{\beta,d}\int_0^1\int_{0}^\infty \int_{\S^{d-1}} \frL_\beta\left(\xi-r\varphi,\rho\omega\right) \rho^{-1-2\beta} \dd \omega \dd \rho \dr \\
	&=  \int_0^\infty \int_{\S^{2d-1}} \frL_\beta\left({\varphi \choose \xi },{\rho}\theta \right) \rho^{-1-2\beta} \dd \bar{\mu}_3(\theta) \dd \rho
\end{align*}
with $\dd\mu_3(r,\omega) = \frac{1}{2}c_{\beta,d}(1+r^2)^{\beta} \dd \omega \dr$ and $\bar{\mu}_3 = \mu_3 \circ f^{-1}$.

We note that the measure
\begin{equation*}
	\dd\pi = \left(\mathds{1}_{[1,\sqrt{2}]} \dd \bar{\mu}_1(\rho,\theta) + \mathds{1}_{[\sqrt{2},\infty)}\dd \bar{\mu}_2(\theta) 
	+ \mathds{1}_{[0,\infty)}\dd \bar{\mu}_3(\theta) \right) \rho^{-1-2\beta} \dd \rho  
\end{equation*}
is a L\'evy measure, i.e. $\int_{\R^{2d} \setminus \{ 0 \}} \abs{z}^2 \land 1 \dd \pi (z) < \infty  $. 
This proves that $\Psi_b := \cF^{-1}(\exp(-\psi_b))$ is a probability measure. 
It is absolutely continuous with respect to the Lebesgue measure with a $\C^\infty$ density by the coercivity.
\end{proof}

\begin{rem}
	It is crucial for our proof that we know that $\Psi_b$ is a measure, and, in particular, that the density is nonnegative. 
	This is because we will only provide upper bounds on the marginals. 
\end{rem}

\begin{rem} \label{rem:analyticdecay}
	 Kernel estimates for degenerate L\'evy measures can be found in \cite{bogdan_heat_2020,watanabe_asymptotic_2007}, 
	 see also the references therein. 
	 The effective measure considered above is (locally) $d+1$-dimensional at $\S^{2d-1}$ in the sense of \cite{bogdan_heat_2020}.
	 This implies pointwise kernel estimates, proven in \cite{bogdan_heat_2020} with a very nice analytical argument, as
	$$
		|\tilde{E}_\beta(x, v)| \lesssim_{\beta,d} (1+|x|+|v|)^{-d-1-2 \beta}.
	$$
	This observation has been made by \cite[p. 28 at the top and p. 66]{marino2023weak} 
	in the context of density estimates for kinetic stochastic differential equations. 
	In general, this decay is integrable only for $d = 1$ and not strong enough for our applications if $d>1$. 
\end{rem}

\subsubsection{Estimates for the bad part}
In this section, we derive estimates for the marginals of the bad part. 
We use as a black-box kernel estimates for nondegenerate L\'evy measures as can be found in \cite{sztonyk_transition_2011}. 

We recall the following elementary lemma, which will be used again in the proof of Proposition \ref{prop:Ktilde}. 
It relates the computation of the marginal with an evaluation of one of the Fourier variables at the zero mode. 

\begin{lem} \label{lem:L1fourier}
	Given any function $f \in \L^1(\R^{2d})$ we have
\begin{align*}
	\int_{\R^d} f(x,v) \dx  = \cF_v^{-1} (\hat{f}(0,\cdot))(v), \; \mbox{a.e.}
\end{align*}
where $\hat{f} = \cF_{x,v}(f)$. This can be extended to the Fourier transform of a measure $\mu $ on $\R^{2d}$ as 
\begin{equation*}
	\int_{\R^{2d}} \eta(v) \dd \mu(x,v) = \int_{\R^d} \hat{\mu}(0,\xi) \cF^{-1}_v(\eta)(\xi) \dd \xi
\end{equation*}
for $\eta \in \C_c^\infty(\R^{d})$.
\end{lem}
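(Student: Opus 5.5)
The first identity follows from Fubini's theorem and Fourier inversion in the $v$ variable. The second follows by the same computation with Fubini applied to the finite measure $\mu$.

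\emph{Step 1 (functions).} For $f\in\L^1(\R^{2d})$, set $g(v):=\int_{\R^d} f(x,v)\dx$. By Fubini, $g$ is defined for a.e.\ $v$ and $g\in\L^1(\R^d)$ with $\|g\|_{\L^1}\le\|f\|_{\L^1}$. Since $f$ is integrable on $\R^{2d}$, Fubini again gives
\[
\hat g(\xi)=\int_{\R^d} e^{-iv\cdot\xi}\int_{\R^d} f(x,v)\dx\dv=\int_{\R^{2d}} e^{-i(x\cdot 0+v\cdot\xi)}f(x,v)\dd(x,v)=\hat f(0,\xi).
\]
Hence $g=\cF_v^{-1}(\hat g)=\cF_v^{-1}(\hat f(0,\cdot))$. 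This identity holds in $\cS'(\R^d)$ by Fourier inversion for tempered distributions, and therefore a.e.\ as $\L^1$ functions. This proves the first claim; note that the right-hand side is a priori a distribution and the identity identifies it with the $\L^1$ function $g$.

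\emph{Step 2 (measures).} Let $\mu$ be a finite (complex or positive) Borel measure on $\R^{2d}$ with $\hat\mu(\varphi,\xi)=\int e^{-i(x\cdot\varphi+v\cdot\xi)}\dd\mu(x,v)$, and let $\eta\in\C_c^\infty(\R^d)$. With the paper's normalisation, and up to the fixed $(2\pi)^{-d}$ convention absorbed in $\cF^{-1}_v$,
\[
\eta(v)=\int_{\R^d} e^{iv\cdot\xi}\,\cF_v^{-1}(\eta)(\xi)\dd\xi .
\]
Here $\cF^{-1}_v\eta\in\cS(\R^d)$, so the integral converges absolutely. Insert this into $\int\eta(v)\dd\mu(x,v)$. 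The integrand $(x,v,\xi)\mapsto e^{iv\cdot\xi}\cF_v^{-1}(\eta)(\xi)$ is bounded by $|\cF_v^{-1}\eta(\xi)|\in\L^1_\xi$, so it is integrable against $|\mu|\otimes\dd\xi$, and Fubini gives
\[
\int_{\R^{2d}} \eta(v)\dd\mu(x,v)=\int_{\R^d}\cF_v^{-1}(\eta)(\xi)\int_{\R^{2d}} e^{iv\cdot\xi}\dd\mu(x,v)\dd\xi .
\]
The inner integral is $\hat\mu(0,-\xi)$. The substitution $\xi\mapsto-\xi$, together with the matching sign convention relating $\cF_v^{-1}$ to $\cF_v$, yields the stated formula with $\hat\mu(0,\xi)$.

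The only real care point is bookkeeping of signs and $(2\pi)$ normalisations in Step 2. In the intended application, $\mu=\Psi_b$ is even, because $\psi_b$ is real and symmetric, so $\hat\mu(0,-\xi)=\hat\mu(0,\xi)$ and the sign issue disappears.
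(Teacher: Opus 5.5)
Your Fubini-plus-inversion argument is the natural one. The paper records this lemma as elementary and gives no proof. Your Step 1 is correct, including the remark that the right-hand side is a priori only a tempered distribution, which the identity identifies with the $\L^1$ function $g$.

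Step 2, however, contains a sign error that your proposed repair does not fix. Use the paper's conventions: $\hat\mu(\varphi,\xi)=\int e^{-i(x\cdot\varphi+v\cdot\xi)}\dd\mu(x,v)$, and $\cF_v^{-1}$ carries the factor $(2\pi)^{-d}e^{+iv\cdot\xi}$. Then your starting formula is false: in fact $\int_{\R^d} e^{iv\cdot\xi}\,\cF_v^{-1}(\eta)(\xi)\dd\xi=\eta(-v)$, not $\eta(v)$.

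The substitution $\xi\mapsto-\xi$ you then invoke does not remove the sign; it only moves it onto the test function. It produces $\int\hat\mu(0,\xi)\,\cF_v^{-1}(\eta)(-\xi)\dd\xi$, which in general is not the claimed right-hand side. The two agree only when $\eta$ or the $v$-marginal of $\mu$ is even, which is why you had to fall back on the evenness of $\Psi_b$.

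The fix is simply to start from the correct inversion formula
\[
\eta(v)=\cF_v\bigl(\cF_v^{-1}\eta\bigr)(v)=\int_{\R^d} e^{-iv\cdot\xi}\,\cF_v^{-1}(\eta)(\xi)\dd\xi .
\]
The same Fubini step then applies, justified exactly as you did since $\cF_v^{-1}\eta\in\cS(\R^d)$ and $|\mu|$ is finite. It gives the inner integral $\int e^{-iv\cdot\xi}\dd\mu(x,v)=\hat\mu(0,\xi)$ directly.

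Hence the stated identity holds for every finite Borel measure $\mu$ and every $\eta\in\C_c^\infty(\R^d)$. No substitution, no extra $(2\pi)$ bookkeeping and no symmetry assumption are needed, so your closing remark about $\Psi_b$ is unnecessary.
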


\begin{lem} \label{lem:bad}
	Let $\beta \in (0,1)$. The $x$ and $v$ marginals of $\Psi_b$ are absolutely continuous with respect to the Lebesgue measure.
	For their respective densities, we have 
	\begin{equation*}
		 0 \le \cF^{-1}(\exp(-\psi_b(0,\cdot)))(v) \lesssim_{\beta,d} (1+\abs{v})^{-d-2\beta}
	\end{equation*}
	for all $v \in \R^d$ and
	\begin{equation*}
		 0 \le \cF^{-1}(\exp(-\psi_b(\cdot,0)))(x) \lesssim_{\beta,d} (1+\abs{x})^{-d-2\beta}
	\end{equation*}
	for all $x \in \R^d$.
\end{lem}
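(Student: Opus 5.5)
The plan is to identify each marginal of $\Psi_b$ as the law of an infinitely divisible distribution on $\R^d$ with a \emph{nondegenerate} Lévy measure. We then invoke the black-box heat kernel bounds of \cite{sztonyk_transition_2011}.

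\textbf{Reduction to the restricted symbols.} By Lemma~\ref{lem:L1fourier}, the $v$-marginal of the probability measure $\Psi_b$ has Fourier transform $\xi\mapsto \exp(-\psi_b(0,\xi))$. Likewise, the $x$-marginal has Fourier transform $\varphi\mapsto\exp(-\psi_b(\varphi,0))$. Pushforwards of probability measures are probability measures, so both are nonnegative. Restricting the Lévy--Khintchine representation of $\psi_b$ obtained in the previous lemma to $\varphi=0$ (resp.\ $\xi=0$) gives
\[
\psi_b(0,\xi)=\int_{\R^{d}\setminus\{0\}} \big(1-\cos(\xi\cdot z)\big)\dd\nu_v(z),
\]
and similarly for $\psi_b(\cdot,0)$, where $\nu_v,\nu_x$ are the images of the $2d$-dimensional Lévy measure $\pi$ under the projections $(x,v)\mapsto v$ and $(x,v)\mapsto x$ (with atoms at the origin discarded).

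\textbf{Reading off the projected measures.} Concretely, from the original formula, $\psi_b(0,\xi)$ has the form $\frac12 c\int_0^\infty\int_{\S^{d-1}}(1-\cos(\rho\,\xi\cdot\omega))\rho^{-1-2\beta}\dd\omega\dd\rho$ plus $\frac12 c\int_1^\infty\cdots$. That is, $\nu_v$ is $\tfrac12$ of the rotation-invariant $2\beta$-stable measure plus a finite measure with the same radial density restricted to $\rho\ge1$. For $\psi_b(\varphi,0)$ the variable is $r\varphi$. Substituting $\rho\mapsto r\rho$ converts this into a Lévy measure $\nu_x(\dd z)=a(|z|)|z|^{-d-2\beta}\dd z$ on $\R^d$ whose density is comparable to the stable one: it is bounded above by $C|z|^{-d-2\beta}$ everywhere, bounded below by $c|z|^{-d-2\beta}$ for small $|z|$, and rotation invariant. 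The factor $\int_0^1 r^{2\beta}\dd r$ appears for the small-jump part, and the truncation $\rho\ge1$ becomes $|z|\ge r$, after integration in $r$. The drift term $ia\cdot b^{(\beta)}$ drops out by symmetry, so no shift appears.

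\textbf{Applying the black box and the main obstacle.} For such symmetric Lévy measures with density dominated by $|z|^{-d-2\beta}$, and whose symbol satisfies the lower bound $\psi(\xi)\gtrsim|\xi|^{2\beta}$ (by \eqref{eq:psibcoercive} restricted to a coordinate axis, or directly from the small-jump part), the time-one transition density $p_1$ satisfies
\[
p_1(z)\lesssim \min\{1,\nu(B(z,1)^c\text{-tails})\}\lesssim (1+|z|)^{-d-2\beta}.
\]
This follows from the estimates in \cite{sztonyk_transition_2011}: boundedness of $p_1$ follows from $\int e^{-\psi}<\infty$, and the tail bound from the upper bound on the Lévy density. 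The step requiring care is verifying the hypotheses of that theorem, namely a nondegenerate angular part plus the upper density bound $\nu(\dd z)\le C|z|^{-d-2\beta}\dd z$ also for large jumps after the $r$-substitution. I expect this to be a routine computation: the large-jump contributions are $\int_{\max(r,|z|/\ldots)}\cdots$ integrated over $r\in[0,1]$, which yields at most $C|z|^{-d-2\beta}$. If that is established, both densities obey the claimed bound $(1+|\cdot|)^{-d-2\beta}$. Absolute continuity with smooth density follows from $e^{-\psi}\in\L^1$ by the coercivity.
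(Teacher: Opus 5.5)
Your proposal is correct and is essentially the paper's argument: restricting $\psi_b$ to $\varphi=0$ and to $\xi=0$ (using the substitution $\rho\mapsto r\rho$ and Fubini in $(r,\rho)$ for the second) gives rotation-invariant Lévy measures with radial densities $\tfrac12 c_{\beta,d}\bigl(1+\mathds{1}_{[1,\infty)}(\rho)\bigr)\rho^{-1-2\beta}$ and $\tfrac{c_{\beta,d}}{2(2\beta+1)}\bigl(1+\min(\rho,1)^{2\beta+1}\bigr)\rho^{-1-2\beta}$, both dominated by the $2\beta$-stable one, after which \cite[Theorem 1]{sztonyk_transition_2011} at $t=1$ yields the bounds. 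Viewing these as projections of the $2d$-dimensional Lévy measure $\pi$ neatly explains why the restricted exponents are again of Lévy--Khintchine form, but the substance and the black box are the same as in the paper.
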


\begin{proof}
	Using the symmetry of $\dd \omega$ we may replace $\frL_\beta$ by $1-\cos$ in the following.
	
	We calculate the $x$-marginal first,
	\begin{align*}
		\psi_b(0,\xi) &=  \frac{1}{2}c_{\beta,d}\int_0^1\int_{0}^1 \int_{\S^{d-1}} \frL_\beta(\xi,\rho\omega) \rho^{-1-2\beta} \dd \omega \dd \rho \dr \\
	&\hphantom{=}+ c_{\beta,d}\int_0^1\int_{1}^\infty \int_{\S^{d-1}} \frL_\beta(\xi,\rho\omega) \rho^{-1-2\beta} \dd \omega \dd \rho \dd r \\
	&=\frac{1}{2}c_{\beta,d}\int_{0}^1 \int_{\S^{d-1}} \frL_\beta(\xi,\rho\omega) \rho^{-1-2\beta} \dd \omega \dd \rho  \\
	&\hphantom{=}+ c_{\beta,d}\int_{1}^\infty \int_{\S^{d-1}} \frL_\beta(\xi,\rho\omega) \rho^{-1-2\beta} \dd \omega \dd \rho \\
	&= \frac{1}{2}\abs{\xi}^{2\beta}+ \frac{1}{2}c_{\beta,d}\int_{1}^\infty \int_{\S^{d-1}} \frL_\beta(\xi,\rho\omega) \rho^{-1-2\beta} \dd \omega \dd \rho.
	\end{align*}

	The marginal is absolutely continuous with respect to the Lebesgue measure, and its density is given by 
	$\cF^{-1}(\exp(-\psi_b(0,\cdot)))$. 
	This is due to the coercivity in \eqref{eq:psibcoercive} with $\varphi = 0$, 
	which implies the integrability of the exponential. 
	Hence, the density $\cF^{-1}(\exp(-\psi_b(0,\cdot)))$ is bounded. 
	The characteristic exponent can be written in terms of the L\'evy measure
	\begin{equation*}
		\dd \pi = \frac{1}{2} c_{\beta,d} \left( \mathds{1}_{[0,1)}(\rho)+2\,\mathds{1}_{[1,\infty)}(\rho) \right) \rho^{-1-2\beta} \dd \omega \dd \rho. 
	\end{equation*}
	This is dominated by a multiple of the L\'evy measure of the $d$-dimensional $2\beta$ fractional Laplacian. 
	Applying \cite[Theorem 1]{sztonyk_transition_2011} at $t = 1$ with 
	$\gamma = d$, $q \equiv c_{\beta,d}$, $\phi \equiv 1$, $\alpha = 2\beta$, $\beta = \alpha$ 
	we conclude the desired density bound. 
	
Let us now consider the $v$-marginal. We derive 
	\begin{align*}
		\psi_b(\varphi,0) &= \frac{1}{2}c_{\beta,d}\int_0^1\int_{0}^\infty \int_{\S^{d-1}} \frL_\beta(-r\varphi,\rho\omega) \rho^{-1-2\beta} \dd \omega \dd \rho \dr  \\
		&\hphantom{=}+\frac{1}{2} c_{\beta,d}\int_0^1\int_{1}^\infty \int_{\S^{d-1}} \frL_\beta(-r\varphi,\rho\omega) \rho^{-1-2\beta} \dd \omega \dd \rho \dr  \\
		&= \frac{1}{2(2\beta+1)} \abs{\varphi}^{2\beta}+\frac{1}{2}c_{\beta,d}\int_0^1\int_{1}^\infty \int_{\S^{d-1}} \frL_\beta(r\varphi,\rho\omega) \rho^{-1-2\beta} \dd \omega \dd \rho \dr \\
		&= \frac{1}{2(2\beta+1)} \abs{\varphi}^{2\beta}+\frac{1}{2}c_{\beta,d}\int_0^1\int_{r}^\infty \int_{\S^{d-1}} \frL_\beta(\varphi,\rho\omega) \rho^{-1-2\beta} \dd \omega \, r^{2\beta }\dd \rho \dr \\
		&= \frac{1}{2(2\beta+1)} \abs{\varphi}^{2\beta}+\frac{1}{2}c_{\beta,d}\int_0^1\int_{0}^\rho \int_{\S^{d-1}} \frL_\beta(\varphi,\rho\omega) \rho^{-1-2\beta} \dd \omega\,  r^{2\beta }\dr \dd \rho  \\ 
		&\hphantom{=}+ \frac{1}{2}c_{\beta,d}\int_1^\infty\int_{0}^1 \int_{\S^{d-1}} \frL_\beta(\varphi,\rho\omega) \rho^{-1-2\beta} \dd \omega\,  r^{2\beta }\dr \dd \rho \\
		&= \frac{1}{2(2\beta+1)} \abs{\varphi}^{2\beta}+\frac{1}{2(2\beta+1)}c_{\beta,d}\int_0^1 \int_{\S^{d-1}} \frL_\beta(\varphi,\rho\omega)  \dd \omega\dd \rho  \\ 
		&\hphantom{=}+ \frac{1}{2(2\beta+1)}c_{\beta,d}\int_1^\infty \int_{\S^{d-1}} \frL_\beta(\varphi,\rho\omega) \rho^{-1-2\beta} \dd \omega \dd \rho.
	\end{align*}
	We use the identity \eqref{eq:charexpfraclaplace} in the first equation. 
	In the second equation, we substituted $\omega \mapsto -\omega$. 
	We rescaled $\rho $ in the third equation and integrated in $r$ in the last equation. 
		
	Again, due to the coercivity as in \eqref{eq:psibcoercive} with $\xi = 0$, 
	we obtain that the Fourier inverse of $\exp(-\psi_b(\cdot,0))$ is absolutely continuous with respect to the Lebesgue measure. 
	We identify the density with $\cF^{-1}(\exp(-\psi_b(\cdot,0)))(x)$, which is bounded.
	 
	In comparison to the above, we have one additional term. 
	Writing the exponent in terms of the L\'evy measure
	\begin{equation*}
		\dd \pi = \frac{1}{2(2\beta+1)}c_{\beta,d} \left( (1+\rho^{2\beta+1})\mathds{1}_{[0,1]}(\rho)+ 2\mathds{1}_{[1,\infty)}(\rho) \right) \rho^{-1-2\beta}\dd \omega \dd \rho
	\end{equation*}
	we are in the position to apply \cite[Theorem 1]{sztonyk_transition_2011} at $t = 1$ with 
	$\gamma = d$, $q \equiv \frac{1}{2\beta+1}c_{\beta,d}$, $\phi \equiv 1$, $\alpha = 2\beta$, $\beta = \alpha$. 
	This proves the lemma. 
\end{proof}

\subsubsection{A kinetic Littlewood--Paley lemma}

The following lemma is of independent interest, as it allows one to prove the anisotropic (w.r.t. the kinetic scaling) 
decay of a function built by a Littlewood--Paley sum. 
As usual, $\C^k(\R^{2d};\R)$ is the space of functions of class $\C^k$, bounded with all its derivatives up to order $k$ 
and endowed with the supremum of the $\L^\infty$ norms of all its partial derivatives.
 
\begin{lem} \label{lem:kinetic}
	Let $k \in \N$ with $k > \homd = (2\beta+2)d+2\beta$, $\Lambda$ be an index set, 
	and $(\eta_\lambda)_{\lambda \in \Lambda} \subset \C^k(\R^{2d};\R)$ be a family of functions with 
	\begin{enumerate}
		\item \begin{equation*}
			c_0 := \sup_{\lambda \in \Lambda} \norm{\eta_\lambda}_{\C^k} < \infty,
		\end{equation*}
		\item \begin{equation*}
			\supp \eta_\lambda \subset D := \left\{ \frac{1}{2} \le d_{\beta}(\varphi,\xi) \le 4 \right\}.
		\end{equation*}
	\end{enumerate}
	Then, for any sequence $(\lambda_j)_{j \le 0} \subset \Lambda$ we have	
	\begin{align*}
		&\abs{\cF^{-1}\left( (\varphi,\xi) \mapsto \sum_{j \le 0} 2^{2\beta j} \eta_{\lambda_j}\left( \frac{\varphi}{2^{j(2\beta+1)}}, \frac{\xi}{2^j} \right) \right)(x,v)} \\
		&\hspace{7cm}\lesssim_{\beta,c_0,d,k} \left(1+\abs{x}^{\frac{1}{2\beta+1}}+\abs{v}\right)^{-\homd}.
	\end{align*}	
\end{lem}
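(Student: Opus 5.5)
We estimate each dyadic piece separately in physical space and then sum over $j \le 0$. Set $r=2^j$ and $g_j:=\cF^{-1}\eta_{\lambda_j}$. By the anisotropic scaling of the Fourier transform,
\[
\cF^{-1}\Big(\eta_{\lambda_j}\big(\tfrac{\varphi}{r^{2\beta+1}},\tfrac{\xi}{r}\big)\Big)(x,v)=r^{(2\beta+2)d}\, g_j\big(r^{2\beta+1}x, r v\big).
\]
The $j$-th term of the series is therefore $2^{2\beta j}\,2^{j(2\beta+2)d}\, g_j(2^{j(2\beta+1)}x,2^j v)=2^{j\homd} g_j(2^{j(2\beta+1)}x,2^jv)$. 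This is exactly why the weight $2^{2\beta j}$ produces the homogeneous dimension $\homd$.

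\textbf{Step 1: a uniform anisotropic decay bound for $g_j$.} The support $D$ is compact, since $d_\beta\le 4$ forces $|\varphi|\le 4^{2\beta+1}$ and $|\xi|\le 4$. We use $\partial_\varphi^\alpha$ and $\partial_\xi^\alpha$ with $|\alpha|\le k$ and integrate by parts, using hypothesis (i) to bound the derivatives of $\eta_\lambda$. This gives
\[
|g_j(x,v)|\lesssim_{c_0,k,d,\beta} (1+|x|+|v|)^{-k},
\]
uniformly in $j$. In anisotropic form, we use $1+|x|^{\frac1{2\beta+1}}+|v|\lesssim 1+|x|+|v|$, so $|g_j(x,v)|\lesssim \rho(x,v)^{-k}$ with $\rho(x,v):=1+|x|^{\frac1{2\beta+1}}+|v|$. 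Also $\|g_j\|_\infty\lesssim c_0|D|$.

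\textbf{Step 2: summation.} Put $R:=|x|^{\frac1{2\beta+1}}+|v|$. Then
\[
\rho\big(2^{j(2\beta+1)}x,2^jv\big)=1+2^jR .
\]
Hence the series is bounded by
\[
\sum_{j\le 0}2^{j\homd}(1+2^jR)^{-k}.
\]
If $R\le 1$, the sum is bounded by $\sum_{j\le0}2^{j\homd}\lesssim 1$, since $\homd>0$. If $R>1$, split at $2^{j_0}\sim R^{-1}$:
\[
\sum_{2^j\le R^{-1}}2^{j\homd}\lesssim R^{-\homd},\qquad \sum_{R^{-1}<2^j\le 1}2^{j\homd}(2^jR)^{-k}=R^{-k}\sum_{R^{-1}<2^j\le1}2^{j(\homd-k)}\lesssim R^{-k}R^{k-\homd}=R^{-\homd}.
\]
The second estimate uses $k>\homd$: the geometric sum is then dominated by its term at the smallest $j$. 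Combining the two cases gives $\lesssim(1+R)^{-\homd}$, which is the claim. The same bound shows the series converges absolutely and uniformly, so it converges in $\cS'$. Its Fourier transform is the pointwise sum of the (locally finite, bounded) symbols, which justifies interchanging $\cF^{-1}$ with the sum.

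\textbf{Main obstacle.} The only delicate point is Step 1: we need decay of order $k>\homd$, uniform in $\lambda$, measured in the anisotropic gauge. A naive isotropic bound suffices because the support is compact and the isotropic distance dominates the anisotropic one away from the origin. The functions are only $\C^k$ and not Schwartz, so we integrate by parts exactly $k$ times; the boundary terms vanish because $\eta_\lambda$ is compactly supported and $\C^k$.
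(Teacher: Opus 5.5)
Your proof is correct and follows the same route as the paper. Each dyadic piece gets pointwise polynomial decay of order $k>\homd$ from $L^1$ bounds on derivatives of the compactly supported symbol, and the sum is then split at $2^j\sim(|x|^{\frac{1}{2\beta+1}}+|v|)^{-1}$, with $k>\homd$ making the high-$j$ tail geometric.

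The one difference is cosmetic. You prove isotropic decay of $\cF^{-1}\eta_{\lambda_j}$ at unit scale and convert it to anisotropic decay there via $1+|x|^{\frac{1}{2\beta+1}}+|v|\lesssim 1+|x|+|v|$, then rescale. The paper instead estimates the $x$- and $v$-moments directly at scale $2^j$ and interpolates in the $x$-exponent. Your version avoids that fractional interpolation step, and you also justify exchanging $\cF^{-1}$ with the sum, which the paper leaves implicit.
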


\begin{proof}
	We write $f$ for the function to be estimated and $(x,v) \in \R^{2d} \setminus \{ (0,0) \}$ in the following. 
	We start by deriving size estimates for each of the frequency packets
	\begin{equation*}
		f_j(x,v) = 2^{2\beta j} \cF^{-1}\left(\eta_{\lambda_j}\left( \frac{\cdot}{2^{j(2\beta+1)}}, \frac{\cdot}{2^j} \right) \right)(x,v), \quad j \le 0. 
	\end{equation*}
	In terms of $v$-decay, we obtain for all $n \in \N_0$, $n \le k$,
	\begin{align*}
		\abs{v}^{n}\abs{f_j(x,v)} &\lesssim \sum_{\substack{m \in \N_0^d\\\abs{m} = n}} \abs{v^{m}f_j(x,v)} \le 2^{2\beta j}\sum_{\substack{m \in \N_0^d \\ \abs{m} = n}} \norm{\partial_\xi^{m} \eta_{\lambda_j}\left( \frac{\cdot}{2^{j(2\beta+1)}}, \frac{\cdot}{2^j} \right)}_{\L^1_{x,v}} \\
		&\lesssim 2^{(\homd-n)j}c_0 \abs{D} \lesssim 2^{(\homd-n)j}
	\end{align*}
	where we have used (i) and (ii). 
	Similarly 
	\begin{align*}
		\abs{x}^{n}\abs{f_j(x,v)} 
		&\lesssim \sum_{\substack{m \in \N_0^d\\\abs{m} = n}} \abs{x^{m}f_j(x,v)} \le 2^{2\beta j}\sum_{\substack{m \in \N_0^d \\ \abs{m} = n}} \norm{\partial_\varphi^{m} \eta_{\lambda_j}\left( \frac{\cdot}{2^{j(2\beta+1)}}, \frac{\cdot}{2^j} \right)}_{\L^1_{x,v}} \\
		&\lesssim 2^{(\homd-(2\beta+1)n)j}c_0 \abs{D} \lesssim 2^{(\homd-(2\beta+1)n)j}
	\end{align*}
	
	Interpolation of the $x$-decay estimates combined with the $v$-decay yields
	\begin{equation*}
		\abs{f_j(x,v)} \lesssim (\abs{x}^{\frac{1}{2\beta+1}}+\abs{v})^{-n} 2^{j(\homd-{n})}
	\end{equation*}
	for all $0 \le n \le k$. 

	We conclude that 
	\begin{align*}
		\abs{f(x,v)} \lesssim \sum_{j = -\infty}^0 \abs{f_j(x,v)} 
		&\lesssim \sum_{2^j \le (\abs{x}^{\frac{1}{2\beta+1}}+\abs{v})^{-1}} \abs{f_j(x,v)}+\sum_{2^j > (\abs{x}^{\frac{1}{2\beta+1}}+\abs{v})^{-1}} \abs{f_j(x,v)} \\
		&\lesssim \sum_{2^j \le (\abs{x}^{\frac{1}{2\beta+1}}+\abs{v})^{-1}}  2^{j \homd }\\
		&\hphantom{=}+\sum_{2^j > (\abs{x}^{\frac{1}{2\beta+1}}+\abs{v})^{-1}} (\abs{x}^{\frac{1}{2\beta+1}}+\abs{v})^{-n} 2^{j(\homd-n)} \\
		&\lesssim(\abs{x}^{\frac{1}{2\beta+1}}+\abs{v})^{-\homd}
\end{align*}
for a choice of $n$ with $\homd<n \le k$.
\end{proof}

\subsubsection{Estimates for the good part}

The nondegeneracy of the symbol as in \eqref{eq:nondegen} is inherited by the good part, see \cite[Lemma A.7]{MR3826548}. 
Moreover, $\psi_g$ is smooth (see \cite[Theorem 3.7.13]{jacob_pdo_2005}) so that $\Psi_g$ is a Schwartz function and, 
in principle, we would get any decay we want for this function. 
If we apply the operator $\tilde{p}_\gamma(D)$ we have a singularity for low frequencies, which limits the decay 
according to the anisotropic scaling of $\tilde{p}_\gamma$. 
We recall the simple case for the fractional Laplacian. For any Schwartz function $f \colon \R^d \to \R$ we have 
\begin{equation*}
	\abs{\cF^{-1}(\abs{\xi}^{2\beta} f(\xi))(x)} \lesssim (1+\abs{x})^{-d-2\beta}.
\end{equation*}
The symbol $d_{\beta}(\varphi,\xi-\varphi)$ only has a singularity at $\xi = \varphi = 0$ 
and it can be controlled at the same order as $d_{\beta}(\varphi,\xi)$ close to this singularity. 
The following lemma quantifies this. 

\begin{lem} \label{lem:xi-phi} There are constants $0<A< B<\infty$ such that for all $\varphi,\xi\in \R^d$ and $\lambda\in [0,1]$, 
\[
	A\inf(d_{\beta}(\varphi,\xi)^{\frac{1}{2\beta+1}}, d_{\beta}(\varphi,\xi)) \le 
	d_{\beta}(\varphi,\xi-\lambda\varphi) \le B \sup(d_{\beta}(\varphi,\xi)^{{2\beta+1}}, d_{\beta}(\varphi,\xi)),
\]
and, in particular, if $d_{\beta}(\varphi,\xi)\le 1$, 
\[
	A d_{\beta}(\varphi,\xi) \le 
	d_{\beta}(\varphi,\xi-\lambda\varphi) \le B  d_{\beta}(\varphi,\xi).
\]
Finally, there is a constant $0<B'<\infty$ such that $d_{\beta}(\varphi,\xi)\ge 1$ implies 
$d_{\beta}(\varphi,\xi) \le B' (|\xi|+|\varphi|)$.
\end{lem}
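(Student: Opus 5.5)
Since $d_\beta\sim\abs{(\cdot,\cdot)}_\beta$, I will prove the bounds with $N(\varphi,\xi):=\abs{\varphi}^{\frac{1}{2\beta+1}}+\abs{\xi}$ in place of $d_\beta$ and convert back at the end, losing only constants depending on $\beta$. Fix $\lambda\in[0,1]$ and put $\xi'=\xi-\lambda\varphi$. Then $\abs{\xi'}\le\abs{\xi}+\abs{\varphi}$ and, conversely, $\abs{\xi}\le\abs{\xi'}+\abs{\varphi}$. Since $\varphi$ is unchanged, both inequalities reduce to comparing $\abs{\varphi}$ with powers of $N$.

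\emph{Upper bound.} We have
\[
N(\varphi,\xi')\le N(\varphi,\xi)+\abs{\varphi}, \qquad \abs{\varphi}=\left(\abs{\varphi}^{\frac1{2\beta+1}}\right)^{2\beta+1}\le N(\varphi,\xi)^{2\beta+1}.
\]
Hence $N(\varphi,\xi')\le N+N^{2\beta+1}\le 2\sup(N^{2\beta+1},N)$.

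\emph{Lower bound.} Applying the same argument with the roles of $\xi$ and $\xi'$ exchanged gives $N(\varphi,\xi)\le 2\sup(N'^{\,2\beta+1},N')$, where $N'=N(\varphi,\xi')$. The map $s\mapsto\sup(s^{2\beta+1},s)$ is increasing and has inverse $u\mapsto\inf(u^{\frac1{2\beta+1}},u)$. Therefore
\[
N'\ge \inf\left((N/2)^{\frac1{2\beta+1}},\,N/2\right)\ge \tfrac12\inf\left(N^{\frac1{2\beta+1}},N\right).
\]
Both bounds transfer to $d_\beta$ through $d_\beta\sim N$. Some care is needed there: if $d_\beta\le c\,N$, then $\sup((cN)^{2\beta+1},cN)\le \max(c^{2\beta+1},c)\sup(N^{2\beta+1},N)$, and the analogous estimate holds for the infimum. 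This only modifies the constants.

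\emph{Second claim.} If $d_\beta(\varphi,\xi)\le1$, then $N\lesssim1$. In that regime the same computation gives directly $\abs{\varphi}\le N^{2\beta+1}\lesssim N$, so $N'\le N+\abs{\varphi}\lesssim N$. Symmetrically, $N\le N'+\abs{\varphi}$, and $\abs{\varphi}\le N^{2\beta+1}\le C N^{2\beta}\cdot N$ is small compared with $N$ only once $N$ is small. It is cleaner to argue as follows. By the first part, $N'\lesssim1$, so $\abs{\varphi}\le N'^{\,2\beta+1}\lesssim N'$, which gives $N\le N'+\abs{\varphi}\lesssim N'$. This boundedness is the only subtle point of the whole argument: linear comparability holds in the bounded regime precisely because $s^{2\beta+1}\lesssim s$ for $s\lesssim1$.

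\emph{Final claim.} If $d_\beta\ge1$, then $N\gtrsim1$. Either $\abs{\xi}\ge N/2$, in which case $N\le 2\abs{\xi}$, or $\abs{\varphi}^{\frac1{2\beta+1}}\ge N/2\gtrsim1$. In the second case $\abs{\varphi}\gtrsim1$, so $\abs{\varphi}^{\frac1{2\beta+1}}\le\abs{\varphi}$, since $\frac1{2\beta+1}\le1$. In either case $N\lesssim\abs{\xi}+\abs{\varphi}$.
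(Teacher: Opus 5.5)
Your proof is correct and follows the paper's argument for the two-sided bound: the triangle inequality gives the upper estimate, and the lower one comes from the same estimate with $\xi$ and $\xi-\lambda\varphi$ swapped, then inverting $s\mapsto\sup(s^{2\beta+1},s)$. Two small points. The case $d_\beta\le 1$ needs no separate argument, since there $\sup(d_\beta^{2\beta+1},d_\beta)=\inf(d_\beta^{\frac{1}{2\beta+1}},d_\beta)=d_\beta$. For the last claim, your split according to which term of $N$ dominates is a slightly cleaner alternative to the paper's restriction to $d_\beta\ge 2b$ plus compactness, but $\abs{\varphi}\gtrsim 1$ only gives $\abs{\varphi}^{\frac{1}{2\beta+1}}\lesssim\abs{\varphi}$, not $\le$, which still suffices.
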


\begin{proof}
Let $0<a<b<\infty$ be the best constants such that for all $(\varphi,\xi)\in \R^{2d}$, 
\[
	a (|\varphi|^{\frac{1}{2\beta+1}}+|\xi|) \le d_{\beta}(\varphi,\xi)\le b 
	(|\varphi|^{\frac{1}{2\beta+1}}+|\xi|).
\]
Then, by the triangle inequality and $\lambda\in [0,1]$,  
\begin{align*}
	d_{\beta}(\varphi,\xi-\lambda\varphi) \le b (|\varphi|^{\frac{1}{2\beta+1}}+|\xi|+\lambda|\varphi|) \le b \bigg(\frac{d_{\beta}(\varphi,\xi)}{a} +\bigg( \frac{d_{\beta}(\varphi,\xi)}{a}\bigg)^{2\beta+1}\bigg).
\end{align*}
Thus distinguishing ${d_{\beta}(\varphi,\xi)} \le 1$ or $\ge 1$, we find $B= \frac {b}{a}+ \frac {b}{a^{2\beta+1}}$. 
For the converse we observe that for $x,y\ge 0$, the inequality $y\le \sup(x^{2\beta+1},x)$ 
is equivalent to  $\inf(y^{\frac{1}{2\beta+1}},y) \le x$. 
The same argument from before yields
\[
	d_{\beta}(\varphi,\xi) \le 
	B \sup(d_{\beta}(\varphi,\xi-\lambda\varphi)^{{2\beta+1}}, d_{\beta}(\varphi,\xi-\lambda\varphi)),
\]
hence the desired inequality follows with $A=\inf (B^{-1}, B^{-\frac{1}{2\beta+1}})$. 

Next, we assume $d_{\beta}(\varphi,\xi)\ge 2b$.   If $|\varphi|\ge 1$, 
\[
	d_{\beta}(\varphi,\xi)\le b(|\xi|+|\varphi|^{\frac{1}{2\beta+1}}) \le  b(|\xi|+|\varphi|).
\]
If $|\varphi|\le 1 $, then $b(|\xi| +1)\ge d_{\beta}(\varphi,\xi)\ge 2b$, hence 
$|\xi|\ge 1$, and $d_{\beta}(\varphi,\xi)\le 2b|\xi| \le 2b (|\xi| +|\varphi|)$.
As  $d_{\beta}(\varphi,\xi)(|\xi| +|\varphi|)^{-1}$ is bounded on the compact set $1\le d_{\beta}(\varphi,\xi)\le 2b$, 
this concludes the argument. 
\end{proof}

We are now able to prove the following pointwise estimates. 
We introduce the following symbols 
\begin{equation*}
	m_1(\varphi,\xi) = \abs{\xi}^{2\beta}, \quad 
	m_2(\varphi,\xi) = \frac{\varphi \cdot [\nabla_\xi d_\beta](\varphi,\xi)}{d_\beta(\varphi,\xi)}.
\end{equation*}

\begin{lem} \label{lem:good}
	Let $\gamma \in \R$ and $\mathfrak{D} \in \left\{ \id, \nabla_v, m_1(D),m_2(D) \right\} $. 
	Then, for any Schwartz function $h \in \cS(\R^{2d})$ we have
	\begin{equation} \label{eq:pggdecay}
		\abs{[\mathfrak{D} \, \tilde{p}_\gamma(D) h] (x,v)}  
		\lesssim_{\beta,\gamma,d,h} (1+\abs{x}^{\frac{1}{2\beta+1}}+\abs{v})^{-\homd}.
	\end{equation}
	As a consequence,
	\begin{equation*}
		\int_{\R^d} \abs{[\mathfrak{D} \, \tilde{p}_\gamma(D) h ](x,v)}  \dd x   
		\lesssim_{\beta,\gamma,d,h} (1+\abs{v})^{-d-2\beta},
	\end{equation*}
	as well as
	\begin{equation*}
		\int_{\R^d} \abs{[\mathfrak{D} \, \tilde{p}_\gamma(D) h](x,v)} \dv  
		\lesssim_{\beta,\gamma,d,h} (1+\abs{x}^{\frac{1}{2\beta+1}})^{-(2\beta+1)d-2\beta}
	\end{equation*}
	for all $x,v \in \R^d$. In particular, this holds for $h = \Psi_g$. 
\end{lem}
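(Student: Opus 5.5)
The plan is to split the symbol $\sigma:=\mathfrak{D}\,\tilde p_\gamma\,\hat h$ into a low-frequency part and a high-frequency part with a smooth anisotropic cutoff, using the normalised family $(\hat\theta_j)$. Write $\sigma=\sum_{j\le 0}\hat\theta_j\sigma+\sum_{j>0}\hat\theta_j\sigma$. The high-frequency piece $\sum_{j>0}\hat\theta_j\sigma$ vanishes near the origin. On the region $d_\beta\ge 1/2$ the functions $d_\beta(\varphi,\xi)$ and $d_\beta(\varphi,\xi-\varphi)$ are smooth and nonvanishing; the latter is smooth there because $d_\beta\in\C^\infty(\R^{2d}\setminus\{0\})$ and, by Lemma~\ref{lem:xi-phi}, $(\varphi,\xi-\varphi)\neq 0$. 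Their powers, $|\xi|^{2\beta}$ and $m_2$ have derivatives of at most polynomial growth, by the derivative bounds on $d_\beta^s$ together with $d_\beta\le B'(|\xi|+|\varphi|)$ from Lemma~\ref{lem:xi-phi}. The factor $|\xi|^{2\beta}$ is smooth away from $\xi=0$. Near $\xi=0$, I would instead treat $m_1(D)$ as the fractional Laplacian in $v$ acting on a Schwartz-type function. Alternatively, I would split $\hat\theta_j$ further with a cutoff in $\xi$ and use the classical bound $|\cF^{-1}(|\xi|^{2\beta}f)(x,v)|\lesssim(1+|v|)^{-d-2\beta}$ in $v$ combined with Schwartz decay in $x$. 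In either case the high part is (essentially) a Schwartz function and decays faster than any power.

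The low-frequency part is where Lemma~\ref{lem:kinetic} enters. For $j\le 0$, set $\eta_j(\varphi,\xi):=2^{-2\beta j}(\hat\theta_j\sigma)(2^{j(2\beta+1)}\varphi,2^j\xi)$, which is supported in $D$. The task is to check that $\sup_j\|\eta_j\|_{\C^k}<\infty$ for some $k>\homd$. By homogeneity of $d_\beta$, on $\supp\hat\theta_j$ with $j\le 0$ we have $d_\beta(\varphi,\xi)\sim 2^j\le 1$. Rescaling gives $d_\beta(2^{j(2\beta+1)}\varphi,2^j\xi)=2^jd_\beta(\varphi,\xi)$. The second factor rescales as $d_\beta(2^{j(2\beta+1)}\varphi,2^j\xi-2^{j(2\beta+1)}\varphi)=2^jd_\beta(\varphi,\xi-2^{2\beta j}\varphi)$, with $\lambda=2^{2\beta j}\in(0,1]$, which by Lemma~\ref{lem:xi-phi} is comparable to $2^j$ and bounded away from $0$ on $D$ uniformly in $j$. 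Hence $\tilde p_\gamma$ contributes exactly $2^{2\beta j}$ times a function that is uniformly $\C^k$ on $D$. The derivatives in $\varphi$ bring powers $2^{2\beta j}\le 1$, so they stay bounded. The function $\hat h(2^{j(2\beta+1)}\cdot,2^j\cdot)$ is uniformly $\C^k$ on the bounded set $D$. For $\mathfrak{D}=\id$ this gives the required uniformity directly.

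For $\mathfrak{D}=\nabla_v$, the factor $i\xi$ gains an extra $2^j$, which is harmless. The factor $m_2$ is homogeneous of degree $0$ under the kinetic dilation, since $\varphi\cdot\nabla_\xi d_\beta$ has degree $(2\beta+1)+1-1$, which is too large; in fact $m_2(2^{j(2\beta+1)}\varphi,2^j\xi)=2^{2\beta j}m_2(\varphi,\xi)$, a gain. The factor $m_1=|\xi|^{2\beta}$ gains $2^{2\beta j}$ but is not smooth at $\xi=0$, and this is the main obstacle. My first attempt would be to write $|\xi|^{2\beta}=d_\beta^{2\beta}\cdot(|\xi|/d_\beta)^{2\beta}$ and treat the non-smooth, degree-zero factor separately, via an additional dyadic decomposition in $|\xi|$ whose pieces have controlled $\C^k$ norms after isotropic rescaling in $\xi$. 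The resulting extra sum should converge thanks to the gain $|\xi|^{2\beta}$, although the decay in $v$ might then only be $(1+|v|)^{-d-2\beta}$-type, which still suffices for the integrated statements. Once the uniform bound holds, Lemma~\ref{lem:kinetic} yields \eqref{eq:pggdecay}.

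The marginal bounds then follow by integration. Integrating $(1+|x|^{\frac{1}{2\beta+1}}+|v|)^{-\homd}$ in $x$ and substituting $x=(1+|v|)^{2\beta+1}y$ gives $(1+|v|)^{(2\beta+1)d-\homd}=(1+|v|)^{-d-2\beta}$. Similarly, integrating in $v$ gives $(1+|x|^{\frac1{2\beta+1}})^{d-\homd}$, which is the stated bound. The case $h=\Psi_g$ is covered because $\Psi_g$ is Schwartz.
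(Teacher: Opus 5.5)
For $\mathfrak{D}\in\{\id,\nabla_v,m_2(D)\}$, and for $m_1(D)$ when $\beta=1$, your argument is the paper's. The high-frequency part is Schwartz, and on low frequencies Lemma~\ref{lem:kinetic} applies, with uniform $\C^k$ bounds from the homogeneity of $d_\beta$ and Lemma~\ref{lem:xi-phi}. The paper cuts off with a small ball and convolves with $\tilde h$ at the end; you put $\hat h(2^{j(2\beta+1)}\cdot,2^j\cdot)$ inside $\eta_j$ instead, which is harmless. Your marginal computations are also correct.

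The gap is $\mathfrak{D}=m_1(D)$ with $\beta<1$, and your suspicion is justified: this case cannot be closed, because \eqref{eq:pggdecay} is false there. The symbol $|\xi|^{2\beta}$ is singular on the whole set $\{\xi=0\}$, which meets both $D$ and the high-frequency region. So the $\eta_j$ are not uniformly $\C^k$, and, contrary to your first paragraph, the high part is not Schwartz. Concretely, set $F=\tilde p_\gamma(D)h$. Since $F(x,\cdot)$ and $\nabla_v^2F(x,\cdot)$ decay like $\abs{v}^{-\homd}$, the singular-integral formula for $(-\Delta_v)^\beta$ gives, for some $c_{d,\beta}>0$,
\[
[m_1(D)F](x,v)=-c_{d,\beta}\abs{v}^{-d-2\beta}\int_{\R^d}F(x,u)\,\dd u+o(\abs{v}^{-d-2\beta}),\qquad \abs{v}\to\infty .
\]
For $h=\Psi_g$, $\int F(0,u)\,\dd u$ is a positive multiple of $\int\tilde p_\gamma(\varphi,0)e^{-\psi_g(\varphi,0)}\,\dd\varphi>0$, while $\homd>d+2\beta$. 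The paper's ``treated similarly'' overlooks the same point. What Proposition~\ref{prop:Ktilde} actually uses are the two marginal bounds, and these are true.

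Rather than a second dyadic decomposition, you can obtain them from
\[
(-\Delta_v)^\beta F(x,v)=\frac{c_{d,\beta}}{2}\int_{\R^d}\frac{2F(x,v)-F(x,v+w)-F(x,v-w)}{\abs{w}^{d+2\beta}}\,\dd w .
\]
Use Taylor's formula for $\abs{w}\le 1$ and the triangle inequality for $\abs{w}>1$. Integrating in $v$ and using translation invariance gives $\int\abs{m_1(D)F(x,v)}\dv\lesssim\int(\abs{F}+\abs{\nabla_v^2F})(x,v)\dv$. Integrating in $x$ instead, with $M(u):=\int(\abs{F}+\abs{\nabla_v^2F})(x,u)\dx\lesssim(1+\abs{u})^{-d-2\beta}$, gives
\[
\int\abs{m_1(D)F(x,v)}\dx\lesssim\sup_{\abs{u-v}\le1}M(u)+\int_{\abs{w}>1}M(v+w)\abs{w}^{-d-2\beta}\dd w\lesssim(1+\abs{v})^{-d-2\beta}.
\]
Here $\nabla_v^2$ is handled by Lemma~\ref{lem:kinetic} exactly like $\nabla_v$, since $\xi_i\xi_k$ is smooth.
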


\begin{proof}
	Consider any Schwartz function $h \in \cS(\R^{2d})$. 
	Using Lemma~\ref{lem:xi-phi}, the symbol $\tilde{p}_\gamma(\varphi,\xi)$ is $\C^\infty(\R^{2d}\setminus\{(0,0)\})$ and of at most polynomial growth. 
	Let $\epsilon>0$. 
	Consider any cutoff function $\chi \in \C_c^\infty(\R^{2d})$ which is equal to $1$ on $B_{\epsilon/2}(0)$ 
	and has support in $B_{\epsilon}(0)$. We write
	\begin{equation*}
		\tilde{p}_\gamma(D) h = [\tilde{p}_\gamma (1-\chi)](D) h+ [\tilde{p}_\gamma \chi] (D) h.
	\end{equation*}
	The first term is a $\C^\infty(\R^{2d})$ multiplier of polynomial growth applied to a Schwartz function and hence Schwartz. 
	For this function, we may get any polynomial decay we want, and, in particular, the decay of \eqref{eq:pggdecay}. 
	The same holds true when applying any of the choices for the differential operator $\mathfrak{D}$.
	
	Employing the triangle inequality we are left to estimate $\tilde{p}_\gamma(D)h$ 
	for some Schwartz function $h \in \cS(\R^{2d})$ whose Fourier transform is supported in a ball of radius $\epsilon$. 
	
	We start with $\mathfrak{D} = \id$. The goal is to apply Lemma \ref{lem:kinetic}. 
	First, we consider the Littlewood--Paley decomposition $(\theta_j)_{j \in \Z} \subset \C_c^\infty(\R^{2d})$ 
	used to define the spaces $\Xdot^{\gamma,p}_\beta$. 	
	For $\epsilon$ chosen small enough so that $B_\epsilon(0) \subset \{ (\varphi,\xi) \colon d_{\beta}(\varphi,\xi)\le 1 \}$ 
	(note the slightly different geometry of these two sets) we have
	\begin{align}
		\big[[\tilde{p}_\gamma\chi](D)h \big](x,v) 
		&= \cF^{-1}\bigg( \sum_{j \le 0} \hat{\theta}_j(\varphi,\xi)\tilde{p}_\gamma(\varphi,\xi) \chi(\varphi,\xi)\hat h(\varphi,\xi) \bigg)(x,v) \nonumber  \\ 
		&= \bigg[\cF^{-1}\bigg(  \sum_{j \le 0} 2^{2\beta j} \eta_{\lambda_j}\bigg( \frac{\varphi}{2^{j(2\beta+1)}},\frac{\xi}{2^j} \bigg)   \bigg)\ast \tilde h\bigg](x,v),  \label{eq:ptg_kinlem}
	\end{align}
	where we rescaled
	\begin{align*}
		\tilde{p}_\gamma(\varphi,\xi) &=
	d_{\beta}(\varphi,\xi-\varphi)^\gamma d_{\beta}(\varphi,\xi)^{2\beta-\gamma}	
		 \\
		&= 2^{2\beta j} d_{\beta}\bigg(\frac{\varphi}{2^{j(2\beta+1)}},\frac{\xi}{2^j}-2^{2\beta j}\frac{\varphi}{2^{j(2\beta+1)}}\bigg)^\gamma d_{\beta}\bigg(\frac{\varphi}{2^{j(2\beta+1)}},\frac{\xi}{2^j}\bigg)^{2\beta-\gamma}		
	\end{align*}
	and set $\hat{\tilde h}=\chi\hat h$ $\lambda_j = 2^{2\beta j}$,
	\begin{equation*}
		\eta_\lambda(\varphi,\xi) =
		\theta(\varphi,\xi) d_{\beta}(\varphi,\xi-\lambda\varphi)^\gamma
		d_{\beta}(\varphi,\xi)^{2\beta-\gamma}
	\end{equation*}
	for $\lambda \in [0,1]$.
	
	It remains to verify $ c_0 = \sup_{\lambda \in [0,1] }\norm{\eta_\lambda}_{\C^k}<\infty$ for $k$ large enough 
	in order to apply Lemma~\ref{lem:kinetic}. 
	Then, convolution with $h$ will preserve the estimates as $h$ is a Schwartz function. 
	Using again Lemma~\ref{lem:xi-phi} with the support property of $\theta$ and the properties of $d_{\beta}$, the map $[0,1]\times \R^{2d}\ni (\lambda,\varphi,\xi) \to \eta_\lambda(\varphi,\xi)$ is $\C^\infty$. 
	Hence,  the map $[0,1] \to \C^k(\R^{2d})$, $\lambda \mapsto \eta_\lambda$ is well-defined and continuous. 
	As $[0,1]$ is compact, we obtain the desired bound $c_0 < \infty$. 
	
	The case $\mathfrak{D} = \nabla_v$ translates to multiplication by $\xi_i$, $i = 1,\dots,d$, in Fourier variables. 
	It introduces a slightly different scaling, and the $2^{2\beta j}$ term in \eqref{eq:ptg_kinlem} needs to be replaced by $2^{(2\beta+1) j}$. 
	This would yield an even better decay of order $-\homd-1$, which we do not need and neglect.  
	From here, we can argue as before to apply Lemma \ref{lem:kinetic}.

	The differential operators $m_1(D)$ and $m_2(D)$ can be treated similarly. 
	The symbol $m_1$ and $m_2$ both give an additional $2^{2\beta j}$ factor, too, and in turn better decay than desired.
	\end{proof}

\begin{rem}
	In the case $\beta = 1$, we would even obtain the pointwise estimates of Lemma \ref{lem:good} for the full kernel. 
	In order to save writing and as it does not simplify the proof, we stick to the integrated estimates even in the case $\beta = 1$.
\end{rem}

\subsubsection{Integrated estimates for $\tilde{K}_\gamma$ and derivatives}

\begin{prop} \label{prop:Ktilde}
	For $\gamma \in \R$ and any choice 
	$\tilde{\mathsf K} \in \{\tilde{K}_\gamma, \nabla_v \tilde{K}_\gamma,  m_1(D) \tilde{K}_\gamma,m_2(D) \tilde{K}_\gamma \}$ 
	we have
	\begin{equation}\label{eq:intxmg}
		\int_{\R^d} \abs{\tilde{\mathsf K}(x,v)} \dv 
		\lesssim_{\beta,\gamma,d} (1+\abs{x}^{\frac{1}{2\beta+1}})^{-(2\beta+1)d-2\beta}
	\end{equation}
	for all $x \in \R^d$ and
	\begin{equation}\label{eq:intvmg2}		
		\int_{\R^d} \abs{\tilde{\mathsf K}(x,v)} \dx 
		\lesssim_{\beta,\gamma,d} (1+\abs{v})^{-d-2\beta}
	\end{equation}
	for any $v \in \R^d$.
\end{prop}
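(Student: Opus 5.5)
The plan is to use the factorisation $\hat{\tilde K}_\gamma=\tilde p_\gamma\, e^{-\psi_g}e^{-\psi_b}$, that is,
\[
	\tilde{\mathsf K}=\big[\mathfrak D\,\tilde p_\gamma(D)\Psi_g\big]\ast\Psi_b,
	\qquad \mathfrak D\in\{\id,\nabla_v,m_1(D),m_2(D)\},
\]
and then treat each marginal as a convolution of marginals. Write $G:=\mathfrak D\,\tilde p_\gamma(D)\Psi_g$. Since $\Psi_b$ is a nonnegative measure (for $\beta=1$ it is the Dirac mass and there is nothing to do), we have pointwise
\[
	|\tilde{\mathsf K}(x,v)|\le \int_{\R^{2d}}|G(x-y,v-w)|\,\dd\Psi_b(y,w).
\]
This is exactly where positivity of $\Psi_b$ is needed, as the remark preceding Lemma~\ref{lem:bad} anticipates.

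\textbf{Estimate \eqref{eq:intvmg2}.} Integrate in $x$ and use Fubini:
\[
	\int|\tilde{\mathsf K}(x,v)|\dx\le \int_{\R^{2d}} g_x(v-w)\,\dd\Psi_b(y,w),
	\qquad g_x(v):=\int|G(x,v)|\dx .
\]
The integrand no longer depends on $y$, so only the $v$-marginal of $\Psi_b$ enters. By Lemma~\ref{lem:L1fourier}, this marginal has density $\cF^{-1}(e^{-\psi_b(0,\cdot)})$. Lemma~\ref{lem:bad} bounds this density by $(1+|w|)^{-d-2\beta}$, and Lemma~\ref{lem:good} (with $h=\Psi_g$) gives $g_x(v)\lesssim(1+|v|)^{-d-2\beta}$. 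It then remains to use the elementary fact that $(1+|\cdot|)^{-d-a}\ast(1+|\cdot|)^{-d-a}\lesssim(1+|\cdot|)^{-d-a}$ on $\R^d$ for $a>0$. This follows by splitting according to whether $|w|\ge|v|/2$ or $|v-w|\ge|v|/2$ and using integrability of each factor.

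\textbf{Estimate \eqref{eq:intxmg}.} This is symmetric. Integrate in $v$; now only the $x$-marginal of $\Psi_b$ enters, which has density $\cF^{-1}(e^{-\psi_b(\cdot,0)})$. Lemma~\ref{lem:bad} bounds this density by $(1+|y|)^{-d-2\beta}$, while Lemma~\ref{lem:good} gives $\int|G(x,v)|\dv\lesssim(1+|x|^{\frac1{2\beta+1}})^{-(2\beta+1)d-2\beta}\simeq(1+|x|)^{-d-\frac{2\beta}{2\beta+1}}$. The bad factor decays faster than the good one, since $2\beta\ge\frac{2\beta}{2\beta+1}$. The same convolution splitting (with exponents $d+a$ and $d+b$, $b\ge a>0$) therefore yields decay $(1+|x|)^{-d-\frac{2\beta}{2\beta+1}}$, which is the claimed bound.

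\textbf{Justification and main obstacle.} Some care is needed to justify the convolution identity and Fubini. $G$ is continuous and integrable by Lemma~\ref{lem:good}, and $\Psi_b$ is a probability measure, so $G\ast\Psi_b$ is well defined and its Fourier transform is the product of symbols. The identification of the marginals of the measure with the stated densities is exactly Lemma~\ref{lem:L1fourier}. The main obstacle is making sure the $x$-marginal decay rates are compatible in the convolution step: one must check that the weaker exponent $\frac{2\beta}{2\beta+1}>0$ still leaves an integrable tail, so that the convolution retains the slower rate rather than losing a logarithm. The real difficulty is the anisotropic decay of $G$, but that is already supplied by Lemma~\ref{lem:good}.
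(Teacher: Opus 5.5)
Your proposal is correct and is essentially the paper's proof. You write $\tilde{\mathsf K}=(\mathfrak D\,\tilde p_\gamma(D)\Psi_g)\ast\Psi_b$, use positivity of $\Psi_b$ and Fubini to reduce each marginal to a convolution of the good part's marginal (Lemma~\ref{lem:good}) with the corresponding marginal density of $\Psi_b$ (Lemma~\ref{lem:bad}), and let $\nabla_v, m_1(D), m_2(D)$ act only on the good part; your explicit check that in the $x$-variable the slower rate $(1+|x|)^{-d-\frac{2\beta}{2\beta+1}}$ survives convolution with $(1+|x|)^{-d-2\beta}$ spells out what the paper compresses into ``follows similarly''.
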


\begin{proof}
	For $\beta = 1$ we have $\Psi_b = \delta_{(0,0)}$ and the proposition follows directly from Lemma \ref{lem:good}. Assume now $\beta<1$.
	Let us consider $\tilde{\mathsf K} = \tilde{K}_\gamma = (\tilde{p}_\gamma(D) \Psi_g )\ast \Psi_b$ first. 
	To obtain integrated estimates (in one of the variables $x$ or $v$), we argue as follows
\begin{align*}
	&\int_{\R^d} \abs{(\tilde{p}_\gamma(D) \Psi_g )\ast \Psi_b}(x,v)  \dx \\ 
	&\le \int_{\R^d} \int_{\R^{2d}} \abs{(\tilde{p}_\gamma(D) \Psi_g )(x-y,v-w)} \dd \Psi_b(y,w)  \dx \\
	&=  \int_{\R^{2d}}\int_{\R^d} \abs{(\tilde{p}_\gamma(D) \Psi_g )(x-y,v-w)} \dx \dd \Psi_b(y,w) \\
	&= \int_{\R^{2d}}\int_{\R^d} \abs{(\tilde{p}_\gamma(D) \Psi_g )(x,v-w)} \dx \dd \Psi_b(y,w) \\
	&= \int_{\R^{d}}\int_{\R^d} \abs{(\tilde{p}_\gamma(D) \Psi_g )(x,v-w)} \dx \, \cF^{-1}(\exp(-\psi_b(0,\cdot)))(w) \dd w,
\end{align*}
where we have used Fubini's theorem and the translation invariance of the Lebesgue measure.

In Lemma \ref{lem:bad} we derived
\begin{equation*}
	0 \le \cF^{-1}(\exp(-\psi_b(0,\cdot)))(w) \lesssim (1+\abs{w})^{-d-2\beta}
\end{equation*}
and in Lemma \ref{lem:good} we proved
\begin{equation*}
	{\int_{\R^d} \abs{(\tilde{p}_\gamma(D) \Psi_g )(x,v-w)} \dx} \lesssim (1+\abs{v-w})^{-d-2\beta}.
\end{equation*}
Convolution preserves this decay. This type of calculation is well-known, see for example 
\cite{bogdan_heat_2020,sztonyk_transition_2011}. 
The estimate for the $v$-integral follows similarly. 

For the other terms, we note that we apply the differential operator $\nabla_v, m_1(D),m_2(D)$ only to the good part, 
i.e.\ to $\tilde{p}_\gamma(D) \Psi_g$ and the corresponding estimates are the content of Lemma \ref{lem:good}.
\end{proof}

\subsubsection{Conclusion}
\label{sec:proofintest}

\begin{proof}[Proof of Proposition \ref{prop:intkernel}]
	We are left to undo the change of variables from rescaled to original variables
 \begin{equation*}
 	\xi' = (t-s)^{\frac{1}{2\beta}} (\xi-s\varphi) \;\mbox{ and }\; \varphi' = (t-s)^{1+\frac{1}{2\beta}} \varphi.
 \end{equation*}
 
Recall that 
\begin{align*}
	\hat{K}_\gamma(t,s,\varphi,\xi)= (t-s)^{-1} \hat{\tilde{K}}_\gamma(\varphi',\xi'). 
\end{align*}
 
 We deduce
 \begin{align} \label{eq:Kg=Kgtilde}
 	K_\gamma(t,s,x,v)  \nonumber
 	&= \cF^{-1}\left( \hat{K}_\gamma(t,s,\varphi,\xi)\right)(x,v) \\
 	&= (t-s)^{-1} \cF^{-1}\left(\hat{\tilde{K}}_\gamma\left((t-s)^{1+\frac{1}{2\beta}} \varphi,(t-s)^{\frac{1}{2\beta}}(\xi-s\varphi) \right) \right)(x,v)\nonumber \\
 	&=(t-s)^{-1} \cF^{-1}\left(\hat{\tilde{K}}_\gamma\left((t-s)^{1+\frac{1}{2\beta}} \varphi,(t-s)^{\frac{1}{2\beta}}\xi \right) \right)(x+sv,v)\nonumber \\
 	&=(t-s)^{-d-\frac{d}{\beta}-1}\cF^{-1}\left(\hat{\tilde{K}}_\gamma\left( \varphi,\xi \right) \right)\left((t-s)^{-1-\frac{1}{2\beta}}(x+sv),(t-s)^{-\frac{1}{2\beta}}v\right ) \nonumber\\
 	&=(t-s)^{-d-\frac{d}{\beta}-1}\tilde{K}_\gamma\left((t-s)^{-1-\frac{1}{2\beta}}(x+sv),(t-s)^{-\frac{1}{2\beta}}v\right ). 
 \end{align}
 
Using the bound \eqref{eq:intvmg2} of Proposition \ref{prop:Ktilde} for $\tilde{K}_\gamma$ we obtain
 \begin{align} \label{eq:intKdx}
 	\int_{\R^d} \abs{K_\gamma(t,s,x,v)}  \dx 
 	&= (t-s)^{-d-\frac{d}{\beta}-1}	\int_{\R^d} \abs{\tilde{K}_\gamma\left((t-s)^{-1-\frac{1}{2\beta}}(x+sv),(t-s)^{-\frac{1}{2\beta}}v\right )}  \dx \nonumber  \\
 	&=(t-s)^{-d-\frac{d}{\beta}-1}	\int_{\R^d} \abs{\tilde{K}_\gamma\left((t-s)^{-1-\frac{1}{2\beta}}x,(t-s)^{-\frac{1}{2\beta}}v\right )}  \dx \nonumber  \\
 	&= (t-s)^{-\frac{d}{2\beta}-1} \int_{\R^d} \abs{\tilde{K}_\gamma\left(x,(t-s)^{-\frac{1}{2\beta}}v\right )}  \dx \nonumber  \\
 	&\lesssim (t-s)^{-\frac{d}{2\beta}-1}\left(1+\abs{(t-s)^{-\frac{1}{2\beta}}v}\right)^{-d-2\beta} \nonumber \\
 	&= \frac{1}{t-s} \frac{t-s}{\left(\abs{t-s}^{\frac{1}{2\beta}}+\abs{v}\right)^{d+2\beta}}.
 \end{align}

Employing the bound \eqref{eq:intxmg} of Proposition \ref{prop:Ktilde} for $\tilde{K}_\gamma$, 
we obtain a bound integrated in $v$ as
 \begin{align*}
 	\int_{\R^d} \abs{K_\gamma(t,s,x-sv,v)}  \dv 
 	&= (t-s)^{-d-\frac{d}{\beta}-1} \int_{\R^d}\abs{\tilde{K}_\gamma\left((t-s)^{-1-\frac{1}{2\beta}}x,(t-s)^{-\frac{1}{2\beta}}v\right )}  \dv \\
 	&= (t-s)^{-d-\frac{d}{2\beta}-1} \int_{\R^d}\abs{\tilde{K}_\gamma\left((t-s)^{-1-\frac{1}{2\beta}}x,v\right ) } \dv \\
 	&\lesssim  (t-s)^{-d-\frac{d}{2\beta}-1}\left(1+\abs{(t-s)^{-1-\frac{1}{2\beta}}x}^{\frac{1}{2\beta+1}}\right)^{-(2\beta+1)d-2\beta}\\
 	&\lesssim \frac{1}{t-s} \frac{t-s}{\left(\abs{t-s}^{\frac{1}{2\beta}}+\abs{x}^{\frac{1}{2\beta+1}}\right)^{(2\beta+1)d+2\beta}}.
 \end{align*}
This explains the case $\mathsf K = K_\gamma$. 
The operator $\nabla_v-s\nabla_x$ applied to \eqref{eq:Kg=Kgtilde} yields
\begin{align*}
    &(\nabla_v - s \nabla_x)K_\gamma(t,s,x,v) \\
    &= (t-s)^{-d-\frac{d}{\beta}-1} \left[ (t-s)^{-1-\frac{1}{2\beta}} s [\nabla_x \tilde{K}_\gamma]\left((t-s)^{-1-\frac{1}{2\beta}}(x+sv),(t-s)^{-\frac{1}{2\beta}}v\right ) \right. \\
    &+ (t-s)^{-\frac{1}{2\beta}} [\nabla_v \tilde{K}_\gamma]\left((t-s)^{-1-\frac{1}{2\beta}}(x+sv),(t-s)^{-\frac{1}{2\beta}}v\right ) \\
    &\left.-s (t-s)^{-1-\frac{1}{2\beta}}  [\nabla_x \tilde{K}_\gamma]\left((t-s)^{-1-\frac{1}{2\beta}}(x+sv),(t-s)^{-\frac{1}{2\beta}}v\right ) \right] \\
    &= (t-s)^{-d-\frac{d}{\beta}-1-\frac{1}{2\beta}} [\nabla_v \tilde{K}_\gamma]\left((t-s)^{-1-\frac{1}{2\beta}}(x+sv),(t-s)^{-\frac{1}{2\beta}}v\right ).
\end{align*}
The estimate for $\nabla_v \tilde{K}_\gamma$ from Proposition \ref{prop:Ktilde} and the same calculation as in \eqref{eq:intKdx} yields
\begin{equation*}
		\int_{\R^d} \abs{(t-s)^{\frac{1}{2\beta}}[(\nabla_v-s\nabla_x)K_\gamma](t,s,x,v)}  \dx \le \frac{1}{t-s} \frac{t-s}{\left(\abs{t-s}^{\frac{1}{2\beta}}+\abs{v}\right)^{d+2\beta}}. 
\end{equation*}
The estimate for the $v$ integral follows similarly. 

Concerning the $\partial_s$ term we differentiate \eqref{eq:Kgamma} directly
\begin{align*}
	\partial_s \hat{K}_\gamma(t,s,\varphi,\xi) &= \left( \abs{\xi-s\varphi}^{2\beta} -(2\beta-\gamma) \frac{\varphi \cdot [\nabla_\xi d_\beta](\varphi,\xi-s\varphi)}{d_\beta(\varphi,\xi-s\varphi)} \right)   \hat{K}_\gamma(t,s,\varphi,\xi) \\
	&=(t-s)^{-2} \left( \abs{\xi'}^{2\beta} -(2\beta-\gamma) \frac{\varphi' \cdot [\nabla_\xi d_\beta](\varphi',\xi')}{d_\beta(\varphi',\xi')} \right) \hat{\tilde{K}}_\gamma(\varphi',\xi') \\
	&=(t-s)^{-2} \left( m_1(\varphi',\xi') -(2\beta-\gamma)  m_2(\varphi',\xi') \right) \hat{\tilde{K}}_\gamma(\varphi',\xi')
\end{align*}
These two terms are estimated in Proposition \ref{prop:Ktilde}.
Undoing the change of variables as above yields the desired estimate.
This concludes the proof of Proposition \ref{prop:intkernel}. 
\end{proof}

\begin{proof}[Proof of Proposition~\ref{prop:upper}]
	This is an immediate consequence of \eqref{eq:Kg=Kgtilde} together with the fact that ${\tilde{K}}_\gamma$ 
	is bounded as its Fourier transform is integrable. 
\end{proof}

\begin{rem}
	If a bound on
  \begin{align*}
 	\int_{\R^d} \abs{K_\gamma(t,s,x,v)}  \dv &= (t-s)^{-d-\frac{d}{\beta}-1} \int_{\R^d}\abs{\tilde{K}_\gamma\left((t-s)^{-1-\frac{1}{2\beta}}(x+sv),(t-s)^{-\frac{1}{2\beta}}v\right)}  \dv \\
 	&= (t-s)^{-d-\frac{d}{2\beta}-1} \int_{\R^d}\abs{\tilde{K}_\gamma\left((t-s)^{-1-\frac{1}{2\beta}}x+ \frac{s}{t-s}v,v\right )}  \dv 
 \end{align*}
 is desired, we need to estimate 
  \begin{equation} \label{eq:intvmg1}
 	\int_{\R^d}\abs{\tilde{K}_\gamma\left(x+\frac{s}{t-s}v,v\right)}  \dv. 
 \end{equation}
	For this, we would need to consider $\psi_b(\varphi,-\alpha \varphi)$, 
	which does not seem to be controllable independently of $\alpha \in \R$. 
	We emphasise that the bounds on quantities evaluated at $(t,s,x-sv,v)$ are more natural due to the transport in $\nabla_v-s\nabla_x$. 
\end{rem} 

\begin{rem} \label{rem:comp_CZ_HMP}
	In \cite{MR3906169,MR3826548} the authors study the kernel $K_\gamma$, where $p_\gamma$ 
	is replaced by $\abs{\xi-t \varphi}^{2\beta}$ or $\abs{\varphi}^{\frac{2\beta}{2\beta+1}}$. 
	In this case, one can conveniently work in unshifted variables and kernel estimates can be deduced 
	by interpolation of estimates on (integer) derivatives of the fundamental solution of the Kolmogorov equation. 
	We cannot deduce estimates for $K_\gamma$ from their results, as our kernel is more involved. 
\end{rem}

\begin{rem} \label{rem:pointwiseest}
	At this point, we may also compare our estimates to the sharp pointwise estimates 
	for the fundamental solution of the Kolmogorov equation obtained in the preprint \cite{hou_kernel_2024}. 
	The estimates reflect the anisotropy of the diffusion (lack of differentiability where $\varphi \approx \xi$) 
	and need to be given in terms of the profile
	\begin{equation*}
		\frac{1}{(1+|(x,v)|)^{1+d+2\beta}}\left(\inf _{s \in[0,1]}\left|(x-sv,v)\right|+1\right)^{1-d-2\beta},
	\end{equation*}
	which is not compatible with the decay in powers of $(1+\abs{x}^{\frac{1}{2\beta+1}}+\abs{v})$ 
	induced by the Fourier multiplier ${p}_\gamma$. 
	
This underlines the flexibility of the integrated estimates. 
In fact, the marginals of the fundamental solution and of the decay induced by the Fourier multiplier 
$\tilde{p}_\gamma$ are compatible, in the sense that their decay has the same shape, see \cite[Lemma 1.5 and Remark 1.6]{hou_kernel_2024}.

If instead of ${p}_\gamma$ one considers the simpler Fourier multipliers as in Remark \ref{rem:comp_CZ_HMP}, 
one can deduce even pointwise estimates for the kernels considered in \cite{MR3906169,MR3826548} 
by interpolation of the estimate in \cite[Theorem 1.1 (ii)]{hou_kernel_2024}.
\end{rem}

\section{Proof of the $\L^p$ estimates}
\label{sec:horm}

To apply Theorem~\ref{thm:CW} to $T_{\gamma}$, we first equip $\R^{1+2d}$ with the following  structure of a space of homogenous type.
For $(t,x,v),(s,y,w) \in \R^{1+2d}$ we set 
\begin{align} \label{eq:kindist}
	&\rho((t,x,v),(s,y,w)) \\
	&= \abs{t-s}^{\frac{1}{2\beta}} + \frac{1}{2} \abs{x-y+t(v-w)}^{\frac{1}{2\beta+1}}+ \frac{1}{2}\abs{x-y+s(v-w)}^{\frac{1}{2\beta+1}}+ \abs{v-w}. \nonumber
\end{align}

This is the symmetrised kinetic distance in shifted variables. 
For two sets $A,B \subset \R^{1+2d}$ we write
\begin{equation*}
	\rho(A,B) = \inf 
	 \{\rho((t,x,v),(s,y,w)), \ (t,x,v) \in A,  (s,y,w) \in B\}.
\end{equation*}

\begin{lem} \label{lem:kindist}
	The mapping $\rho \colon \R^{1+2d} \times \R^{1+2d} \to \R$ is a quasi-distance, i.e.\ it is definite, 
	symmetric and satisfies the triangle inequality with some constant $C>0$. 
	Moreover, the triple $(\R^{1+2d},\rho, \dd(t,x,v))$, i.e.\ $\R^{1+2d}$ equipped with the quasi-distance 
	and the Lebesgue measure, is a  space of homogeneous type.
\end{lem}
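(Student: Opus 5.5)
The plan is to verify the quasi-distance axioms directly and then show the volume of balls is comparable to $r^{\homd}$, which yields the doubling property. Definiteness and symmetry are immediate. If $\rho=0$, then $t=s$ and $v=w$, and consequently $x=y$. Exchanging $(t,x,v)$ with $(s,y,w)$ swaps the two middle terms, since $|x-y+t(v-w)| = |y-x+t(w-v)|$; this swap is precisely why the distance was symmetrised.

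For the quasi-triangle inequality, take points $P_i=(t_i,x_i,v_i)$, $i=1,2,3$. The terms $|t-s|^{1/(2\beta)}$ and $|v-w|$ satisfy a quasi-triangle inequality, because $a\mapsto a^{1/(2\beta)}$ with $1/(2\beta)\ge 1/2$ gives $|a+b|^{\alpha}\le C(|a|^\alpha+|b|^\alpha)$. The main work is the middle term. Write
\[
x_1-x_3+t_1(v_1-v_3) = \big(x_1-x_2+t_1(v_1-v_2)\big) + \big(x_2-x_3+t_2(v_2-v_3)\big) + (t_1-t_2)(v_2-v_3).
\]
The first two brackets are controlled by $\rho(P_1,P_2)^{2\beta+1}$ and $\rho(P_2,P_3)^{2\beta+1}$. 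The cross term is the expected obstacle. Bound it by
\[
|t_1-t_2|\,|v_2-v_3| \le \rho(P_1,P_2)^{2\beta}\rho(P_2,P_3) \le \big(\rho(P_1,P_2)+\rho(P_2,P_3)\big)^{2\beta+1}.
\]
Then take the $\tfrac1{2\beta+1}$ power using subadditivity of $a\mapsto a^{1/(2\beta+1)}$. The term with $s$ in place of $t$, namely $x_1-x_3+t_3(v_1-v_3)$, is handled the same way by splitting through $t_2$ with the roles of the indices interchanged. This is why both the $t$- and the $s$-shifted terms appear in the symmetrised distance. Each term is therefore bounded by a constant times $\rho(P_1,P_2)+\rho(P_2,P_3)$.

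For the homogeneous type structure, compute $|B_\rho((t,x,v),r)|$. Fix the centre and substitute $z = x-y+t(v-w)$ at fixed $(s,w)$; this is a translation in $y$ and so preserves Lebesgue measure. The ball is then contained in the set where $|t-s|\le r^{2\beta}$, $|v-w|\le r$ and $|z|\le (2r)^{2\beta+1}$, which has measure $\lesssim r^{2\beta}r^{d}r^{(2\beta+1)d}=r^{\homd}$. Conversely, the ball contains the set where $|t-s|\le (r/4)^{2\beta}$, $|v-w|\le r/4$ and $|z|\le c r^{2\beta+1}$. On that set the other middle term satisfies $|x-y+s(v-w)|\le |z|+|t-s||v-w|\lesssim r^{2\beta+1}$, so choosing $c$ small gives $\rho< r$. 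Hence $|B(P,r)|\simeq r^{\homd}$ uniformly in the centre, doubling follows, and balls are measurable since $\rho$ is continuous.
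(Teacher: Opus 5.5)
Your proof is correct, but it takes a different route from the paper. The paper verifies nothing by hand. It evaluates $\rho$ at $(t,x-tv,v)$ and $(s,y-sw,w)$, i.e.\ conjugates by $\Gamma^{-1}$, which turns $\rho$ into
\[
|t-s|^{\frac1{2\beta}}+\tfrac12|x-y-(t-s)w|^{\frac1{2\beta+1}}+\tfrac12|x-y-(t-s)v|^{\frac1{2\beta+1}}+|v-w|,
\]
the symmetrised Galilean kinetic quasi-distance. It then quotes \cite[Proposition C.2]{MR3906169} for the quasi-triangle inequality and doubling. Doubling transfers because $\Gamma$ preserves Lebesgue measure.

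You instead work directly in the shifted variables. Your splitting of $x_1-x_3+t_1(v_1-v_3)$, and its counterpart through $t_2$ for the $s$-term, replaces the group-law argument. Your two-sided estimate $|B(P,r)|\simeq r^{\homd}$, uniform in $P$, is Ahlfors regularity, which is stronger than doubling. The citation buys brevity and places $\rho$ in the standard translation/dilation structure. Your computation buys a self-contained proof with explicit constants, and it gives the exact volume growth that implicitly underlies the integrals over $\{\rho\ge Nr_0\}$ in Lemma~\ref{lem:hormander}.

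Two small points to tighten:
\begin{enumerate}
\item In the lower volume bound, the cross term $|t-s||v-w|\le (r/4)^{2\beta+1}$ is not made small by $c$; it contributes up to $\tfrac18 r$ to $\rho$. It is the factor $\tfrac14$ that makes the bound work, giving $\rho\le(\tfrac58+c^{1/(2\beta+1)})r<r$ once $c^{1/(2\beta+1)}<\tfrac38$.
\item For the Coifman--Weiss setting you also want the $\rho$-topology to be Hausdorff, with Lebesgue measure Borel for it. Both follow because $\rho(P,Q)\to0$ if and only if $Q\to P$ in the Euclidean topology, so the two topologies coincide.
\end{enumerate}
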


\begin{proof}
	Shifting $\rho$ along the characteristics $(t,x-tv,v)$ (the inverse of the kinetic shift $\Gamma$), 
	one obtains the kinetic quasi-distance considered in \cite{MR3906169}. 
	The statement follows from \cite[Proposition C.2]{MR3906169}, where the doubling property transfers 
	because the Lebesgue measure is preserved by the kinetic shift.
\end{proof}

Next, we would like to check the three conditions (i)--(iii) 
on the kernel $\mathds{1}_{t-s>0}K_\gamma(t,s,x-y,v-w)$. 
We know that the kernel $K_\gamma(t,s,\cdot) \in \C^\infty(\R^{2d})$ for $s<t$ and that the only singularity 
is where $t -s\approx 0$. 
It is convenient to regularise this singularity to make the representation formula rigorous for an approximate operator.
Let $\epsilon \in (0,1)$ and $\zeta_\epsilon \in \C_c^\infty(\R)$ be such that 
$\supp \zeta_\epsilon \subset [\epsilon/2,2/{\epsilon}]$, $\zeta_\epsilon = 1$ for $r \in [\epsilon,1/\epsilon]$ 
with $M_\zeta = \sup_{r>0, \epsilon \in (0,1)} \abs{r \zeta'_\epsilon(r)}< \infty$ 
(e.g. $\zeta_\epsilon = \chi_1(\cdot/\eps) \chi_2(\eps \cdot)$ for suitable $\chi_1,\chi_2$).  
We define
\begin{equation} \label{eq:Kgammaeps}
	K_{\gamma,\epsilon}(t,s,x,v) = \zeta_\epsilon(t-s) K_\gamma(t,s,x,v)\mathds{1}_{s<t} 
\end{equation}
and set for $f \in \L^\infty(\R^{1+2d})$ with bounded support,
\begin{equation} \label{eq:defTgammaeps}
	[T_{\gamma,\epsilon} f](t,x,v) = \int_{\R^{1+2d}} K_{\gamma,\epsilon}(t,s,x-y,v-w) f(s,y,w) \dd(s,y,w).
\end{equation}
We prove estimates that are independent of $\epsilon>0$, and we let $\epsilon \to 0^+$ in the end.

In view of Lemma \ref{lem:TgammaL2} it is clear that $T_{\gamma,\eps}$ is bounded on $\L^2_{t,x,v}$. 
\begin{lem} \label{lem:TgammaepsL2}
	For all $\gamma\in \R$ and $\beta>0$, $T_{\gamma,\epsilon}$ extends boundedly to $\L^2_{t,x,v}$ 
	with operator norm bounded independently of $\epsilon$. 
\end{lem}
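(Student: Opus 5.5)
The plan is to redo the $\L^2$ argument of Lemma~\ref{lem:TgammaL2} on the Fourier side, with the time cutoff $\zeta_\epsilon$ inserted. After the partial Fourier transform in $(x,v)$, the operator $T_{\gamma,\epsilon}$ acts on $u=\hat g$ by
\[
[\widehat{T_{\gamma,\epsilon} g}](t,\varphi,\xi)=\int_{-\infty}^t \zeta_\epsilon(t-s)\, \widehat{K}_\gamma(t,s,\varphi,\xi)\, u(s,\varphi,\xi)\ds .
\]
By Plancherel in $(x,v)$, it suffices to show that for every fixed $(\varphi,\xi)$ the operator on $\L^2_t$ with kernel $\mathds{1}_{s<t}\zeta_\epsilon(t-s)\widehat K_\gamma(t,s,\varphi,\xi)$ is bounded uniformly in $(\varphi,\xi)$ and $\epsilon$. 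By Fubini, $\L^2_{t,\varphi,\xi}$ boundedness then follows.

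The key step is a Schur test. I will use the size bound that underlies the proof of \cite[Prop~2.18]{AIN}. Since $\widehat K_\gamma\ge 0$, we have
\[
0\le \zeta_\epsilon(t-s)\widehat K_\gamma(t,s,\varphi,\xi)\le \|\zeta_\epsilon\|_\infty \widehat K_\gamma(t,s,\varphi,\xi),
\]
and the cutoffs can be taken with $\|\zeta_\epsilon\|_\infty\le 1$, for instance $\zeta_\epsilon=\chi_1(\cdot/\epsilon)\chi_2(\epsilon\,\cdot)$ with $0\le\chi_i\le 1$. The Schur bounds of \cite{AIN}, namely
\[
\sup_t\int \widehat K_\gamma\, w(s)\,\ds\lesssim w(t), \qquad \sup_s\int \widehat K_\gamma\, w(t)\,\dt\lesssim w(s),
\]
for a suitable positive weight $w$ (possibly $w\equiv 1$), involve only absolute values of the kernel. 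They therefore persist when the kernel is multiplied by a function with values in $[0,1]$, with the same constants. This gives the bound uniformly in $\epsilon$.

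The main obstacle is that \cite[Prop~2.18]{AIN} might not be proved by a pointwise-positive Schur-type argument. For instance, it could rely on an energy or cancellation argument, which would not obviously survive multiplication by $\zeta_\epsilon(t-s)$.

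To cover that case I would give a fallback that writes $\zeta_\epsilon(t-s)$ as a superposition of simpler truncations. Write $\zeta_\epsilon(r)=-\int_0^\infty \zeta_\epsilon'(\tau)\mathds{1}_{r<\tau}\,\dd\tau$ for $r>0$, and use $\int|\zeta_\epsilon'|\lesssim 1$ uniformly in $\epsilon$. This reduces the problem to uniform $\L^2$ bounds for the truncated operators with kernel $\mathds{1}_{0<t-s<\tau}\widehat K_\gamma$. Those bounds would follow if each is dominated, via positivity of $\widehat K_\gamma$, by the untruncated kernel in a Schur test. In the size-estimate setting of AIN this domination holds, so I expect the first route to suffice. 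The operator norm is then bounded by the constant of Lemma~\ref{lem:TgammaL2}, and the extension from bounded compactly supported $f$ to $\L^2_{t,x,v}$ is by density.
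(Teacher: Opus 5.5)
Your proposal is correct and follows the paper's reasoning. The paper only remarks that the $\L^2$ bound of Lemma~\ref{lem:TgammaL2}, taken from \cite[Prop~2.18]{AIN}, rests on size estimates for the nonnegative kernel $\widehat K_\gamma$, so inserting the bounded factor $\zeta_\epsilon(t-s)$ changes nothing.

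Your worry about a possible non-Schur proof in \cite{AIN} is unfounded, so the fallback is unnecessary. Since $0\le \zeta_\epsilon(t-s)\widehat K_\gamma\le \widehat K_\gamma$, the Fourier-side integral defining $T_{\gamma,\epsilon}$ applied to $u$ is pointwise bounded by the one defining $T_\gamma$ applied to $|u|$. Hence any $\L^2$ bound for $T_\gamma$, however it was proved, transfers to $T_{\gamma,\epsilon}$ with the same constant.
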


Thus, the conditions (i) and (iii) are met. It remains to check (ii), which is the main objective of this section.

\begin{lem} \label{lem:hormander}
	There exist constants $C= C(\beta,\gamma,d)<\infty$ and $N = N(\beta,\gamma,d) > 0$ 
	such that the kernels $K_{\gamma,\epsilon}$ satisfy, uniformly in $\varepsilon\in (0,1)$, the H\"ormander's condition: for all $r_0>0$ and any $(s,y,w),(s',y',w') \in \R^{1+2d}$ with $\rho((s,y,w),(s',y',w')) \le r_0$,
	\begin{enumerate}
		\item[$ $] \begin{equation} \label{eq:hormander1}
		 	\int\limits_{ 
			\rho((t,x,v),(s,y,w)) \ge N r_0}
			 \hspace{-0.8cm} \abs{K_{\gamma,\epsilon}(t,s,x-y,v-w)-K_{\gamma,\epsilon}(t,s',x-y',v-w')} \dd(t,x,v) \le C,
		\end{equation}
			\item[$ $] \begin{equation} \label{eq:hormander2}
		 	\int\limits_{ 
			\rho((t,x,v),(s,y,w)) \ge N r_0
			} \hspace{-0.8cm} \abs{K_{\gamma,\epsilon}(s,t,y-x,w-v)-K_{\gamma,\epsilon}(s',t,y'-x,w'-v)} \dd(t,x,v) \le C.
		\end{equation}
	\end{enumerate} 
	
\end{lem}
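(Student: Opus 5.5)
The plan is to verify \eqref{eq:hormander1} and \eqref{eq:hormander2} using only the integrated estimates of Proposition~\ref{prop:intkernel}, and not pointwise bounds. I will split the integration region and the difference of kernels into elementary moves, each matched to a quantity controlled in \eqref{eq:Kxmarginal}--\eqref{eq:Kxvmarginal}. After translating, I may take $(s,y,w)$ as the base point. The region $\rho((t,x,v),(s,y,w))\ge Nr_0$ is split into two parts. In the first part $I_1$ we have $|t-s|\ge c\,(Nr_0)^{2\beta}$. In the second part $I_2$ we have $|t-s|$ small, so that $|v-w|\gtrsim Nr_0$ or $|x-y+s(v-w)|^{\frac1{2\beta+1}}\gtrsim Nr_0$, since the $t$-versus-$s$ shift difference is controlled by $|t-s||v-w|$. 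Choosing $N$ large guarantees $|t-s'|\simeq|t-s|$ on $I_1$, so the cutoffs $\zeta_\eps$ and $\mathds 1_{s<t}$ play no role except through $M_\zeta$.

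On $I_2$ I will not use cancellation. I estimate each of the two kernels separately, using the marginals along characteristics.
\begin{itemize}
\item If $|v-w|\ge R\simeq Nr_0$, integrating \eqref{eq:Kxmarginal} in $x$ and then in $v$ over $|v-w|\ge R$ gives $\int_{|v|\ge R}(t-s)^{0}(|t-s|^{\frac1{2\beta}}+|v|)^{-d-2\beta}\,\dd v\lesssim R^{-2\beta}$. Integrating in $t-s\lesssim R^{2\beta}$ then gives $O(1)$.
\item If instead $|x-y+s(v-w)|\ge R^{2\beta+1}$, I change variables $x\mapsto x-sv$ and use \eqref{eq:Kvmarginal} in the same way, with exponent $(2\beta+1)d+2\beta$ and the region $|x|\ge R^{2\beta+1}$.
\end{itemize}
For the primed kernel the same works, since $\rho((s,y,w),(s',y',w'))\le r_0\ll Nr_0$ keeps us in the analogous region relative to $(s',y',w')$, by the quasi-triangle inequality of Lemma~\ref{lem:kindist}.

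On $I_1$ I use cancellation along a path from $(s,y,w)$ to $(s',y',w')$ made of three moves.
\begin{itemize}
\item \emph{Time move} $s\to s'$. This is bounded via $\partial_sK_\gamma$ (plus the harmless $\zeta_\eps'$ term). By \eqref{eq:Kxvmarginal} for $(t-s)\partial_sK_\gamma$, its contribution is $\int_{|t-s|\gtrsim (Nr_0)^{2\beta}}|s-s'|(t-s)^{-2}\,\dd t\lesssim r_0^{2\beta}(Nr_0)^{-2\beta}\lesssim1$.
\item \emph{Velocity move along characteristics}: $w\to w'$ together with $y\to y-s'(w'-w)$. Its derivative is exactly $(\nabla_v-s'\nabla_x)K_\gamma$. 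It is bounded by $|w-w'|\,(t-s)^{-1-\frac1{2\beta}}$ after integrating in $(x,v)$, and integrating in $t$ gives $r_0\cdot (Nr_0)^{-1}\lesssim1$.
\item \emph{Pure $x$-translation} by $h$, with $|h|^{\frac1{2\beta+1}}\lesssim r_0$ by the definition of $\rho$.
\end{itemize}

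The third move is the step I expect to be the main obstacle, because Proposition~\ref{prop:intkernel} contains no $\nabla_x$ bound. I would first try to supply it by the same scheme. In rescaled variables, $\nabla_x$ corresponds to the multiplier $\varphi'$ with prefactor $(t-s)^{-1-\frac1{2\beta}}$. The symbol $\varphi$ gains $2^{(2\beta+1)j}$ in Lemma~\ref{lem:kinetic}, so Lemma~\ref{lem:good} and Proposition~\ref{prop:Ktilde} extend verbatim to $\mathfrak D=\nabla_x$. This yields
\[
\int_{\R^{2d}}|\nabla_xK_\gamma|\,\dd(x,v)\lesssim (t-s)^{-2-\frac1{2\beta}},
\]
and hence a contribution $|h|\int_{|t-s|\gtrsim (Nr_0)^{2\beta}}(t-s)^{-2-\frac1{2\beta}}\,\dd t\lesssim r_0^{2\beta+1}(Nr_0)^{-2\beta-1}\lesssim1$. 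If $s'$ in the characteristic shift causes trouble, I would do the time move last, so that all spatial moves use a single $s$.

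Finally, \eqref{eq:hormander2} follows by the same argument applied to the adjoint kernel, now integrating over $(t,x,v)$ in the second slot. Here I use the symmetry of $\rho$ (which contains both $t$- and $s$-shifted terms) and the fact that all marginals in Proposition~\ref{prop:intkernel} are along characteristics. The only change is that the time derivative falls on $t$; it is handled by $\partial_t\widehat K_\gamma$, which produces the symbols $|\xi-t\varphi|^{2\beta}$ and $\gamma\,m_2$ at the shifted point. These are estimated exactly as in Lemma~\ref{lem:good}.
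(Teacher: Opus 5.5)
Your proof is correct, but it produces the $x$-displacement differently from the paper.

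\textbf{Your route.} You add an integrated bound $\int_{\R^{2d}}|\nabla_x K_\gamma|\,\dd(x,v)\lesssim (t-s)^{-2-\frac1{2\beta}}$. This does follow from the proofs of Lemma~\ref{lem:good} and Proposition~\ref{prop:Ktilde}: the factor $\varphi$ only improves the Littlewood--Paley gain, and the derivative falls on the good part only. On the far region $|t-s|\gtrsim (Nr_0)^{2\beta}$ you then connect the two points by three moves:
\begin{itemize}
\item a time move;
\item a move along $\nabla_v-s'\nabla_x$;
\item a pure $x$-translation by $h=y-y'+s'(w-w')$, whose size is controlled by the symmetrised $\rho$.
\end{itemize}
On the near region you use only size bounds for each kernel separately, and pass to the primed point via the quasi-triangle inequality.

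\textbf{The paper's route.} The paper deliberately avoids any $\nabla_x$ bound. It realises the commutator $[\nabla_v-s\nabla_x,\partial_s]=\nabla_x$ by moving along characteristics at two different times, $s$ and $\hat s=s'-\theta r_0^{2\beta}$. The separation $|\hat s-s|\ge\theta r_0^{2\beta}$ is exactly what keeps $|c_1|,|c_2|\lesssim r_0$. As a result only the vector fields of Proposition~\ref{prop:intkernel} are needed, at the price of four trajectories and a case analysis along each. Your route costs one extra, easy kernel estimate and buys shorter, more transparent bookkeeping.

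\textbf{The second estimate.} For \eqref{eq:hormander2} the two routes diverge further. You redo the argument with derivatives in the first time slot, which requires new $\L^1$ bounds for the symbols $|\xi-\varphi|^{2\beta}$ and $m_2(\varphi,\xi-\varphi)$ in rescaled variables. The paper instead uses the identity \eqref{eq:kernelidentity}, $K_{\gamma,\epsilon}(s,t,x,v)=K_{2\beta-\gamma,\epsilon}(-t,-s,-x,v)$, together with the invariance of $\rho$ under $(t,x,v)\mapsto(-t,x,-v)$. This reduces \eqref{eq:hormander2} to \eqref{eq:hormander1} with $\gamma$ replaced by $2\beta-\gamma$, so no new estimate is needed. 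The same identity is also the quickest justification of the first-slot bounds you invoke.
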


\begin{rem}
	The alert reader might have noticed that we only prove estimates on the $\partial_s$ and $\nabla_v-s\nabla_x$ 
	derivatives of $K_{\gamma,\epsilon}$. 
	Instead of directly estimating the difference
	\begin{equation*}
		(t,s,x-y,v-w) \to (t,s',x-y',v-w'),
	\end{equation*}
	where we would need control along the $\partial_s,\nabla_x, \nabla_v$ derivatives of $K_{\gamma,\epsilon}$, 
	we exhibit the underlying hypoelliptic structure. 
	We will use control only along the vector fields $\partial_s$ and $\nabla_v -s \nabla_x$. 
	Recall that we are working in shifted variables, so $\partial_s$ is the analogue of $\partial_s+v\cdot\nabla_x$ and 
	$\nabla_v-s\nabla_x$ corresponds to $\nabla_v$. 
	In particular, $[\nabla_v-s\nabla_x,\partial_s] = \nabla_x$ so that these vector fields satisfy the H\"ormander rank condition. 
	We refer to \cite{MR4875497,dietert2025criticaltrajectorieskineticgeometry} for a related analysis.
\end{rem}

\begin{proof}[Proof of Lemma \ref{lem:hormander}]
We let $N>0$, $\theta>0$ and $\alpha_1,\alpha_2,\alpha_3 \in [0,1]$ with $\alpha_1 + \alpha_2+\alpha_3 = 1$ with 
\begin{equation*}
	\alpha_3^{2\beta+1}-\alpha_1^{2\beta} \alpha_2>0
\end{equation*}
(e.g. $\alpha_1 = \frac{11}{20}$, $\alpha_2 = \frac{1}{20}$ and $\alpha_3 = \frac{4}{10}$ gives the lower bound $\frac{391}{8000}$). 
The parameter $\theta$ can be chosen arbitrarily, and $N$ will be chosen large at the end of the proof so that the quantities $B_{i,j}$ defined in each step are positive.

We will repeatedly use Proposition~\ref{prop:intkernel} for $K_{\gamma,\epsilon}$ and for the kernels obtained by applying the vector fields $\partial_s$ and $\nabla_v-s\nabla_x$. 
The only nontrivial $\epsilon$ contribution is the $\partial_s$--derivative, because the cutoff $\zeta_\epsilon(t-s)$ depends on $s$. 
We therefore record the identity
\[
	\partial_s K_{\gamma,\epsilon}(t,s,x,v)=\zeta_\epsilon(t-s)\,\partial_s K_\gamma(t,s,x,v)-\zeta_\epsilon'(t-s)\,K_\gamma(t,s,x,v),
\]
and hence
\[
	(t-s)\partial_s K_{\gamma,\epsilon}=\zeta_\epsilon(t-s)\,(t-s)\partial_s K_\gamma-(t-s)\zeta_\epsilon'(t-s)\,K_\gamma.
\]
We have the uniform bound $|(t-s)\zeta_\epsilon'(t-s)|\le M_\zeta$ for all $s<t$ and all $\epsilon>0$. 
Consequently, Proposition~\ref{prop:intkernel} for $K_\gamma$ and $(t-s)\partial_s K_\gamma$ implies the same marginal estimates for $(t-s)\partial_s K_{\gamma,\epsilon}$ with constants independent of $\epsilon$. 

Let $r_0>0$ and let $(s,y,w),(s',y',w') \in \R^{1+2d}$ with $\rho((s,y,w),(s',y',w')) \le r_0$. 
By symmetry we may assume $s' \leq s$ so that $s-s' = \abs{s'-s}$. 

\medskip
Let us first consider the case $s<t$. 
We aim to estimate
	\begin{equation*}
		\int\limits_{ 
		\rho((t,x,v),(s,y,w)) \ge N r_0} \abs{K_{\gamma,\epsilon}(t,s,x-y,v-w)-K_{\gamma,\epsilon}(t,s',x-y',v-w')}  \dd(t,x,v) 
	\end{equation*}
	by some constant from above. 
	
	Note that $\abs{s'-s}\le r_0^{2\beta}$. 
	For technical reasons, i.e.\ to control the spatial difference, we need to be able to control this difference from below. 
	Let $\hat{s} = s'-  \theta r_0^{2\beta}$, then $\theta r_0^{2\beta} \le  \abs{\hat{s}-s} \le (1+\theta) r_0^{2\beta}$.  
	We follow the trajectories $\gamma_i \colon [0,1] \to \R^{1+1+2d}$, $i = 1,\dots,4$ defined by
	\begin{align*}
		\gamma_1(r) &= (t,s,x-y-sc_1r,v-w+c_1r), \\
		\gamma_2(r) &= (t,s+r(\hat{s}-s),x-y-sc_1,v-w+c_1), \\
		\gamma_3(r) &= (t,\hat{s},x-y-sc_1-\hat{s}c_2r,v-w+c_1+c_2r) \\
		\gamma_4(r) &= (t,\hat{s}+r(s'-\hat{s}),x-y',v-w'),
	\end{align*}
	for $r \in [0,1]$ and
	\begin{equation*}
		c_1 = \frac{\hat{s}}{\hat{s}-s}(w-w') + \frac{1}{\hat{s}-s}(y-y'), \quad c_2 = -\frac{s}{\hat{s}-s}(w-w')-\frac{1}{\hat{s}-s}(y-y').
	\end{equation*}
	We remark that $\hat s < s'  \leq s <t$ and that the second component of these trajectories is smaller 
	or equal to $s$, and, in particular, smaller than $t$.
	Note that $\gamma_1,\gamma_3$ are along the vector field $\nabla_v-s\nabla_x$ while $\gamma_2,\gamma_4$ 
	moves along $\partial_s$.
	The coefficients $c_1,c_2$ are chosen such that the concatenation of the four paths is a continuous trajectory from  $ (t,s,x-y,v-w) $ to $(t,s',x-y',v-w')$.
	The choice of $\hat s$ implies  the following bounds 
	\begin{equation} \label{eq:boundci}
		\abs{c_i} \le \left( \frac{2^{2\beta+1}}{\theta} +1 \right) r_0, \quad i = 1,2. 
	\end{equation}
	
	\medskip
	We write 
	\begin{align}
		&K_{\gamma,\epsilon}(t,s,x-y,v-w)-K_{\gamma,\epsilon}(t,s',x-y',v-w') \nonumber\\
		&=K_{\gamma,\epsilon}(t,s,x-y,v-w)-K_{\gamma,\epsilon}(\gamma_1(1)) \label{eq:diffK1}\\
		&\hphantom{=}+K_{\gamma,\epsilon}(\gamma_2(0))-K_{\gamma,\epsilon}(\gamma_2(1))\label{eq:diffK2}\\
		&\hphantom{=}+K_{\gamma,\epsilon}(\gamma_3(0))-K_{\gamma,\epsilon}(\gamma_3(1))\label{eq:diffK3}\\
		&\hphantom{=}+K_{\gamma,\epsilon}(\gamma_4(0))-K_{\gamma,\epsilon}(t,s',x-y',v-w')\label{eq:diffK4}
	\end{align}
	noting that $\gamma_1(1) = \gamma_2(0)$, $\gamma_2(1) = \gamma_3(0)$ and $\gamma_3(1) = \gamma_4(0)$. 	
	
	The integrals are over the set 
	\begin{align*}
		&\rho((t,x,v),(s,y,w))\\
		&= \abs{t-s}^{\frac{1}{2\beta}} + \frac{1}{2} \abs{x-y+t(v-w)}^{\frac{1}{2\beta+1}}+ \frac{1}{2}\abs{x-y+s(v-w)}^{\frac{1}{2\beta+1}}+ \abs{v-w} \ge N r_0.
	\end{align*}
	We split this condition into the following disjoint cases 
	\begin{enumerate}
		\item[Case 1:] $\abs{t-s}^{\frac{1}{2\beta}} >  \alpha_1 N r_0$,
		\item[Case 2:] $\abs{t-s}^{\frac{1}{2\beta}} \le  \alpha_1 N r_0$ and $\abs{v-w} >\alpha_2 Nr_0$, 
		\item[Case 3:] $\abs{t-s}^{\frac{1}{2\beta}} \le  \alpha_1 N r_0$, $\abs{v-w} \le \alpha_2 Nr_0$ and 
		\begin{equation*}
			\frac{1}{2}\abs{x-y+s(v-w)}^{\frac{1}{2\beta+1}} > \frac{1}{2}\alpha_3 Nr_0.
		\end{equation*}
		\item[Case 4:] $\abs{t-s}^{\frac{1}{2\beta}} \le  \alpha_1 N r_0$, $\abs{v-w} \le \alpha_2 Nr_0$, $\frac{1}{2}\abs{x-y+s(v-w)}^{\frac{1}{2\beta+1}} \le \frac{1}{2}\alpha_3 Nr_0$ and 
		\begin{equation*}
			\frac{1}{2} \abs{x-y+t(v-w)}^{\frac{1}{2\beta+1}} > \frac{1}{2} \alpha_3 Nr_0.
		\end{equation*}
	\end{enumerate}
	Denote by $\cC_j$ the set of points $(t,x,v)$ satisfying the conditions in Case $j$, $j = 1,\dots,4$. 
	
	If neither of these cases is satisfied we have $\rho((t,x,v),(s,y,w)) \le N r_0 $ as 
	$\alpha_1 + \alpha_2 + \alpha_3 = 1$, hence
	\begin{equation*}
		\cC_1 \cup \cdots \cup \cC_4 = \left\{ (t,x,v) \in \R^{1+2d} : \rho((t,x,v),(s,y,w)) > N r_0 \right\},
	\end{equation*}
	and we are left to estimate four integrals on each of the domains $\cC_j$, $j = 1,\dots,4$. 
	In the first case, we use the estimate integrated in $x$ and $v$. 
	In the second case, we use the decay of the $x$-marginal, and in the last two cases, we use the decay of the $v$-marginal.
	
	Before we start, we recall the following elementary estimate
	\begin{align} 
		\int_{\abs{z} \ge a} \frac{1}{(b+\abs{z}^\lambda)^{c}} \dd z &\lesssim  \int_{a}^\infty r^{(d-1)-\lambda c}  \dr  = \frac{a^{d-\lambda c}}{\lambda c-d} 
		\label{eq:decayExtBall}
	\end{align}
	for all $a,b,c,\lambda>0$ with $\lambda c-d>0$.

	\medskip
	\textbf{Estimate along $\gamma_1$.} As $\gamma_1(0) = (t,s,x-y,v-w)$ we can write the difference in \eqref{eq:diffK1} as
	\begin{align*}
		I_1 &:= \int\limits_{\rho((t,x,v),(s,y,w)) \ge N r_0}\abs{K_{\gamma,\epsilon}(t,s,x-y,v-w)-K_{\gamma,\epsilon}(\gamma_1(1))}  \dd(t,x,v) \\
		&=\int\limits_{\rho((t,x,v),(s,y,w)) \ge N r_0}\abs{K_{\gamma,\epsilon}(\gamma_1(1))-K_{\gamma,\epsilon}(\gamma_1(0))}  \dd(t,x,v) =: I_1^1+I_1^2+I_1^3+I_1^4,
	\end{align*}
	where each of the terms $I_1^j$ corresponds to the integral on $\cC_j$, $j = 1,\dots, 4$. 
	
	\medskip
	\textbf{Case 1:}
	We drop the condition on $x$ and $v$, employ the full force of the mean value inequality and estimate by \eqref{eq:Kxvmarginal}
	\begin{align*}
		I_1^1&=\int\limits_{\abs{t-s}^{\frac{1}{2\beta}} \ge  \alpha_1 N r_0}  \int_{\R^{2d}} \abs{\int_0^1 \frac{\dd}{ \dr}K_{\gamma,\epsilon}(\gamma_1(r))  \dr} \dd(x,v)  \dt \\
		&\le \int\limits_{\abs{t-s}^{\frac{1}{2\beta}}\ge  \alpha_1 N r_0} \int_{\R^{2d}} \int_0^1 \abs{c_1} \abs{[(\nabla_v-s\nabla_x) K_{\gamma,\epsilon}](\gamma_1(r))}  \dr \dd(x,v)  \dt \\
		&= \abs{c_1}\int_0^1\int\limits_{\abs{t-s}^{\frac{1}{2\beta}}\ge  \alpha_1 N r_0}\int_{\R^{2d}}  \abs{[(\nabla_v-s\nabla_x) K_{\gamma,\epsilon}](\gamma_1(r))}  \dd(x,v)  \dt \dr \\
		&\lesssim \abs{c_1}\int_0^1\int_{s+(\alpha_1 N)^{2\beta}r_0^{2\beta}}^\infty \frac{1}{(t-s)^{1+\frac{1}{2\beta}}}  \dt \dr \\
		&= 2\beta \abs{c_1}\int_0^1 \frac{1}{\alpha_1 Nr_0}  \dr \le 2\beta \left(\frac{ 2^{2\beta+1}}{\theta}+1\right) \frac{1}{\alpha_1 N }. 
	\end{align*}
	We used that as $s<t$ the condition $\abs{t-s}^{\frac{1}{2\beta}}\ge \alpha_1 Nr_0$ 
	can be written as $s+(\alpha_1Nr_0)^{2\beta}\le t < \infty $.
	
	\medskip
	When $\abs{t-s}^{\frac{1}{2\beta}} \le \alpha_1 N r_0$, it suffices to use only the decay of each of the terms on their own 
	and we do not need to employ the mean value inequality.

	\textbf{Case 2:}
	We use the triangle inequality to estimate
	\begin{align*}
		I_1^2&=\int\limits_{\substack{\abs{t-s}^{\frac{1}{2\beta}} \le  \alpha_1 N r_0\\ \abs{v-w}\ge \alpha_2 N r_0}} \abs{K_{\gamma,\epsilon}(t,s,x-y,v-w)-K_{\gamma,\epsilon}(t,s,x-y-sc_1,v-w+c_1)}  \dd(t,x,v) \\
		&\le\int\limits_{\substack{\abs{t-s}^{\frac{1}{2\beta}} \le  \alpha_1 N r_0\\ \abs{v-w}\ge \alpha_2 N r_0}} \abs{K_{\gamma,\epsilon}(t,s,x-y,v-w)}+\abs{K_{\gamma,\epsilon}(t,s,x-y-sc_1,v-w+c_1)}  \dd(t,x,v) \\
		&=:J_0+J_1.
	\end{align*}

We substitute $\tilde{v} = v-w+c_1r$ and $\tilde{x} = x-y-sc_1r$, where we estimate the integral domain as
\begin{align*}
	\abs{\tilde{v}}&= \abs{v-w+c_1r} \\
	&\ge \abs{\alpha_2Nr_0-\abs{c_1}r} 
	\ge \left(\alpha_2 N - \frac{2^{2\beta+1}}{\theta}-1\right)r_0 =: B_{1,2} r_0
\end{align*}
as a consequence of $\abs{c_1} \le \left(\frac{2^{2\beta+1}}{\theta}+1\right) r_0$. 
Then estimate by \eqref{eq:Kxmarginal} and shift  $\tilde{t} = t-s$. 
Dropping tildes, we obtain
\begin{align*}
	J_1 &=  \int_{s}^{s+(\alpha_1 Nr_0)^{2\beta}}	\int_{\abs{v}\ge \alpha_2 Nr_0} \int_{\R^d} \abs{K_{\gamma,\epsilon}(t,s,x,v)}  \dx  \dv  \dt \\
	&\le  \int_{0}^{(\alpha_1 Nr_0)^{2\beta}} 	\int_{\abs{v}\ge B_{1,2} r_0} \frac{1}{(t^{\frac{1}{2\beta}}+\abs{v})^{d+2\beta}}  \dv  \dt\\
	&\lesssim_{\beta,d} \int_{0}^{(\alpha_1 Nr_0)^{2\beta}} (B_{1,2} r_0)^{-2\beta}	   \dt \\
	&\le   \left(\frac{\alpha_1N }{B_{1,2}} \right)^{2\beta}.
\end{align*}
The estimate for $J_0$ is similar. 

\textbf{Case 3:}
We use the triangle inequality and change variables $\tilde{v}= v-w+c_1 r$ and $\tilde{x}-s\tilde{v} = x-y-sc_1r$ for $r = 0,1$. 
The integral domains are
\begin{equation*}
	\abs{\tilde{v}-c_1r} \le \alpha_2 N r_0 \;\mbox{ and }\; \abs{\tilde{x}} \ge (\alpha_3Nr_0)^{2\beta+1}.
\end{equation*}
Dropping tildes and the $v$ condition, we are left to estimate
\begin{equation*}
	J = \int_{s}^{s+(\alpha_1 Nr_0)^{2\beta}}	\int_{\abs{x} > (\alpha_3 Nr_0)^{2\beta+1}} \int_{\R^d} \abs{K_{\gamma,\epsilon}(t,s,x-sv,v)}  \dv  \dx  \dt
\end{equation*}
for $r = 0,1$. 
Employing \eqref{eq:Kvmarginal} and shifting $\tilde{t} = t-s$ we deduce
\begin{align*}
	J &\le \int_{0}^{(\alpha_1 Nr_0)^{2\beta}}	\int_{\abs{x}  > (\alpha_3 Nr_0)^{2\beta+1}} \frac{1}{(t^{\frac{1}{2\beta}}+\abs{x}^{\frac{1}{2\beta+1}})^{(2\beta+1)d+2\beta}} \dx  \dt \\
	&\lesssim_{\beta,d} \left(\frac{\alpha_1 }{\alpha_3} \right)^{2\beta}.
\end{align*}
	
\textbf{Case 4:}	
We argue as in Case 3. 
The only difference is that the integral domain for the $(x,v)$-variables transforms to 
\begin{equation*}
	\abs{\tilde{v}-c_1r} \le {\alpha_2Nr_0} \;\mbox{ and }\; \abs{\tilde{x}+(t-s)(\tilde{v}-c_1r)} \ge (\alpha_3 N r_0)^{2\beta+1}
\end{equation*}
after the change of variables for $r = 0,1$. 
With that, we can estimate
\begin{equation*}
	\abs{x} \ge \left( \alpha_3^{2\beta+1}-\alpha_2\alpha_1^{2\beta} \right) N^{2\beta+1}r_0^{2\beta+1}=:B_{1,4}r_0^{2\beta+1}.
\end{equation*}
Details are left to the reader.

	\medskip
	\textbf{Estimate along $\gamma_2$.} To treat the $s$-difference in \eqref{eq:diffK2} we integrate along the second trajectory as follows
	\begin{align*}
		&\int\limits_{\rho((t,x,v),(s,y,w)) \ge N r_0}\abs{K_{\gamma,\epsilon}(\gamma_2(0))-K_{\gamma,\epsilon}(\gamma_2(1))}  \dd(t,x,v) \\
		&=\int\limits_{\rho((t,x,v),(s,y,w)) \ge N r_0} \abs{\int_0^1 \frac{\dd}{ \dr}K_{\gamma,\epsilon}(\gamma_2(r))  \dr}  \dd(t,x,v) \\
		&\le \int_0^1 \int\limits_{\rho((t,x,v),(s,y,w)) \ge N r_0}  \abs{\hat{s}-s}\abs{[\partial_s K_{\gamma,\epsilon}](\gamma_2(r))}   \dd(t,x,v) \dr.
	\end{align*}
	
	\textbf{Case 1:}
	We enlarge the domain of the $x$ and $v$ integral to be the full space and use the estimate in \eqref{eq:Kxvmarginal} to obtain 
	\begin{align*}
		&\int_0^1\int\limits_{\substack{\abs{t-s}^{\frac{1}{2\beta}} \ge  \alpha_1 N r_0}}  \abs{\hat{s}-s}\abs{[\partial_s K_{\gamma,\epsilon}](\gamma_2(r))}   \dd(t,x,v) \dr \\
		&\lesssim \abs{\hat{s}-s}\int_0^1\int\limits_{\abs{t-s}^{\frac{1}{2\beta}}\ge  \alpha_1 N r_0}\int_{\R^{2d}}  \abs{[\partial_s K_{\gamma,\epsilon}](\gamma_2(r))}  \dd(x,v)  \dt \dr \\
		&\le \abs{\hat{s}-s}\int_0^1\int\limits_{\abs{t-s}^{\frac{1}{2\beta}}\ge  \alpha_1 N r_0}  \frac{1}{(t-s-r(\hat{s}-s))^2}   \dt  \dr\\
		&= \abs{\hat{s}-s}\int_0^1  \int_{s+(\alpha_1 N)^{2\beta}r_0^{2\beta}}^\infty \frac{1}{(t-s-r(\hat{s}-s))^2}  \dt  \dr \\
		&= \abs{\hat{s}-s}\int_0^1 \frac{1}{(\alpha_1 N)^{2\beta}r_0^{2\beta}+\abs{\hat{s}-s}r}   \dr\\
		&= \int_0^{\abs{\hat{s}-s}} \frac{1}{(\alpha_1 N)^{2\beta} r_0^{2\beta}+\tau } \dd \tau  = \log\left( 1+ \frac{\abs{\hat{s}-s}}{(\alpha_1 N)^{2\beta} r_0^{2\beta}} \right)  \le \log\left( 1+ \frac{1+\theta}{(\alpha_1 N)^{2\beta}} \right).
	\end{align*}
	Here, the condition $\abs{t-s}^{\frac{1}{2\beta}}\ge \alpha_1 Nr_0$ translates to $s+(\alpha_1Nr_0)^{2\beta}\le t < \infty $.
	We have used that $\hat{s}<s$ and $\abs{\hat{s}-s} \le (1+\theta) r_0^{2\beta}$. 	
	
	\medskip
	\noindent\textbf{Case 2:} 
	We enlarge the domain of the $x$ integral to be the full space, then use translation invariance of the Lebesgue measure to substitute $\tilde{x} = x-y-sc_1$. 
	Next, we change variables as $\tilde{v} = v-w+c_1$, which transforms the condition $\abs{v-w}\ge \alpha_2 Nr_0$ to $\abs{\tilde{v}-c_1} \ge \alpha_2 Nr_0$. 
	Here, we can estimate as follows
	\begin{equation*}
		\abs{\tilde{v}}\ge  \abs{\abs{\tilde{v}-c_1}-\abs{c_1}} \ge \left(\alpha_2 N -\frac{2^{2\beta+1}}{\theta}-1\right) r_0 =: B_{2,2} r_0.
	\end{equation*}
	
	Together with \eqref{eq:Kxmarginal} we obtain 
	\begin{align*}
		&\int\limits_{\substack{\rho((t,x,v),(s,y,w)) \ge N r_0\\\abs{t-s}^{\frac{1}{2\beta}} \le  \alpha_1 N r_0\\ \abs{v-w}\ge \alpha_2 N r_0}} \int_0^1 \abs{\hat{s}-s}\abs{[\partial_s K_{\gamma,\epsilon}](\gamma_2(r))}  \dr  \dd(t,x,v) \\
		&\le \abs{\hat{s}-s}\int_0^1  \int\limits_{\abs{t-s}^{\frac{1}{2\beta}}\le  \alpha_1 N r_0} \int\limits_{\abs{v-w}\ge \alpha_2 N r_0}\int_{\R^{d}}  \abs{[\partial_s K_{\gamma,\epsilon}](\gamma_2(r))}   \dx  \dv  \dt  \dr \\
		&= \abs{\hat{s}-s}\int_0^1\!\!\!\!\!\! \int\limits_{\abs{t-s}^{\frac{1}{2\beta}}\le  \alpha_1 N r_0} \int\limits_{\abs{v-w}\ge \alpha_2 N r_0}\int_{\R^{d}}  \abs{[\partial_s K_{\gamma,\epsilon}](t,s+r(\hat{s}-s),x,v-w+c_1)}   \dx  \dv  \dt  \dr \\
		&= \abs{\hat{s}-s}\int_0^1\!\!\! \int\limits_{\abs{t-s}^{\frac{1}{2\beta}}\le  \alpha_1 N r_0} \int\limits_{\abs{\tilde{v}-c_1}\ge \alpha_2 N r_0}\int_{\R^{d}}  \abs{[\partial_s K_{\gamma,\epsilon}](t,s+r(\hat{s}-s),x,\tilde{v})}   \dx  \dv  \dt  \dr \\
		&\le \abs{\hat{s}-s}\int_0^1 \!\!\!\int\limits_{\abs{t-s}^{\frac{1}{2\beta}}\le  \alpha_1 N r_0} \int\limits_{\abs{\tilde{v}}\ge B_{2,2} r_0}\int_{\R^{d}}  \abs{[\partial_s K_{\gamma,\epsilon}](t,s+r(\hat{s}-s),x,\tilde{v})}   \dx  \dv  \dt  \dr \\
		&\lesssim \abs{\hat{s}-s}\int_0^1\!\!\!\!\!\! \int\limits_{\abs{t-s}^{\frac{1}{2\beta}}\le  \alpha_1 N r_0} \int\limits_{\abs{\tilde{v}}\ge B_{2,2} r_0} \frac{1}{\abs{t-s-r(\hat{s}-s)}} \frac{1}{\left(\abs{t-s-r(\hat{s}-s)}^{\frac{1}{2\beta}}+\abs{\tilde{v}}\right)^{d+2\beta}} \dd\tilde{v}  \dt  \dr \\
		&\lesssim \abs{\hat{s}-s}\int_0^1 \int\limits_{\abs{t-s}^{\frac{1}{2\beta}}\le  \alpha_1 N r_0} \frac{1}{\abs{t-s-r(\hat{s}-s)}} \frac{1}{(B_{2,2}r_0)^{2\beta}}  \dt  \dr \\
		&= \frac{\abs{\hat{s}-s}}{(B_{2,2}r_0)^{2\beta}}\int_0^1 \int\limits_{s}^{s+(\alpha_1 Nr_0)^{2\beta}}\frac{1}{t-s+r\abs{\hat{s}-s}}  \dt  \dr \\
		&= \frac{\abs{\hat{s}-s}}{(B_{2,2}r_0)^{2\beta}}\int_0^1 \log \left( 1+ \frac{(\alpha_1 Nr_0)^{2\beta}}{r \abs{\hat{s}-s}} \right)  \dr \\
		&= \frac{1}{B_{2,2}^{2\beta}}\int_0^{\frac{\abs{\hat{s}-s}}{ r_0^{2\beta}}} \log \left( 1+ \frac{(\alpha_1 N)^{2\beta}}{\tau } \right) \dd\tau \\
		&\le  \frac{1}{B_{2,2}^{2\beta}}\int_0^{1+\theta} \log \left( 1+ \frac{(\alpha_1 N)^{2\beta}}{\tau } \right) \dd\tau \lesssim 1,
	\end{align*}
	where we have used $ {\abs{\hat{s}-s}}\le (1+\theta) { r_0^{2\beta}}$ in the second to last estimate. 
	Moreover, we employed the estimate of \eqref{eq:decayExtBall}.
	 
	\medskip
	\textbf{Case 3:}
	If we want to use \eqref{eq:Kvmarginal} we need to write the kernel evaluated at $(t,\tilde{s},\tilde{x}-\tilde{s}\tilde{v},\tilde{v})$. 
	We abbreviate $\tilde{s}(r) = s+r(\hat{s}-s)$ and substitute
	\begin{equation*}
		\tilde{v} = v-w+c_1
	\end{equation*}
	and 
	\begin{equation*}
		\tilde{x} -\tilde{s}(r)\tilde{v}= x-y-sc_1.
	\end{equation*}
	The integral domain
	\begin{equation*}
		\abs{v-w}\le \alpha_2 N r_0 \; \mbox{ translates to } \; \abs{\tilde{v}-c_1}\le \alpha_2 N r_0 
	\end{equation*}
	and	
	\begin{equation*}
		\frac{1}{2}\abs{x-y+s(v-w)}^{\frac{1}{2\beta+1}} \ge \frac{1}{2}\alpha_3 Nr_0
	\end{equation*}
	transforms to
	\begin{equation*}
			\abs{\tilde{x}-r(\hat{s}-s)\tilde{v}}  \ge (\alpha_3 Nr_0)^{2\beta+1}.
	\end{equation*}
	
	We use that $\abs{\hat{s}-s} \le (1+\theta) r_0^{2\beta}$ to deduce
	\begin{align*}
		 (\alpha_3 Nr_0)^{2\beta+1} &\le \abs{\tilde{x}-r(\hat{s}-s)\tilde{v}}  \le \abs{\tilde{x}} + \abs{\hat{s}-s}\left(\abs{\tilde{v}-c_1}+\abs{c_1}\right)\\
		 &\le \abs{\tilde{x}} + (1+\theta)\alpha_2 N r_0^{2\beta+1} + (1+\theta)\left(\frac{2^{2\beta+1}}{\theta}+1\right) r_0^{2\beta+1},
	\end{align*}
	whence
	\begin{equation*}
		B_{2,3}r_0^{2\beta+1} := \left( \alpha_3^{2\beta+1} N^{2\beta+1}- (1+\theta)\alpha_2 N-(1+\theta)\left(\frac{2^{2\beta+1}}{\theta}+1\right) \right) r_0^{2\beta+1} \le \abs{\tilde{x}}.
	\end{equation*}

	We obtain
	\begin{align*}
		&\int\limits_{\substack{\abs{t-s}^{\frac{1}{2\beta}} \le  \alpha_1 N r_0\\ \abs{v-w}\le \alpha_2 N r_0\\	\abs{x-y+s(v-w)}^{\frac{1}{2\beta+1}} \ge \alpha_3 Nr_0}}  \int_0^1 \abs{\hat{s}-s}\abs{[\partial_s K_{\gamma,\epsilon}](t,s+r(\hat{s}-s),x-y-sc_1,v-w+c_1)}  \dr  \dd(t,x,v) \\
		&=\abs{\hat{s}-s}\int_0^1\int\limits_{\abs{t-s}^{\frac{1}{2\beta}} \le  \alpha_1 N r_0} \int\limits_{\abs{\tilde{v}-c_1}\le \alpha_2 N r_0 } \int\limits_{\abs{\tilde{x}-r(\hat{s}-s)\tilde{v}}^{\frac{1}{2\beta+1}} \ge \alpha_3 Nr_0} \\[1em]
		&\hspace{5cm}  \abs{[\partial_s K_{\gamma,\epsilon}](t,s+r(\hat{s}-s),\tilde{x}-\tilde{s}(r)\tilde{v},\tilde{v})} \dd \tilde{x} \dd \tilde{v}  \dt  \dr \\
		&\le \abs{\hat{s}-s}\int_0^1\int\limits_{\abs{t-s}^{\frac{1}{2\beta}} \le  \alpha_1 N r_0}  \int\limits_{\abs{\tilde{x}}\ge B_{2,3}r_0^{2\beta+1}} \int_{\R^{d}} \abs{[\partial_s K_{\gamma,\epsilon}](t,s+r(\hat{s}-s),\tilde{x}-\tilde{s}(r)\tilde{v},\tilde{v})} \dd \tilde{v} \dd\tilde{x}   \dt  \dr \\
		&\le \abs{\hat{s}-s}\int_0^1\int\limits_{\abs{t-s}^{\frac{1}{2\beta}} \le  \alpha_1 N r_0}  \int\limits_{\abs{x}\ge B_{2,3}r_0^{2\beta+1}} \frac{1}{\abs{t-\tilde{s}(r)}} \frac{1}{(\abs{t-\tilde{s}(r)}^{\frac{1}{2\beta}}+\abs{x}^{\frac{1}{2\beta+1}})^{(2\beta+1)d+2\beta}}  \dx   \dt  \dr \\
		&\lesssim \abs{\hat{s}-s}\int_0^1 \int\limits_{\abs{t-s}^{\frac{1}{2\beta}}\le  \alpha_1 N r_0} \frac{1}{\abs{t-s-r(\hat{s}-s)}} \frac{1}{B_{2,3}^{\frac{2\beta}{2\beta+1}}r_0^{2\beta}}  \dt  \dr \lesssim 1
	\end{align*}
	by the same estimation as in Case 2 at the end. 
	
	\medskip
	
	\textbf{Case 4:} As in Case 3 we want to use \eqref{eq:Kvmarginal}. We substitute
	\begin{equation*}
		\tilde{v} = v-w+c_1
	\end{equation*}
	and 
	\begin{equation*}
		\tilde{x} -\tilde{s}(r)\tilde{v}= x-y-sc_1
	\end{equation*}
	so that
	\begin{equation*}
		\abs{v-w}\le \alpha_2 N r_0 \; \mbox{ translates to } \; \abs{\tilde{v}-c_1}\le \alpha_2 N r_0 
	\end{equation*}
	and	
	\begin{equation*}
		\frac{1}{2} \abs{x-y+t(v-w)}^{\frac{1}{2\beta+1}} \ge \frac{1}{2}\alpha_3 Nr_0
	\end{equation*}
	transforms to
	\begin{equation*}
			\abs{\tilde{x}+(t-s-r(\hat{s}-s))\tilde{v}-(t-s)c_1}^{\frac{1}{2\beta+1}} \ge  \alpha_3 Nr_0.
	\end{equation*}
	As $\abs{\hat{s}-s} \le (1+\theta) r_0^{2\beta}$ we deduce
	\begin{align*}
		 (\alpha_3 Nr_0)^{2\beta+1} &\le \abs{\tilde{x}+(t-s-r(\hat{s}-s))\tilde{v}-(t-s)c_1}  \\
		 &\le \abs{\tilde{x}} + \abs{t-s}\abs{\tilde{v}-c_1}+\abs{\hat{s}-s}\left(\abs{\tilde{v}-c_1}+\abs{c_1}\right) \\
		 &\le \abs{\tilde{x}} \left((\alpha_1N)^{2\beta}\alpha_2N + \alpha_2 N(1+\theta) + (1+\theta)\left(\frac{2^{2\beta+1}}{\theta}+1\right)	\right)r_0^{2\beta+1},
	\end{align*}
	whence
	\begin{equation*}
		B_{2,4}r_0^{2\beta+1} := \left(  \left(\alpha_3^{2\beta+1} -\alpha_1^{2\beta}\alpha_2 \right)N^{2\beta+1} - \alpha_2 N(1+\theta) -(1+\theta)\left(\frac{2^{2\beta+1}}{\theta}+1\right) \right) r_0^{2\beta+1} \le \abs{\tilde{x}}.
	\end{equation*}
	From here, we deduce the bound along the lines of Case 3. 
	
	\medskip
	Combining all four cases, we have proven that 
	\begin{equation*}
		\int\limits_{\rho((t,x,v),(s,y,w)) \ge N r_0}\abs{K_{\gamma,\epsilon}(\gamma_2(0))-K_{\gamma,\epsilon}(\gamma_2(1))}  \dd(t,x,v) \lesssim 1. 
	\end{equation*}
	
	\medskip
	\textbf{Estimate along $\gamma_3$.}
	The goal is to estimate
		\begin{equation*}
		\int\limits_{\rho((t,x,v),(s,y,w)) \ge N r_0}\abs{K_{\gamma,\epsilon}(\gamma_3(0))-K_{\gamma,\epsilon}(\gamma_3(1))}  \dd(t,x,v) \lesssim 1. 
	\end{equation*}
	We follow the argumentation of the estimate along $\gamma_1$. 
	The main thing is to estimate the integral domains. 
	
	\textbf{Case 1.}
	Here, we use the integrated estimates in $(x,v)$. 
	Translation invariance allows us to deduce the estimate with the same argument and the same estimate as before, 
	but with $\abs{c_1}$ replaced by $\abs{c_2}$. 
	
	\medskip
	In the remaining cases, we use the triangle inequality and estimate each term on its own. 
	We only explain the integral domains the rest of the estimation follows along the lines of the estimate for $\gamma_1$.
	
	\medskip
	\textbf{Case 2.}
	We substitute
	\begin{equation*}
		 \tilde{v} = v-w+c_1+c_2r \;\mbox{ and }\;\tilde{x} = x-y-sc_1-\hat{s}c_2r,
	\end{equation*}
	which yields the integral domain
	\begin{equation*}
		\abs{\tilde{v}-c_1-c_2 r} \ge \alpha_2 N r_0.
	\end{equation*}
	We estimate 
	\begin{equation*}
		\abs{\tilde{v}} \ge \left( \alpha_2 N - \frac{2^{2\beta+2}}{\theta} \right) r_0.
	\end{equation*}
	
	\medskip
	\textbf{Case 3.}
	We change variables
	\begin{equation*}
		 \tilde{v} = v-w+c_1+c_2r \;\mbox{ and }\;\tilde{x}-\hat{s}\tilde{v} = x-y-sc_1-\hat{s}c_2r,
	\end{equation*}
	which transforms the integral domain into
	\begin{equation*}
		\abs{\tilde{v}-c_1-c_2r} \le \alpha_2 N r_0 \;\mbox{ and }\; 
		\abs{\tilde{x}-(\hat{s}-s)(\tilde{v}-c_2 r)} \ge (\alpha_3 N r_0)^{2\beta+1}.
	\end{equation*}
	With that, we estimate
	\begin{equation*}
		\abs{\tilde{x}} 
		\ge \left(\alpha_3^{2\beta+1} N^{2\beta+1}-(1+\theta)\alpha_2 N - \frac{1+\theta}{\theta}2^{2\beta+1} \right)r_0^{2\beta+1} 
		=: B_{3,3} r_0^{2\beta+1}.
	\end{equation*}
	
	\medskip
	\textbf{Case 4.}
	The same change of variables as in Case 3 leads to the integral domain
	\begin{equation*}
		\abs{\tilde{v}-c_1-c_2r} \le \alpha_2 N r_0 \;\mbox{ and }\;
		\abs{\tilde{x}-(t-\hat{s})(\tilde{v}-c_1-c_2 r)+(s-\hat{s})c_1} \ge (\alpha_3 N r_0)^{2\beta+1},
	\end{equation*}
	which allows us to estimate
	\begin{equation*}
		\abs{\tilde{x}} 
		\ge \left( \left(\alpha_3^{2\beta+1}-\alpha_1^{2\beta}\alpha_2 \right) N^{2\beta+1}-(1+\theta)\alpha_2 N-(1+\theta)\left(\frac{2^{2\beta+1}}{\theta}+1\right)  \right) r_0^{2\beta+1} 
		=: B_{3,4}r_0^{2\beta+1}
	\end{equation*}
	
	\medskip
	\textbf{Estimate along $\gamma_4$.} 
	Similarly, as for the estimate along $\gamma_2$, we deduce a bound for
	\begin{equation*}
		\int\limits_{\rho((t,x,v),(s,y,w)) \ge N r_0}\abs{K_{\gamma,\epsilon}(\gamma_4(0))-K_{\gamma,\epsilon}(\gamma_4(1))}  \dd(t,x,v) . 
	\end{equation*}
	To keep things short, we only state the needed bounds for $\alpha_1,\alpha_2,\alpha_3,N,\theta$. 
	In Case 1, we need no further assumptions. 
	In Case 2 we change variables $\tilde{v} = v-w'$ and need
	\begin{equation*}
		B_{4,2}:=\alpha_2 N - 1 >0.
	\end{equation*}
	The change of variables in Cases 3 and 4 is 
	\begin{equation*}
		\tilde{v} = v-w' \;\mbox{ and }\;\tilde{x}-\tilde{s}(r) \tilde{v} = x-y'
	\end{equation*}
	with $\tilde{s}(r) = \hat{s}+r(s'-\hat{s})=s'-(1-r)\theta r_0^{2\beta}$. 
	In order to obtain a lower bound in Case 3, we need
	\begin{equation*}
		B_{4,3} := \alpha_3^{2\beta+1}N^{2\beta+1}-(1+\theta)(\alpha_2 N+1)-2^{2\beta+1}>0,
	\end{equation*}
	and for Case 4, we need
	\begin{equation*}
		B_{4,4} := (\alpha_3^{2\beta+1}-\alpha_1^{2\beta}\alpha_2)N^{2\beta+1}-\alpha_1^{2\beta}N^{2\beta}-(1+\theta)(\alpha_2 N+1)-2^{2\beta+1}>0.
	\end{equation*}
		
	\textbf{Case $t<s$.} 
	We only need to treat the case $s'<t$, as for $t<s'$, both kernels are zero. 
	In this case, we need to estimate
	\begin{equation*}
		I = \int\limits_{\substack{\R^{1+2d}\\\rho((t,x,v),(s,y,w)) \ge N r_0}} \abs{K_{\gamma,\epsilon}(t,s',x-y',v-w')}  \dd(t,x,v) 
	\end{equation*}	
	from above.
	
	\textbf{Case 1.}
	For
	\begin{equation*}
		B_{5,1}:= \alpha_1 N -1>0
	\end{equation*}
	then the set of $t$, such that $\abs{t-s}^{\frac{1}{2\beta}} \ge \alpha_1 N r_0$ with $s'<t<s$ 
	and where $\abs{s'-s}^\frac{1}{2\beta} \le r_0$ holds, is empty. 
	So there is no estimate to prove in the first case. 
	
	\textbf{Case 2.} 
	We change variables $\tilde{v} = v-w'$, $\tilde{x} = x-x'$, $\tilde{t} = t-s$, estimate the domain as
	\begin{equation*}
		\abs{\tilde{v}} \ge B_{4,2}r_0
	\end{equation*}
	and conclude (similarly to the estimate for $J_2$ in Case 2 of the estimate for $\gamma_1$)
	\begin{align*}
		I& \lesssim \frac{1+(\alpha_1 N)^{2\beta}}{B_{4,2}^{2\beta}}.
	\end{align*} 
	
	\textbf{Case 3.}
	Here we set $\tilde{v} = v-w'$ and $\tilde{x}-s'\tilde{v} = x-y'$. 
	With that, we can estimate the new integral domain as 
	\begin{equation*}
		\abs{\tilde{x}} \ge \left(\alpha_3^{2\beta+1} N^{2\beta+1}-(\alpha_2 N+1) -2^{2\beta+1}\right)r_0^{2\beta+1} =: B_{4,3}r_0^{2\beta+1}
	\end{equation*}
	from which we conclude the estimate as in Case 2. 
	
	\textbf{Case 4.}
	With the same change of variables as in Case 3, we estimate the $\tilde{x}$ domain as
	\begin{equation*}
		\abs{\tilde{x}} 
		\ge \left(\left(\alpha_3^{2\beta+1}-\alpha_1^{2\beta}\alpha_2 \right) N^{2\beta+1}-\alpha_1^{2\beta}N^{2\beta}-(\alpha_2 N+1)-2^{2\beta+1}\right)r_0^{2\beta+1} 
		=: B_{4,4}r_0^{2\beta+1},
	\end{equation*}
	which allows us to deduce the desired estimate and concludes the proof of \eqref{eq:hormander1}. 
	
	\medskip
	Let us now explain the second estimate \eqref{eq:hormander2}. 
	We argue via duality to reduce the second estimate \eqref{eq:hormander2} to the first estimate \eqref{eq:hormander1}.
	First, we prove that for every $\gamma\in\mathbb R$, every $\epsilon>0$, every $s,t\in\mathbb R$ with $s>t$, 
	and every $(x,v)\in\mathbb R^{2d}$,
	\begin{equation}\label{eq:kernelidentity}
		K_{\gamma,\epsilon}(s,t,x,v)=K_{2\beta-\gamma,\epsilon}(-t,-s,-x,v).
	\end{equation}

	{
	In fact, let $\widehat{K}_{\gamma,\epsilon}(t,s,\varphi,\xi)$ denote the Fourier transform of $K_{\gamma,\epsilon}(t,s,\cdot,\cdot)$ in the $(x,v)$-variables, with dual variables $(\varphi,\xi)$.
	By \eqref{eq:Kgammaeps} and \eqref{eq:Kgamma} and the fact that $-t>-s$ when $s>t$, we compute
	\begin{align*}
		&\widehat{K}_{2\beta-\gamma,\epsilon}(-t,-s,-\varphi,\xi) \\
		&= d_{\beta}\big(-\varphi,\xi-(-t)(-\varphi)\big)^{2\beta-\gamma} d_{\beta}\big(-\varphi,\xi-(-s)(-\varphi)\big)^{\gamma} \\
		&\qquad \cdot \exp\left(-\int_{-s}^{-t}\big|\xi-\tau(-\varphi)\big|^{2\beta}\dd \tau\right) \zeta_\epsilon(s-t) \\
		&= d_{\beta}\big(\varphi,\xi-t\varphi\big)^{2\beta-\gamma} d_{\beta}\big(\varphi,\xi-s\varphi\big)^{\gamma} \exp\left(-\int_{t}^{s}\big|\xi-\sigma\varphi\big|^{2\beta} d\sigma\right) \zeta_\epsilon(s-t) \\
		&= d_{\beta}\big(\varphi,\xi-s\varphi\big)^{\gamma} d_{\beta}\big(\varphi,\xi-t\varphi\big)^{2\beta-\gamma} \exp\left(-\int_{t}^{s}\big|\xi-\sigma\varphi\big|^{2\beta} d\sigma\right) \zeta_\epsilon(s-t) \\
		&= \widehat{K}_{\gamma,\epsilon}(s,t,\varphi,\xi),
	\end{align*}
	where we used $d_{\beta}(-\varphi,\cdot)=d_{\beta}(\varphi,\cdot)$ 
	and the change of variables $\sigma=-\tau$ in the exponential.
	}
	
	{
	Taking inverse Fourier transform in $(\varphi,\xi)$, the identity 
	$\widehat{K}_{\gamma,\epsilon}(s,t,\varphi,\xi)=\widehat{K}_{2\beta-\gamma,\epsilon}(-t,-s,-\varphi,\xi)$ 
	yields
	\[
		K_{\gamma,\epsilon}(s,t,x,v) =\frac{1}{(2\pi)^{2d}}\int_{\mathbb R^{2d}} e^{i(x\cdot\varphi+v\cdot\xi)}  \widehat{K}_{2\beta-\gamma,\epsilon}(-t,-s,-\varphi,\xi) \dd (\varphi, \xi).
	\]
	Changing variables $\varphi\mapsto -\varphi$ gives \eqref{eq:kernelidentity}.
	}
	
	{
	Next, we deduce \eqref{eq:hormander2}. 
	Fix $r_0>0$ and $(s,y,w),(s',y',w')\in\mathbb R^{1+2d}$ such that we have $\rho((s,y,w),(s',y',w'))\le r_0$. 
	By \eqref{eq:kernelidentity}, for every $(t,x,v)\in\mathbb R^{1+2d}$,
	\[
		K_{\gamma,\epsilon}(s,t,y-x,w-v)=K_{2\beta-\gamma,\epsilon}(-t,-s,x-y,w-v),
	\]
	and similarly
	\[
		K_{\gamma,\epsilon}(s',t,y'-x,w'-v)=K_{2\beta-\gamma,\epsilon}(-t,-s',x-y',w'-v).
	\]
	Hence the left-hand side of \eqref{eq:hormander2} equals
	\begin{equation*}
	\int\limits_{ 
	\rho((t,x,v),(s,y,w))\ge N(\beta,2\beta-\gamma,d) r_0} \hspace{-1cm}\big|K_{2\beta-\gamma,\epsilon}(-t,-s,x-y,w-v)-K_{2\beta-\gamma,\epsilon}(-t,-s',x-y',w'-v)\big| \dd (t,x,v).
	\end{equation*}
	}
	
	{
	Now perform the change of variables
	\[
		\tau=-t,\qquad \eta=-v,
	\]
	so that $\dd (t,x,v)=\dd (\tau, x, \eta)$ and
	\[
		w-v=\eta-(-w), \qquad w'-v=\eta-(-w').
	\]
	A direct computation from the definition of $\rho$ shows
	\begin{equation}\label{eq:rho-invariance}
			\rho((t,x,v),(s,y,w))=\rho((\tau,x,\eta),(-s,y,-w)).
	\end{equation}
	Indeed,
	\begin{align*}
		&\rho((\tau,x,\eta),(-s,y,-w))\\
		&=|\tau+s|^{\frac1{2\beta}}+\frac12|x-y+\tau(\eta+w)|^{\frac1{2\beta+1}}+\frac12|x-y-s(\eta+w)|^{\frac1{2\beta+1}}+|\eta+w| \\
		&=|t-s|^{\frac1{2\beta}} +\frac12|x-y+t(v-w)|^{\frac1{2\beta+1}}+\frac12|x-y+s(v-w)|^{\frac1{2\beta+1}}+|v-w|.
	\end{align*}
	Using \eqref{eq:rho-invariance}, we obtain
	\begin{align*}
		&\int\limits_{ 
		\rho((t,x,v),(s,y,w))\ge N(\beta,2\beta-\gamma,d) r_0}\hspace{-1cm} \big|K_{\gamma,\epsilon}(s,t,y-x,w-v)-K_{\gamma,\epsilon}(s',t,y'-x,w'-v)\big|\dd (t,x,v) \\
		&=\hspace{0cm}\int\limits_{ 
		\rho((\tau,x,\eta),(-s,y,-w))\ge N(\beta,2\beta-\gamma,d)r_0}\hspace{-1.5cm}\big|K_{2\beta-\gamma,\epsilon}(\tau,-s,x-y,\eta-(-w))  \\
		&\hspace{7cm}    -K_{2\beta-\gamma,\epsilon}(\tau,-s',x-y',\eta-(-w'))\big| \dd  (\tau, x, \eta),
	\end{align*}
	where $N = N(\beta,2\beta-\gamma,d)$ is the constant of the first estimate \eqref{eq:hormander1} with $\gamma$ replaced by $2\beta-\gamma$.
	}
	
	{
	Finally, the same computation as above gives
	\[
		\rho((-s,y,-w),(-s',y',-w'))=\rho((s,y,w),(s',y',w'))\le r_0.
	\]
	Thus we may apply \eqref{eq:hormander2} with $\gamma$ replaced by $2\beta-\gamma$ and the points 
	$(-s,y,-w)$ and $(-s',y',-w')$, to conclude that the last integral is bounded by $C(\beta,2\beta-\gamma,d)$ 
	uniformly in $\epsilon>0$. 
	This proves \eqref{eq:hormander2} and concludes the proof of the lemma.
	}	
\end{proof}

{\begin{thm} \label{thm:TgammepsLp}
	Let $d\ge1$, $\beta\in(0,1]$ and $\gamma\in\R$. 
	Then for every $p \in (1,\infty)$ there exists $C_p=C_p(\beta,\gamma,d)<\infty$ such that:
	\begin{enumerate}
	\item[$($i$)$] $($\emph{Uniform $\L^p_{t,x,v}$-bounds for $T_{\gamma,\epsilon}$}$)$
	For all $\epsilon\in(0,1)$,
	\begin{equation}\label{eq:uniformLp}
		\|T_{\gamma,\epsilon}f\|_{\L^p_{t,x,v}}\le C_p\,\|f\|_{\L^p_{t,x,v}}\qquad \forall f\in \L^p_{t,x,v}.
	\end{equation}
	\item[$($ii$)$] $($\emph{Identification of the $\epsilon \to0$ limit on $\L^2$}$)$
	For all $f,g\in \L^2_{t,x,v}$,
	\begin{equation}\label{eq:bilinearconv}
		\langle g,T_{\gamma,\epsilon}f\rangle_{\L^2_{t,x,v}}\longrightarrow \langle g,T_\gamma f\rangle_{\L^2_{t,x,v}} \qquad \text{as }\epsilon \to0^+.
	\end{equation}
	\item[$($iii$)$] $($\emph{$\L^p$-bounded extension of $T_\gamma$ and convergence in the weak $\L^p$ sense}$)$
	The operator $T_\gamma$ admits a unique bounded extension
	\[
	T_\gamma\colon \L^p_{t,x,v}\to \L^p_{t,x,v},\qquad \|T_\gamma f\|_{\L^p_{t,x,v}}\le C_p\,\|f\|_{\L^p_{t,x,v}}.
	\]
	Moreover, for every $f\in \L^p_{t,x,v}\cap \L^2_{t,x,v}$ and every $g\in \L^{p'}_{t,x,v}\cap \L^2_{t,x,v}$ $($where $p'=\frac{p}{p-1}${}$)$,
	\begin{equation}\label{eq:weakLpconv}
		\langle g,T_{\gamma,\epsilon}f\rangle_{\L^2_{t,x,v}} \longrightarrow \langle g,T_\gamma f\rangle_{\L^2_{t,x,v}} \qquad\text{as }\epsilon \to0^+.
	\end{equation}
	In particular, $T_{\gamma,\epsilon}f\rightharpoonup T_\gamma f$ weakly in $\L^p_{t,x,v}$ for each $f\in \L^p_{t,x,v}\cap \L^2_{t,x,v}$.
\end{enumerate}
\end{thm}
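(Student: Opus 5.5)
For (i), I will apply the Coifman--Weiss theorem (Theorem~\ref{thm:CW}) on the space of homogeneous type $(\R^{1+2d},\rho,\dd(t,x,v))$ provided by Lemma~\ref{lem:kindist}, separately for each $\epsilon\in(0,1)$. Its hypotheses are the following.
\begin{itemize}
\item[(a)] Uniform $\L^2$ boundedness, which is Lemma~\ref{lem:TgammaepsL2}.
\item[(b)] A representation of $T_{\gamma,\epsilon}$ by the kernel $K_{\gamma,\epsilon}(t,s,x-y,v-w)$ for $f\in \L^\infty$ with bounded support and $(t,x,v)$ outside the support. The cutoff $\zeta_\epsilon$ makes this legitimate: by Proposition~\ref{prop:upper} the kernel is bounded by $\epsilon^{-c}$ on $\supp\zeta_\epsilon$, and by \eqref{eq:Kxvmarginal} it is integrable in $(x,v)$. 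Hence the integral \eqref{eq:defTgammaeps} converges absolutely. It agrees with the Fourier definition \eqref{eq:Tgammahat} with the $\zeta_\epsilon(t-s)$ factor inserted, by Fubini and Plancherel applied for fixed $s<t$.
\item[(c)] The two H\"ormander conditions, \eqref{eq:hormander1}--\eqref{eq:hormander2} of Lemma~\ref{lem:hormander}, with constants independent of $\epsilon$.
\end{itemize}
Coifman--Weiss then gives weak type $(1,1)$ with a constant depending only on the $\L^2$ norm and the H\"ormander constants. Marcinkiewicz interpolation yields the bound for $1<p\le 2$. For $2\le p<\infty$ I will use duality: the adjoint has kernel $K_{\gamma,\epsilon}(s,t,y-x,w-v)$, whose H\"ormander condition is exactly \eqref{eq:hormander2}. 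Thus \eqref{eq:uniformLp} holds for all $p$ with $\epsilon$-independent $C_p$.

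For (ii), I work on the Fourier side. By Plancherel, $\langle g,T_{\gamma,\epsilon}f\rangle$ equals an integral over $(t,s,\varphi,\xi)$ of $\overline{\widehat{\Gamma^{-1}\!g}}$-type products with the multiplier $\zeta_\epsilon(t-s)\widehat K_\gamma$. Since $\zeta_\epsilon\to\mathds 1_{(0,\infty)}$ pointwise, dominated convergence applies once the absolute integrand is shown to be integrable. The $\L^2$ proof of Lemma~\ref{lem:TgammaL2}, via \cite[Prop.~2.18]{AIN}, uses only size estimates, so it bounds the operator with kernel $|\widehat K_\gamma|$ acting on $|\hat f|$. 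This gives exactly the needed domination. Equivalently, the $\zeta_\epsilon$ are uniformly bounded with $|\zeta_\epsilon|\le C$ and converge pointwise, so the same Schur-type bound applies.

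For (iii), first consider $f\in \L^p\cap \L^2$ and $g\in \L^{p'}\cap\L^2$. Combining (ii) with (i) gives $|\langle g,T_\gamma f\rangle|=\lim|\langle g,T_{\gamma,\epsilon}f\rangle|\le C_p\|f\|_p\|g\|_{p'}$. Since $\L^{p'}\cap\L^2$ is dense in $\L^{p'}$, this shows $T_\gamma f\in \L^p$ with $\|T_\gamma f\|_p\le C_p\|f\|_p$. Density of $\L^p\cap\L^2$ in $\L^p$ then gives the unique extension, and \eqref{eq:weakLpconv} is just (ii).

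For the weak convergence, fix $f\in \L^p\cap\L^2$. The family $T_{\gamma,\epsilon}f$ is bounded in $\L^p$ by (i), and it converges when tested against the dense class $\L^{p'}\cap\L^2$. A standard $\varepsilon/3$ argument upgrades this to weak convergence in $\L^p$. The main obstacle is the rigorous kernel representation in (b) together with uniformity in $\epsilon$, but the needed ingredients are already in Lemma~\ref{lem:hormander} and Proposition~\ref{prop:intkernel}.
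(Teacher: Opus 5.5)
Your proposal is correct and follows the paper's proof essentially step by step. You apply Coifman--Weiss on $(\R^{1+2d},\rho,\dd(t,x,v))$ using the uniform $\L^2$ bound, the kernel representation and \eqref{eq:hormander1} for $p<2$, then \eqref{eq:hormander2} with duality for $p>2$; for (ii) you use dominated convergence on the Fourier side with the $T_\gamma$ symbol as majorant; for (iii) you argue by density and duality. The only cosmetic difference is in (iii): you get $\|T_\gamma f\|_{\L^p}\le C_p\|f\|_{\L^p}$ directly by testing against $\L^{p'}\cap\L^2$ before deducing weak convergence, whereas the paper goes through weak convergence and weak lower semicontinuity; both routes are valid.
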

}

\begin{proof}
{
We apply Theorem~\ref{thm:CW} on the space of homogeneous type $(X,\rho,\dd(t,x,v))$ with $X=\R^{1+2d}$, $\rho$ the quasi-distance of \eqref{eq:kindist}, and $\dd(t,x,v)$ the Lebesgue measure; see Lemma \ref{lem:kindist}.
}

{
\medskip
\noindent\textbf{Step 1: $\L^p_{t,x,v}$ bounds for $T_{\gamma,\epsilon}$.}
Fix $\epsilon \in(0,1)$. 
Since $\supp \zeta_\epsilon \subset[\epsilon /2,2/\epsilon ]$ and $K_\gamma(t,s,\cdot)\in \C^\infty(\R^{2d})$ for $s<t$, 
the kernel $K_{\gamma,\epsilon}$ is locally bounded on $X\times X\setminus\{(t,x,v,t,x,v): (t,x,v)\in X\}$ and, 
because $t-s$ is restricted to a compact set away from zero, it is locally integrable there. Thus item~(i) in Theorem~\ref{thm:CW} holds.
}

{
Lemma~\ref{lem:hormander} \eqref{eq:hormander1} gives H\"ormander's condition~(ii) for $K_{\gamma,\epsilon}$ 
with constants $C,N$ depending only on $(\beta,\gamma,d)$, uniformly in $\epsilon $.
}

{
Finally, item~(iii) in Theorem~\ref{thm:CW} holds because $T_{\gamma,\epsilon}$ is defined by kernel integration. 
For $f,g\in \L^\infty_{t,x,v}$ with compact support the double integral is absolutely convergent and Fubini's theorem yields
\[
	\langle g,T_{\gamma,\epsilon}f\rangle_{\L^2_{t,x,v}} =\iint_{X\times X} K_{\gamma,\epsilon}(t,t',x-x',v-v')\,f(t',x',v')g(t,x,v) \dd(t,x,v) \dd (t',x',v').
\]
}

{
Since $T_{\gamma,\epsilon}$ is bounded on $\L^2_{t,x,v}$ by Lemma \ref{lem:TgammaepsL2} with a bound independent of $\epsilon $,
 Theorem~\ref{thm:CW} applies and yields, for every $1<p<2$,
\begin{equation}\label{eq:TgammaepsLP}
	\|T_{\gamma,\epsilon}f\|_{\L^p_{t,x,v}}\le C_p\,\|f\|_{\L^p_{t,x,v}}\qquad (\epsilon \in(0,1)),
\end{equation}
with $C_p = C_p(\beta,d, \gamma)$ independent of $\epsilon $.
}

{
Lemma~\ref{lem:hormander} \eqref{eq:hormander2} gives the symmetrised H\"ormander condition and thus by Theorem \ref{thm:CW}
 together with Remark \ref{rem:CW} we deduce the boundedness for $p>2$. 
}

\medskip
{
\noindent\textbf{Step 2: Convergence of the bilinear forms on $\L^2_{t,x,v}$.}
Let $f,g\in \L^2(X)$. By Plancherel in the $(x,v)$ variables and the identity
$\widehat K_{\gamma,\epsilon}(t,s,\varphi,\xi)=\zeta_\epsilon (t-s)\,\widehat K_\gamma(t,s,\varphi,\xi)$,
we can write $\langle g,T_{\gamma,\epsilon}f\rangle_{\L^2_{t,x,v}}$ as an $(s,t,\varphi,\xi)$-integral whose integrand contains the 
factor $\zeta_\epsilon (t-s)$.
Since $\zeta_\epsilon (t-s)\to \mathds 1_{t-s>0}$ pointwise for $t\neq s$ and $|\zeta_\epsilon |\le 1$ we may use dominated
 convergence to pass to the limit. 
 In fact, the kernel of $T_\gamma$ provides a majorant; compare \cite{AIN}. 
}

\medskip
{
\noindent\textbf{Step 3: Extension of $T_\gamma$ to $\L^p_{t,x,v}$ and identification with the $\epsilon \to0$ limit.}
Fix $p \in (1,\infty)$ and $f\in \L^p_{t,x,v}\cap\L^2_{t,x,v}$. By \eqref{eq:uniformLp}, the family $\{T_{\gamma,\epsilon}f\}_{\epsilon \in(0,1)}$ 
is bounded in $\L^p_{t,x,v}$. Let $g\in \L^{p'}_{t,x,v}\cap\L^2_{t,x,v}$. Then, since $T_{\gamma,\epsilon}f\in\L^2_{t,x,v}$ for each $\epsilon $,
\[
	\langle g,T_{\gamma,\epsilon}f\rangle_{\L^2_{t,x,v}}\to \langle g,T_\gamma f\rangle_{\L^2_{t,x,v}}
\]
as $\epsilon \to 0$, by Step~2, which is exactly \eqref{eq:weakLpconv}. 
In particular, $T_{\gamma,\epsilon}f\rightharpoonup T_\gamma f$ weakly in $\L^p_{t,x,v}$ as $\epsilon \to 0^+$, 
because $\L^{p'}_{t,x,v}\cap\L^2_{t,x,v}$ is dense in $(\L^p_{t,x,v})^*=\L^{p'}_{t,x,v}$ and the uniform bound \eqref{eq:TgammaepsLP}.
}

{
By weak lower semicontinuity of the $\L^p$ norm,
\[
\|T_\gamma f\|_{\L^p_{t,x,v}}\le \liminf_{\epsilon \to0}\|T_{\gamma,\epsilon}f\|_{\L^p_{t,x,v}}
\le C_p\,\|f\|_{\L^p_{t,x,v}}.
\]
Thus $T_\gamma$ is bounded on the dense subspace $\L^p_{t,x,v}\cap\L^2_{t,x,v}$, and therefore extends uniquely to a bounded operator 
on all of $\L^p_{t,x,v}$ with the same bound. 
This completes the proof.
}
\end{proof}

\section{Regularity of the temporal trace}
\label{sec:trace}
The goal of this section is to prove the temporal regularity for the Kolmogorov operators. 
We start with the initial value problem and proceed by duality. 
We will use the (homogeneous) kinetic anisotropic Besov spaces defined earlier and rely on Fourier multiplier theory. 

We are going to use a continuous Littlewood--Paley version of the Besov norm, 
which is easier to work with. For the  equivalent definition of the $\Bdot_{\beta}^{\gamma,p}$-norm 
we consider any function $\psi \in \cS(\R^{2d})$ 
which satisfies 
\begin{equation*}
	\hat{\psi}(\varphi,\xi)>0 \quad \text{ for } \quad \frac{1}{2} < d_{\beta}(\varphi,\xi) < 2
\end{equation*}
and $\hat{\psi} = 0$ else. 
We set $\psi_s(x,v) = s^{-(1+\frac{1}\beta)d} \psi\left(s^{-(\frac{1}{2\beta}+1)}x,s^{-\frac{1}{2\beta}}v\right)$ for $s>0$, 
whose Fourier transform is  $\hat{\psi}(s^{\frac{1}{2\beta}+1}\varphi,s^{\frac{1}{2\beta}}\xi)$. 
With that, we have for $g\in \Bdot_{\beta}^{\gamma,p}$,
\begin{equation} \label{eq:besovcont}
	\norm{g}_{\Bdot_{\beta}^{\gamma,p}}^p  \simeq \int_0^\infty \left( \frac{\norm{\psi_{s} \ast g}_{\L^p_{x,v}}}{s^{\gamma/(2\beta)}} \right)^p \frac{ \ds}{s}.  
\end{equation}

\subsection{Estimates for the initial value problem of the Kolmogorov equation}

First, we are going to prove estimates for the solution $f$ of the Kolmogorov equation given by  
\begin{equation} \label{eq:solKolIV}
	\hat{f}(t,\varphi,\xi)= [\widehat{\sem^+_t g}](\varphi,\xi) := \exp\left( -\int_0^t \abs{\xi+(t-r)\varphi}^{2\beta}  \dr \right) \hat{g}(\varphi,\xi+t\varphi), \quad t\ge 0,
\end{equation}
that is, $ \widehat{\Gamma f}(t,\varphi,\xi)= \widehat{E_{\beta}}(t,0,\varphi,\xi)\hat g(\varphi,\xi)$
where $g$ is the initial value. 
The same discussion in this section applies to the solution of the backward Kolmogorov equation given by 
\begin{equation} \label{eq:solKolIVback}
	[\widehat{\sem^-_t g}](\varphi,\xi) = \exp\left( \int_0^t \abs{\xi+(t-r)\varphi}^{2\beta}  \dr \right) \hat{g}(\varphi,\xi+t\varphi), \quad t\le 0.
\end{equation}
Details in the backward case are left to the reader.
In the following, $\sem^{\pm}$ map the initial value to the associated solution as given by \eqref{eq:solKolIV} and \eqref{eq:solKolIVback} respectively.

\begin{lem}\label{lem:initialvalue} 
Let $g \in \cS(\R^{2d})$ whose Fourier transform has compact support in 
$(\R_{\varphi}^{d}\setminus \{0\})\times (\R_{\xi}^{d}\setminus \{0\})$. 
Then the function $f$ given by \eqref{eq:solKolIV} satisfies $f\in \C^{}_{0}([0,\infty)\, ;\, \L^2_{x,v})$ with 
$f(0,\cdot)=g$, $(\partial_{t}+v\cdot\nabla_{x})f, (-\Delta_{v})^{\beta}f\in \L^2_{t,x,v} $ 
and $f$ solves $(\partial_{t}+v\cdot\nabla_{x})f+ (-\Delta_{v})^{\beta}f=0$ in $(0,\infty)\times \R^{2d}$ 
in the sense of strong solutions in $\L^2_{x,v}$.  
 \end{lem}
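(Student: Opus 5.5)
We will work entirely on the Fourier side in the shifted variables, i.e.\ with $\widehat{\Gamma f}(t,\varphi,\xi)=\widehat{E}_\beta(t,0,\varphi,\xi)\hat g(\varphi,\xi)$, and then transfer the conclusions back through the kinetic shift $\Gamma$. The shift is an isometry of $\L^2_{x,v}$ for each fixed $t$, and it depends continuously on $t$ in the strong topology. The key quantitative input is the nondegeneracy \eqref{eq:nondegen}. After the substitution $\tau=t r$ it gives
\[
	\int_0^t \abs{\xi-\tau\varphi}^{2\beta}\dd\tau \gtrsim t\,\abs{\xi}^{2\beta}+t^{2\beta+1}\abs{\varphi}^{2\beta}.
\]
Since $\hat g$ is supported in a compact set $K\subset(\R^d\setminus\{0\})\times(\R^d\setminus\{0\})$, we have $\abs{\xi}\ge c_K>0$ on $K$. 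Hence $0\le \widehat{E}_\beta(t,0,\varphi,\xi)\le e^{-c\,t}$ on $K$.

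\textbf{Continuity, decay and initial value.} We have $\|f(t)\|_{\L^2_{x,v}}=\|\widehat{E}_\beta(t,0)\hat g\|_{\L^2}\lesssim e^{-ct}\|g\|_{\L^2}$, so $f(t)\to 0$ as $t\to\infty$. Continuity of $t\mapsto \widehat{E}_\beta(t,0)\hat g$ in $\L^2_{\varphi,\xi}$ follows by dominated convergence, since $\widehat E_\beta$ is continuous in $t$ and bounded by $1$. Composing with $\Gamma^{-1}$, which is strongly continuous in $t$ on $\L^2$ (Fourier side: a translation $\xi\mapsto\xi+t\varphi$), gives $f\in\C_0([0,\infty);\L^2_{x,v})$. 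Finally, $\widehat{E}_\beta(0,0)=1$ gives $f(0)=g$.

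\textbf{Regularity and the equation.} In shifted variables $\Gamma$ conjugates $\partial_t+v\cdot\nabla_x$ to $\partial_t$ and $(-\Delta_v)^\beta$ to the multiplier $\abs{\xi-t\varphi}^{2\beta}$. It therefore suffices to show three things for $u(t)=\widehat{E}_\beta(t,0)\hat g$. First, $u$ is $\C^1$ in $t$ with values in $\L^2_{\varphi,\xi}$. Second, $\partial_t u=-\abs{\xi-t\varphi}^{2\beta}u$. Third, both sides lie in $\L^2_{t,\varphi,\xi}$ on $(0,\infty)$. The derivative formula is immediate pointwise. Moreover, on the compact support $K$ we have $\abs{\xi-t\varphi}^{2\beta}\lesssim (1+t)^{2\beta}$, so $\abs{\xi-t\varphi}^{2\beta}\abs{u}\lesssim (1+t)^{2\beta}e^{-ct}\abs{\hat g}$. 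This bound is square integrable in $(t,\varphi,\xi)$ and dominates the difference quotients, which justifies differentiation in $\L^2$. Undoing $\Gamma$ then gives $(\partial_t+v\cdot\nabla_x)f=-(-\Delta_v)^\beta f\in\L^2_{t,x,v}$, and the equation holds as an identity in $\L^2_{x,v}$ for a.e.\ (indeed every) $t>0$, i.e.\ in the strong sense.

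\textbf{Main obstacle.} The only delicate point is checking that the conjugation by $\Gamma$ is compatible with the distributional transport derivative. Concretely, one must verify that $\partial_t(\Gamma f)=\Gamma\bigl((\partial_t+v\cdot\nabla_x)f\bigr)$ when paired with test functions. We would do this by testing against $\phi\in\cD((0,\infty)\times\R^{2d})$: change variables $x\mapsto x+tv$ and compute on the Fourier side, exactly as in the analogous Lemma~\ref{lem:equation} taken from \cite{AIN}. Alternatively one can cite that lemma's proof directly, since the computation is identical with $S$ replaced by initial data.
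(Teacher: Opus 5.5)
Your proof is correct, and its skeleton is the paper's: work with $\widehat{\Gamma f}=\widehat E_\beta(t,0)\hat g$, use $\partial_t\widehat E_\beta(t,0)=-|\xi-t\varphi|^{2\beta}\widehat E_\beta(t,0)$, and apply dominated convergence. The difference lies in how you get square-integrability in time.

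You use the support hypothesis quantitatively. The bound $|\xi|\ge c_K$ on $\supp\hat g$ gives $\widehat E_\beta(t,0)\le e^{-ct}$, and boundedness of $K$ gives $|\xi-t\varphi|^{2\beta}\lesssim(1+t)^{2\beta}$. Everything is then dominated by $(1+t)^{2\beta}e^{-ct}|\hat g|$.

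The paper instead takes the sharper pointwise bound $\widehat E_\beta(t,0,\varphi,\xi)\le e^{-c_\beta t(|\xi-t\varphi|^{2\beta}+(t|\varphi|)^{2\beta})}$ from \cite[Lemma~2.19]{AIN} and integrates exactly in $t$. This yields the scale-invariant estimate
\[
\int_0^\infty\!\!\int_{\R^{2d}}\big||\xi-t\varphi|^{2\beta}\widehat E_\beta(t,0,\varphi,\xi)\hat g(\varphi,\xi)\big|^2\dd(\varphi,\xi)\dt\lesssim\int_{\R^{2d}}\big(|\xi|^{2\beta}+|\varphi|^{\frac{2\beta}{2\beta+1}}\big)|\hat g(\varphi,\xi)|^2\dd(\varphi,\xi).
\]
The paper also gets the decay as $t\to\infty$ from dominated convergence alone, with no rate.

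Your constants depend on $c_K$ and on the diameter of $K$, and they blow up as the support approaches the axes or spreads out, so your argument is purely qualitative. That suffices for the lemma as stated and needs only \eqref{eq:nondegen}. The paper's route costs a slightly finer pointwise bound but gives a homogeneous energy estimate (the $\L^2$, $\gamma=2\beta$ counterpart of the upper bound in \eqref{eq:tracekol}) that survives density arguments.

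You also spell out the identification of the distributional transport derivative with $\Gamma^{-1}\partial_t\Gamma$, which the paper leaves implicit. This is only the measure-preserving change of variables $x\mapsto x+tv$ applied to test functions, so calling it the main obstacle overstates it.
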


\begin{proof} 
By Plancherel, it suffices to argue in $\L^2_{\varphi,\xi}$. By \cite[Lemma~2.19]{AIN}, 
\[
	\widehat{E_{\beta}}(t,0,\varphi,\xi) \le e^{-c_{\beta}t(|\xi-t\varphi|^{2\beta} + (t|\varphi|)^{2\beta})}.
\]
So from the isometry property of the change of variables 
$(\varphi,\xi)\mapsto (\varphi,\xi+t\varphi)$,  $\|\hat f(t,\cdot)\|_{\L^2_{\varphi,\xi}} \le \|\hat g\|_{\L^2_{\varphi,\xi}}$. 
The continuity on $\L^2_{\varphi,\xi}$ is clear and the limit at infinity follows from dominated convergence. 
Now, we have   
$$
	|\xi-t\varphi|^{2\beta}\widehat{E_{\beta}}(t,0,\varphi,\xi) 
	\lesssim 
	|\xi|^{2\beta}e^{-c'_{\beta}t|\xi|^{2\beta}}+ t^{2\beta}|\varphi|^{2\beta}e^{-c''_{\beta}t^{2\beta+1}|\varphi|^{2\beta}} 
$$
so that 
\begin{align*}
	\int_{0}^\infty\int_{\R^{2d}} ||\xi-t\varphi|^{2\beta}\widehat{E_{\beta}}(t,0,\varphi,\xi)\hat g(\varphi,\xi)|^2 \dd(\varphi,\xi) \dt 
	\lesssim
	\int_{\R^{2d}} \bigg(|\xi|^{2\beta}+ |\varphi|^{\frac{2\beta}{2\beta+1}}\bigg)|\hat g(\varphi,\xi)|^2  \dd(\varphi,\xi).
\end{align*}
This implies $(-\Delta_{v})^{\beta}f \in \L^2_{t,x,v}$ on $(0,\infty)\times \R^{2d}$. 
As $\partial_{t}\widehat{E_{\beta}}(t,0,\varphi,\xi)=-|\xi-t\varphi|^{2\beta}\widehat{E_{\beta}}(t,0,\varphi,\xi)$, 
we can use the estimates above and the conclusion follows. 
\end{proof}

We start with important multiplier estimates. 

\begin{lem} \label{lem:traceMult}
Let $\beta\in (0,1]$, $\gamma\in \R$ and $p \in (1,\infty)$. 
Let $s,t>0$. 
Let $\eta$ be a Schwartz function with Fourier transforms supported in $\frac{1}{4} < d_{\beta}(\varphi,\xi) < 4$. 
Then 
$(\varphi,\xi)\mapsto\widehat{\eta}_{s}(\varphi,\xi) d_{\beta}(\varphi, \xi-t\varphi)^\gamma \widehat{E_{\beta}}(t,0,\varphi,\xi)$ 
is an $\L^p_{x,v}$ Fourier multiplier with norm bounded by 
$t^{-\frac{\gamma}{2\beta}}\kappa_{\gamma}((t/s)^{\frac{1}{2\beta}})$ 
with $\kappa_{\gamma} (\Lambda)= c_{1}\Lambda^\gamma e^{-c_{0}\Lambda^{2\beta}}$ for some $c_0,c_1>0$ independent of $\Lambda$.
\end{lem}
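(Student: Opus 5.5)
The plan is to reduce by kinetic scaling to a single parameter $\Lambda=(t/s)^{1/(2\beta)}$ and then apply the Marcinkiewicz multiplier theorem, exactly as in Lemma~\ref{lem:multipliers}. Here $\widehat\eta_s(\varphi,\xi)=\widehat\eta(s^{1+\frac1{2\beta}}\varphi,s^{\frac1{2\beta}}\xi)$, as for $\psi_s$ above.

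\textbf{Step 1: scaling.} The anisotropic dilation $(\varphi,\xi)\mapsto(\lambda^{2\beta+1}\varphi,\lambda\xi)$ preserves $\L^p_{x,v}$ multiplier norms. We rescale with $\lambda=t^{1/(2\beta)}$, i.e. $\varphi'=t^{1+\frac1{2\beta}}\varphi$ and $\xi'=t^{\frac1{2\beta}}\xi$, as in \eqref{eq:rescaledvar}. Then
\[
d_\beta(\varphi,\xi-t\varphi)^\gamma=t^{-\frac{\gamma}{2\beta}}d_\beta(\varphi',\xi'-\varphi')^\gamma,\qquad
\widehat{E_\beta}(t,0,\varphi,\xi)=\exp\Big(-\int_0^1|\xi'-r\varphi'|^{2\beta}\dr\Big),
\]
and $\widehat\eta_s(\varphi,\xi)=\widehat\eta(\Lambda^{-(2\beta+1)}\varphi',\Lambda^{-1}\xi')$. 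So it suffices to show that
\[
M_\Lambda(\varphi,\xi)=\widehat\eta(\Lambda^{-(2\beta+1)}\varphi,\Lambda^{-1}\xi)\,d_\beta(\varphi,\xi-\varphi)^\gamma\exp\Big(-\int_0^1|\xi-r\varphi|^{2\beta}\dr\Big)
\]
has multiplier norm at most $c_1\Lambda^\gamma e^{-c_0\Lambda^{2\beta}}$, uniformly in $\Lambda>0$.

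\textbf{Step 2: size on the support.} On the support, $d_\beta(\varphi,\xi)\simeq\Lambda$. By Lemma~\ref{lem:xi-phi}, $d_\beta(\varphi,\xi-\varphi)$ is comparable to $\Lambda$ when $\Lambda\lesssim1$. When $\Lambda\gtrsim1$ it lies between $\Lambda^{1/(2\beta+1)}$ and $\Lambda^{2\beta+1}$, so it is at most polynomial in $\Lambda$ up to constants. By \eqref{eq:nondegen}, the exponent is $\gtrsim|\varphi|^{2\beta}+|\xi|^{2\beta}$. For $\Lambda\ge1$ Lemma~\ref{lem:xi-phi} gives $|\varphi|+|\xi|\gtrsim\Lambda$, so the exponential is at most $e^{-c\Lambda^{2\beta}}$. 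This factor absorbs any polynomial loss, e.g. $\Lambda^{|\gamma|(2\beta+1)}e^{-c\Lambda^{2\beta}}\lesssim\Lambda^\gamma e^{-c_0\Lambda^{2\beta}}$ with a smaller $c_0$. For $\Lambda\le1$ we have $|M_\Lambda|\lesssim\Lambda^\gamma$, and $e^{-c_0\Lambda^{2\beta}}\simeq1$ there. Hence $\|M_\Lambda\|_\infty\lesssim\Lambda^\gamma e^{-c_0\Lambda^{2\beta}}$.

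\textbf{Step 3: Marcinkiewicz symbol bounds.} We verify $|\partial_\varphi^{\alpha_1}\partial_\xi^{\alpha_2}M_\Lambda|\lesssim\Lambda^\gamma e^{-c_0\Lambda^{2\beta}}|\varphi|^{-|\alpha_1|}|\xi|^{-|\alpha_2|}$ on $(\R^d\setminus\{0\})^2$. Derivatives of the cutoff and of $d_\beta(\cdot,\cdot-\varphi)^\gamma$ are handled by the homogeneity bounds for $d_\beta^s$ and the chain rule. The exponent $\Phi(\varphi,\xi)=\int_0^1|\xi-r\varphi|^{2\beta}\dr$ is the delicate factor: for $\beta<1$ it is not smooth where $\xi=r\varphi$. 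I would control it by differentiating under the integral and using $|\partial^\alpha|a|^{2\beta}|\lesssim|a|^{2\beta-|\alpha|}$. This gives $|\xi^{\alpha_2}\varphi^{\alpha_1}\partial_\xi^{\alpha_2}\partial_\varphi^{\alpha_1}\Phi|\lesssim\int_0^1|\xi-r\varphi|^{2\beta-k}(|\xi|+r|\varphi|)^k\dr$ with $k=|\alpha_1|+|\alpha_2|$.

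This last integral is the main obstacle, since it may diverge for $k\ge 2\beta+1$ near the line $\xi\parallel\varphi$. I would first try to exploit the one-dimensional structure in $r$: the singularity sits at $r=\xi\cdot\varphi/|\varphi|^2$ only when $\xi$ is exactly parallel to $\varphi$, and there the Marcinkiewicz weights $|\varphi|^{-|\alpha_1|}|\xi|^{-|\alpha_2|}$ leave some slack. If that fails, I would replace Marcinkiewicz by the decomposition of Section~\ref{sec:estimates}, splitting $\widehat{E_\beta}=e^{-\psi_g}e^{-\psi_b}$. The $e^{-\psi_b}$ factor is the Fourier transform of the probability measure $\Psi_b$, so its multiplier norm is $\le1$, though it carries no decay in $\Lambda$. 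The $e^{-\psi_g}$ factor is a Schwartz symbol and satisfies the Marcinkiewicz bounds. Then $M_\Lambda=(\widehat\eta(\cdots)d_\beta^\gamma e^{-\psi_g})\cdot e^{-\psi_b}$, and by \eqref{eq:psigcoercive} the first factor alone carries the decay $e^{-c_0\Lambda^{2\beta}}$ (or $e^{-c_0\Lambda^2}$, which is stronger) when $\Lambda\ge1$. This yields the claimed bound $t^{-\gamma/(2\beta)}\kappa_\gamma((t/s)^{1/(2\beta)})$.
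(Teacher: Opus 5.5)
Your fallback is exactly the paper's proof: after the same rescaling to $M_\Lambda$, the paper peels off $e^{-\psi_b}$ as convolution with the probability measure $\Psi_b$ and applies Marcinkiewicz to $\widehat\eta(\Lambda^{-(2\beta+1)}\varphi,\Lambda^{-1}\xi)\,d_\beta(\varphi,\xi-\varphi)^\gamma e^{-\psi_g}$, treating $\Lambda\le 1$ and $\Lambda\ge 1$ separately. Make this your main line, because the direct attempt cannot succeed: for $\beta<1$ the exponent $\int_0^1|\xi-r\varphi|^{2\beta}\dr$ fails to be smooth at points of $\{\xi=r\varphi,\ r\in[0,1]\}$, which meets $(\R^d\setminus\{0\})^2$ where the Marcinkiewicz weights give no slack.

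One detail still needs to be supplied. Schwartz-ness of $e^{-\psi_g}$ alone only yields $\Lambda^{-N}$ decay of the weighted derivatives. To reach the stated rate $e^{-c_0\Lambda^{2\beta}}$ you need, as the paper does, the bounds $|\partial_\varphi^\alpha\partial_\xi^\sigma\psi_g|\lesssim 1+|\varphi|+|\xi|$ combined with coercivity and Fa\`a di Bruno, giving $|\varphi|^{|\alpha|}|\xi|^{|\sigma|}|\partial_\varphi^\alpha\partial_\xi^\sigma e^{-\psi_g}|\lesssim e^{-\psi_g/2}$. Note also that for $\Lambda\ge1$ the bound \eqref{eq:psigcoercive} is a minimum, so it gives $e^{-c\Lambda^{2\beta}}$, not the stronger $e^{-c\Lambda^{2}}$.
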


\begin{proof} 
We want to estimate the $\L^p_{x,v}$ Fourier multiplier norm of the symbol
\[ 
	\widehat{\eta}_{s}(\varphi,\xi) d_{\beta}(\varphi, \xi-t\varphi)^\gamma \widehat{E_{\beta}}(t,0,\varphi,\xi).
\]
This norm is invariant under dilations in each of the variables $\varphi$ and $\xi$. 
Changing coordinates $\tilde{\varphi} = t^{1+\frac{1}{2\beta}}\varphi $ and $\tilde{\xi} = t^{\frac{1}{2\beta}} \xi $, 
it is the same as  the $\L^p_{x,v}$ multiplier norm of the symbol 
\begin{equation*}
	t^{-\frac{\gamma}{2\beta}}\widehat{\eta}\left( {\Big( \frac{s}{t} \Big)^{1+\frac{1}{2\beta}}} \tilde\varphi, {\Big( \frac{s}{t} \Big)^{\frac{1}{2\beta}}}{\tilde{\xi}}\right) d_{\beta}(\tilde\varphi, \tilde\xi-\tilde\varphi)^\gamma  \widehat{E_{\beta}}(1,0,\tilde\varphi,\tilde\xi).
\end{equation*}

When $\beta<1$, we split the exponential term as
\begin{equation*}
	\widehat{E_{\beta}}(1,0,\varphi,\xi)=\exp\left(-\int_0^1 \abs{\xi-r\varphi}^{2\beta} \dr \right) = \exp(-\psi_g(\varphi,\xi)-\psi_b(\varphi,\xi))
\end{equation*}
in a smooth part and a part induced by the Fourier transform of a measure, see Section \ref{sec:proofintest}. 
The measure part induces an $\L^p_{x,v}$-bounded operator. For the smooth part $\exp(-\psi_g)$ together with the rest 
of the multiplier, we want to employ Mikhlin's theorem to deduce the estimate. 
When $\beta=1$, we directly have smoothness of the integral and can write 
$\widehat{E_{\beta}}(1,0,\varphi,\xi)= \exp(-\psi_g(\varphi,\xi))$. 
	
Setting $\Lambda=(t/s)^{\frac{1}{2\beta}}$ and defining 	
\begin{equation*}
	M_\Lambda (\varphi,\xi) = \hat\eta\left( \frac{\varphi}{\Lambda^{2\beta+1}}, \frac{\xi}{\Lambda} \right) 
	d_{\beta}(\varphi,\xi-\varphi)^\gamma \exp\left(-\psi_g(\varphi,\xi)\right),  
\end{equation*}
the proof reduces to the following Lemma \ref{lem:helpMLambda}.
\end{proof} 
	
\begin{lem}  \label{lem:helpMLambda}
Let $\beta\in (0,1]$, $p \in (1,\infty)$ and $\gamma\in \R$.
$M_{\Lambda}$ is an $\L^p_{x,v}$ Fourier multiplier with bound 
$\kappa_{\gamma} (\Lambda)= c_{1}\Lambda^\gamma e^{-c_{0}\Lambda^{2\beta}}$ for some $c_0,c_1>0$
independent of $\Lambda$.
\end{lem}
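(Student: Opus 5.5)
The plan is to use the Marcinkiewicz multiplier theorem, as in Lemma~\ref{lem:multipliers}: check that $M_\Lambda$ is smooth off the coordinate axes and that
\[
|\partial_\varphi^{\alpha_1}\partial_\xi^{\alpha_2} M_\Lambda(\varphi,\xi)|\lesssim \kappa_\gamma(\Lambda)\,|\varphi|^{-|\alpha_1|}|\xi|^{-|\alpha_2|}
\]
for all multi-indices up to order $d$ (or a fixed finite order). A simpler alternative is to prove that the inverse Fourier transform of $M_\Lambda$ has $\L^1_{x,v}$ norm $\lesssim\kappa_\gamma(\Lambda)$, which bounds the multiplier norm on every $\L^p$ at once. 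I would pursue the $\L^1$-kernel route. It matches the tools of Lemma~\ref{lem:good} and Lemma~\ref{lem:kinetic}, and it handles large and small $\Lambda$ uniformly.

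\textbf{Step 1: size of the symbol on its support.} On the support of $M_\Lambda$ we have $\frac14\Lambda<d_\beta(\varphi,\xi)<4\Lambda$, by homogeneity of $d_\beta$. By \eqref{eq:psigcoercive}, $\psi_g\gtrsim \min(d_\beta^2,d_\beta^{2\beta})$ up to comparing $|\varphi|^{2\beta}+|\xi|^{2\beta}$ with powers of $d_\beta$ on the annulus. For $\Lambda\ge1$ this gives $e^{-\psi_g}\lesssim e^{-c\Lambda^{2\beta}}$. By Lemma~\ref{lem:xi-phi}, $d_\beta(\varphi,\xi-\varphi)^\gamma$ is at most polynomial in $\Lambda$, which is absorbed by halving $c_0$. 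For $\Lambda\le1$ the same lemma gives $d_\beta(\varphi,\xi-\varphi)\simeq d_\beta(\varphi,\xi)\simeq\Lambda$, so the factor is $\simeq\Lambda^\gamma$, while the exponential is $\le 1\simeq e^{-c_0\Lambda^{2\beta}}$. In both regimes $\|M_\Lambda\|_\infty\lesssim\kappa_\gamma(\Lambda)$, with the polynomial weight $\Lambda^\gamma$ recovered for large $\Lambda$ at the cost of the constant $c_0$.

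\textbf{Step 2: derivative bounds after rescaling.} I would rescale back to unit frequency and set $N_\Lambda(\varphi,\xi)=M_\Lambda(\Lambda^{2\beta+1}\varphi,\Lambda\xi)$. This has the same $\L^1$-kernel norm and is supported in the fixed annulus $D=\{\frac14\le d_\beta\le4\}$. There $\hat\eta$ is smooth, and $d_\beta(\varphi,\Lambda^{2\beta}\varphi-\xi)$ is smooth because it stays away from $0$ by Lemma~\ref{lem:xi-phi}. The function $\psi_g(\Lambda^{2\beta+1}\varphi,\Lambda\xi)$ is smooth, since $\psi_g$ is smooth, with derivatives of order $k$ bounded by $C_k(1+\Lambda)^{m_k}$ for some $m_k$. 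Then $\|N_\Lambda\|_{\C^k(D)}\lesssim(1+\Lambda)^{m}\sup_D|N_\Lambda|$ times polynomial factors, and the polynomial factors are again absorbed into $e^{-c_0\Lambda^{2\beta}}$. Since $N_\Lambda$ has compact support in a fixed set, the standard bound $\|\cF^{-1}N\|_{\L^1}\lesssim\|N\|_{\C^{2d+1}}$ gives $\|\cF^{-1}N_\Lambda\|_{\L^1}\lesssim\kappa_\gamma(\Lambda)$.

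\textbf{Main obstacle.} The hard part is controlling the derivatives of $e^{-\psi_g}$ and of $d_\beta(\varphi,\Lambda^{2\beta}\varphi-\xi)^\gamma$ uniformly in $\Lambda$. For large $\Lambda$ the shift $\Lambda^{2\beta}\varphi$ is large, and each derivative costs a factor $\Lambda^{2\beta}$. The way I would handle it is to bound every derivative of $\psi_g$ of order $\ge1$ by $C(1+|\cdot|)^{2\beta}$-type growth using its L\'evy–Khintchine representation; for $\beta<1$, differentiating under the integral is controlled because the small-jump part is truncated at $\rho\le1$. The resulting factors are polynomial in $\Lambda$ against the $e^{-c\Lambda^{2\beta}}$ decay from Step 1, so they are harmless. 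For small $\Lambda$ there is no exponential decay, but no large factors appear either, since all arguments stay in compact sets.
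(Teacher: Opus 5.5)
Your proof is correct, but it uses a different mechanism from the paper's.

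\emph{The paper's route.} It stays at frequency scale $\Lambda$ and verifies the Marcinkiewicz condition $|\varphi|^{|\alpha|}|\xi|^{|\sigma|}|\partial_\varphi^\alpha\partial_\xi^\sigma M_\Lambda|\lesssim\kappa_\gamma(\Lambda)$ directly, by Leibniz and Fa\`a di Bruno. For $\Lambda\le1$ it counts powers of $\Lambda$, using the homogeneity of the derivatives of $d_\beta^\gamma$ against $|\varphi|\lesssim\Lambda^{2\beta+1}$ and $|\xi|\lesssim\Lambda$. For $\Lambda\ge1$ it absorbs every power into $e^{-c\Lambda^{2\beta}}$. The derivative bounds on $\psi_g$ are quoted from the literature.

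\emph{Your route.} You use dilation invariance to move to the fixed annulus $\{\frac14\le d_\beta\le4\}$. There the symbol becomes $\Lambda^\gamma\hat\eta(\varphi,\xi)\,d_\beta(\varphi,\xi-\Lambda^{2\beta}\varphi)^\gamma e^{-\psi_g(\Lambda^{2\beta+1}\varphi,\Lambda\xi)}$, and you bound the $\L^1$ norm of its kernel by a $\C^{2d+1}$ norm. You also derive the smoothness of $\psi_g$ directly from its truncated L\'evy representation.

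\emph{What each buys.} Your route gives a stronger conclusion: an integrable kernel, hence boundedness on every $\L^p$, $1\le p\le\infty$, with a $p$-independent constant, and no multiplier theorem is needed. For $\Lambda\le1$ it replaces the power counting by the uniform smoothness of $(\lambda,\varphi,\xi)\mapsto\hat\eta(\varphi,\xi)\,d_\beta(\varphi,\xi-\lambda\varphi)^\gamma$ on $[0,1]\times\{\frac14\le d_\beta\le4\}$, exactly as in Lemma~\ref{lem:good}. The paper's route, by contrast, does not depend on the compact Fourier support that yours relies on.

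\emph{Two details to state more carefully.}
\begin{itemize}
\item For $\Lambda\ge1$, Lemma~\ref{lem:xi-phi} applied in the unscaled variables does not give a uniform lower bound. It only gives $d_\beta(\varphi,\xi-\Lambda^{2\beta}\varphi)\gtrsim\Lambda^{-2\beta/(2\beta+1)}$ on the annulus. This is harmless because the loss is polynomial in $\Lambda$.
\item The derivative bound should read $\|N_\Lambda\|_{\C^k}\lesssim(1+\Lambda)^m\Lambda^\gamma\sup_{D}e^{-\psi_g(\Lambda^{2\beta+1}\varphi,\Lambda\xi)}$. Phrasing it in terms of $\sup_D|N_\Lambda|$ is not what your argument actually controls.
\end{itemize}
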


\begin{proof}  
We begin with the first item. According to Lemma~\ref{lem:xi-phi} and since $M_{\Lambda}$ has support where 
$d_{\beta}(\varphi,\xi)\sim \Lambda$, $M_{\Lambda}$ is $\C^\infty$. 
By Marcinkiewicz's multiplier theorem, it suffices to show 
\begin{equation}\label{eq:MikGoal}
	\sup_{(\varphi,\xi) \in \R^{2d} \setminus \{(0,0) \}}
	\abs{\varphi}^{\abs{\alpha}}\abs{\xi}^{\abs{\sigma}}
	\abs{\partial_\varphi^\alpha \partial_\xi^\sigma M_\Lambda(\varphi,\xi)}
	\lesssim_{\alpha,\sigma} \Lambda^\gamma e^{-c_{0}\Lambda^{2\beta}}.
\end{equation}
First, 
\begin{equation*}
	\abs{\varphi}^{\abs{\alpha}}\abs{\xi}^{\abs{\sigma}}
  	\abs{\partial_\varphi^\alpha \partial_\xi^\sigma  \exp\left(-\psi_g(\varphi,\xi)\right)}   \lesssim_{\alpha,\sigma} 
  	\exp(-c_0(\Lambda^{2\beta} \wedge \Lambda^2)) \lesssim_{\alpha,\sigma} \inf (1, \exp(-c_0 \Lambda^{2\beta})).
\end{equation*}
Indeed, recall that when $\beta<1$
\begin{equation*}
	\psi_g(\varphi,\xi) = \frac{1}{2}c_{\beta,d}\int_0^1\int_{0}^1 \int_{\S^{d-1}} \frL_\beta(\xi-r\varphi,\rho\omega) \rho^{-1-2\beta} \dd \omega \dd \rho  \dr.
\end{equation*}
	
Fix $r \in [0,1]$. 
According to \cite[Lemma 7.5]{MR4709546} the function $h \colon \R^{2d} \to \R$,
\begin{equation*}
	h(\varphi,\xi) := \int_{0}^1 \int_{\S^{d-1}} \frL_\beta(\xi-r\varphi,\rho\omega) \rho^{-1-2\beta} \dd \omega \dd \rho
\end{equation*}
is $\C^\infty$ uniformly in $r$ with 
\begin{equation*}
	\abs{\partial_\varphi^\alpha \partial_\xi^\sigma h(\varphi,\xi)} \lesssim_{\alpha,\sigma} \left( (1+\abs{(\varphi,\xi)}) \mathds{1}_{\abs{\alpha} +\abs{\sigma}= 1}+\mathds{1}_{\abs{\alpha}+\abs{\sigma} \ge 2} \right).
\end{equation*}
 Hence, $\psi_g(\varphi,\xi)$ is $\C^\infty$ with 
\begin{equation*}
	\abs{\partial_\varphi^\alpha \partial_\xi^\sigma \psi_g(\varphi,\xi)} \lesssim_{\alpha,\sigma} \left( 1+ \abs{\xi}+\abs{\varphi} \right).
\end{equation*}
Due to the coercivity, see \eqref{eq:psigcoercive}, we know that every term 
\begin{equation*}
	\abs{\varphi}^{\abs{\alpha}}\abs{\xi}^{\abs{\sigma}}   \exp(-\psi_g(\varphi,\xi)) \lesssim_{\alpha,\sigma} \exp\left(-\frac{1}{2} \psi_g(\varphi,\xi)\right)
\end{equation*}
and we conclude by the Fa\`a-di-Bruno formula combining the two inequalities. 
For $\beta=1$, the same conclusion holds more easily. 

Next, by the Leibniz rule, write $\alpha=\alpha_{1}+\alpha_{2}+\alpha_{3}$ and $\sigma=\sigma_{1}+\sigma_{2}+\sigma_{3}$
 so that we have to estimate
\begin{equation}\label{eq:toestimate}
	\abs{\varphi}^{\abs{\alpha}}\abs{\xi}^{\abs{\sigma}}
	\abs{\partial_\varphi^{\alpha_{1}} \partial_\xi^{\sigma_{1}} \hat{\eta}\left( \frac{\varphi}{\Lambda^{2\beta+1}}, \frac{\xi}{\Lambda} \right)}
	\abs{\partial_\varphi^{\alpha_{2}} \partial_\xi^{\sigma_{2}} d_{\beta}(\varphi,\xi-\varphi)^\gamma}
	\abs{\partial_\varphi^{\alpha_{3}} \partial_\xi^{\sigma_{3}} \exp\left(-\psi_g(\varphi,\xi)\right)}.
\end{equation}
We begin with $\Lambda\le 1$. 
Recall that $\hat\eta$ is supported in the region where $1/4< d_{\beta}(\varphi,\xi)< 4$. 
In the  support of $\hat\eta\left( \frac{\varphi}{\Lambda^{2\beta+1}}, \frac{\xi}{\Lambda} \right)$, we know by 
Lemma~\ref{lem:xi-phi} that  $d_{\beta}(\varphi, \xi-\varphi)\sim d_{\beta}(\varphi,\xi) \sim \Lambda$. 
For the derivatives, we have 
\[ 
	\partial_\varphi^{\alpha_{2}} \partial_\xi^{\sigma_{2}} [d_{\beta}(\varphi,\xi-\varphi)^\gamma] = [(\partial_\varphi-\partial_\xi)^{\alpha_{2}} \partial_\xi^{\sigma_{2}} d_{\beta}^\gamma](\varphi,\xi-\varphi)
\]
so this last term is a combination of terms 
\[ 
	[\partial_\varphi^{\alpha_{2}^\flat} \partial_\xi^{\alpha_{2}^\sharp+\sigma_{2}} d_{\beta}^\gamma](\varphi,\xi-\varphi)
\]
where we write $\alpha_{2}=\alpha_{2}^\flat+\alpha_{2}^\sharp$ and the latter is bounded by 
\[ 	
	d_{\beta}(\varphi,\xi-\varphi)^{\gamma- (2\beta+1)|\alpha_{2}^\flat|-|\alpha_{2}^\sharp|-|\sigma_{2}|} \sim \Lambda^{\gamma- (2\beta+1)|\alpha_{2}^\flat|-|\alpha_{2}^\sharp|-|\sigma_{2}|}.
\]
Altogether,  and using $|\varphi|\lesssim \Lambda^{2\beta+1}$ and $|\xi|\lesssim \Lambda$, the term in 
\eqref{eq:toestimate} is bounded a constant times 
\begin{align*}
	& \Lambda^{(2\beta+1)|\alpha|+|\sigma|} \Lambda^{-(2\beta+1)|\alpha_{1}|-|\sigma_{1}|}  
	\Lambda^{\gamma- (2\beta+1)|\alpha_{2}^\flat|-|\alpha_{2}^\sharp|-|\sigma_{2}|} \\
	&\qquad=\Lambda^{\gamma}\Lambda^{(2\beta+1)(|\alpha_{2}|-|\alpha_{2}|^\flat+|\alpha_{3}|)}
	\Lambda^{|\sigma_{3}|-|\alpha_{2}^\sharp|} =\Lambda^{\gamma}\Lambda^{(2\beta+1)|\alpha_{3}|} \Lambda^{2\beta ||\alpha_{2}^\sharp|}
	\Lambda^{|\sigma_{3}|}. 
\end{align*}
As $\alpha_{3},\sigma_{3}, \alpha_{2}^\sharp$ can be $0$ and as $\Lambda \le 1$, the best possible bound is $\Lambda^\gamma$. 

When $\Lambda\ge 1$, we can do the same calculation, but here, 
Lemma~\ref{lem:xi-phi} yields $\Lambda^{\frac{1}{2\beta+1}}  \lesssim 
d_{\beta}(\varphi,\xi-\varphi) \lesssim \Lambda^{{2\beta+1}}.$ 
So the first two terms contribute to some power of $\Lambda$ and the value of the exponent 
does not matter since it will be absorbed by the exponential factor coming from $\psi_{g}$ when $\Lambda\ge 1$.
\end{proof}

\begin{lem}[Bounds for the initial value problem]\label{lem:boundsinitialvalueproblem}
Let $\beta\in (0,1]$, $p \in (1,\infty)$ and $\gamma\in \R$.
	The solution of the Kolmogorov equation given by \eqref{eq:solKolIV} with $g \in \cS(\R^{2d})$ whose Fourier transform has 
	compact support in $(\R_{\varphi}^{d}\setminus \{0\})\times (\R_{\xi}^{d}\setminus \{0\})$, satisfies the estimate
	\begin{equation} \label{eq:tracekol}
		\norm{g}_{\Bdot_{\beta}^{\gamma-{2\beta}/{p},p}} \lesssim \norm{f}_{\L^p_t((0,\infty);\Xdot^{\gamma,p}_{\beta})} \lesssim \norm{g}_{\Bdot_{\beta}^{\gamma-{2\beta}/{p},p}}.
	\end{equation}
	Moreover, if $g \in \Bdot^{\gamma-2\beta/p,p}_\beta$, then $f\in \C^{}_0\bigl([0,\infty);\Bdot^{\gamma-2\beta/p,p}_\beta\bigr)$ with 
	\begin{align*}
		\sup_{t\ge 0}\|f (t,\cdot)\|_{\Bdot^{\gamma-2\beta/p,p}_{\beta}}&\lesssim \norm{f}_{\L^p_t((0,\infty);\Xdot^{\gamma,p}_{\beta})}, \\
		\lim_{t\to0^+}\|f(t,\cdot)-g\|_{\Bdot^{\gamma-2\beta/p,p}_\beta}&=0, \\
		\lim_{t\to \infty}\|f (t,\cdot)\|_{\Bdot^{\gamma-2\beta/p,p}_{\beta}}&=0.
	\end{align*}
\end{lem}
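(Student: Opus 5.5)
The plan is to work in the shifted variables, use the continuous Littlewood--Paley characterisation \eqref{eq:besovcont} of the Besov norm, and reduce both inequalities in \eqref{eq:tracekol} to Lemma~\ref{lem:traceMult} together with a Schur-type argument in the scale variables. Set $F(t)=\Gamma f(t)$, so that $\widehat{F}(t,\varphi,\xi)=\widehat{E_\beta}(t,0,\varphi,\xi)\hat g(\varphi,\xi)$. Since $\Gamma$ preserves $\L^p_{x,v}$ norms at fixed $t$, we have $\|f(t)\|_{\Xdot^{\gamma,p}_\beta}\simeq\|\cF^{-1}(m_\beta(t)^\gamma \widehat F(t))\|_{\L^p_{x,v}}$ with $m_\beta(t,\varphi,\xi)=d_\beta(\varphi,\xi-t\varphi)$. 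By density (Lemma~\ref{lem:denseanisotropic}) it suffices to treat $g$ in the dense class, for which all multiplier manipulations are legitimate.

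\textbf{Upper bound.} Decompose $g=\int_0^\infty \psi_s\ast\tilde\psi_s\ast g\,\frac{\ds}{s}$ with a reproducing pair $\tilde\psi$, where $\hat{\tilde\psi}$ has slightly larger support so that $\eta=\tilde\psi$ is admissible in Lemma~\ref{lem:traceMult}. Then
\[
\|f(t)\|_{\Xdot^{\gamma,p}_\beta}\lesssim\int_0^\infty t^{-\frac{\gamma}{2\beta}}\kappa_\gamma\big((t/s)^{\frac1{2\beta}}\big)\,\|\psi_s\ast g\|_{\L^p_{x,v}}\frac{\ds}{s}.
\]
Write this as $\int_0^\infty k(t/s)\,a(s)\frac{\ds}{s}$ with $a(s)=s^{-\frac{\gamma}{2\beta}}\|\psi_s\ast g\|_{\L^p}$ and $k(\lambda)=\lambda^{-\frac{\gamma}{2\beta}}\kappa_\gamma(\lambda^{\frac1{2\beta}})=c_1e^{-c_0\lambda}$. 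Then $\|f\|_{\L^p_t\Xdot}^p=\int_0^\infty t\,|\cdots|^p\frac{\dt}{t}$. Absorbing $t^{1/p}=(t/s)^{1/p}s^{1/p}$ gives the kernel $\lambda^{1/p}e^{-c_0\lambda}$, which lies in $\L^1(\frac{\dd\lambda}{\lambda})$ because it vanishes like $\lambda^{1/p}$ at $0$. Young's inequality on the multiplicative group then yields $\lesssim(\int_0^\infty (s^{1/p}a(s))^p\frac{\ds}{s})^{1/p}\simeq\|g\|_{\Bdot^{\gamma-2\beta/p,p}_\beta}$. Here the lower-order Littlewood--Paley pieces must be controlled; this is exactly what the uniform-in-$\Lambda$ bound of Lemma~\ref{lem:helpMLambda} provides.

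\textbf{Lower bound and trace.} I would use duality with the backward problem. For $h$ in the dense class of $\Bdot^{-\gamma+2\beta/p,p'}_\beta$, the identity $\frac{\dd}{\dt}\langle f(t),\sem^-_{-t}h\rangle$ (or more directly $\langle g,h\rangle=\int_0^\infty\langle f(t),2(-\Delta_v)^\beta \tilde h(t)\rangle\dt$, with $\tilde h$ the backward solution for $h$, obtained by integrating $\frac{\dd}{\dt}$ of the pairing using the energy identity of Lemma~\ref{lem:initialvalue}) gives $|\langle g,h\rangle|\le\|f\|_{\L^p_t\Xdot^{\gamma}}\,\|\tilde h\|_{\L^{p'}_t\Xdot^{2\beta-\gamma,p'}}$. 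The upper bound applied to the backward problem with exponents $(p',2\beta-\gamma)$ gives $\|\tilde h\|\lesssim\|h\|_{\Bdot^{2\beta-\gamma-2\beta/p',p'}_\beta}=\|h\|_{\Bdot^{-(\gamma-2\beta/p),p'}_\beta}$, and Besov duality concludes. The one point to check is that $(-\Delta_v)^\beta$ maps $\Xdot^{2\beta-\gamma,p'}$ into $\Xdot^{-\gamma,p'}$, which holds by Proposition~\ref{prop:characterisation} and Lemma~\ref{lem:multipliers}.

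\textbf{Continuity and sup bound.} By the semigroup property, $f(t_0+\cdot)$ solves the same problem with datum $f(t_0)$ (after a kinetic shift, which is harmless). The lower bound applied to this restarted problem gives $\|f(t_0)\|_{\Bdot}\lesssim\|f\|_{\L^p((t_0,\infty);\Xdot)}$, hence the sup bound and, letting $t_0\to\infty$, decay at infinity. Continuity at $0$ follows from density of the good class and the uniform bound $\sup_t\|\sem^+_t g\|_{\Bdot}\lesssim\|g\|_{\Bdot}$, since $\sem^+_t g\to g$ for $g$ in the dense class. Continuity at $t_0>0$ then follows from the semigroup property.

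The main obstacle is the lower bound: justifying the duality identity rigorously in the homogeneous setting and checking that $\Gamma$ and restarting at $t_0$ are compatible with the anisotropic Besov norm, which is not shift invariant in the kinetic sense. The fallback is to derive the trace bound directly from the pointwise identity $g=f(t)+\int_0^t(-\Delta_v)^\beta f$, reversed via Lemma~\ref{lem:traceMult} with negative $\gamma$.
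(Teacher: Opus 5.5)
Your upper bound is the paper's Step~1: the reproducing formula, Lemma~\ref{lem:traceMult}, and Schur/Young in $\L^p((0,\infty);\frac{\dd t}{t})$.

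For the lower bound you take a genuinely different route, and it is valid. The paper proves a pointwise-in-$t$ estimate $t^{-\frac{\gamma}{2\beta}}\|\vartheta_t\ast g\|_{\L^p_{x,v}}\lesssim\|m_t(D)g\|_{\L^p_{x,v}}$ by inverting $u=\widehat{E_\beta}(1,0,\cdot)$ locally in the Wiener algebra $\mathcal F\L^1(\R^{2d})$. This needs $u\in\mathcal F\L^1$, which comes from the splitting $\Psi_g\ast\Psi_b$ with $\Psi_b$ a probability measure, together with the Wiener--L\'evy theorem, but it needs no duality. You instead get the lower bound from four ingredients: the upper bound at the dual exponents $(2\beta-\gamma,p')$, the multiplier $(|\xi|/d_\beta)^{2\beta}$ of Lemma~\ref{lem:multipliers}, Besov duality, and the identity
\[
\langle g,h\rangle=2\int_0^\infty\langle \sem^+_t g,(-\Delta_v)^\beta \sem^+_t h\rangle\,\dd t .
\]
For $g,h$ in the dense class this identity is an exact Fourier computation. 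After the change of variables $\xi\mapsto\xi+t\varphi$ it reduces to $\int_0^\infty 2|\xi-t\varphi|^{2\beta}\widehat{E_\beta}(t,0,\varphi,\xi)^2\,\dd t=1$ for $\varphi\neq0$. So the ``main obstacle'' you name is not one. Your route is shorter, avoids the Banach-algebra machinery, and is the same Lions-type duality argument the paper uses in Corollary~\ref{cor:besovbounds}. What you give up is the pointwise-in-$t$ reverse multiplier bound that the local inverse provides.

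Several points need correcting.
\begin{itemize}
\item The function paired with $f$ must be the \emph{forward} evolution $\sem^+_t h$, not a backward solution. The identity is the polarized energy identity $\frac{\mathrm{d}}{\mathrm{d}t}\langle f(t),\sem^+_th\rangle=-2\langle f(t),(-\Delta_v)^\beta\sem^+_th\rangle$. Pairing $f$ with a solution of the adjoint equation only gives a conserved quantity, and $\langle f(t),\sem^-_{-t}h\rangle$ gives nothing usable.
\item Restarting at $t_0$ needs no kinetic shift. The equation is autonomous in the original variables, so $\sem^+_{t+s}=\sem^+_t\sem^+_s$.
\item Most importantly, your continuity at $t=0$ rests on the claim that $\sem^+_tg\to g$ in $\Bdot^{\gamma-2\beta/p,p}_\beta$ for $g$ in the dense class. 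You do not prove this, and it is not immediate: $\sem^+_tg$ leaves that class, and for $\beta<1$ its Fourier transform need not be smooth.
\item The clean fix, which is what the paper does, is to apply your two-sided estimate to the datum $f(h)-g$. Its evolution is $f(\cdot+h)-f$, so $\|f(h)-g\|_{\Bdot^{\gamma-2\beta/p,p}_\beta}\lesssim\|f(\cdot+h)-f\|_{\L^p((0,\infty);\Xdot^{\gamma,p}_\beta)}$. The right-hand side tends to $0$ by continuity of translations in $\L^p$.
\item Your fallback identity $g=f(t)+\int_0^t(-\Delta_v)^\beta f$ omits the transport term. It holds only for $\Gamma f$.
\end{itemize}
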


\begin{proof}
{\textbf{Step 1: upper bound in \eqref{eq:tracekol}.} We start with the upper bound in the first estimate.} 
Pick $\eta\in \cS(\R^{2d})$  so that $\hat \eta=1$ on the support of $\hat\psi$ and 
with support in $\frac{1}{4} < d_{\beta}(\varphi,\xi) < 4$. 
{
We additionally assume the reproducing property
\[
\int_{0}^\infty \hat{\psi}(s^{\frac{1}{2\beta}+1}\varphi,s^{\frac{1}{2\beta}}\xi) \, \frac{\ds}{s} = 1
\]
for $(\varphi,\xi)\ne (0,0)$.
}
Thus, 
\begin{equation*}
	1 = \int_{0}^\infty \widehat{\psi_{s}}(\varphi,\xi) \, \frac{\ds}{s}
	= \int_{0}^\infty \widehat{\eta_{s}}(\varphi,\xi)\widehat{\psi_{s}}(\varphi,\xi) \, \frac{\ds}{s}.
\end{equation*}
	By the translation invariance of the Lebesgue measure, we have
	\begin{align*}
		&\norm{f}_{\L^p_t\Xdot^{\gamma,p}_{\beta}}^p \\
		&\simeq \int_0^\infty \norm{\cF^{-1}\left( d_{\beta}(\varphi,\xi)^\gamma \widehat{E_{\beta}}(t,0,\varphi,\xi+t\varphi)\hat{g}(\varphi,\xi+t\varphi) \right) }^p_{\L^p_{x,v}}  \dt \\
		&= \int_0^\infty   \norm{\cF^{-1}\left( d_{\beta}(\varphi,\xi-t\varphi)^\gamma\widehat{E_{\beta}}(t,0,\varphi,\xi)\hat{g}(\varphi,\xi) \right) }^p_{\L^p_{x,v}}    \dt 
		\\
		&= \int_0^\infty  \norm{\cF^{-1}\left(  \int_{0}^\infty \widehat{\eta_{s}}(\varphi,\xi)\widehat{\psi_{s}}(\varphi,\xi) \frac{\ds}{s} d_{\beta}(\varphi,\xi-t\varphi)^\gamma\widehat{E_{\beta}}(t,0,\varphi,\xi) \hat{g}(\varphi,\xi) \right) }^p_{\L^p_{x,v}}   \dt
		\\
		&=\int_0^\infty  \norm{\cF^{-1}\left( \int_{0}^\infty \widehat{\eta_{s}}(\varphi,\xi)d_{\beta}(\varphi,\xi-t\varphi)^\gamma\widehat{E_{\beta}}(t,0,\varphi,\xi) \widehat{\psi_{s}}(\varphi,\xi)\hat{g}(\varphi,\xi)  \frac{\ds}{s} \right) }^p_{\L^p_{x,v}}  \dt \\
		&\le \int_0^\infty \left( \int_{0}^\infty \norm{\cF^{-1}\left( \widehat{\eta_{s}}(\varphi,\xi)d_{\beta}(\varphi,\xi-t\varphi)^\gamma\widehat{E_{\beta}}(t,0,\varphi,\xi) \widehat{\psi_{s}}(\varphi,\xi)\hat{g}(\varphi,\xi) \right)}_{\L^p_{x,v}} \frac{\ds}{s}\right)^p    \dt\\
		&\le \int_0^\infty \left( \int_{0}^\infty  t^{-\frac{\gamma}{2\beta}} \kappa_{\gamma}((t/s)^{\frac{1}{2\beta}}) \norm{\psi_s\ast g}_{\L^p_{x,v}} \frac{\ds}{s} \right)^p    \dt
		\\ 
		&\le \int_0^\infty \left( \int_{0}^\infty  c_{1}(t/s)^{\frac{1}{p}} e^{-c_{0}{(t/s)}} s^{ -\frac1{2\beta}({\gamma-\frac{2\beta}{p}}) }\norm{\psi_s\ast g}_{\L^p_{x,v}} \frac{\ds}{s} \right)^p   \frac{ \dt}{t}
\\
		&
		\le C^p  \int_0^\infty \left(  s^{ -\frac{1}{2\beta}(\gamma-\frac{2\beta}{p}) }\norm{\psi_s\ast g}_{\L^p_{x,v}}  \right)^p   \frac{ \ds}{s} \simeq \|g\|_{\Bdot_{\beta}^{\gamma-{2\beta}/{p},p}}^p.
	\end{align*}
where $\kappa_{\gamma}$ is as in Lemma \ref{lem:traceMult}, and with 
$C= \int_{0}^\infty c_{1}r^{\frac{1}{p}} e^{-c_{0}r}\,  \frac{\dr}{r}$ after applying Schur's lemma 
in $\L^p((0,\infty);\frac{ \dt}{t})$.  
Remark that we used the properties of $\hat g$ to justify the first three lines. 
First, $(\varphi,\xi)\mapsto \widehat{E_{\beta}}(t,0,\varphi,\xi+t\varphi)\hat{g}(\varphi,\xi+t\varphi)$ belongs to 
$\L^2_{\varphi,\xi}$ and is supported away from $\varphi=0$. 
In particular, for each $t$, we have $0< m\le d_{\beta}(\varphi,\xi)< M(t)$, so we can multiply by 
$ d_{\beta}(\varphi,\xi)^\gamma$ and the computation of $\|f(t)\|_{\Xdot^{\gamma,p}_{\beta}}$ via the 
inverse Fourier transform is justified. 
The introduction of the Littlewood--Paley decomposition at each fixed $t$ is justified similarly.   

\bigskip

\textbf{Step 2: lower bound in \eqref{eq:tracekol}.}
As before, we use translation invariance to write
\begin{align}
	\|f\|_{\L^p((0,\infty);\Xdot_\beta^{\gamma,p})}^p
	&\simeq \int_0^\infty \Big\| \cF^{-1}\Big( d_\beta(\varphi,\xi)^\gamma\,\hat f(t,\varphi,\xi) \Big) \Big\|_{\L^p_{x,v}}^p \dd t \nonumber\\
	&= \int_0^\infty \Big\| \cF^{-1}\Big( d_\beta(\varphi,\xi-t\varphi)^\gamma\,\widehat{E_\beta}(t,0,\varphi,\xi)\,\hat g(\varphi,\xi) \Big) \Big\|_{\L^p_{x,v}}^p \dd t. \label{eq:def_Tt}
\end{align}
For $t>0$ we define the Fourier multiplier
\[
	m_t(\varphi,\xi) := d_\beta(\varphi,\xi-t\varphi)^\gamma\,\widehat{E_\beta}(t,0,\varphi,\xi),
\]
and the corresponding operator $m_t(D) g:=\cF^{-1}\big(m_t\,\hat g\big)$.
Then \eqref{eq:def_Tt} reads
\begin{equation}\label{eq:Lp_equals_Tt}
	\|f\|_{\L^p((0,\infty);\Xdot_\beta^{\gamma,p})}^p \simeq \int_0^\infty \|m_t(D) g\|_{\L^p_{x,v}}^p\dd t.
\end{equation}
The lower bound will be obtained by recovering a Littlewood--Paley piece of $g$ from
$m_t(D)g$ by means of a local inverse in the Wiener algebra.

\bigskip

\noindent
\emph{1. The Wiener algebra.}
We set
\[
	\A(\R^{2d}) := \mathcal F \L^1(\R^{2d})
	= \bigl\{\widehat h : h\in \L^1(\R^{2d})\bigr\},
	\qquad
	\A_1(\R^{2d}) := \IC \oplus \A(\R^{2d}).
\]
The space $\A(\R^{2d})$ is a Banach algebra under pointwise multiplication, because $\widehat h\,\widehat k=\widehat{h*k}
	$ for $ h,k\in \L^1(\R^{2d}),$
and $\A(\R^{2d})$ is an ideal in $\A_1(\R^{2d})$. Moreover, $\C_c^\infty(\R^{2d})\subset \A(\R^{2d})$, since for $a\in \C_c^\infty(\R^{2d})$ one has $\mathcal F^{-1}a\in \cS(\R^{2d})\subset \L^1(\R^{2d})$.

We set
\[
	u(\varphi,\xi):=\widehat E_\beta(1,0,\varphi,\xi)
	=\exp\!\left(-\int_0^1 |\xi-r\varphi|^{2\beta}\,dr\right). 
\]
We claim that $u\in \A(\R^{2d})$. Indeed, by the decomposition from Section \ref{sec:estimates},
\[
	u(\varphi,\xi)=e^{-\psi_g(\varphi,\xi)}e^{-\psi_b(\varphi,\xi)}
	=\widehat{\Psi_g}(\varphi,\xi)\,\widehat{\Psi_b}(\varphi,\xi)
	=\widehat{\Psi_g * \Psi_b}(\varphi,\xi),
\]
where $\Psi_g\in \cS(\R^{2d})\subset \L^1(\R^{2d})$, and $\Psi_b$ is a probability
measure (for $\beta=1$, $\Psi_b=\delta_0$). Hence $\Psi_g * \Psi_b\in \L^1(\R^{2d})$, and therefore $u\in \A(\R^{2d})$.
Next, $u(\varphi,\xi)>0$ for every $(\varphi,\xi)\in\R^{2d}$.

\medskip

\noindent
\emph{2. Construction of a local inverse.}
Choose $\vartheta\in\mathcal S(\R^{2d})$ such that
\[
	\hat\vartheta>0
	\quad\text{on}\quad
	\Bigl\{\frac12<d_\beta(\varphi,\xi)<2\Bigr\},
	\qquad
	\supp\hat\vartheta\subset
	\Bigl\{\frac14<d_\beta(\varphi,\xi)<4\Bigr\}.
\]
We recall that any such cutoff may be used in the continuous
Littlewood--Paley characterisation of $\Bdot_\beta^{s,p}$.

Since $\supp\hat\vartheta$ is a compact subset of the annulus
$\{\frac14<d_\beta(\varphi,\xi)<4\}$, Lemma~\ref{lem:xi-phi} implies that
\[
	d_\beta(\varphi,\xi-\varphi)\sim d_\beta(\varphi,\xi)\sim 1
	\qquad\text{on }\supp\hat\vartheta.
\]
In particular, there exists an open neighborhood $\Omega$ of $\supp\hat\vartheta$ and a constant
$c_*>0$ such that
\[
	d_\beta(\varphi,\xi-\varphi)\ge c_*
	\qquad\text{for all }(\varphi,\xi)\in\Omega.
\]
Using the smoothness properties of $d_\beta$ away from the origin, the function
\[
	(\varphi,\xi)\longmapsto d_\beta(\varphi,\xi-\varphi)^{-\gamma}
\]
is $\C^\infty$ on $\Omega$.

Set
\[
	\chi(\varphi,\xi):=\hat\vartheta(\varphi,\xi)\,d_\beta(\varphi,\xi-\varphi)^{-\gamma},
\]
then $\chi\in \C_c^\infty(\R^{2d})\subset \A(\R^{2d})$.

Next choose $\rho_0\in \C_c^\infty(\R^{2d})$ such that
\[
	0\le \rho_0\le 1,
	\qquad
	\rho_0=1 \text{ on } \supp\chi .
\]
Define
\[
	U_0(\varphi,\xi):=1-\rho_0(\varphi,\xi)+\rho_0(\varphi,\xi)\,u(\varphi,\xi).
\]
Since $\rho_0\in \A(\R^{2d})$ and $u\in \A(\R^{2d})$, we have $U_0\in \A_1(\R^{2d})$.

We now verify that $U_0$ has no zeros. Outside $\supp\rho_0$, we have $U_0=1$.
On $\supp\rho_0$, the function $u$ is continuous and strictly positive, and
$\supp\rho_0$ is compact; therefore
\[
	m_0:=\inf_{(\varphi,\xi) \in \supp\rho_0} u(\varphi,\xi) >0.
\]
Hence for every $(\varphi,\xi)\in\R^{2d}$,
\[
	U_0(\varphi,\xi)\ge \min\{1,m_0\}>0.
\]
Therefore the function $z\mapsto z^{-1}$ is holomorphic on a neighborhood of the range of $U_0$,
and the Wiener--L\'evy theorem (\cite{MR2039503}) in the unital algebra $A_1(\R^{2d})$ yields
\[
	U_0^{-1}\in A_1(\R^{2d}).
\]

Now define
\[
	b(\varphi,\xi):=\chi(\varphi,\xi)\,U_0(\varphi,\xi)^{-1}.
\]
Since $\chi\in \A(\R^{2d})$ and $\A(\R^{2d})$ is an ideal in $\A_1(\R^{2d})$, we have $b\in \A(\R^{2d})$.

Finally, because $\rho_0=1$ on $\supp\chi$, we have $U_0=u$ on $\supp\chi$. Hence
\begin{equation}
\label{eq:localinverse}
b(\varphi,\xi)\,u(\varphi,\xi)=\chi(\varphi,\xi)
	=\hat\vartheta(\varphi,\xi)\,d_\beta(\varphi,\xi-\varphi)^{-\gamma}
\end{equation}
for every $(\varphi,\xi)\in\R^{2d}$. This is the desired local inverse relation.

\medskip
\noindent
\emph{3. Rescaling and recovery of a Littlewood--Paley piece of $g$.}
For $t>0$, define the anisotropic rescaling of $b$ by
\[
	b_t(\varphi,\xi):=
	b\Bigl(t^{1+\frac1{2\beta}}\varphi,\; t^{\frac1{2\beta}}\xi\Bigr).
\]
Since $b\in \A(\R^{2d})$, there exists $k\in \L^1(\R^{2d})$ such that $b=\widehat k$.
Define its anisotropic dilate by
\[
	k_t(x,v):=
	t^{-(1+\frac1\beta)d}\,
	k\Bigl(t^{-(1+\frac1{2\beta})}x,\;t^{-\frac1{2\beta}}v\Bigr).
\]
Then $\widehat{k_t}=b_t$ and $\|k_t\|_{\L^1_{x,v}}=\|k\|_{\L^1_{x,v}}$. Consequently,
for every $h\in \L^p(\R^{2d})$,
\[
	\|b_t(D)h\|_{\L^p_{x,v}}
	=\|k_t*h\|_{\L^p_{x,v}}
	\le \|k_t\|_{\L^1_{x,v}}\|h\|_{\L^p_{x,v}}
	=\|k\|_{\L^1_{x,v}}\|h\|_{\L^p_{x,v}}.
\]
Thus the operators $b_t(D)$ are bounded on $\L^p_{x,v}$, uniformly in $t>0$.

We now compute the relation between $b_t$, $m_t$, and $\vartheta_t$.
Set
\[
	\tilde\varphi:=t^{1+\frac1{2\beta}}\varphi,
	\qquad
	\tilde\xi:=t^{\frac1{2\beta}}\xi .
\]
By the anisotropic homogeneity of $d_\beta$ we have $d_\beta(\varphi,\xi-t\varphi) = t^{-\frac1{2\beta}}\,d_\beta(\tilde\varphi,\tilde\xi-\tilde\varphi)$.
Moreover, using the change of variables $r=ts$, we deduce
\begin{align*}
	\widehat E_\beta(t,0,\varphi,\xi)
	&=
	\exp\!\left(-\int_0^t |\xi-r\varphi|^{2\beta}\,dr\right) =
	\exp\!\left(
	-\int_0^t
	t^{-1}\bigl|\tilde\xi-(r/t)\tilde\varphi\bigr|^{2\beta}\,dr
	\right) \\
	&=
	\exp\!\left(-\int_0^1 |\tilde\xi-s\tilde\varphi|^{2\beta}\,ds\right)
	=
	u(\tilde\varphi,\tilde\xi).
\end{align*}
Therefore
\[
	m_t(\varphi,\xi)
	=
	t^{-\frac{\gamma}{2\beta}}\,
	d_\beta(\tilde\varphi,\tilde\xi-\tilde\varphi)^\gamma\,
	u(\tilde\varphi,\tilde\xi).
\]
Multiplying by $b_t(\varphi,\xi)=b(\tilde\varphi,\tilde\xi)$ and using the local inverse relation \eqref{eq:localinverse}, we obtain
\begin{align*}
	b_t(\varphi,\xi)\,m_t(\varphi,\xi)
	&=
	t^{-\frac{\gamma}{2\beta}}\,
	b(\tilde\varphi,\tilde\xi)\,
	d_\beta(\tilde\varphi,\tilde\xi-\tilde\varphi)^\gamma\,
	u(\tilde\varphi,\tilde\xi) \\
	&=
	t^{-\frac{\gamma}{2\beta}}\,
	\hat\vartheta(\tilde\varphi,\tilde\xi)
	=
	t^{-\frac{\gamma}{2\beta}}\,
	\widehat{\vartheta_t}(\varphi,\xi).
\end{align*}
Hence, on the Fourier side,
\[
	[\widehat{\vartheta_t*g}](\varphi,\xi)
	=
	\widehat{\vartheta_t}(\varphi,\xi)\,\hat g(\varphi,\xi)
	=
	t^{\frac{\gamma}{2\beta}}\,
	b_t(\varphi,\xi)\,m_t(\varphi,\xi)\,\hat g(\varphi,\xi).
\]
Taking inverse Fourier transform and $\L^p$-norms and using the uniform $\L^p$-boundedness of $b_t(D)$, we infer
\[
	t^{-\frac{\gamma}{2\beta}}
	\|\vartheta_t*g\|_{\L^p_{x,v}}
	\lesssim
	\|m_t(D)g\|_{\L^p_{x,v}}
	\qquad\text{for every }t>0.
\]

\medskip

\noindent
\emph{4. Integration in $t$.}
Using the continuous Littlewood--Paley characterisation with the admissible cutoff $\vartheta$, we obtain
\begin{align*}
	\|g\|_{\Bdot_\beta^{\gamma-\frac{2\beta}{p},p}}^p
	&\simeq
	\int_0^\infty
	\left(
	\frac{\|\vartheta_t*g\|_{\L^p_{x,v}}}
	{t^{(\gamma-\frac{2\beta}{p})/(2\beta)}}
	\right)^p
	\frac{\dd t}{t} \\
	&=
	\int_0^\infty
	t^{-\frac{\gamma p}{2\beta}}
	\|\vartheta_t*g\|_{\L^p_{x,v}}^p\dd t \\
	&\lesssim
	\int_0^\infty
	\|m_t(D)g\|_{\L^p_{x,v}}^p\dd t \\
	&\simeq
	\|f\|_{\L^p((0,\infty);\Xdot_\beta^{\gamma,p})}^p .
\end{align*}
This proves the lower bound in \eqref{eq:tracekol}.

\medskip
{
\noindent\textbf{Step 3: continuity and limits.}
}
	
	{
	Since $\supp\hat g\subset (\R^d_\varphi\setminus\{0\})\times(\R^d_\xi\setminus\{0\})$ is compact, 
	there exists $\delta_\varphi>0$ such that $|\varphi|\ge\delta_\varphi$ on $\supp\hat g$. 
	For every fixed $t\ge0$,
	\[
		\supp \hat f(t,\cdot)
		\subset \{(\varphi,\xi):(\varphi,\xi+t\varphi)\in\supp\hat g\},
	\]
	so in particular $|\varphi|\ge\delta_\varphi$ on $\supp\hat f(t,\cdot)$. 
	Hence $d_\beta(\varphi,\xi)\gtrsim_\beta |\varphi|^{\frac1{2\beta+1}}\gtrsim_{\beta,g} 1$ 
	on $\supp\hat f(t,\cdot)$, uniformly in $t$.
	The same is true for differences $f(t+h)-f(t)$: their Fourier supports are contained in unions of such sets.
	In particular, all homogeneous Besov norms appearing below are unambiguously defined and finite for these data, 
	and the trace estimate proved in Steps~1--2 applies to them with the same constants.
	}
	
	{
	For $t\ge 0$, we recall the Kolmogorov semigroup {$\sem^+_t$} defined in \eqref{eq:solKolIV} 
	\begin{equation}\label{eq:def_semigroup}
		[\widehat{\sem^+_t g}](\varphi,\xi)
		:= \exp\!\Big( -\int_0^t \abs{\xi+(t-r)\varphi}^{2\beta}\, \dr \Big)\, \hat g(\varphi,\xi+t\varphi).
	\end{equation}
	Then $f(t)=\sem^+_t g$ for $t\ge 0$, and we have the semigroup property
	\begin{equation}\label{eq:semigroup_property}
		\sem^+_{t+s}=\sem^+_t\sem^+_s\qquad\text{for all }t,s\ge 0.
	\end{equation}
	Moreover, we fix $t\ge 0$ and define the time-shifted function $F_t(s):=f(t+s)$ for $s\ge 0$.
	}
	
	{
	By \eqref{eq:semigroup_property}, $F_t(s)=\sem^+_s(f(t))$, so $F_t$ is the  solution to the forward Kolmogorov equation when $s>0$ with initial datum $f(t)$.
	Applying \eqref{eq:tracekol} to $F_t$ yields the $\L^\infty_t$ bound
	\begin{equation} \label{eq:Linfty-trace-bound}
		\|f(t)\|_{\Bdot_\beta^{{\gamma-{2\beta}/{p}},p}} \lesssim \|F_t\|_{\L^p((0,\infty);\Xdot_\beta^{\gamma,p})}= \|f\|_{\L^p((t,\infty);\Xdot_\beta^{\gamma,p})}.
	\end{equation}
	}
	
	{
	Fix $t_0\ge 0$ and $h>0$, and define
	\[
		W_{t_0,h}(s):=f(t_0+s+h)-f(t_0+s),\qquad s\ge 0.
	\]
	Using \eqref{eq:semigroup_property},
	\[
		W_{t_0,h}(s)=\sem^+_{t_0+s+h}g-\sem^+_{t_0+s}g
		=\sem^+_s\big(\sem^+_{t_0+h}g-\sem^+_{t_0}g\big)
		=\sem^+_s\big(f(t_0+h)-f(t_0)\big).
	\]
	Thus $W_{t_0,h}$ is the forward Kolmogorov solution with initial datum $W_{t_0,h}(0)=f(t_0+h)-f(t_0)$.
	Applying the trace bound \eqref{eq:tracekol} to $W_{t_0,h}$ gives us
	\begin{equation}\label{eq:modulus_of_continuity}
		\|f(t_0+h)-f(t_0)\|_{\Bdot_\beta^{{\gamma-{2\beta}/{p}},p}} \lesssim		\|W_{t_0,h}\|_{\L^p((0,\infty);\Xdot_\beta^{\gamma,p})}		=		\|f(\cdot+h)-f(\cdot)\|_{\L^p((t_0,\infty);\Xdot_\beta^{\gamma,p})}.
	\end{equation}
	Since $f\in \L^p((0,\infty);\Xdot_\beta^{\gamma,p})$ the translation invariance of vector valued 
	$\L^p$ spaces together with \eqref{eq:modulus_of_continuity} yields
	\[
		\lim_{h\to0^+}\|f(t_0+h)-f(t_0)\|_{\Bdot_\beta^{{\gamma-{2\beta}/{p}},p}}=0
		\qquad\text{for every }t_0\ge 0,
	\]
	i.e.\ right-continuity for all $t_0\ge0$.
	}

	{
	To prove the left-continuity at $t_0>0$, we let $0<h<t_0$ and apply \eqref{eq:modulus_of_continuity} with $t_0$ replaced by $t_0-h$:
	\[
		\|f(t_0)-f(t_0-h)\|_{\Bdot_\beta^{{\gamma-{2\beta}/{p}},p}}
		\lesssim
		\|f(\cdot+h)-f(\cdot)\|_{\L^p((t_0-h,\infty);\Xdot_\beta^{\gamma,p})}.
	\]
	Splitting $(t_0-h,\infty)=(t_0,\infty)\cup(t_0-h,t_0)$ and using $\|a-b\|^p\le 2^{p-1}(\|a\|^p+\|b\|^p)$,
	\[
		\|f(\cdot+h)-f(\cdot)\|_{\L^p((t_0-h,t_0);\Xdot_\beta^{\gamma,p})}^p
		\le 2^{p-1}\int_{t_0-h}^{t_0+h}\|f(\tau)\|_{\Xdot_\beta^{\gamma,p}}^p \dd\tau \xrightarrow[h\to0^+]{} 0
	\]
	by absolute continuity of the Lebesgue integral, while the tail term over $(t_0,\infty)$ tends to $0$ by translation invariance.
	We have proven continuity on $(0,\infty)$ with values in $\Bdot_\beta^{{\gamma-{2\beta}/{p}},p}$.
	}
	
	{
	Furthermore, taking the limit $t \to \infty$ in \eqref{eq:Linfty-trace-bound} yields
	\[
		\|f(t)\|_{\Bdot_\beta^{{\gamma-{2\beta}/{p}},p}}
		\lesssim \|f\|_{\L^p((t,\infty);\Xdot_\beta^{\gamma,p})}\xrightarrow[t\to\infty]{}0,
	\]
	so $f\in \C_0([0,\infty);\Bdot_\beta^{{\gamma-{2\beta}/{p}},p})$. 
	Finally, taking $t_0=0$ and using that $f(0)=g$ by construction, the same right-continuity argument gives
	$\|f(t)-g\|_{\Bdot_\beta^{{\gamma-{2\beta}/{p}},p}}\to0$ as $t\to0^+$.
	}	
\end{proof}

\subsection{Consequences for the Kolmogorov operators}
We first recall a consequence of the energy equality. 

\begin{lem}\label{lem:energy} 
Let $g\in \cS(\R^{2d})$ whose Fourier transform has compact  support in 
$(\R_{\varphi}^{d}\setminus \{0\})\times (\R_{\xi}^{d}\setminus \{0\})$,  
$S\in \cS_{K}$ and $\tau\in \R$. Then
\[
	\int_{\R^{2d}} g(x,v) (\cK_{\beta}^-S)(\tau,x,v)\, \dd(x,v)= \int_{\tau}^\infty\int_{\R^{2d}} \sem^+_{t-\tau}g(x,v)S(t,x,v)\,  \dd(x,v)\dt.
\]
Similarly,
\[
	\int_{\R^{2d}} g(x,v) (\cK_{\beta}^+S)(\tau,x,v)\, \dd(x,v)= \int^{\tau}_{-\infty}\int_{\R^{2d}} \sem^-_{t-\tau}g(x,v)S(t,x,v)\,  \dd(x,v)\dt.
\]
 \end{lem}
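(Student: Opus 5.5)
The plan is to work entirely on the Fourier side in $(x,v)$. By Plancherel, both sides are $\L^2$ pairings, and everything reduces to an identity between explicit integrals of Fourier transforms. For $S\in\cS_K$ and $g$ as stated, all the functions involved are in $\L^2$ with compact Fourier support away from the axes, so Fubini can be applied freely. We only treat the first identity; the second follows by the symmetric argument, or by applying the time-reversal symmetry $t\mapsto -t$, $v\mapsto -v$ (as in \eqref{eq:kernelidentity}), which exchanges $\cK^+_\beta$ with $\cK^-_\beta$ and $\sem^+$ with $\sem^-$. Since $\cK^-_\beta$ is defined through the kinetic shift, we write the left side as $\langle \Gamma^{-1}\cdot\rangle$ pairings. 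Concretely, $\widehat{\Gamma \cK^-_\beta S}(\tau,\varphi,\xi)=\int_\tau^\infty \widehat E_\beta(t,\tau,\varphi,\xi)\widehat{\Gamma S}(t,\varphi,\xi)\dt$ by \eqref{eq:kolmogorovoperatoradjoint}.

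\textbf{Step 1.} Rewrite the left side. Using $\hat f(\tau,\varphi,\xi)=\widehat{\Gamma f}(\tau,\varphi,\xi+\tau\varphi)$ and Parseval, $\int g\,\cK^-_\beta S(\tau)$ equals $(2\pi)^{-2d}\int \overline{\hat g(\varphi,\xi)}\,\widehat{\Gamma\cK^-_\beta S}(\tau,\varphi,\xi+\tau\varphi)\dd(\varphi,\xi)$, with conjugations adjusted to the real/sesquilinear convention of the bracket. Substitute the formula for $\widehat{\Gamma\cK^-_\beta S}$, exchange the order of integration in $(t,\varphi,\xi)$, and change variables $\xi\mapsto\xi-\tau\varphi$.

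\textbf{Step 2.} Rewrite the right side. For $t\ge\tau$, \eqref{eq:solKolIV} gives $\widehat{\sem^+_{t-\tau}g}(\varphi,\xi)=\exp(-\int_0^{t-\tau}|\xi+(t-\tau-r)\varphi|^{2\beta}\dr)\,\hat g(\varphi,\xi+(t-\tau)\varphi)$. Pair this with $\hat S(t,\varphi,\xi)=\widehat{\Gamma S}(t,\varphi,\xi+t\varphi)$ and change variables $\xi\mapsto \xi-t\varphi$. The $\hat g$ argument becomes $\xi-\tau\varphi$, and the exponent becomes $\int_0^{t-\tau}|\xi-(\tau+r)\varphi|^{2\beta}\dr=\int_\tau^t|\xi-\sigma\varphi|^{2\beta}\dd\sigma$. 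This is exactly $\widehat E_\beta(t,\tau,\varphi,\xi)$.

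\textbf{Step 3.} Compare the two expressions. Both are now $\int_\tau^\infty\int \hat g(\varphi,\xi-\tau\varphi)\widehat E_\beta(t,\tau,\varphi,\xi)\widehat{\Gamma S}(t,\varphi,\xi)$, so they agree. The main obstacle is bookkeeping: matching signs in the shift $\xi\pm t\varphi$ and keeping the conjugation conventions consistent, so that the exponent lands exactly on $\int_\tau^t$. Justifying Fubini is routine, since $|\widehat E_\beta|\le1$, $\widehat{\Gamma S}$ is bounded with compact support in $(t,\varphi,\xi)$, and $\hat g$ is bounded with compact support.
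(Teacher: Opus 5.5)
Your computation is correct, but it is not the route the paper takes. After the shifts $\xi\mapsto\xi-\tau\varphi$ on the left and $\xi\mapsto\xi-t\varphi$ on the right, both sides do reduce to $(2\pi)^{-2d}\int_\tau^\infty\int \hat g(\varphi,\xi-\tau\varphi)\,\widehat E_\beta(t,\tau,\varphi,\xi)\,\widehat{\Gamma S}(t,\varphi,\xi)\,\mathrm{d}(\varphi,\xi)\,\mathrm{d}t$. For the bilinear pairing in the statement, $\hat g$ appears reflected or conjugated, which is harmless because all the multipliers are real and even in $(\varphi,\xi)$. Fubini is justified because $\widehat{\Gamma S}$ is bounded with compact support in $(t,\varphi,\xi)$ and $0<\widehat E_\beta\le 1$ for $t\ge\tau$.

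The paper instead argues by duality between the two equations. The map $t\mapsto\sem^+_{t-\tau}g$ is a strong $\L^2_{x,v}$ solution of the forward homogeneous equation on $(\tau,\infty)$ (Lemma~\ref{lem:initialvalue}), and $\cK^-_\beta S$ is a strong solution of the backward equation with source $S$ (Lemma~\ref{lem:equation}). As in \cite[Theorem~3.9]{AIN}, the pairing $t\mapsto\langle\sem^+_{t-\tau}g,(\cK^-_\beta S)(t)\rangle$ is then absolutely continuous with derivative $-\langle\sem^+_{t-\tau}g,S(t)\rangle$: the transport terms cancel by skew-adjointness and the diffusion terms by self-adjointness. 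Integrating over $[\tau,\infty)$, with value $\langle g,(\cK^-_\beta S)(\tau)\rangle$ at $t=\tau$ and zero limit at infinity, gives the identity.

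Your approach is a self-contained algebraic check. It uses the fact that the same propagator $\widehat E_\beta$ appears both in $\frT^-_\beta$ and in the shifted semigroup, so it needs neither absolute continuity nor decay at infinity. The paper's energy argument uses only the equations satisfied by the two objects and reuses lemmas already proved. It is also the kind of argument that survives when no explicit Fourier representation is available, e.g.\ for variable coefficients.
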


\begin{proof} 
We only prove this first one. Let $f(t, x,v)=\sem^+_{t}g(x,v)$ be given by \eqref{eq:solKolIV}.
Shifting time, we have for $t> \tau$, 
$(\partial_{t}+v\cdot\nabla_{x})f(t-\tau, x,v)+ (-\Delta_{v})^\beta f(t-\tau,x,v)= 0$, 
the equality being in the sense of strong solutions in $\L^2_{x,v}$ by Lemma~\ref{lem:initialvalue}, with $f(0,x,v)=g(x,v)$.  
By Lemma~\ref{lem:equation}, also in the sense of strong solutions in $\L^2_{x,v}$,  
$-(\partial_{t}+v\cdot\nabla_{x})(\cK_{\beta}^-S)+ (-\Delta_{v})^\beta (\cK_{\beta}^- S)=S$. 
Hence the formula follows as in \cite[Theorem~3.9]{AIN} using the absolute continuity of 
$t\mapsto \angle{f(t-\tau)}{(\cK_{\beta}^-S)(t)}$ on $[\tau,\infty)$ and zero limit at $\infty$. 
 \end{proof}

\begin{cor}[A priori bounds involving Besov spaces] \label{cor:besovbounds} 
Let $\beta\in (0,1]$, $\gamma\in \R$ and $p \in (1,\infty)$. Let $S\in \cS_{K}$.   
\begin{itemize}
\item $ \cK_{\beta}^\pm S$ belongs to 
$\C^{}_{0}(\R^{}_{t}\,;\, \Bdot_{\beta}^{\gamma-{2\beta}/{p},p})$ with 
\[
\|  \cK_{\beta}^\pm S\|_{\L^\infty_{t}\Bdot_{\beta}^{\gamma-{2\beta}/{p},p}} \lesssim_{\gamma,p} \|S\|_{\L^p_{t}\Xdot^{\gamma-2\beta,p}_{\beta}}. 
\]
\item
$ \cK_{\beta}^\pm S $ belongs to 
$\C^{}_{0}(\R^{}_{t}\,;\, \Bdot_{\beta}^{\gamma-2\beta/p,p})$ with 
\[
\|  \cK_{\beta}^\pm S \|_{\L^\infty_{t}\Bdot_{\beta}^{\gamma-2\beta/p,p}} \lesssim_{\gamma,p} \|S\|_{\L^1_{t}\Bdot^{\gamma-2\beta/p,p}_{\beta}}.
\] 
\item
$ \cK_{\beta}^\pm S $ belongs to 
$\L^p_{t}\Xdot^{\gamma,p}_{\beta}$ with 
\[
\|  \cK_{\beta}^\pm S \|_{\L^p_{t}\Xdot^{\gamma,p}_{\beta}} \lesssim_{\gamma,p} \|S\|_{\L^1_{t}\Bdot^{\gamma-2\beta/p,p}_{\beta}}. \]
\end{itemize}
\end{cor}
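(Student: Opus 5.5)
We argue by duality, combining Lemma~\ref{lem:energy} with the trace bounds of Lemma~\ref{lem:boundsinitialvalueproblem}; the backward case follows by the same argument with $\sem^-$ in place of $\sem^+$, so we treat only $\cK_\beta^-S$ (the formula in Lemma~\ref{lem:energy} involving $\sem^+$). Fix $\tau\in\R$ and a test function $g\in\cS(\R^{2d})$ whose Fourier transform has compact support away from the axes. Such $g$ are dense in $\Bdot_\beta^{-\gamma+2\beta/p,p'}$. Since the duality pairing identifies $\Bdot_\beta^{\gamma-2\beta/p,p}$ with the dual of $\Bdot_\beta^{-(\gamma-2\beta/p),p'}$ (modulo polynomials, harmless here because $\cK^\pm_\beta S(\tau)$ has Fourier support away from the origin), it suffices to bound $|\angle{g}{\cK_\beta^-S(\tau)}|$ by $\|g\|_{\Bdot_\beta^{-\gamma+2\beta/p,p'}}$ times the appropriate norm of $S$.

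\textbf{First bullet.} By Lemma~\ref{lem:energy} and the duality between $\L^p_t\Xdot^{\gamma-2\beta,p}_\beta$ and $\L^{p'}_t\Xdot^{2\beta-\gamma,p'}_\beta$,
\[
|\angle{g}{\cK^-_\beta S(\tau)}|\le \|S\|_{\L^p_t\Xdot^{\gamma-2\beta,p}_\beta}\,\|\sem^+_{\cdot-\tau}g\|_{\L^{p'}((\tau,\infty);\Xdot^{2\beta-\gamma,p'}_\beta)}.
\]
We then apply the upper bound of \eqref{eq:tracekol} with exponents $(2\beta-\gamma,p')$. This gives $\|g\|_{\Bdot^{2\beta-\gamma-2\beta/p',p'}_\beta}$, and the Besov index is $2\beta-\gamma-2\beta/p'=-(\gamma-2\beta/p)$, exactly the dual index required.

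\textbf{Second bullet.} Here we use instead the sup bound of Lemma~\ref{lem:boundsinitialvalueproblem}: $\sup_t\|\sem^+_tg\|_{\Bdot^{-(\gamma-2\beta/p),p'}_\beta}\lesssim\|g\|_{\Bdot^{-(\gamma-2\beta/p),p'}_\beta}$. This is obtained by applying the sup estimate with regularity $2\beta-\gamma$ and exponent $p'$, then chaining with \eqref{eq:tracekol}. Pairing in $(x,v)$ at each time $t$ and integrating in $t$ then bounds the right side of the energy identity by $\|S\|_{\L^1_t\Bdot^{\gamma-2\beta/p,p}_\beta}\|g\|$.

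\textbf{Third bullet.} We dualise in space-time. For $h\in\L^{p'}_t\Xdot^{-\gamma,p'}_\beta$ in $\cS_K$, $\langle h,\cK^-_\beta S\rangle=\langle \cK^+_\beta h,S\rangle$ by definition of $\cK^-_\beta$ as formal adjoint. Then
\[
|\langle \cK^+_\beta h,S\rangle|\le\|S\|_{\L^1_t\Bdot^{\gamma-2\beta/p,p}_\beta}\sup_t\|\cK^+_\beta h(t)\|_{\Bdot^{-\gamma+2\beta/p,p'}_\beta},
\]
and the supremum is controlled by the first bullet applied to $\cK^+_\beta$ with parameters $(2\beta-\gamma,p')$: the source space $\L^{p'}_t\Xdot^{2\beta-\gamma-2\beta,p'}_\beta=\L^{p'}_t\Xdot^{-\gamma,p'}_\beta$ matches, and the target index is $2\beta-\gamma-2\beta/p'=-\gamma+2\beta/p$. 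Density of $\cS_K$ (Lemma~\ref{lem:density}) concludes.

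\textbf{Continuity and decay.} It remains to show that $\tau\mapsto\cK^\pm_\beta S(\tau)$ is continuous into $\Bdot$ and vanishes at $\pm\infty$. Since $S\in\cS_K$, $\cK^\pm_\beta S\in\C_0(\R;\L^2_{x,v})$ by Lemma~\ref{lem:equation}, with Fourier support controlled away from $\varphi=0$. We would first try to transfer this to Besov continuity by approximating $S$ in $\L^p_t\Xdot$ (or in $\L^1_t\Bdot$) by sources compactly supported in time, for which continuity follows from the semigroup representation and Lemma~\ref{lem:boundsinitialvalueproblem}. The uniform bounds then pass continuity and decay to the limit, since $\C_0$ is closed under uniform convergence. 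This approximation step is the main technical obstacle; the duality estimates themselves are routine.
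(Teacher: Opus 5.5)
Your three norm estimates coincide with the paper's proof. You pair through Lemma~\ref{lem:energy} and apply the upper bound in \eqref{eq:tracekol}, respectively the $\sup_t$ bound of Lemma~\ref{lem:boundsinitialvalueproblem}, at the parameters $(2\beta-\gamma,p')$. The third bullet then follows by $\L^2_{t,x,v}$-duality from the first bullet for $\cK^+_\beta$ with $(2\beta-\gamma,p')$.

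The gap is the assertion $\cK^\pm_\beta S\in\C_0(\R_t;\Bdot^{\gamma-2\beta/p,p}_\beta)$. It is part of the statement, and you leave it open. Your proposed reduction is vacuous: every $S\in\cS_{K}$ already has compact support in $t$, because its partial Fourier transform is compactly supported in $\R_t\times(\R^d_\varphi\setminus\{0\})\times(\R^d_\xi\setminus\{0\})$. So nothing is reduced, and the claim that continuity ``follows from the semigroup representation'' is only asserted. Starting from the $\L^2_{x,v}$-continuity of Lemma~\ref{lem:equation} does not help either. For $p<2$, $\L^2$ control of the Littlewood--Paley pieces gives no $\L^p$ control. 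Moreover, the $\xi$-support of $\widehat{(\cK^\pm_\beta S)(t)}$ is translated by $-t\varphi$, so it is not uniformly compact as $|t|\to\infty$.

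The missing idea is already contained in your duality identity. Substituting $u=t-\tau$ in Lemma~\ref{lem:energy} gives
\[
\int_{\R^{2d}} g\,(\cK^-_\beta S)(\tau)\,\dd(x,v)=\int_0^\infty\!\!\int_{\R^{2d}} \sem^+_u g\;S(u+\tau)\,\dd(x,v)\,\dd u .
\]
The very same estimates, applied to the difference of two such identities, then yield
\[
\|(\cK^-_\beta S)(\tau)-(\cK^-_\beta S)(\tau')\|_{\Bdot^{\gamma-2\beta/p,p}_\beta}\lesssim \|S(\cdot+\tau)-S(\cdot+\tau')\|_{\L^p_t\Xdot^{\gamma-2\beta,p}_\beta},
\]
or the same with $\L^1_t\Bdot^{\gamma-2\beta/p,p}_\beta$ on the right. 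The right-hand side tends to $0$ as $\tau'\to\tau$, since $S$ is smooth and compactly supported in $t$ with values in these spaces. This is how the paper concludes.

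For the limits, let $[a,b]$ be the time support of $S$. For $\tau>b$ you have $(\cK^-_\beta S)(\tau)=0$. For $\tau<a$, the factorisation $\widehat E_\beta(t,\tau,\cdot)=\widehat E_\beta(t,a,\cdot)\,\widehat E_\beta(a,\tau,\cdot)$ gives $(\cK^-_\beta S)(\tau)=\sem^-_{\tau-a}\big[(\cK^-_\beta S)(a)\big]$. This tends to $0$ in $\Bdot^{\gamma-2\beta/p,p}_\beta$ as $\tau\to-\infty$ by the backward version of Lemma~\ref{lem:boundsinitialvalueproblem}.

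A Duhamel argument would also work, using that $\sem^\pm$ is uniformly bounded and strongly continuous on $\Bdot^{s,p}_\beta$ by Lemma~\ref{lem:boundsinitialvalueproblem} and density. It is longer than the translation argument, however, and you did not carry it out.
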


\begin{proof}
We prove the first two items for $\cK_{\beta}^-$ as it is the same for $\cK_{\beta}^+$ 
starting from the backward initial value problem given by \eqref{eq:solKolIVback}.
  
Using the formula for  $g$, $S$ and $f$  as in Lemma~\ref{lem:energy},  the duality pairing for anisotropic  Sobolev spaces and Lemma~\ref{lem:boundsinitialvalueproblem},
\begin{align*}
	\bigg|\int_{\R^{2d}} g(x,v) (\cK_{\beta}^-S)(\tau,x,v)\, \dd(x,v) \bigg| 
	&\le 	\|f\|_{\L^{p'}_{t}\Xdot^{-(\gamma-2\beta),p'}_{\beta}} \|S\|_{\L^p_{t}\Xdot^{\gamma-2\beta,p}_{\beta}} \\
	&\lesssim \|g\|_{\Bdot_{\beta}^{-\gamma+{2\beta}/{p},p'}}\|S\|_{\L^p_{t}\Xdot^{\gamma-2\beta,p}_{\beta}}.
\end{align*}
As $g$ describes a dense class of $\Bdot_{\beta}^{-\gamma+{2\beta}/{p},p'}$, we have obtained
\[	
	\|  (\cK_{\beta}^- S)(\tau)\|_{\Bdot_{\beta}^{\gamma-{2\beta}/{p},p}} \lesssim   \|S\|_{\L^p_{t}\Xdot^{\gamma-2\beta,p}_{\beta}}
\]
uniformly in $\tau$.

For the continuity and limits, writing $S_{\tau}(t,x,v)=S(t+\tau,x,v)$, we have similarly for $\tau<\tau'$,
\[
	\| (\cK_{\beta}^- S)(\tau)-(\cK_{\beta}^- S)(\tau')\|_{\Bdot_{\beta}^{\gamma-{2\beta}/{p},p}} \lesssim   \|S_{\tau}-S_{\tau'}\|_{\L^p_{t}\Xdot^{\gamma-2\beta,p}_{\beta}}
\]
and as $S$ is $\C^\infty$ with compact support in time valued in $\Xdot^{\gamma-2\beta,p}_{\beta}$, the conclusion follows.  

For the second item, using again Lemma~\ref{lem:boundsinitialvalueproblem}, replace the first inequality with 
\begin{align*}
	\bigg|\int_{\R^{2d}} g(x,v) (\cK_{\beta}^-S)(\tau,x,v)\, \dd(x,v) \bigg| &\le 
	\|f\|_{\vphantom{\L^1_{t}}\L^{\infty}_{t}\Bdot^{-(\gamma-2\beta/p),p'}_{\beta}} \|S\|_{\L^1_{t}\Bdot^{\gamma-2\beta/p,p}_{\beta}} \\
	&\lesssim \|g\|_{\Bdot_{\beta}^{-(\gamma-2\beta/p),p'}}\|S\|_{\L^1_{t}\Bdot^{\gamma-2\beta/p,p}_{\beta}}
\end{align*}
and argue similarly. 
We skip further details.

For the third item, we use that $\cK^+_{\beta}$ is the dual operator to $\cK_{\beta}^-$ in the extended $\L^2_{t,x,v}$ duality.
 So the estimate follows from the first item, on changing $(\gamma,p)$ to $(-(\gamma-2\beta),p')$.
\end{proof}


\section{Hardy--Littlewood--Sobolev estimates for Kolmogorov operators}
\label{sec:Lebesgueestimates}

Collecting the results of the previous sections, we have obtained the following estimates for the Kolmogorov operators and semigroups. 

\begin{thm}[Bounds for the Kolmogorov operators and semigroups]\label{thm:bounds} 
Let $\beta\in (0,1]$, $\gamma\in \R$, $p\in(1,\infty)$. 
\begin{itemize}
\item $\cK^{\pm}_{\beta}$ extend to bounded operators from  
$\Zdot^{\gamma,p}_{\beta}$ to $\Ydot^{\gamma,p}_{\beta}$.
\item If $0\le \gamma\le 2\beta$, then  $\cK^{\pm}_{\beta}$ extend to bounded operators from either $\L^p_{t,x} \Hdot^{\gamma -2\beta,p}_{\vphantom{t,x} v}$ or $\L^p_{t,v}\Hdot_{\vphantom{t,v} x}^{\frac{\gamma-2\beta}{2\beta+1},p}$ to both $\L^p_{t,x} \Hdot^{\gamma ,p}_{\vphantom{t,x} v}$ and  $\L^p_{t,v}\Hdot_{\vphantom{t,v} x}^{\frac{\gamma}{2\beta+1},p}$.
\item The semigroups $\sem^\pm$ extend to bounded operators from $ \Bdot^{\gamma-2\beta/p,p}_{\beta}$ to $\Ydot^{\gamma,p}_{\beta, \pm}$ where the $\pm $ index indicates whether we work with positive times or negative times. 
\item  If $0\le \gamma\le 2\beta$, $\sem^\pm$ extend to bounded operators from $ \Bdot^{\gamma-2\beta/p,p}_{\beta}$ to both $\L^p_{t,x} \Hdot^{\gamma ,p}_{\vphantom{t,x} v}$ and  $\L^p_{t,v}\Hdot_{\vphantom{t,v} x}^{\frac{\gamma}{2\beta+1},p}$.
\end{itemize}
\end{thm}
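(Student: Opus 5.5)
The plan is to assemble the four items from a priori estimates on the dense class $\cS_K$ (Lemma~\ref{lem:density}) and then extend by density. All target spaces are complete, if necessary modulo polynomials, and all estimates are uniform, so the extensions are unique.

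\textbf{First item.} Write $\Zdot^{\gamma,p}_\beta=\L^p_t\Xdot^{\gamma-2\beta,p}_\beta+\L^1_t\Bdot^{\gamma-2\beta/p,p}_\beta$ and treat the two summands separately.
\begin{itemize}
\item[] For $S\in\cS_K$ in the first summand, the $\L^p_t\Xdot^{\gamma,p}_\beta$ bound on $\cK^+_\beta S$ is exactly Theorem~\ref{thm:TgammepsLp}. Indeed, $\cK^+_\beta=\Gamma^{-1}\circ(\text{conjugate of }T_\gamma)\circ\Gamma$ via \eqref{eq:Tgammahat}. The kinetic shift $\Gamma$ preserves $\L^p_{t,x,v}$, and $\|\cF^{-1}(d_\beta^{\gamma}\hat f(t))\|_{\L^p}\simeq\|f(t)\|_{\Xdot^{\gamma,p}_\beta}$ on $\cS_K$. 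One has to check that for $S\in\cS_K$ the function $h=\cF^{-1}(d_\beta^{\gamma-2\beta}\hat S)$ is in $\L^p\cap\L^2$, and that $T_\gamma$ on $\L^2$ (Lemma~\ref{lem:TgammaL2}) agrees with the $\L^p$ extension there.
\item[] The $\C_0(\R;\Bdot^{\gamma-2\beta/p,p}_\beta)$ bound for this summand is the first item of Corollary~\ref{cor:besovbounds}.
\item[] For the second summand, both the $\L^p_t\Xdot^{\gamma,p}_\beta$ and the $\C_0\Bdot$ bounds are the second and third items of Corollary~\ref{cor:besovbounds}.
\end{itemize}
The case $\cK^-_\beta$ is symmetric, via time reversal (cf.\ the identity \eqref{eq:kernelidentity}) or duality.

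\textbf{Second item.} This follows from the first by Proposition~\ref{prop:characterisation}. For $0\le\gamma\le 2\beta$ we have $\gamma-2\beta\le0$, so both $\L^p_{t,x}\Hdot^{\gamma-2\beta,p}_v$ and $\L^p_{t,v}\Hdot^{\frac{\gamma-2\beta}{2\beta+1},p}_x$ embed continuously into $\L^p_t\Xdot^{\gamma-2\beta,p}_\beta$, and the latter is a sum of the two. Likewise $\gamma\ge0$ gives $\L^p_t\Xdot^{\gamma,p}_\beta\hookrightarrow \L^p_{t,x}\Hdot^{\gamma,p}_v\cap\L^p_{t,v}\Hdot^{\frac{\gamma}{2\beta+1},p}_x$. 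These identifications pass pointwise in $t$ to the Bochner spaces.

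\textbf{Third and fourth items.} Lemma~\ref{lem:boundsinitialvalueproblem} gives, for $g$ in the dense class of Lemma~\ref{lem:denseanisotropic}, both $\|\sem^+g\|_{\L^p((0,\infty);\Xdot^{\gamma,p}_\beta)}\lesssim\|g\|_{\Bdot^{\gamma-2\beta/p,p}_\beta}$ and membership in $\C_0([0,\infty);\Bdot^{\gamma-2\beta/p,p}_\beta)$ with the sup bound. This is precisely boundedness into $\Ydot^{\gamma,p}_{\beta,+}$. The backward semigroup is handled identically, as indicated after \eqref{eq:solKolIVback}. Density of that class in $\Bdot^{\gamma-2\beta/p,p}_\beta$ gives the extension. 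The fourth item then follows from Proposition~\ref{prop:characterisation} exactly as in the second.

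\textbf{Main obstacle.} I expect the hard point to be the rigorous identification in the first item: showing that the $\L^p$ extension of $T_\gamma$ really computes $\cK^\pm_\beta S$ in the sense of \eqref{e:conjugaison1} for $S\in\cS_K$. I would argue through Theorem~\ref{thm:TgammepsLp}(iii). For $h\in\L^p\cap\L^2$, the $\L^2$ definition and the $\L^p$ extension agree on this intersection, since they coincide as weak limits of $T_{\gamma,\epsilon}h$. After that I would undo the shift and the multiplier $d_\beta^\gamma$ on the level of tempered distributions; for this step I would use Lemma~\ref{lem:anisoconvergenceLP} to pass from $\L^p$ control of the Littlewood--Paley pieces to convergence modulo polynomials.
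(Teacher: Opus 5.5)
Your proposal is correct and follows the paper's proof essentially step for step. The $\L^p_t\Xdot^{\gamma,p}_\beta$ bound comes from the $\L^p_{t,x,v}$-boundedness of $T_\gamma$, the Besov and $\L^1_t$ bounds from Corollary~\ref{cor:besovbounds}, and the extension from density of $\cS_K$ (Lemma~\ref{lem:density}) plus completeness of the target. The third item comes from Lemma~\ref{lem:boundsinitialvalueproblem}, and the second and fourth items from Proposition~\ref{prop:characterisation}, since $\gamma\ge 0\ge \gamma-2\beta$. Your extra care in identifying the $\L^p$ extension of $T_\gamma$ with its $\L^2$ definition on $\L^p\cap\L^2$, and your explicit treatment of $\cK^-_\beta$ by time reversal or duality, only make explicit what the paper leaves implicit.
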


\begin{proof} 
For the first item, the a priori bounds
\[
	\|\cK^{\pm}_{\beta}S\|_{\L^p_{t} \Xdot^{\gamma,p}_{\beta}}
	\lesssim \|S\|_{\L^p_{t} \Xdot^{\gamma-2\beta,p}_{\beta}},
\]
for  $S\in \cS_{K}$ exactly follow from  the $\L^p_{t,x,v}$ boundedness of the operator $T_{\gamma}$ established in 
Theorem~\ref{thm:boundednessTgamma} and the three other ones involving the Besov bounds are in Corollary~\ref{cor:besovbounds}. 
Then, the extension is obtained by the  density Lemma~\ref{lem:density}.
Note that on the left-hand side we have to use that Cauchy sequences  for the $ \Ydot^{\gamma,p}_{\beta}$ semi-norm of elements in 
$\L^2_{t,x,v}\cap  \Ydot^{\gamma,p}_{\beta}$  converge (for that semi-norm) to elements in $\Ydot^{\gamma,p}_{\beta}$ by completeness. 

The second item follows directly from the first and the characterisation in Proposition~\ref{prop:characterisation} as $\gamma\ge 0$ and $\gamma-2\beta\le 0$.  

The third item for $\sem^+$ is a consequence of Lemma~\ref{lem:boundsinitialvalueproblem}. The last one  follows directly from the third one and the characterisation in Proposition~\ref{prop:characterisation} as $\gamma\ge 0$ and $\gamma-2\beta\le 0$. The proof is the same for $\sem^-$.
\end{proof}

We obtain the following kinetic Hardy--Littlewood--Sobolev estimates as a consequence. Given any positive number with $a < \frac{\homd}{\beta}$ we set $a^* = \frac{a \homd}{\homd - a\beta }$ the kinetic Sobolev exponent of $a$. If $a< \frac{\homd}{2\beta}$, then $a^*< \frac{\homd}{\beta}$  and we set $a^{**} = (a^*)^*$.

\begin{thm}  \label{thm:HLS} 
 Let  $\beta\in (0,1]$.
 \begin{enumerate}
\item If $1<a< \frac{\homd}{2\beta}$, then
$$\cK_{\beta}^\pm: \L^{a}_{t,x,v}\to  \L^{a^{**}}_{t,x,v}.
$$
\item If $1<b<\infty$, then 
$$
\sem^\pm: \L^{b}_{x,v} \to \L^{\frac{b \homd}{\homd - 2\beta}}_{t,x,v}.
$$
\end{enumerate}
\end{thm}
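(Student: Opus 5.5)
The plan is to combine Theorem~\ref{thm:bounds} with Sobolev embeddings for the anisotropic spaces $\Xdot^{\gamma,p}_\beta$ on $\R^{2d}$ and for $\Bdot^{s,b}_\beta$, together with a Hardy--Littlewood--Sobolev argument in time. The key tool will be a kinetic Sobolev embedding in all variables: for $0<\beta\gamma$ small and suitable $p<q$ with $\frac1q=\frac1p-\frac{\beta}{\homd}$, we aim for
\[
\Ydot^{\beta,p}_\beta=\L^p_t\Xdot^{\beta,p}_\beta\cap \C_0(\R_t;\Bdot^{\beta-2\beta/p,p}_\beta)\hookrightarrow \L^{q}_{t,x,v}, \qquad q=p^*,
\]
and dually $\L^{a}_{t,x,v}\hookrightarrow \Zdot^{\beta,a^*}_\beta$ (with source exponent $a$, target $a^*$). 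Composing the dual embedding, the boundedness $\cK^\pm_\beta:\Zdot^{\beta,p}_\beta\to\Ydot^{\beta,p}_\beta$ from the first item of Theorem~\ref{thm:bounds} with $p=a^*$, and the direct embedding $\Ydot^{\beta,a^*}_\beta\hookrightarrow \L^{a^{**}}_{t,x,v}$ gives (i). The scaling count is consistent: $\cK^\pm_\beta$ gains $2\beta$ derivatives, each embedding costs $\beta$, and $\frac1{a^{**}}=\frac1a-\frac{2\beta}{\homd}$, which requires $a<\homd/(2\beta)$.

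For the embedding $\Ydot^{\beta,p}_\beta\hookrightarrow\L^{p^*}$ I would interpolate pointwise in time, following the $\L^2$ argument of \cite{AIN}. For a.e.\ $t$, use the anisotropic Gagliardo--Nirenberg inequality
\[
\|f(t)\|_{\L^q_{x,v}}\lesssim \|f(t)\|_{\Xdot^{\beta,p}_\beta}^{\theta}\,\|f(t)\|_{\Bdot^{\beta-2\beta/p,p}_\beta}^{1-\theta},
\]
obtained from Littlewood--Paley pieces via Bernstein's inequality $\|\theta_j\ast g\|_{\L^\infty}\lesssim 2^{j(2\beta+2)d/p}\|\theta_j\ast g\|_{\L^p}$ and a standard splitting at a frequency threshold in the spirit of Hedberg. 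Scaling forces $\theta=p/q$. Then integrate $\|f(t)\|_{\L^q}^q\lesssim \|f(t)\|_{\Xdot^{\beta,p}}^p\sup_s\|f(s)\|_{\Bdot}^{q-p}$ over $t$. The endpoint nature of the Besov exponent is the delicate part: Gagliardo--Nirenberg with a $\Bdot^{s,p}$ endpoint may only give $\Bdot^{s,\infty}$-type control. The fallback is to prove the embedding from the Fourier side, using the spaces $\L^p_t\Xdot^{\beta,p}_\beta$ (anisotropic HLS in $(x,v)$) together with a time-fractional regularity.

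If that fallback is needed, the alternative route is to write $\cK^\pm_\beta$ directly as convolution with the kernel $K_{2\beta}$ in shifted variables. Proposition~\ref{prop:upper} and the marginal bounds give $\|K_0(t,s)\|_{\L^{r}_{x,v}}\lesssim (t-s)^{-1+\frac{\homd-2\beta}{2\beta}(1-\frac1r)}$, from interpolating the $\L^1$ bound $(t-s)^{-1}\cdot(t-s)$ for the undifferentiated fundamental solution with the $\L^\infty$ bound. Young's inequality in $(x,v)$ then gives $\|\cK f(t)\|_{\L^{c}}\lesssim\int|t-s|^{-\lambda}\|f(s)\|_{\L^a}\,\dd s$, with $\lambda = \frac{\homd-2\beta}{2\beta}(\frac1a-\frac1c)$, and the one-dimensional HLS inequality in $t$ closes exactly when $c=a^{**}$; taking $c=a^{**}$ makes $1-\lambda=\frac1a-\frac1{a^{**}}$. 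This needs only that $E_\beta(t,s,\cdot)$ is a probability density with $\L^\infty$ bound $(t-s)^{-\homd/(2\beta)+1}$, which follows from the scaling in \eqref{eq:Kg=Kgtilde}. I expect this kernel route to be the most robust way to get (i), and I would try it first.

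For (ii), the same kernel route works. We have $\|\sem^\pm_t g\|_{\L^c_{x,v}}\lesssim t^{-\mu}\|g\|_{\L^b}$ with $\mu=\frac{\homd-2\beta}{2\beta}(\frac1b-\frac1c)$, and for $c=\frac{b\homd}{\homd-2\beta}$ this gives $\mu c=1$, so only the weak $\L^c_t$ bound holds. To upgrade it to strong $\L^c$, I would use real interpolation (Marcinkiewicz) in $b$: the map $g\mapsto\sem g$ is of weak type $(b,c(b))$ for all $b$, and the strong bound follows since $c>b$. Alternatively, Theorem~\ref{thm:bounds} (third item) together with $\L^b\hookrightarrow\Bdot^{0,b}$ for $b\ge2$ handles part of the range. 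The main obstacle is this weak-to-strong step and the endpoint exponent bookkeeping.
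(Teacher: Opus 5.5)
Part (i) of your proposal is correct. Part (ii) has a genuine gap at the weak-to-strong step.

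\textbf{Part (i).} Your first plan is the paper's argument with the specific choice $\gamma=\beta$, $p=a^*$, and this choice covers the whole range. The hypotheses of Lemma~\ref{lem:embedLinZ} (proved by duality from Lemma~\ref{lem:Sobolev2}) reduce to $a>1$, and those of Lemma~\ref{lem:Sobolev2} reduce to $a<\homd/(2\beta)$. So $\L^a\hookrightarrow\Zdot^{\beta,a^*}_\beta\xrightarrow{\cK^\pm_\beta}\Ydot^{\beta,a^*}_\beta\hookrightarrow\L^{a^{**}}_{t,x,v}$.

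Your worry about the Besov endpoint is unfounded. The Hedberg splitting only uses $\sup_j 2^{j(\gamma-2\beta/p)}\|\theta_j\ast f(t)\|_{\L^p_{x,v}}$, which is dominated by the $\ell^p$ Besov norm. This is exactly how Lemma~\ref{lem:Sobolev2} is proved, pointwise in $(t,x,v)$.

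Your preferred kernel route is also correct and genuinely different, after one correction. The bound must concern the fundamental solution $E_\beta(t,s,\cdot)$ itself, not $K_0$, which carries the extra symbol $d_\beta^{2\beta}$. It reads $\|E_\beta(t,s,\cdot)\|_{\L^r_{x,v}}\lesssim (t-s)^{-\frac{\homd-2\beta}{2\beta}(1-\frac1r)}$, and your $\lambda$ is in fact computed with this exponent. The Galilean shift is measure preserving at each fixed time, so Young's inequality in $(x,v)$ and the one-dimensional HLS inequality close exactly at $c=a^{**}$.

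This route is more elementary. It uses only $\L^1$ and $\L^\infty$ size bounds on the fundamental solution, with no Calder\'on--Zygmund theory and no Besov spaces; it is the classical approach alluded to before Corollary~\ref{cor:Lebesguesestimates}. The paper instead gets (i) as a by-product of the sharper factorisation through $\Zdot^{\gamma,p}_\beta\to\Ydot^{\gamma,p}_\beta$.

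\textbf{Part (ii): the upgrade fails as stated.} The kernel route gives $\|\sem^+_tg\|_{\L^c_{x,v}}\lesssim t^{-1/c}\|g\|_{\L^b_{x,v}}$, that is, a bound $\L^b_{x,v}\to\L^{c,\infty}_t(\L^c_{x,v})$. This is not weak type $(b,c)$ in the sense of Marcinkiewicz, for two reasons.
\begin{itemize}
\item $\L^{c,\infty}_t(\L^c_{x,v})\not\subset\L^{c,\infty}_{t,x,v}$. Take $F(t,\cdot)=\mathds 1_{E_t}$ for $t>0$ with $|E_t|=1/t$. Then $\|F(t)\|_{\L^c_{x,v}}=t^{-1/c}$, but $|\{F>1/2\}|=\int_0^\infty t^{-1}\dt=\infty$.
\item Letting $c=c(b)$ vary with $b$ changes the inner space, so your endpoint bounds do not lie in one interpolation scale of weak Lebesgue spaces.
\end{itemize}

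The repair is to freeze the inner space. Fix $c=\frac{b\homd}{\homd-2\beta}$ and choose $b_0<b<b_1<c$. Your kernel estimate gives $\L^{b_i}_{x,v}\to\L^{s_i,\infty}_t(\L^c_{x,v})$ with $\frac1{s_i}=\frac{\homd-2\beta}{2\beta}(\frac1{b_i}-\frac1c)$. Since $1/s_i$ is affine in $1/b_i$, the interpolated exponent satisfies $\frac1s=\frac{\homd-2\beta}{2\beta}(\frac1b-\frac1c)=\frac1c$. Real interpolation of vector-valued Lorentz spaces over the fixed Banach space $\L^c_{x,v}$ then gives $\L^b=(\L^{b_0},\L^{b_1})_{\theta,b}\to\L^{c,b}_t(\L^c_{x,v})\subset\L^c_{t,x,v}$. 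The last inclusion uses $b<c$, which is where that condition actually enters.

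Your alternative for $b\ge2$ is valid: $\L^b\hookrightarrow\Bdot^{0,b}_\beta$, then the third item of Theorem~\ref{thm:bounds} with $p=b$ and $\gamma=2\beta/b$, then Lemma~\ref{lem:Sobolev2}. It leaves $b<2$ open, however. The paper covers all $b\in(1,\infty)$ with the same semigroup-plus-Sobolev scheme by using the negative-regularity embedding $\L^b_{x,v}\hookrightarrow\Bdot^{\gamma-2\beta/p,p}_\beta$ with $p>b$ and $\gamma<2\beta/p$ (Corollary~\ref{cor:Lq_into_Besov}). That route needs no weak-to-strong upgrade at all.
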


Again, the restriction to $\beta\le 1$ (it works also for all integers) is only due to availability of kernel estimates in Section~\ref{sec:estimates}. Recall that for $\beta=1$ we have $\frac{\homd}{2\beta}=2d+1$.

To illustrate this result, we obtain the following concrete corollary, studied more often in the literature via direct estimates on the fundamental solution and kinetic convolution. 

\begin{cor}
\label{cor:Lebesguesestimates}
	Let  $\beta\in (0,1]$. Let $1<a< \frac{\homd}{2\beta}$,  $\psi \in \L^{a^\flat}_{x,v}$ with $a^\flat= \frac{a^{**}(\homd-2\beta)}{\homd}$, and $S \in \L^a_{t,x,v}$. Set $f(t)=\sem^+_{t}\psi+[\cK_{\beta}^+S](t)$, $t>0$. 
	  Then $f \in \L^{a^{**}}_{t,x,v}$   with 
\[
	\|f\|_{\L^{\vphantom{a^\flat}a^{**}}_{t,x,v}} \lesssim \| S \|_{\L^{\vphantom{a^\flat}a}_{t,x,v}}+ \|\psi\|_{\L^{a^\flat}_{x,v}}. 
\]
\end{cor}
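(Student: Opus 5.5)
The plan is to apply the two parts of Theorem~\ref{thm:HLS} to the two summands of $f$ separately and then use the triangle inequality in $\L^{a^{**}}_{t,x,v}$ on $(0,\infty)\times\R^{2d}$. Both pieces are understood through the bounded extensions provided by Theorem~\ref{thm:bounds} and Theorem~\ref{thm:HLS}, so $f$ is well defined as an element of $\L^{a^{**}}$ once each piece is.

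\emph{Source term.} Since $1<a<\frac{\homd}{2\beta}$, Theorem~\ref{thm:HLS}(i) gives
\[
\|\cK_{\beta}^+S\|_{\L^{a^{**}}_{t,x,v}}\lesssim \|S\|_{\L^a_{t,x,v}}.
\]
Restricting the left-hand side to $t>0$ only decreases the norm.

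\emph{Initial datum.} I will invoke Theorem~\ref{thm:HLS}(ii) with $b=a^\flat$. This requires the target exponent $\frac{b\homd}{\homd-2\beta}$ to equal $a^{**}$, and that holds by the very definition of $a^\flat$. It remains to check that $1<a^\flat<\infty$.
\begin{itemize}
\item Computing the iterated exponent gives $a^{**}=\frac{a\homd}{\homd-2a\beta}$, which is finite and positive because $a<\frac{\homd}{2\beta}$.
\item Hence $a^\flat=\frac{a(\homd-2\beta)}{\homd-2a\beta}$, which is finite and positive.
\item The inequality $a^\flat>1$ is equivalent to $a(\homd-2\beta)>\homd-2a\beta$, that is, to $a\homd>\homd$, which holds exactly because $a>1$.
\end{itemize}
Theorem~\ref{thm:HLS}(ii) for $\sem^+$, whose values live on positive times, then yields
\[
\|\sem^+\psi\|_{\L^{a^{**}}((0,\infty)\times\R^{2d})}\lesssim \|\psi\|_{\L^{a^\flat}_{x,v}}.
\]

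Adding the two bounds gives the claim. The only point needing care is this exponent bookkeeping, namely that $a^\flat$ lands in $(1,\infty)$; it reduces to the elementary inequality above, so I expect no real obstacle.
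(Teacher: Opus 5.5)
Your proposal is correct and is exactly the argument the paper intends: the corollary is stated as an immediate consequence of Theorem~\ref{thm:HLS}. You apply part (i) to $\cK_\beta^+S$, apply part (ii) with $b=a^\flat$ (so that $\frac{b\homd}{\homd-2\beta}=a^{**}$) to $\sem^+\psi$, and conclude with the triangle inequality. Your bookkeeping, namely $a^{**}=\frac{a\homd}{\homd-2a\beta}$ and $a^\flat>1\iff a>1$, is right and is the only verification needed.
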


\begin{rem} 
If we want $a^\flat=2$, then $a^{**}=\frac{2\homd}{\homd -2\beta}=2^*$ and $a=\frac{2\homd}{\homd + 2\beta}=:2_{*}$, also the conjugate exponent to $2^*$, so the formula specializes to
\[
	\|f\|_{\L^{2^*}_{t,x,v}} \lesssim \| S \|_{\L^{2_{*}}_{t,x,v}}+ \|\psi\|_{\L^{2}_{x,v}}. 
\] 
Note that for the term in $S$, the inequality is a consequence of the reverse embedding $\L^{2_{*}}_{t,x,v} \hookrightarrow \Zdot^{\beta,2}_{\beta} $ proved in Lemma~\ref{lem:embedLinZ}, corresponding to the  Sobolev embedding $\Ydot^{\beta,2}_{\beta}\hookrightarrow \L^{2^*}_{t,x,v}$ proved in Lemma~\ref{lem:Sobolev2} below. If we decompose $S=S_{1}+S_{2}+S_{3}$ from the definition, one has the stronger inequalites
\[
	\|f\|_{\L^{2^*}_{t,x,v}} \lesssim \|f\|_{\Ydot^{\beta,2}_{\beta}}  \lesssim \| S_{1} \|_{\L^2_{t,x} \Hdot^{-\beta , 2}_{\vphantom{t,x} v}}+  \|S_{2}\|_{\L^2_{t,v}\Hdot_{\vphantom{t,v} x}^{-\frac{\beta }{2\beta+1},2}}+ \|S_{3} \|_{\L^1_{t}\L^2_{x,v}}+ \|\psi\|_{\L^{2}_{x,v}}.
	\]
\end{rem}

To prove the Hardy--Littlewood--Sobolev inequalities we need some further embeddings, which are of interest on their own. For the following mixed anisotropic spaces, the regularity condition $\gamma<\homd/p$ for the parameters $\gamma,p$ appears naturally and is likely optimal by  the underlying scaling.

\begin{lem} \label{lem:Sobolev2}
Let $p \in (1,\infty)$ and $\beta >0$. Fix $\gamma\in(0,\frac\homd p)$ and define $\kappa:=\frac{\homd}{\homd-\gamma p}$.
We have the continuous embedding
\[
	\L^{p}_t\Xdot^{\gamma,p}_\beta \cap \L^\infty_t\Bdot^{\gamma-2\beta/p,p}_\beta
	\ \hookrightarrow\ \L^{p \kappa }_{t,x,v},
\]
with the estimate
\[
	\|f\|_{\L^{p\kappa }_{t,x,v}}
	\;\lesssim\;
	\|f\|_{\L^{p}_t\Xdot^{\gamma,p}_\beta}^{1/\kappa}\,
	\|f\|_{\L^\infty_t\Bdot^{\gamma-2\beta/p,p}_\beta}^{1-1/\kappa}.
\]
\end{lem}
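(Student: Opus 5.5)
At each fixed time $t$ we will interpolate pointwise in $(x,v)$, Hedberg style, between the $\Xdot^{\gamma,p}_\beta$ information (a Triebel--Lizorkin type quantity) and the Besov information at negative or lower smoothness. Afterwards we integrate in $t$. The exponents are fixed by scaling. Write $Q:=(2\beta+2)d$ for the homogeneous dimension in $(x,v)$, so that $\homd=Q+2\beta$. Set $\sigma:=\gamma-2\beta/p$, which lies below $Q/p$ because $\gamma<\homd/p$.

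\textbf{Step 1: pointwise interpolation at fixed time.} For $g=f(t,\cdot)$, decompose $g=\sum_j\theta_j\ast g$, and split the sum at a level $J=J(x,v)$ to be chosen. For the low frequencies $j\le J$, use Bernstein's inequality in the anisotropic setting: since $\theta_j\ast g$ has Fourier support in $d_\beta\sim 2^j$, we get $\|\theta_j\ast g\|_{\L^\infty}\lesssim 2^{jQ/p}\|\theta_j\ast g\|_{\L^p}\le 2^{j(Q/p-\sigma)}\|g\|_{\Bdot^{\sigma,p}_\beta}$. Summing over $j\le J$ is a convergent geometric series because $Q/p-\sigma=(\homd-\gamma p)/p>0$. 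It gives $\lesssim 2^{J(\homd-\gamma p)/p}B(t)$, where $B(t):=\|f(t)\|_{\Bdot^{\sigma,p}_\beta}$. For the high frequencies $j>J$, bound $|\theta_j\ast g|\lesssim 2^{-j\gamma}\,2^{j\gamma}\mathcal{M}(\tilde\theta_j\ast g)$ with a maximal function adapted to the anisotropic balls. A simpler route is to estimate directly $\sum_{j>J}|\theta_j\ast g|\le 2^{-J\gamma}\sup_j 2^{j\gamma}|\theta_j\ast g|\lesssim 2^{-J\gamma}G(x,v)$, where $G:=(\sum_j|2^{j\gamma}\theta_j\ast g|^2)^{1/2}\in\L^p$ with norm $\|g\|_{\Xdot^{\gamma,p}_\beta}$. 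This requires $\gamma>0$, which is assumed. Optimising $J$ so that $2^{J\homd/p}=G/B$ yields
\[
|g(x,v)|\lesssim G(x,v)^{1-\gamma p/\homd}B(t)^{\gamma p/\homd}=G^{1/\kappa}B(t)^{1-1/\kappa}.
\]
The sum $\sum_j\theta_j\ast g$ recovers $g$ pointwise for $g$ in the dense class of Lemma~\ref{lem:density}, modulo the polynomial issue. This issue is harmless here because $\sigma<Q/p$, so no polynomials are present.

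\textbf{Step 2: integration.} Raising to the power $p\kappa$ gives $|f(t,x,v)|^{p\kappa}\lesssim G(t,x,v)^{p}B(t)^{p(\kappa-1)}$. Integrating in $(x,v)$ and then in $t$ yields
\[
\|f\|_{\L^{p\kappa}}^{p\kappa}\lesssim \sup_t B(t)^{p(\kappa-1)}\int\|f(t)\|^p_{\Xdot^{\gamma,p}_\beta}\dt.
\]
Taking the $p\kappa$-th root gives the stated product bound. Finally, we extend from $\cS_K$ to the full intersection space by density together with Fatou's lemma. We expect this last step to be the main obstacle: we must check that $\cS_K$ is dense in the intersection with the $\L^\infty_t$ (rather than $\C_0$) norm. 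If density fails, we would instead apply the pointwise argument directly to an arbitrary element, time by time. This is legitimate because at a.e.\ $t$ we have $f(t)\in\Xdot^{\gamma,p}_\beta\cap\Bdot^{\sigma,p}_\beta$, both embedded continuously in $\cS'$, and the Littlewood--Paley series converges in $\cS'$ and locally uniformly thanks to the Bernstein bounds.
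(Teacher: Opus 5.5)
Your argument is correct and is essentially the paper's proof. It uses the same anisotropic Littlewood--Paley splitting at a level $j_0$, the anisotropic Bernstein inequality to turn the $\L^\infty_t\Bdot^{\gamma-2\beta/p,p}_\beta$ bound into a uniform low-frequency bound, the $\Xdot^{\gamma,p}_\beta$ square function for the high frequencies, and optimisation in $j_0$. The only cosmetic difference is that you bound $\sum_j|\theta_j\ast f|$ pointwise, so you never need the square-function characterisation of $\L^{p\kappa}$, whereas the paper bounds $(\sum_j|\theta_j\ast f|^2)^{1/2}$.

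As in the paper, no density argument is needed: work directly with each $f(t)$, since $\Bdot^{\gamma-2\beta/p,p}_\beta\hookrightarrow\cS'$ in this range. Note, however, that the high-frequency tail converges only a.e. (where $G<\infty$), not locally uniformly. The identification of the sum with $f(t)$ follows instead from your own domination $\sum_j|\theta_j\ast f(t)|\lesssim G^{1/\kappa}B(t)^{1-1/\kappa}\in\L^{p\kappa}_{x,v}$.
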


\begin{proof}
We use the anisotropic Littlewood--Paley decomposition. 
In the range $\gamma<\homd/p$, the second space in the intersection is continuously embedded into $\cS'(\Omega)$ and the anisotropic 
Littlewood--Paley decomposition converges in $\cS'(\Omega)$ on it (the variable $t$ is not involved in the decomposition).
Hence, it suffices to obtain the inequality for norms.
Let $f\in  \L^{p}_t\Xdot^{\gamma,p}_\beta \cap \L^\infty_t\Bdot^{\gamma-2\beta/p,p}_\beta$ and set
$$
	F_{1}= \bigg(\sum_{j=-\infty}^\infty |2^{j\gamma} \theta_{j}\ast f|^2\bigg)^{1/2}, \quad 
	F_{2}= \sup_{j\in\Z}|2^{j(\gamma-\homd/p)} \theta_{j}\ast f|, \quad  
	F_{3}=\bigg(\sum_{j=-\infty}^\infty | \theta_{j}\ast f|^2\bigg)^{1/2}.
$$
We want to control $\|F_{3}\|_{\L^r_{t,x,v}}$, where $r=p\kappa$ (or, equivalently, $\frac 1 r= \frac 1 p - \frac {\gamma}{ \homd}$).  First, by definition, $F_{1}\in \L^p_{t,x,v}$ with 
$$
	\|F_{1}\|_{\vphantom{\Xdot}\L^p_{t,x,v}} \sim \|f\|_{\L^p_{t}\Xdot^{\gamma,p}_{\beta}}.
$$ 
Second, by  Bernstein inequalities in the anisotropic scaling, 
$$
	| \theta_{j}\ast f(t,\cdot) | \lesssim 2^{j(2\beta+2)d/p}\,   \|\theta_{j}\ast f(t,\cdot)\|_{\L^p_{x,v}}.
$$ 
Hence by the relation $\gamma-\homd/p+(2\beta+2)d/p=\gamma-2\beta/p$ and the inclusion of $\ell^p(\Z)\subset \ell^\infty(\Z)$, 
$F_{2} \in \L^\infty_{t,x,v}$ with 
$$
	\|F_{2}\|_{\L^\infty_{t,x,v}} \lesssim  \|f\|_{\L^\infty_{ t}\Bdot^{\gamma-2\beta/p,p}_{\beta}}.
$$
Now, for any $j_{0}\in \Z$,
\begin{align*}
	F_{3}
	&\lesssim 2^{-j_{0}\gamma} \bigg(\sum_{j=j_{0}}^\infty |2^{j\gamma} \theta_{j}\ast f|^2\bigg)^{1/2} + \bigg(\sum_{j=-\infty}^{j_{0}-1} | \theta_{j}\ast f|^2\bigg)^{1/2} \\
	&\lesssim 2^{-j_{0}\gamma} F_{1}+ 2^{j_{0}(\homd/p-\gamma)}F_{2}.
\end{align*}
So optimizing in $j_{0}$ and using the relations between $p$ and $r$, we have
\[
	F_{3}\lesssim F_{1}^{p/r}F_{2}^{1-p/r} \le F_{1}^{p/r}\|F_{2}\|_{\L^\infty_{t,x,v}}^{1-p/r}
\]
and by integration in $(t,x,v)$ we obtain 
$$
	\|F_{3}\|^{\vphantom{p/r}}_{\L^r_{t,x,v}} \lesssim \|F_{1}\|_{\L^p_{t,x,v}}^{p/r}\|F_{2}\|_{\L^\infty_{t,x,v}}^{1-p/r}.
$$ 
This concludes the proof. 
\end{proof}

Using duality we may deduce from that a reverse embedding for $\Zdot^{\gamma,p}_\beta$.

\begin{lem}
\label{lem:embedLinZ}
Let $p \in (1,\infty)$, $\beta>0$, and $\gamma\in\R$. Assume $0<2\beta-\gamma<\frac{\homd}{p'}$. Define $q\in(1,\infty)$ by
\begin{equation}\label{eq:B}
\frac1q=\frac1p+\frac{2\beta-\gamma}{\homd}
\qquad\Big(\text{equivalently, } q=\frac{p\,\homd}{\homd+p(2\beta-\gamma)}\Big).
\end{equation}
Then there is the continuous embedding $\L^q_{t,x,v}\ \hookrightarrow\ \Zdot^{\gamma,p}_\beta$ with  the norm estimate
\begin{equation}\label{eq:SchwartzEstimate}
\|S\|_{\Zdot^{\gamma,p}_\beta}\ \lesssim\ \|S\|_{\L^q_{t,x,v}}.
\end{equation}
\end{lem}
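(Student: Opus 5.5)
The plan is to argue by duality from Lemma~\ref{lem:Sobolev2}, applied with the dual exponents $(\gamma',p')$ where $\gamma':=2\beta-\gamma$. The hypothesis $0<2\beta-\gamma<\homd/p'$ is exactly $\gamma'\in(0,\homd/p')$, so Lemma~\ref{lem:Sobolev2} applies on the dual side. It gives
\[
	\L^{p'}_t\Xdot^{\gamma',p'}_\beta\cap \L^\infty_t\Bdot^{\gamma'-2\beta/p',p'}_\beta\hookrightarrow \L^{r}_{t,x,v}
\]
with $\frac1r=\frac1{p'}-\frac{\gamma'}{\homd}$. A direct check shows $\frac1r = 1-\frac1q$, so $r=q'$. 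Note also that $\gamma'-2\beta/p' = -(\gamma-2\beta/p)$ and $-\gamma' = \gamma-2\beta$.

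Next I identify the dual of $\Zdot^{\gamma,p}_\beta$. Recall that $\Zdot^{\gamma,p}_\beta$ is a sum of $\L^p_t\Xdot^{\gamma-2\beta,p}_\beta$ and $\L^1_t\Bdot^{\gamma-2\beta/p,p}_\beta$, both Banach spaces continuously embedded in $\cS'$. This holds because $\gamma-2\beta<(2\beta+2)d/p$ and $\gamma<\homd/p$, which follow from $2\beta-\gamma>0$ and some bookkeeping; if needed, I would argue on the dense class $\cS_K$ to avoid issues with polynomials. The dual of the sum is then the intersection $\L^{p'}_t\Xdot^{-(\gamma-2\beta),p'}_\beta\cap \L^\infty_t\Bdot^{-(\gamma-2\beta/p),p'}_\beta$, with equivalent norm, via the $\L^2_{t,x,v}$ pairing and the duality pairings for $\Xdot$ and $\Bdot$ recalled earlier. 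In fact only the easy direction is needed. It suffices to show that for $S\in\L^q_{t,x,v}$ (first for $S\in\cS_K$, then by density of $\cS_K\cap \L^q$ in $\L^q$), the norm $\|S\|_{\Zdot^{\gamma,p}_\beta}$ is comparable to the supremum of $|\langle S,h\rangle|$ over $h$ in the dense class with intersection norm at most $1$. Given that, the estimate follows from Hölder and Lemma~\ref{lem:Sobolev2}:
\[
	|\langle S,h\rangle|\le \|S\|_{\L^q}\|h\|_{\L^{q'}}\lesssim \|S\|_{\L^q}\|h\|_{\L^{p'}_t\Xdot^{\gamma',p'}_\beta\cap\L^\infty_t\Bdot^{-(\gamma-2\beta/p),p'}_\beta}.
\]

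I expect the main obstacle to be the norming (Hahn–Banach) step for the sum space. The sum norm is the quotient norm of the direct sum $E_1\oplus E_2$ modulo the antidiagonal. Its dual is therefore the set of pairs of functionals agreeing on $E_1\cap E_2$, which is the intersection of the duals provided $E_1\cap E_2$ is dense in each $E_i$; density of $\cS_K$ (Lemma~\ref{lem:density}) supplies this. The dual of $\L^1_t\Bdot$ is $\L^\infty_t$ of the dual Besov space. Weak-$*$ measurability is harmless here, since we only need to norm by a dense set of nice $h$, and the sup in the definition can be restricted to $h\in\cS_K$ by density in the respective norms.

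A further point to handle is that an element $S\in\L^q$ must actually define an element of $\Zdot^{\gamma,p}_\beta$ and not only a bounded functional. For this I would approximate $S$ in $\L^q$ by $S_n\in\cS_K$, use the a priori bound to get a Cauchy sequence in the complete space $\Zdot^{\gamma,p}_\beta$, and identify the limit with $S$ in $\cS'$ using the continuity of both embeddings into $\cS'$.
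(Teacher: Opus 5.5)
Your proposal is the paper's proof. It identifies $(\Zdot^{\gamma,p}_\beta)'$ with $\L^{p'}_t\Xdot^{2\beta-\gamma,p'}_\beta\cap\L^\infty_t\Bdot^{-\gamma+2\beta/p,p'}_\beta$, embeds this into $\L^{q'}_{t,x,v}$ via Lemma~\ref{lem:Sobolev2} with exponents $(p',2\beta-\gamma)$, and concludes by Hahn--Banach norming, H\"older and density of $\cS_K$.

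Two corrections:
\begin{itemize}
\item $\gamma<\homd/p$ does not follow from the hypotheses (e.g.\ $\beta=d=1$, $p=100$, $\gamma=1$). So $\L^1_t\Bdot^{\gamma-2\beta/p,p}_\beta$ need not embed continuously into $\cS'(\Omega)$. As in the paper, the duality and your final limit identification must be carried out modulo polynomials.
\item You should not restrict $h$ to $\cS_K$ ``by norm density'', since $\cS_K$ is not norm-dense in the $\L^\infty_t$ factor. The restriction is also unnecessary, because Lemma~\ref{lem:Sobolev2} applies to every $h$ in the intersection.
\end{itemize}
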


\begin{proof} Let us begin by noting that the conjugate exponent to $q$ in  \eqref{eq:B}  is given by
 
\[
	 q'=\frac{p'\homd}{\homd-(2\beta-\gamma) p'}
	\quad\text{as}\quad
	\frac1{q'}=\frac1{p'}-\frac{2\beta-\gamma}{\homd}.
\]
Note that, arguing in $\cS'(\R^{2d})$ modulo polynomials in which all spaces embed as Banach spaces,  $(\Xdot^{s,p}_\beta)' = \Xdot^{-s,p'}_\beta$ and $(\Bdot^{s,p}_\beta)' = \Bdot^{-s,p'}_\beta$ and  
\begin{equation}\label{eq:dualZ}
(\Zdot^{\gamma,p}_\beta)'
=
\L^{p'}_t\Xdot^{2\beta-\gamma,p'}_\beta \cap \L^\infty_t\Bdot^{-\gamma+2\beta/p,p'}_\beta,
\end{equation}
with the norm given by the maximum of the two component norms. 
\medskip

Applying Lemma \ref{lem:Sobolev2} with $\tilde{p} = p'$ and $\tilde{\gamma} = 2\beta-\gamma$ (note $-\gamma+2\beta/p = 2\beta-\gamma -2\beta/p'$ and $0<2\beta-\gamma < \homd /p'$), we deduce $(\Zdot^{\gamma,p}_\beta)' \hookrightarrow \L_{t,x,v}^{q'}$ with 
 the inequality
\begin{equation} \label{eq:embeddingH}
	\|f\|_{\L^{q'}_{t,x,v}}\lesssim \|f\|_{(\Zdot^{\gamma,p}_\beta)'}.
\end{equation} 

Let $S\in \cS_{K}\subset \Zdot^{\gamma,p}_\beta$. By the Hahn-Banach theorem,  
\begin{equation}\label{eq:normFormula}
\|S\|_{\Zdot^{\gamma,p}_\beta}
=
\sup\Big\{\,|\langle f,S\rangle| \;:\; f\in (\Zdot^{\gamma,p}_\beta)',\ \|f\|_{(\Zdot^{\gamma,p}_\beta)'}\le 1\Big\}.
\end{equation}
 By \eqref{eq:embeddingH}, every such $f$ satisfies $\|f\|_{\L^{q'}_{t,x,v}}\lesssim 1$. Therefore H\"older's inequality gives us
\[
|\langle f,S\rangle|
=\Big|\int_{\R^{1+2d}} f(t,x,v)\,S(t,x,v) \dd (t,x,v) \Big|
\le \|S\|_{\L^{\vphantom{q'}q}_{t,x,v}}\,\|f\|_{\L^{q'}_{t,x,v}}
\lesssim \|S\|_{\L^{\vphantom{q'}q}_{t,x,v}}.
\]
Taking the supremum over all $f$ as in \eqref{eq:normFormula},
we obtain \eqref{eq:SchwartzEstimate} for all  $S\in \cS_{K}$ and we conclude by density since $\cS_{K}$ is dense in $\L^q_{t,x,v}$  by Lemma~\ref{lem:density}.   \end{proof}

We further provide the Sobolev embedding for anisotropic Besov spaces: the threshold is $(2\beta+2)d/p=(\homd -2\beta)/p$.

\begin{lem}[Sobolev embedding for homogeneous anisotropic Besov spaces]\label{lem:besov_sobolev_embedding}
Let $\beta \in (0,\infty)$, $p \in (1,\infty)$, and assume $0<\gamma<\frac{\homd-2\beta}{p}$.
Define $q\in(p,\infty)$ by
\[
	\frac1q=\frac1p-\frac{\gamma}{\homd-2\beta}.
\]
Then there is the continuous embedding $\Bdot^{\gamma,p}_\beta \hookrightarrow \L^q_{x,v}$,
and the estimate
\[
	\|g\|_{\L^q_{x,v}}\ \lesssim\ \|g\|_{\Bdot^{\gamma,p}_\beta}
	\qquad\text{for all }g\in \Bdot^{\gamma,p}_\beta.
\]
\end{lem}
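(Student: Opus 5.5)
The plan is to mimic the proof of Lemma~\ref{lem:Sobolev2}: a pointwise Littlewood--Paley interpolation between a high-frequency and a low-frequency bound. There is no time variable here, so the spatial homogeneous dimension $D:=(2\beta+2)d=\homd-2\beta$ plays the role of $\homd$. Since $\gamma<D/p$, the space $\Bdot^{\gamma,p}_\beta$ embeds continuously in $\cS'(\R^{2d})$ and $g=\sum_j\theta_j\ast g$ in $\cS'$. It therefore suffices to bound $\|\sum_j \theta_j\ast g\|_{\L^q}$ for $g$ in the dense class $\dense$ of Lemma~\ref{lem:denseanisotropic} and conclude by density, using Fatou to identify the limit.

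\emph{Step 1 (Bernstein, low frequencies).} By the anisotropic Bernstein inequality already used in the proof of Lemma~\ref{lem:Sobolev2},
\[
\|\theta_j\ast g\|_{\L^\infty_{x,v}}\lesssim 2^{jD/p}\|\theta_j\ast g\|_{\L^p_{x,v}}\le 2^{j(D/p-\gamma)}\|g\|_{\Bdot^{\gamma,p}_\beta}.
\]
Summing over $j<j_0$ converges geometrically because $D/p-\gamma>0$. This gives
\[
\Big|\sum_{j<j_0}\theta_j\ast g\Big|\lesssim 2^{j_0(D/p-\gamma)}\|g\|_{\Bdot^{\gamma,p}_\beta}.
\]

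\emph{Step 2 (high frequencies).} The high-frequency part cannot be handled with a square function, because the Besov norm does not control $\L^p$ of $(\sum_j|2^{j\gamma}\theta_j\ast g|^2)^{1/2}$ when $p>2$. Instead I use the standard real-interpolation argument, that is, a weak-type estimate followed by Marcinkiewicz-type interpolation over $\gamma$. Normalise $\|g\|_{\Bdot^{\gamma,p}_\beta}=1$ and fix a level $\lambda>0$. Choose $j_0$ with $2^{j_0(D/p-\gamma)}\sim\lambda/2$, so that by Step 1 the low part stays below $\lambda/2$. Then
\[
|\{|g|>\lambda\}|\le\Big|\Big\{\Big|\sum_{j\ge j_0}\theta_j\ast g\Big|>\lambda/2\Big\}\Big|\lesssim\lambda^{-p}\Big\|\sum_{j\ge j_0}\theta_j\ast g\Big\|_{\L^p}^p.
\]
By the triangle and H\"older inequalities in $j$,
\[
\Big\|\sum_{j\ge j_0}\theta_j\ast g\Big\|_{\L^p}\le\sum_{j\ge j_0}2^{-j\gamma}\,2^{j\gamma}\|\theta_j\ast g\|_{\L^p}\lesssim 2^{-j_0\gamma}.
\]
Combining, $|\{|g|>\lambda\}|\lesssim\lambda^{-p}2^{-j_0\gamma p}\sim\lambda^{-q}$, since $2^{-j_0\gamma p}\sim\lambda^{-\gamma p/(D/p-\gamma)}$ and $p+\gamma p^2/(D-\gamma p)=q$ by the defining relation for $q$. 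This yields the weak-type bound $\Bdot^{\gamma,p}_\beta\to\L^{q,\infty}$.

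\emph{Step 3 (upgrade to strong type).} This is the main obstacle. To pass from $\L^{q,\infty}$ to $\L^q$, pick $\gamma_0<\gamma<\gamma_1$ inside $(0,D/p)$ and use the real-interpolation identity
\[
(\Bdot^{\gamma_0,p}_\beta,\Bdot^{\gamma_1,p}_\beta)_{\theta,p}=\Bdot^{\gamma,p}_\beta .
\]
This identity follows from the retract of $\Bdot^{s,p}_\beta$ onto weighted $\ell^p(\L^p)$ given by the Littlewood--Paley family, exactly as in the isotropic case. Marcinkiewicz interpolation then gives
\[
(\L^{q_0,\infty},\L^{q_1,\infty})_{\theta,p}=\L^{q,p}\hookrightarrow\L^q
\]
because $p\le q$. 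An alternative would be to refine Step 2 by splitting the level sets dyadically and summing with $\ell^p\subset\ell^q$. I will carry out the interpolation route, and checking the retract claim in the anisotropic homogeneous setting modulo polynomials is the part needing the most care.
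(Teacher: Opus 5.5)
Your argument is correct, but it takes a genuinely different route from the paper. The paper never passes through a weak-type bound. It works pointwise with three functions: $F_1=(\sum_j|2^{j\gamma}\theta_j\ast g|^p)^{1/p}$, $F_2=\sup_j|2^{j(\gamma-(\homd-2\beta)/p)}\theta_j\ast g|$ and $F_3=(\sum_j|\theta_j\ast g|^2)^{1/2}$. By Fubini, $\|F_1\|_{\L^p_{x,v}}=\|g\|_{\Bdot^{\gamma,p}_\beta}$, and by Bernstein, $\|F_2\|_{\L^\infty_{x,v}}\lesssim\|g\|_{\Bdot^{\gamma,p}_\beta}$. For every $j_0$ one has $F_3\lesssim 2^{-j_0\gamma}F_1+2^{j_0((\homd-2\beta)/p-\gamma)}F_2$, and optimizing $j_0$ at each point gives $F_3\lesssim F_1^{p/q}\|F_2\|_{\L^\infty_{x,v}}^{1-p/q}$. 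Integrating and using the anisotropic Littlewood--Paley theorem $\|g\|_{\L^q_{x,v}}\simeq\|F_3\|_{\L^q_{x,v}}$ finishes the proof.

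So the obstruction you name at the start of Step 2 is not a real one. The high-frequency tail is controlled pointwise by the $\ell^p$-in-$j$ function $F_1$, not by the $\ell^2$ square function at level $\gamma$. Since $\gamma>0$, H\"older in $j$ against the weights $2^{-j\gamma}$ gives $(\sum_{j\ge j_0}|\theta_j\ast g|^2)^{1/2}\lesssim 2^{-j_0\gamma}F_1$ for every $p$. What forces your detour is taking the $\L^p$ norm of $\sum_{j\ge j_0}\theta_j\ast g$ before choosing $j_0$. Keeping everything pointwise (Hedberg's trick) gives strong type directly and makes your Step 3 unnecessary.

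On the other hand, your route uses no square-function theory at all, only Bernstein, the triangle inequality and Chebyshev. Your exponent bookkeeping in Step 2 is right. Your interpolation step also yields the sharper conclusion $\Bdot^{\gamma,p}_\beta\hookrightarrow\L^{q,p}_{x,v}$, a Lorentz refinement that the paper's pointwise argument does not directly give.

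The retract identity you flag is unproblematic here. Since $0<\gamma_0<\gamma_1<(\homd-2\beta)/p$, both Besov spaces embed continuously in $\cS'(\R^{2d})$ with no polynomial quotient, so they form a compatible couple. The coretraction $(g_j)_j\mapsto\sum_j\tilde\theta_j\ast g_j$ then converges in $\cS'(\R^{2d})$ by the argument of Lemma~\ref{lem:convergenceLPBeov}.
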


\begin{proof}
Fix $g\in \cC$, dense in $\Bdot^{\gamma,p}_\beta$. Define the functions on $\R^{2d}$,
\[
	F_{1}:=\Big(\sum_{j\in\Z}\big|2^{j\gamma}\,\theta_j\ast g \big|^p\Big)^{1/p},\quad
	F_{2}:=\sup_{j\in\Z}\big|2^{j(\gamma-(\homd-2\beta)/p)}\,\theta_j\ast g\big|,
\quad 	F_{3}:=\Big(\sum_{j\in\Z}\big|\theta_j\ast g\big|^2\Big)^{1/2}.
\]
By the Littlewood--Paley square-function estimate (valid for $1<q<\infty$), $\|g\|_{\L^q_{x,v}}\ \simeq \ \|F_3\|_{\L^q_{x,v}}$, so it suffices to prove $\|F_3\|_{\L^q_{x,v}} \lesssim \|g\|_{\Bdot^{\gamma,p}_\beta}$ and we conclude by density. 

First, 
by Fubini's theorem, 
\[
	\|F_1\|_{\L^p_{x,v}}^p
	=\int_{\R^{2d}}\sum_{j\in\Z}\big|2^{j\gamma}\theta_j\ast g(x,v)\big|^p\dd (x,v)
	=\sum_{j\in\Z}2^{j\gamma p}\|\theta_j\ast g\|_{\L^p_{x,v}}^p
	=\|g\|_{\Bdot^{\gamma,p}_\beta}^p.
\]
Hence $\|F_1\|_{\L^p_{x,v}}=\|g\|_{\Bdot^{\gamma,p}_\beta}$.

Next, by the anisotropic Bernstein inequality,
\[
\|\theta_j\ast g\|_{\L^\infty_{x,v}}\ \lesssim\ 2^{j(\homd-2\beta)/p}\,\|\theta_j\ast g\|_{\L^p_{x,v}}.
\]
Therefore
\begin{align*}
	\|F_2\|_{\L^\infty_{x,v}}
	&=\|\sup_{j}2^{j(\gamma-(\homd-2\beta)/p)}|\theta_j\ast g |\|_{\L^\infty_{x,v}}
	\le \sup_{j}2^{j(\gamma-(\homd-2\beta)/p)}\|\theta_j\ast g\|_{\L^\infty_{x,v}} \\
	&\lesssim \sup_{j}2^{j\gamma}\|\theta_j\ast g\|_{\L^p_{x,v}}
	\lesssim \Big(\sum_{j\in\Z}2^{j\gamma p}\|\theta_j\ast g\|_{\L^p_{x,v}}^p\Big)^{1/p}
	=\|g\|_{\Bdot^{\gamma,p}_\beta},
\end{align*}
where we used $\ell^p(\Z)\subset \ell^\infty(\Z)$ in the last inequality.

Now, for any $j_0\in\Z$,
\begin{align*}
 F_3
&\le \Big(\sum_{j\ge j_0}|\theta_j\ast g|^2\Big)^{1/2}
+\Big(\sum_{j<j_0}|\theta_j\ast g|^2\Big)^{1/2}
\\
&\lesssim 2^{-j_0\gamma}F_1+ \Big(\sum_{j< j_0}2^{2j((\homd-2\beta)/p-\gamma)}\Big)^{1/2}F_2
\\
& \lesssim 2^{-j_0\gamma}F_1 + 2^{j_{0}((\homd-2\beta)/p-\gamma)}F_2
\end{align*}
since $0<\gamma<(\homd-2\beta)/p$. 
Optimizing $j_0\in \Z$ with 
$
2^{-j_0\gamma}F_1 \sim 2^{j_0((\homd-2\beta)/p-\gamma)}F_2$
yields
\[
F_3\ \lesssim\ F_1^{\,1-\gamma p/(\homd-2\beta)}\,F_2^{\,\gamma p/(\homd-2\beta)} \le F_1^{\,1-\gamma p/(\homd-2\beta)}\,\|F_2\|_{\L^\infty_{x,v}}^{\,\gamma p/(\homd-2\beta)}.
\]
Using the definition of $q$ in the statement, this is the same as 
\[
F_3\ \lesssim\ F_1^{\,p/q}\,\|F_2\|_{\L^\infty_{x,v}}^{\,1-p/q}.
\]
Thus, 
\[
\|F_3\|^{\vphantom{p/q}{}}_{\L^q_{x,v}}
\lesssim \|F_1\|_{\L^p_{x,v}}^{p/q}\,\|F_2\|_{\L^\infty_{x,v}}^{1-p/q}
\lesssim \|g\|_{\Bdot^{\gamma,p}_\beta}^{p/q}\,\|g\|_{\Bdot^{\gamma,p}_\beta}^{1-p/q}
\lesssim \|g\|^{\vphantom{p/q}{}}_{\Bdot^{\gamma,p}_\beta},
\]
which proves the embedding.
\end{proof}

\begin{cor}[Reverse Sobolev embedding for homogeneous anisotropic Besov spaces]\label{cor:Lq_into_Besov}
Let $\beta \in (0,\infty)$, $p \in (1,\infty)$ and assume $0<\gamma<\frac{\homd-2\beta}{p}.$ 
Set $q \in(p,\infty)$ as 
\[
	\frac1q=\frac1p-\frac{\gamma}{\homd-2\beta}.
\]
 Then, there is the continuous embedding $\L^{q'}_{x,v} \hookrightarrow \Bdot^{-\gamma,p'}_\beta$, and moreover
\begin{equation}
\label{eq:reverseembedBesov}
\|h\|_{\Bdot^{-\gamma,p'}_\beta}\ \lesssim\ \|h\|_{\vphantom{\Bdot}\L^{q'}_{\vphantom{\beta}x,v}}
	\qquad\text{for all }h\in \L^{q'}_{x,v}.
\end{equation}
\end{cor}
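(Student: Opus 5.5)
The plan is to obtain Corollary~\ref{cor:Lq_into_Besov} by duality from Lemma~\ref{lem:besov_sobolev_embedding}, in the same way Lemma~\ref{lem:embedLinZ} was derived from Lemma~\ref{lem:Sobolev2}. We view all spaces inside $\cS'(\R^{2d})$ modulo polynomials, where they are Banach spaces. Since $\gamma>0$ and $p'<\infty$, the index $-\gamma-(2\beta+2)d/p'$ is negative. Hence $\Bdot^{-\gamma,p'}_\beta$ is a genuine normed space embedded continuously in $\cS'$, and no polynomial ambiguity arises on the target side.

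\textbf{Step 1: the norm as a dual norm.} For $h$ in the dense class $\dense$ of Lemma~\ref{lem:denseanisotropic}, I would establish
\[
\|h\|_{\Bdot^{-\gamma,p'}_\beta}\simeq \sup\big\{|\angle h g|\,;\, g\in \dense,\ \|g\|_{\Bdot^{\gamma,p}_\beta}\le 1\big\}.
\]
The upper bound $\lesssim$ in the display is the duality pairing recalled earlier. For the reverse direction, one route is the $\ell^{p'}(\L^{p'})$--$\ell^{p}(\L^{p})$ duality applied to the Littlewood--Paley pieces: use a second family $\tilde\theta_j$ with $\hat{\tilde\theta}_j=1$ on $\supp\hat\theta_j$, and test against $g=\sum_j 2^{j\gamma}\tilde\theta_j\ast a_j$ with $(a_j)$ finitely supported and Schwartz. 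The other route is simply to invoke the identification $(\Bdot^{\gamma,p}_\beta)'=\Bdot^{-\gamma,p'}_\beta$, which the paper already uses in the proof of Lemma~\ref{lem:embedLinZ}.

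\textbf{Step 2: Hölder plus Sobolev.} For $h\in\dense$ and $g\in\dense$, both are $\L^2$ functions, so $\angle h g=\int h\bar g$. Hölder's inequality gives $|\angle h g|\le \|h\|_{\L^{q'}_{x,v}}\|g\|_{\L^q_{x,v}}$. Lemma~\ref{lem:besov_sobolev_embedding}, applicable because $0<\gamma<(\homd-2\beta)/p$, gives $\|g\|_{\L^q_{x,v}}\lesssim\|g\|_{\Bdot^{\gamma,p}_\beta}$. Taking the supremum in Step 1 yields \eqref{eq:reverseembedBesov} for all $h\in\dense$.

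\textbf{Step 3: density.} The class $\dense$ is dense in $\L^{q'}_{x,v}$; this is the $\gamma=0$ case of Lemma~\ref{lem:denseanisotropic}, since $\Xdot^{0,q'}_\beta=\L^{q'}$ by Littlewood--Paley theory. Take $h_n\to h$ in $\L^{q'}$. Then $(h_n)$ is Cauchy in $\Bdot^{-\gamma,p'}_\beta$, which is complete, so it converges to some limit there. Both convergences imply convergence in $\cS'$, so the two limits coincide and equal $h$. This gives the embedding together with the estimate.

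The main obstacle is Step 1, namely making the duality with the homogeneous Besov space rigorous on a dense class. If citing the dual-space identification is acceptable, Step 1 is immediate. Otherwise, the explicit test-function construction above is the fallback, and it requires checking that $\|g\|_{\Bdot^{\gamma,p}_\beta}\lesssim(\sum_j\|a_j\|_{\L^p}^p)^{1/p}$ by almost-orthogonality of the pieces.
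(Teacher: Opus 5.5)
Your proposal is correct and follows essentially the paper's argument: H\"older's inequality combined with Lemma~\ref{lem:besov_sobolev_embedding} on the dense class $\dense$, followed by the identification $(\Bdot^{\gamma,p}_\beta)'=\Bdot^{-\gamma,p'}_\beta$. Your extra density step in $\L^{q'}_{x,v}$, which uses completeness and the continuous inclusion into $\cS'(\R^{2d})$, is a slightly more careful way of identifying the dual element with $h$ itself than the paper's one-line appeal to duality. One small slip is in your fallback for Step 1: the test function should be $g=\sum_j 2^{-j\gamma}\tilde\theta_j\ast a_j$ rather than carrying the weight $2^{j\gamma}$, since otherwise the bound $\|g\|_{\Bdot^{\gamma,p}_\beta}\lesssim(\sum_j\|a_j\|_{\L^p}^p)^{1/p}$ fails.
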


\begin{proof}
Fix $h\in \L^{q'}_{x,v}$. For any $g\in  \cC$, see Lemma~\ref{lem:denseanisotropic},
H\"older's inequality and Lemma~\ref{lem:besov_sobolev_embedding} yield
\[
\big|\langle h,g\rangle\big|
\le \|h\|_{\vphantom{\Bdot}\L^{q'}_{\vphantom{\beta}x,v}}\,\|g\|_{\vphantom{\Bdot}\L^{q}_{\vphantom{\beta}x,v}}
\lesssim \|h\|_{\vphantom{\Bdot}\L^{q'}_{\vphantom{\beta}x,v}}\,\|g\|_{\Bdot^{\gamma,p}_\beta}.
\]
As $\cC$ is dense in $\Bdot^{\gamma,p}_\beta$ (see Section~\ref{sec:HomAnBesov}),  $h$ defines a bounded linear functional on $\Bdot^{\gamma,p}_\beta$ and
\[
\|h\|_{(\Bdot^{\gamma,p}_\beta)'} \lesssim \|h\|_{\vphantom{\Bdot}\L^{q'}_{\vphantom{\beta}x,v}}.
\]

By duality for homogeneous Besov spaces we have $(\Bdot^{\gamma,p}_\beta)'= \Bdot^{-\gamma,p'}_\beta$.
Therefore $h\in \Bdot^{-\gamma,p'}_\beta$ and \eqref{eq:reverseembedBesov} is proved. 
\end{proof}

We can now establish the announced Hardy--Littlewood--Sobolev inequalities for the Kolmogorov operators and semigroups.

\begin{proof}[Proof of Theorem~\ref{thm:HLS}]
 For (i), the conditions  
 $$
 p\in (1,\infty) \quad \text{and}\quad
 \max \left\{ 0,2\beta-\homd+ \frac{\homd}{p} \right\}< \gamma < \min\left\{{2\beta},\frac{\homd}{p} \right\}
 $$ 
 allow us to use successively Lemma~\ref{lem:embedLinZ}, the first item of Theorem \ref{thm:bounds} and Lemma~\ref{lem:Sobolev2}. We obtain
 \[
  \L^{\frac{p \homd}{\homd + p (2\beta-\gamma)}}_{t,x,v}\hookrightarrow  \Zdot^{\gamma,p}_\beta \xrightarrow{\cK_{\beta}^\pm} \Ydot^{\gamma,p}_{\beta} \hookrightarrow   \L^{\frac{p \homd}{\homd - p \gamma}}_{t,x,v}.
 \]
 Now we observe that 
 $$\frac{p \homd}{\homd + p (2\beta-\gamma)}=  \frac{ \homd}{\frac{\homd}{p}- \gamma + 2\beta}
 $$ takes any value in the interval $(1, \frac{\homd}{2\beta})$ for all $(\gamma,p)$ as above. If $1<a< \frac{\homd}{2\beta}$, then setting $a=\frac{p \homd}{\homd + p (2\beta-\gamma)}$ for some $\gamma,p$ as above, a direct computation yields
 $$
 	\frac{p \homd}{\homd - p \gamma}= \frac{a \homd}{\homd - 2\beta a}
 $$
 and we are done. 
 
 For (ii), we begin with $\sem^+$ and work for $t>0$.  The conditions
 $$
 	1<p<\infty \quad \text{and}\quad \max \left\{ 0, 2\beta-\homd +\frac{\homd}{p} \right\} < \gamma < \frac{2\beta}{p}
 $$
 allow us to use successively  Corollary \ref{cor:Lq_into_Besov},  the third item of Proposition~\ref{prop:mapping} and Lemma~\ref{lem:Sobolev2}. We obtain
 \[
 \L^{\frac{p (\homd-2\beta)}{\homd - p \gamma}}_{x,v} \hookrightarrow  \Bdot^{\gamma-2\beta/p,p}_\beta \xrightarrow{\sem^+} \Ydot^{\gamma,p}_{\beta} \hookrightarrow   \L^{\frac{p \homd}{\homd - p \gamma}}_{t,x,v}.
 \]
 Now, we observe that 
 $$\frac{p (\homd-2\beta)}{\homd - p \gamma}= \frac{ \homd-2\beta}{\frac \homd p -  \gamma}$$ takes any value in the interval $(1,\infty)$ for $(\gamma,p)$ with the conditions above. If $1<b<\infty$, then setting $b= \frac{p (\homd-2\beta)}{\homd - p \gamma}$ with some $(\gamma,p)$ as above, we have $$\frac{p \homd}{\homd - p \gamma}= \frac{b \homd}{\homd - 2\beta}$$
 and the conclusion follows. 
 
The proof for the  backward Kolmogorov semigroup $\sem^-$  is the same. 
\end{proof}

\section{Isomorphism property of the Kolmogorov operators}
\label{sec:iso}
So far, we have explored the boundedness properties of the Kolmogorov operators and have obtained a complete picture between the scale of $\Zdot$ and $\Ydot$ spaces.  This was summarised up in Theorem~\ref{thm:bounds}. 
We now go back to the essence of these operators seen as fundamental solutions for the Kolmogorov equations. 
For which exponents $\gamma,p$ can we assert that they map into solutions? 
It is precisely for this purpose that we introduced the kinetic Sobolev spaces.

We shall show the following statement.
Recall that $\homd$ denotes the homogeneous dimension for the kinetic scaling: $$\homd=2\beta+(2\beta+2)d.$$ 
The main result of this section is the following. 
\begin{thm}[Isomorphism] \label{lem:isom-embed}
Let $\beta\in (0,1]$,    $p \in (1,\infty)$ and  $\gamma\in [0,2\beta]$ with $\gamma<\homd/p$.  
Then, the operators $\pm (\partial_{t}+v\cdot\nabla_{x}) +(-\Delta_{v})^\beta$ are isomorphisms from  
\begin{itemize}
\item  $\cLdot^{\gamma,p}_{\beta}$ onto $\Zdot^{\gamma,p}_{\beta}$,
\item $\cGdot^{\gamma,p}_{\beta}$ onto $\L^p_{t} \Xdot^{\gamma -2\beta,p}_{\beta}$, 
\item $\cFdot^{\gamma,p}_{\beta} $ onto $\L^p_{t,x} \Hdot^{\gamma -2\beta,p}_{\vphantom{t,x} v}$. 
\end{itemize}
The  inverses are respectively given by the Kolmogorov operators $\cK^\pm_\beta$ defined by the fundamental solutions.  
In particular we have the maximal regularity bounds
\begin{align*}
	\|\pm(\partial_{t}+v\cdot\nabla_{x})f+ (-\Delta_{v})^\beta f \|_{\Adot^{\gamma,p}_{\beta}} 
	&\sim \|f\|_{\L^p_{t,x} \Hdot^{\gamma,p}_{\vphantom{t,x} v}} + \|(\partial_{t}+v\cdot\nabla_{x})f\|_{\Adot^{\gamma,p}_{\beta}}\\
	\|S\|_{\Adot^{\gamma,p}_{\beta}}   \sim \|\cK^\pm_\beta S\|_{\L^p_{t,x} \Hdot^{\gamma,p}_{\vphantom{t,x} v}}
	& + \|(\partial_{t}+v\cdot\nabla_{x})\cK^\pm_\beta S\|_{\Adot^{\gamma,p}_{\beta}},
\end{align*}
 where $\Adot^{\gamma,p}_{\beta}$ denotes any of the three target spaces of the isomorphisms. 
 \end{thm}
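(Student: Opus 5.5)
The proof splits into surjectivity (existence) and injectivity (uniqueness), and in each case I treat the three target spaces together, using the inclusions $\L^p_{t,x}\Hdot^{\gamma-2\beta,p}_v\subset \L^p_t\Xdot^{\gamma-2\beta,p}_\beta\subset\Zdot^{\gamma,p}_\beta$. These hold for $\gamma\le 2\beta$ by Proposition~\ref{prop:characterisation}.

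\textbf{Existence.} Take $S\in\cS_K$. Lemma~\ref{lem:equation} shows that $f=\cK^\pm_\beta S$ solves $\pm(\partial_t+v\cdot\nabla_x)f+(-\Delta_v)^\beta f=S$ in $\cS'(\Omega)$. Theorem~\ref{thm:bounds} gives $\|f\|_{\Ydot^{\gamma,p}_\beta}\lesssim\|S\|_{\Zdot^{\gamma,p}_\beta}$, and since $\gamma\ge0$, Proposition~\ref{prop:characterisation} yields $f\in\L^p_{t,x}\Hdot^{\gamma,p}_v$. Next I estimate $(-\Delta_v)^\beta f$. Lemma~\ref{lem:fractionalLaplacian} maps $\L^p_{t,x}\Hdot^{\gamma,p}_v$ into $\L^p_{t,x}\Hdot^{\gamma-2\beta,p}_v$, which lies in $\L^p_t\Xdot^{\gamma-2\beta,p}_\beta$ because $\gamma-2\beta\le0$. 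Writing $(\partial_t+v\cdot\nabla_x)f=\pm(S-(-\Delta_v)^\beta f)$ then bounds the transport term in the relevant target space. For the $\cFdot$ case I use the second item of Theorem~\ref{thm:bounds}, which keeps everything in the $\Hdot_v$ scale. This gives the a priori bound on the dense class $\cS_K$ (Lemma~\ref{lem:density}).

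To extend to general $S$, I approximate $S$ by $S_k\in\cS_K$ and set $f_k=\cK^\pm_\beta S_k$. These form a Cauchy sequence for the semi-norms. The condition $\gamma<\homd/p$ is used exactly here. It makes $\Ydot^{\gamma,p}_\beta$ embed continuously in $\cS'(\Omega)$, so $f_k\to f=\cK^\pm_\beta S$ in $\cS'$, and the distributional identities, including $D_v^\gamma f_k\to D_v^\gamma f$ and the transport term, pass to the limit. This proves surjectivity together with the upper bounds in the maximal regularity estimates.

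\textbf{Uniqueness.} This is the main obstacle. Suppose $f\in\L^p_{t,x}\Hdot^{\gamma,p}_v$ solves $\pm(\partial_t+v\cdot\nabla_x)f+(-\Delta_v)^\beta f=0$ in $\cD'(\Omega)$, with transport term in $\Zdot^{\gamma,p}_\beta$. I want to show $f=0$, at least modulo the polynomials allowed by the semi-norm; since $\gamma<\homd/p$ forces $\gamma-(2\beta+2)d/p<2\beta/p$, the only relevant freedom is controlled. I argue by duality: pair $f$ with $\cK^\mp_\beta\chi$ for $\chi\in\cS_K$, so formally $\langle f,\chi\rangle=\langle f,(\mp(\partial_t+v\cdot\nabla_x)+(-\Delta_v)^\beta)\cK^\mp_\beta\chi\rangle=0$.

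Justifying this pairing is the difficulty, because $\cK^\mp_\beta\chi$ is not compactly supported in $(t,x,v)$. I would first localize in time with a cutoff $\eta_R(t)$. The error term $\langle f,\eta_R'\,\cK^\mp_\beta\chi\rangle$ should tend to $0$ by pairing $\L^p_t\Hdot^{\gamma,p}_v$ against $\L^{p'}_t\Hdot^{-\gamma,p'}_v$ of the dual kernel. Here I need $\cK^\mp_\beta\chi\in\L^{p'}_t\Xdot^{-\gamma+2\beta,p'}_\beta\subset \L^{p'}_{t,x}\Hdot^{-\gamma,p'}_v$ after adding decay: $\eta_R'\sim R^{-1}$ on a time window of length $R$, so Hölder gives an $R^{-1/p}$-type gain against $\L^\infty_t$ Besov control of the dual solution. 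Spatial localization in $x$ can be avoided by working with tempered distributions and using the $v\cdot\nabla_x$ structure together with the kinetic shift $\Gamma$. The distributional transport term is handled by the $\Zdot$–$\Zdot'$ duality in \eqref{eq:dualZ}. Once $\langle f,\chi\rangle=0$ for all $\chi\in\cS_K$, density gives $f=0$ in the quotient.

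The lower bounds in the maximal regularity estimates then follow from injectivity, since $f=\cK^\pm_\beta(\text{its source})$, combined with the existence bounds.
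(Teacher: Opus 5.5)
Your existence half is essentially the paper's route: Proposition~\ref{prop:mapping}, the boundedness lemma and Corollary~\ref{cor:ontoness}, with $\gamma<\homd/p$ used exactly for continuity into $\cS'(\Omega)$. The uniqueness half, however, has a genuine gap.

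\textbf{The pairing is not justified.} The identity $\langle f,\chi\rangle=\langle f,(\mp(\partial_t+v\cdot\nabla_x)+(-\Delta_v)^\beta)\cK^\mp_\beta\chi\rangle=0$ requires $\cK^\mp_\beta\chi$ to be an admissible test function for the distributional equation satisfied by $f$, essentially a Schwartz function. For $\beta<1$ and $\chi\in\cS_K$ it is not one. In shifted Fourier variables it contains $\exp(-\int_s^t|\xi-\tau\varphi|^{2\beta}\,\mathrm{d}\tau)$, which fails to be smooth, e.g.\ at $\xi=s\varphi$, and the support of $\widehat{\Gamma\chi}$ generally meets $\{\xi\in\R\varphi\}$.

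To pass from Schwartz test functions to such a $\tilde f$, you would need one of two things. The first is continuity of $f\mapsto(\partial_t+v\cdot\nabla_x)f$ on $\L^p_{t,x}\Hdot^{\gamma,p}_v$, which is false; this is exactly the obstruction the paper points out before Lemma~\ref{lem:uniquenessH}. The second is a density or energy identity for the kinetic spaces, such as Corollary~\ref{cor:polarizedenergyequalities}. The paper only obtains that as a consequence of the isomorphism, so using it here is circular.

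Your proposed fixes do not reach this obstruction:
\begin{itemize}
\item The time cutoff $\eta_R$ does not help. $\cK^\mp_\beta\chi$ already vanishes on one side in time and decays super-exponentially on the Fourier side on the other, so the problem lies in $(x,v)$, not in $t$.
\item The claim that spatial localisation ``can be avoided'' has no mechanism behind it. Cutoffs in $(x,v)$ do not commute with $v\cdot\nabla_x$ or with the nonlocal $(-\Delta_v)^\beta$. Controlling the commutator terms would need integrability of $f$ itself, which only comes from the embeddings that follow from the theorem. Moreover, $f$ need not be a function when $\gamma\ge d/p$.
\item The inclusion $\L^{p'}_t\Xdot^{2\beta-\gamma,p'}_\beta\subset\L^{p'}_{t,x}\Hdot^{-\gamma,p'}_v$ that you invoke is false, since homogeneous spaces of different orders are not nested.
\end{itemize}

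\textbf{What the paper does instead, and how your idea could be rescued.} The paper never pairs $f$ with non-smooth test functions. After the kinetic shift and partial Fourier transform, it solves $\partial_t\widehat{\Gamma f}+|\xi-t\varphi|^{2\beta}\widehat{\Gamma f}=0$ distributionally on the open set where the coefficient is smooth, and writes $\widehat{\Gamma f}=m\,h$. It then uses temperedness as $t\to-\infty$, where $m$ grows super-exponentially, to get $h=0$ and hence $D_v^\gamma f=0$.

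Your duality idea can be repaired only by restricting the test class:
\begin{itemize}
\item For $\beta=1$ one has $\cK^\mp_\beta(\cS_K)\subset\cS(\Omega)$, and the argument closes.
\item For $d\ge2$, take $\chi$ with $\widehat{\Gamma\chi}$ compactly supported in $\R\times\{\varphi\ne0,\ \xi\notin\R\varphi\}$. This class is dense in $\L^{p'}_{t,x,v}$ by \cite[Lemma~2.14]{AIN}, and for it $\cK^\mp_\beta\chi\in\cS(\Omega)$. This is the dual form of Step~2 of Lemma~\ref{lem:uniquenessH}.
\item For $d=1$ and $\beta<1$ that set is empty. One then needs the paper's Step~3: solving on the two half-lines $t\gtrless\xi/\varphi$, introducing the distributions $h^\pm$ and the test class $\cD_\beta(O_2)$, and using the representation formula \eqref{eq:representation} to prove $h^+=h^-$.
\end{itemize}

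\textbf{The polynomial step is also missing.} $D_v^\gamma f=0$ only gives $f\in\L^p_{t,x}(\cP_{[\gamma-d/p]}[v])$, which is nontrivial when $\gamma\ge d/p$, and that case is allowed here. Your remark about $\gamma-(2\beta+2)d/p$ concerns the wrong polynomial class. You still need the paper's last step: $(-\Delta_v)^\beta f=0$ forces $(\partial_t+v\cdot\nabla_x)f=0$, so $f=T(x-tv,v)$, and $\L^p$-integrability in $(t,x)$ gives $T=0$.
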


\begin{proof}
The ingredients for the proof of this theorem are supplied in later parts of this section.
In the range for $\gamma$ prescribed in the statement,  Corollary~\ref{cor:ontoness} ensures boundedness and ontoness 
with the Kolmogorov operators as bounded right inverses and Lemma~\ref{lem:uniquenessH} yields  injectivity. 
\end{proof}

\begin{rem}
We always have $\beta<\homd/2 = \beta+(\beta+1)d$, hence the statement applies in full when $\gamma=\beta$ and $p=2$ with our definition of $\Hdot^{\beta,2}_{v}$.  
This removes the assumption $f\in \Udot^\beta$ when $\beta\ge d/2$ made in \cite[Theorem~1.16]{AIN} as only $f\in \L^2_{t,x} \Hdot^{\beta,2}_{\vphantom{t,x} v}$ is required.  
\end{rem}
 
\begin{rem}  
Let $\Wdot^{1,p}_{v}$, $p \in (1,\infty)$, be the space of distributions $f$ with $\nabla_{v} f\in \L^p_{v}$ and 
$\Wdot^{-1,p'}_{v}$ be its dual space. 
In the case $\gamma=\beta=1$ and $p=2$, \cite[Proposition~8.4]{AIN} establishes several isomorphisms. 
Let us look at the ones  saying that $\cK^\pm_{1}$ are isomorphisms from $\L^2_{t,x} \Wdot^{-1,2}_{\vphantom{t,x} v}$ onto 
\begin{align*}
	\cFdot= \{ f \in \cD'(\Omega)\, ; \,  f\in  \L^2_{t,x}\Wdot^{1,2}_{\vphantom{t,x} v} \ \& \ (\partial_{ t} + v \cdot \nabla_x)f \in \L^2_{t,x} \Wdot^{-1,2}_{\vphantom{t,x} v}\}
\end{align*}
having semi-norm defined by
\begin{align*}
	\|f\|_{\cFdot}= \| \nabla_{v} f\|_{\L^2_{t,x,v}} + \|(\partial_t + v \cdot \nabla_x)f\|_{\L^2_{t,x} \Wdot^{-1,2}_{\vphantom{t,x} v}}.
\end{align*}
Since $\Hdot^{-1,2}_{v}=\Wdot^{-1,2}_{v}$, it follows that $\cFdot^{1,2}_{1}=\cFdot$. 
The other two isomorphisms lead to similar conclusion for $\cGdot^{1,2}_{1}$ and $\cLdot^{1,2}_{1}$.
We can do the same thing with $p\ne 2$ to obtain 
\begin{align*}  
	\cFdot^{1,p}_{1}= \{ f \in \cD'(\Omega)\, ; \,  f\in  \L^p_{t,x}\Wdot^{1,p}_{\vphantom{t,x} v} \ 
	\& \ (\partial_{ t} + v \cdot \nabla_x)f \in \L^p_{t,x} \Wdot^{-1,p}_{\vphantom{t,x} v}\}
\end{align*}
and similarly for $\cGdot^{1,p}_{1}$ and $\cLdot^{1,p}_{1}$.
\end{rem}

\subsection{Consequences of Theorem~\ref{lem:isom-embed}}

\begin{cor}[Completeness, dense class and interpolation] 
Let $\beta\in (0,1]$, $p \in (1,\infty)$ and $\gamma\in [0,2\beta]$ with $\gamma<\homd/p$.
The spaces $\cFdot^{\gamma,p}_{\beta}$,  $\cGdot^{\gamma,p}_{\beta}$ and $\cLdot^{\gamma,p}_{\beta}$ are complete Banach spaces 
of tempered distributions containing the images of $\cS_{K}$ under $\cK_{\beta}^\pm$ as dense subspaces.  
The  $\cF$ and  $\cG$ spaces  interpolate by the complex method along each segment drawn in the convex region 
$(\gamma, 1/p) \in [0,2\beta] \times (0,1)$ defined by $\gamma<\homd/p$.
\end{cor}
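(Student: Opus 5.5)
The plan is to transport every structural property through the isomorphisms of Theorem~\ref{lem:isom-embed}. Fix one of the three spaces, say $\cLdot^{\gamma,p}_{\beta}$, with target space $\Adot^{\gamma,p}_{\beta}=\Zdot^{\gamma,p}_{\beta}$; the other two are identical with $\Adot=\L^p_t\Xdot^{\gamma-2\beta,p}_\beta$ or $\L^p_{t,x}\Hdot^{\gamma-2\beta,p}_{v}$.

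\emph{Step 1: normed space.} The maximal regularity bound gives $\|f\|_{\cLdot^{\gamma,p}_{\beta}}\sim \|(\partial_t+v\cdot\nabla_x)f+(-\Delta_v)^\beta f\|_{\Adot}$. If $\|f\|=0$, then the source $S$ vanishes in $\Adot$. Injectivity in Theorem~\ref{lem:isom-embed} then forces $f=0$ as a distribution. So the semi-norm is a norm on $\cLdot$. Here we use $\gamma<\homd/p$, which makes $\Adot\hookrightarrow\cS'(\Omega)$ continuous and genuinely normed.

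\emph{Step 2: completeness and tempered distributions.} Take a Cauchy sequence $f_n$ in $\cLdot$. Then $S_n$ is Cauchy in $\Adot$, which is complete, and converges to some $S$. Put $f=\cK^\pm_\beta S$. By the right-inverse statement (Corollary~\ref{cor:ontoness}), $f\in\cLdot$. The norm equivalence applied to $f_n-f=\cK^\pm_\beta(S_n-S)$, which uses injectivity to identify $f_n=\cK^\pm_\beta S_n$, gives $f_n\to f$. All elements are of the form $\cK^\pm_\beta S$. By Theorem~\ref{thm:bounds} these lie in $\Ydot^{\gamma,p}_\beta\subset \cS'(\Omega)$, which is a continuous inclusion when $\gamma<\homd/p$.

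\emph{Step 3: density.} Lemma~\ref{lem:density} says $\cS_K$ is dense in each of the three target spaces. Since $\cK^\pm_\beta$ is an isomorphism onto the kinetic space, it maps dense sets to dense sets, so $\cK^\pm_\beta(\cS_K)$ is dense.

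\emph{Step 4: interpolation.} This is the main obstacle. By retraction, it suffices that the target families $\L^p_t\Xdot^{\gamma-2\beta,p}_\beta$ and $\L^p_{t,x}\Hdot^{\gamma-2\beta,p}_v$ form complex interpolation scales, and that $\cK^\pm_\beta$ is compatible across parameters. For compatibility, $\cK^\pm_\beta$ is a single operator on the common dense class $\cS_K$, and the isomorphisms hold at every point of the segment.

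For the target families, I would work in Littlewood--Paley (Triebel--Lizorkin type) form. The space $\Xdot^{s,p}_\beta$ is a retract of $\L^p(\ell^2_s)$ via the anisotropic family $(\theta_j)$; $\Hdot^{s,p}_v$ is handled the same way in $v$ only. Then $\L^p_t$ of these is a retract of $\L^p_{t,x,v}(\ell^2_s)$, whose complex interpolation is classical: $[\L^{p_0}(\ell^2_{s_0}),\L^{p_1}(\ell^2_{s_1})]_\theta=\L^{p}(\ell^2_{s})$.

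The delicate points are the homogeneous setting and the need to stay in a fixed ambient space modulo polynomials. The condition $\gamma<\homd/p$ along the segment (a convex region) keeps $s-(2\beta+2)d/p<0$, so no polynomials enter, and $\cS'(\Omega)$ serves as the compatible ambient space. I would verify this by checking that the coretraction $f\mapsto(2^{js}\theta_j\ast f)_j$ and the retraction $(g_j)\mapsto\sum_j 2^{-js}\tilde\theta_j\ast g_j$ are independent of the endpoint, up to the analytic weight $2^{js(z)}$ absorbed in the standard Stein-type argument.
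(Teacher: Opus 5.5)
Your proposal is correct and follows the paper's route: the paper's proof is the one-line observation that the three target spaces are Banach spaces with $\cS_{K}$ dense and that the first two target families complex interpolate, so that Theorem~\ref{lem:isom-embed} transports completeness, density and interpolation to $\cFdot^{\gamma,p}_{\beta}$, $\cGdot^{\gamma,p}_{\beta}$ and $\cLdot^{\gamma,p}_{\beta}$. Your Steps 1--3 spell out this transport, and your Step 4 supplies the interpolation of the target scales (via the Littlewood--Paley retraction onto $\L^p(\ell^2_s)$), which the paper takes as known.
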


\begin{proof}
This is clear because, in the respective prescribed ranges for the exponents $(\gamma,1/p)$, 
the  spaces $\L^p_{t,x} \Hdot^{\gamma -2\beta,p}_{\vphantom{t,x} v}$,  $\L^p_{t} \Xdot^{\gamma -2\beta,p}_{\beta}$ 
and $\Zdot^{\gamma,p}_\beta$  are Banach spaces, containing $\cS_{K}$ as a dense class,  
and the first two families complex interpolate.
Theorem \ref{lem:isom-embed} yields the claim.
\end{proof}

\begin{rem}
The issue of dense classes in the homogeneous kinetic Sobolev spaces above is nontrivial. 
It is not clear from their definition that spaces such as their intersection with $\cD(\Omega)$ are dense in any of the above spaces. Tricks such as regularisation by kinetic convolution (see the proof of Theorem~\ref{thm:inhomkinspaceLp} below) would likely require further information such as the embeddings in Corollary~\ref{cor:embedding} below.  Lemma~\ref{lem:equation} provides us with the properties of elements in the class appearing in the above statement, enough to justify basic calculations that one may need to perform. We mention that applying \cite[Lemma 2.14]{AIN} provides us with a stronger density result when $d\ge 2$: in particular, the intersection of $\cS(\Omega)$ with those spaces is dense. 
\end{rem}

\begin{rem}
The interpolation property of sums of mixed spaces with different integrability exponents is not known. This is why we have no conclusion   for the $\Zdot$-family. 
 \end{rem}

\begin{cor}[Embedding and transfer of regularity]\label{cor:embedding}  
Let $\beta\in (0,1]$, $p \in (1,\infty)$ and  $\gamma\in [0,2\beta]$ with $\gamma<\homd/p$.  
All elements of $\cLdot^{\gamma,p}_{\beta}$ belong to 
$\L^p_{t,v}\dot\H_{\vphantom{t,v} x}^{\frac{\gamma}{2\beta+1},p}\cap \C^{}_{0}(\R^{}_{t}\, ;\, \Bdot^{\gamma-2\beta/p,p}_{\beta})$ with
\begin{equation}\label{eq:embedding}
	\| D_{x}^{\frac{\gamma}{2\beta+1}}f\|_{\L^p_{t,x,v}}+ \sup_{t\in \R}\|f(t,\cdot)\|_{\Bdot^{\gamma-2\beta/p,p}_{\beta}}
	\lesssim_{\beta,\gamma,d,p}\| D_{v}^{\gamma}f\|_{\L^p_{t,x,v}}+ \|(\partial_t + v \cdot \nabla_x)f\|_{\Zdot^{\gamma,p}_\beta}.
\end{equation}
\end{cor}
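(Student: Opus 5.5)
The idea is to read off \eqref{eq:embedding} from the isomorphism Theorem~\ref{lem:isom-embed} together with the mapping properties of the Kolmogorov operators in Theorem~\ref{thm:bounds}.

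Fix $f\in \cLdot^{\gamma,p}_{\beta}$ and set $S:=(\partial_t+v\cdot\nabla_x)f+(-\Delta_v)^\beta f$. By the first item of Theorem~\ref{lem:isom-embed}, $S\in\Zdot^{\gamma,p}_\beta$ and $f=\cK^+_\beta S$ in $\cS'(\Omega)$, modulo the polynomial ambiguity built into the homogeneous definitions; this ambiguity is absent because $\gamma<\homd/p$ makes the inclusions into $\cS'(\Omega)$ continuous. The maximal regularity bound of the same theorem gives
\[
\|S\|_{\Zdot^{\gamma,p}_\beta}\lesssim \|D_v^\gamma f\|_{\L^p_{t,x,v}}+\|(\partial_t+v\cdot\nabla_x)f\|_{\Zdot^{\gamma,p}_\beta}.
\]

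Next, apply the first item of Theorem~\ref{thm:bounds}, which says $\cK^+_\beta\colon\Zdot^{\gamma,p}_\beta\to\Ydot^{\gamma,p}_\beta$ is bounded. This yields
\[
f\in \L^p_t\Xdot^{\gamma,p}_\beta\cap\C_0(\R_t;\Bdot^{\gamma-2\beta/p,p}_\beta),
\]
with norm controlled by $\|S\|_{\Zdot^{\gamma,p}_\beta}$. Since $\gamma\ge0$, Proposition~\ref{prop:characterisation} identifies $\Xdot^{\gamma,p}_\beta$ with $\L^p_x\Hdot^{\gamma,p}_v\cap\L^p_v\Hdot^{\frac{\gamma}{2\beta+1},p}_x$. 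This gives $\|D_x^{\frac{\gamma}{2\beta+1}}f\|_{\L^p_{t,x,v}}\lesssim\|f\|_{\L^p_t\Xdot^{\gamma,p}_\beta}$. The $\C_0$ part supplies the supremum-in-time Besov bound. Chaining the inequalities gives \eqref{eq:embedding}.

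The only delicate point is the identification $f=\cK^+_\beta S$ as genuine distributions, so that the regularity of $\cK^+_\beta S$ transfers to $f$ and not merely to a representative modulo polynomials. This is exactly the injectivity and uniqueness content of Theorem~\ref{lem:isom-embed}, obtained through Lemma~\ref{lem:uniquenessH}, and it is where the hypotheses $\gamma\in[0,2\beta]$ and $\gamma<\homd/p$ enter. Taking that theorem as given, the remaining steps are routine.
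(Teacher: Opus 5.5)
Your proposal is correct and is essentially the paper's own one-line proof: you identify $f=\cK^+_\beta S$ through the isomorphism of Theorem~\ref{lem:isom-embed}, then combine the $\Zdot^{\gamma,p}_\beta\to\Ydot^{\gamma,p}_\beta$ bound of Theorem~\ref{thm:bounds} with Proposition~\ref{prop:characterisation}. One small correction: $f=\cK^+_\beta S$ holds exactly, with no $v$-polynomial having $\L^p_{t,x}$ coefficients left over when $\gamma\ge d/p$, because of the injectivity in Lemma~\ref{lem:uniquenessH}, as you say at the end. It does not follow from the continuity of the inclusions into $\cS'(\Omega)$; that continuity only guarantees that $\cK^+_\beta S$ is a genuine tempered distribution solving the equation.
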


\begin{proof}
 It follows from the isomorphisms in Theorem~\ref{lem:isom-embed} and also Theorem~\ref{thm:bounds} which was instrumental to prove the isomorphism property.
\end{proof}

\begin{cor}[Polarized energy equalities]\label{cor:polarizedenergyequalities} 
Let $\beta\in (0,1]$, $p\in(1,\infty)$, $\gamma,\tilde \gamma\in [0,2\beta]$ with $\gamma<\homd/p$ and 
$ \tilde \gamma<\homd/p'$ with $\gamma+\tilde \gamma=2\beta$.

Let $f, \tilde f\in \cD'(\Omega)$ be such that $f\in \cLdot^{\gamma,p}_{\beta}$ and $\tilde f\in \cLdot^{\tilde \gamma,p'}_{\beta}$. 
Then the map  $t\mapsto\angle{f(t)}{\tilde f(t)}$, where the bracket is for the duality between 
$\Bdot^{\gamma-2\beta/p,p}_{\beta}, \Bdot^{\tilde \gamma- 2\beta/p',p'}_{\beta}$,  
is absolutely continuous on $\R$ and its almost everywhere derivative can be computed as follows. 
  
Write 
$$
	(\partial_{t}+v\cdot\nabla_{x})f =S_{1}+S_{2}
$$
with $S_1 \in \L^p_{t}\Xdot^{\gamma-2\beta,p}_{\vphantom{t} \beta}$ and 
$S_2 \in \L^1_{\vphantom{t,x} t}\Bdot_{\vphantom{t,x} \beta}^{\gamma-2\beta/p,p}$, and 
$$
	(\partial_{t}+v\cdot\nabla_{x})\tilde f =\widetilde S_{1}+ \widetilde S_{2}
$$
with $\widetilde S_1 \in \L^{p'}_{t}\Xdot^{\tilde \gamma-2\beta,p'}_{\vphantom{t} \beta}$ 
and $\widetilde S_2 \in \L^1_{\vphantom{t,x} t}\Bdot_{\vphantom{t,x} \beta}^{\tilde\gamma-2\beta/p',p'}$.
Then almost everywhere on  $\R_{t}$, 
\begin{equation}\label{eq:derivative}
	\frac{\mathrm{d} }{\mathrm{d}t }\angle{f}{\tilde f} 
	=    \angle {S_{1}}{\tilde f} + \angle { f}{\widetilde S_{1}}     + \angle   {S_{2}}{\tilde f} + \angle   {f} {\widetilde S_{2}}
\end{equation}
where each bracket denotes a different sesquilinear duality extending the $\L^2_{x,v}$ inner product by tracing the spaces
involved, and is an integrable function of $t$.

Moreover, we may also decompose 
$ S_{1} =S_{1,1}+S_{1,2} $
with $S_{1,1} \in \L^p_{t,x}\Hdot^{\gamma-2\beta,p}_{\vphantom{t,x} v}$, 
$S_{1,2} \in \L^p_{t,v}\Hdot_{\vphantom{t,x} x}^{\frac{\gamma-2\beta}{2\beta+1},p}$ and 
$ \widetilde S_{1} =\widetilde S_{1,1}+ \widetilde S_{1,2} $
with $\widetilde S_{1,1} \in \L^{p'}_{t,x}\Hdot^{\tilde \gamma-2\beta,p'}_{\vphantom{t,x} v}$, 
$\widetilde S_{1,2} \in \L^{p'}_{t,v}\Hdot_{\vphantom{t,x} x}^{\frac{\tilde \gamma-2\beta}{2\beta+1},p'}$  
and the formula for the  derivative  becomes 
\begin{multline*}
	\frac{\mathrm{d} }{\mathrm{d}t }\angle{f}{\tilde f} 
	=    \int_{\R^d} (\angle {S_{1,1}}{\tilde f} + \angle { f}{\widetilde S_{1,1}}) \dx + \int_{\R^d} (\angle {S_{1,2}}{\tilde f}+  \angle { f}{\widetilde S_{1,2}}) \dv  + \angle   {S_{2}}{\tilde f} + \angle   {f} {\widetilde S_{2}}
\end{multline*}
with the same interpretation for the brackets being integrable in the missing variables. 
\end{cor}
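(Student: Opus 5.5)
We reduce the statement to a computation on the dense class furnished by the isomorphism, Theorem~\ref{lem:isom-embed}, and then pass to the limit. First, set $g=\pm(\partial_t+v\cdot\nabla_x)f+(-\Delta_v)^\beta f$ with the forward sign. Theorem~\ref{lem:isom-embed} gives $f=\cK^+_\beta g$ with $g\in\Zdot^{\gamma,p}_\beta$, and similarly $\tilde f=\cK^-_\beta\tilde g$ (or $\cK^+_\beta$) with $\tilde g\in\Zdot^{\tilde\gamma,p'}_\beta$. By Lemma~\ref{lem:density} we approximate $g,\tilde g$ by $g_n,\tilde g_n\in\cS_K$, and set $f_n=\cK^+_\beta g_n$, $\tilde f_n=\cK^+_\beta\tilde g_n$. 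By Lemma~\ref{lem:equation}, $f_n,\tilde f_n\in\C_0(\R;\L^2_{x,v})$ with $(\partial_t+v\cdot\nabla_x)f_n\in\L^2_{t,x,v}$, and similarly for $\tilde f_n$. For such regular functions, $t\mapsto\int f_n(t)\overline{\tilde f_n(t)}$ is absolutely continuous. The transport term $v\cdot\nabla_x$ is skew-adjoint, so it integrates out when pairing in $(x,v)$ and gives the classical product rule with derivative $\angle{(\partial_t+v\cdot\nabla_x)f_n}{\tilde f_n}+\angle{f_n}{(\partial_t+v\cdot\nabla_x)\tilde f_n}$.

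The second step is to split the sources. Since $(\partial_t+v\cdot\nabla_x)f_n=g_n-(-\Delta_v)^\beta f_n$, we take $S_1^n=g_{n,1}-(-\Delta_v)^\beta f_n\in\L^p_t\Xdot^{\gamma-2\beta,p}_\beta$ and $S_2^n=g_{n,2}\in\L^1_t\Bdot^{\gamma-2\beta/p,p}_\beta$, with the analogous splitting for the tilde quantities. Every pairing in \eqref{eq:derivative} is then controlled by the following duality estimates, which hold because $\gamma+\tilde\gamma=2\beta$ makes the exponents dual.
\begin{itemize}
\item $\angle{S_1}{\tilde f}$ uses $\L^p_t\Xdot^{\gamma-2\beta,p}_\beta$ against $\L^{p'}_t\Xdot^{\tilde\gamma,p'}_\beta$, noting $\tilde\gamma=-(\gamma-2\beta)$. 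The bound on $\tilde f$ in $\L^{p'}_t\Xdot^{\tilde\gamma,p'}_\beta$ comes from Theorem~\ref{thm:bounds} and Proposition~\ref{prop:characterisation}.
\item $\angle{S_2}{\tilde f}$ uses $\L^1_t\Bdot^{\gamma-2\beta/p,p}_\beta$ against $\L^\infty_t\Bdot^{\tilde\gamma-2\beta/p',p'}_\beta$. These exponents are dual because $\gamma-2\beta/p+\tilde\gamma-2\beta/p'=0$.
\end{itemize}
Hence the integrated identity $\angle{f_n(t)}{\tilde f_n(t)}-\angle{f_n(s)}{\tilde f_n(s)}=\int_s^t(\dots)$ passes to the limit. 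The left side converges uniformly in $t$ by the $\C_0$ Besov bounds of Theorem~\ref{thm:bounds}, and the integrand converges in $\L^1_t$ by the bilinear estimates above. This yields absolute continuity together with formula \eqref{eq:derivative} for the specific splitting coming from $g$.

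The main obstacle is showing that \eqref{eq:derivative} holds for an \emph{arbitrary} decomposition $S_1+S_2$, not only the one obtained from $g$. To handle it, take two decompositions. Their difference satisfies $S_1-S_1'=S_2'-S_2\in\L^p_t\Xdot^{\gamma-2\beta,p}_\beta\cap\L^1_t\Bdot^{\gamma-2\beta/p,p}_\beta$. We must check that the pairing of this difference with $\tilde f$ is the same whether it is computed in the $\Xdot$ duality or in the $\Bdot$ duality. Both dualities extend the $\L^2$ pairing and are continuous, so they agree on a common dense class. Concretely, we approximate $\tilde f$ simultaneously in $\L^{p'}_t\Xdot^{\tilde\gamma,p'}_\beta\cap\L^\infty_t\Bdot$ by the elements $\tilde f_n$ above, for which both pairings reduce to integrals. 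This is the point that needs care, since both spaces sit in $\cS'$ only modulo polynomials; the range conditions $\gamma<\homd/p$ and $\tilde\gamma<\homd/p'$ remove this ambiguity.

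Finally, the refined formula with $S_{1,1},S_{1,2}$ follows from Proposition~\ref{prop:characterisation}, which applies since $\gamma-2\beta\le0$, combined with the corresponding mixed dualities. For the $\L^p_{t,x}\Hdot_v$ duality we pair with $\tilde f\in\L^{p'}_{t,x}\Hdot^{\tilde\gamma,p'}_v$ in $v$ and integrate in $x$. For the $\L^p_{t,v}\Hdot_x$ duality we pair with $D_x^{\tilde\gamma/(2\beta+1)}\tilde f\in\L^{p'}$, using the transfer of regularity in Corollary~\ref{cor:embedding}. Consistency of these pairings is checked by the same density argument as in the previous step.
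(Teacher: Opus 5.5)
Your proposal is correct and is essentially the paper's proof. The paper establishes the identity for $f=\cK^+_\beta S$, $\tilde f=\cK^-_\beta\tilde S$ with $S,\tilde S\in\cS_K$, and passes to the limit using density of $\cS_K$ in $\Zdot^{\gamma,p}_\beta$ and $\Zdot^{\tilde\gamma,p'}_\beta$ together with the bounds of Theorem~\ref{thm:bounds}, exactly as you do; where it quotes \cite{AIN} for the dense-class identity, you use the product rule, which is rigorous after conjugating by the kinetic shift $\Gamma$.

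Your extra check that \eqref{eq:derivative} does not depend on the splitting $S_1+S_2$, which the paper leaves implicit, is sound, with one point to fix. $S_1-S_1'$ is not an $\L^2_{x,v}$ function, so the two pairings against $\tilde f_n(t)$ do not agree merely because $\tilde f_n(t)\in\L^2_{x,v}$. Justify the agreement either by the compact Fourier support of $\tilde f_n(t)$ away from the origin, which reduces both pairings to the same integral against a finite Littlewood--Paley piece of $S_1-S_1'$, or by also approximating $S_1-S_1'$ simultaneously in both norms.
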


\begin{proof} 
We can write $f=\cK_{\beta}^+S$ and $\tilde f= \cK_{\beta}^-\tilde S$ with $S,\tilde S\in \tilde  \cS_{K}$. 
In this case, the equality
\[
	\frac{\mathrm{d} }{\mathrm{d}t }\int_{\R^{2d}}{f(t,x,v})\overline {\tilde f(t,x,v)} \dd(x,v) = \int_{\R^{2d}} (S(t,x,v)\overline{\tilde f(t,x,v)} + f(t,x,v)\overline{\tilde S(t,x,v)}) \dd(x,v)
\]
is already in \cite{AIN}. 
Then, we use density  of $\cS_{K}$ in $\Zdot^{\gamma,p}_{\beta}$ and $\Zdot^{\tilde \gamma,p'}_{\beta}$, 
and boundedness of  $\cK_{\beta}^\pm $ to conclude with the proper interpretation of the dualities. 
 \end{proof}

\begin{cor}[Kinetic Sobolev embedding]\label{cor:Sobolevembeddings} 
Let $\beta\in (0,1]$, $p\in(1,\infty)$ and  $\gamma\in [0,2\beta]$ with $0<\gamma<\homd/p$. 
We have $\cLdot^{\gamma,p}_{\beta} \subset \L^{p\kappa}_{t,x,v}$ with 
\[
	\|f\|_{\L^{p\kappa}_{t,x,v}} \lesssim \| D_{v}^{\gamma}f\|_{\L^p_{t,x,v}}+ \|(\partial_t + v \cdot \nabla_x)f\|_{\Zdot^{\gamma,p}_\beta}
\]
where $\kappa= \frac{\homd}{\homd-\gamma p}.$
\end{cor}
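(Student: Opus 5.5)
The plan is to combine the isomorphism of Theorem~\ref{lem:isom-embed} with the mapping bounds of Theorem~\ref{thm:bounds} and then to apply the Gagliardo--Nirenberg-type embedding of Lemma~\ref{lem:Sobolev2}.

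First, let $f\in\cLdot^{\gamma,p}_\beta$ and set $S:=(\partial_t+v\cdot\nabla_x)f+(-\Delta_v)^\beta f$. By Theorem~\ref{lem:isom-embed}, which applies because $\gamma\in[0,2\beta]$ and $\gamma<\homd/p$, we have $S\in\Zdot^{\gamma,p}_\beta$ and $f=\cK^+_\beta S$, with the maximal regularity bound
\[
\|S\|_{\Zdot^{\gamma,p}_\beta}\lesssim \|D_v^\gamma f\|_{\L^p_{t,x,v}}+\|(\partial_t+v\cdot\nabla_x)f\|_{\Zdot^{\gamma,p}_\beta}.
\]
Next, the first item of Theorem~\ref{thm:bounds} gives $f=\cK^+_\beta S\in\Ydot^{\gamma,p}_\beta=\L^p_t\Xdot^{\gamma,p}_\beta\cap\C_0(\R_t;\Bdot^{\gamma-2\beta/p,p}_\beta)$, with norm controlled by $\|S\|_{\Zdot^{\gamma,p}_\beta}$. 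In particular $f$ lies in $\L^p_t\Xdot^{\gamma,p}_\beta\cap\L^\infty_t\Bdot^{\gamma-2\beta/p,p}_\beta$.

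Finally, since $0<\gamma<\homd/p$, Lemma~\ref{lem:Sobolev2} yields
\[
\|f\|_{\L^{p\kappa}_{t,x,v}}\lesssim\|f\|_{\L^p_t\Xdot^{\gamma,p}_\beta}^{1/\kappa}\|f\|_{\L^\infty_t\Bdot^{\gamma-2\beta/p,p}_\beta}^{1-1/\kappa}\le\|f\|_{\Ydot^{\gamma,p}_\beta}.
\]
Chaining the three inequalities gives the claimed bound.

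The one point that needs care is the identification of $f$ as a genuine distribution, as opposed to an element defined only modulo polynomials. The strict inequality $\gamma<\homd/p$ guarantees that $\Ydot^{\gamma,p}_\beta$ embeds continuously into $\cS'(\Omega)$, as observed after \eqref{eq:V}. Moreover, the uniqueness in Theorem~\ref{lem:isom-embed} identifies $f$ itself with $\cK^+_\beta S$, not just up to polynomials. Consequently the Littlewood--Paley series in the proof of Lemma~\ref{lem:Sobolev2} converges to $f$ in $\cS'$, and the $\L^{p\kappa}$ bound applies to $f$ itself. I expect this bookkeeping to be the only delicate point; everything else is a direct combination of the earlier results.
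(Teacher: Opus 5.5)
Your proposal is correct and follows essentially the paper's route. The paper cites Corollary~\ref{cor:embedding} for the continuous inclusion $\cLdot^{\gamma,p}_\beta\hookrightarrow\Ydot^{\gamma,p}_\beta$, which is itself proved exactly as you do by combining Theorem~\ref{lem:isom-embed} with Theorem~\ref{thm:bounds}, and then applies Lemma~\ref{lem:Sobolev2}; your remark about distributions versus classes modulo polynomials matches the observation made at the start of the proof of that lemma.
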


\begin{proof} 
By the embedding in Corollary~\ref{cor:embedding}, $\cLdot^{\gamma,p}_{\beta} \subset \Ydot^{\gamma, p}_{\beta}$ 
with continuous inclusion. Hence, we may apply Lemma~\ref{lem:Sobolev2} as  $\Ydot^{\gamma, p}_{\beta} \subset \L^{p}_t\Xdot^{\gamma,p}_\beta \cap \L^\infty_t\Bdot^{\gamma-2\beta/p,p}_\beta$ in this range of exponents.
\end{proof}

\subsection{Relating the Kolmogorov operator and solutions of the Kolmogorov equation}
 
The following proposition makes the link with the Kolmogorov equation, via the  kinetic spaces. As we want to deal exclusively with distributions, this puts some restrictions on the parameter $\gamma$.

\begin{prop}[Kolmogorov operators map into solutions]\label{prop:mapping}  
Let $\beta\in (0,1]$,  $p\in (1,\infty)$ and $ \gamma\in [0,\homd/p)$. 
\begin{itemize}
\item $\cK^{\pm}_{\beta}$ extend to bounded operators from  $ \Zdot^{\gamma,p}_{\beta}$ to $ \cLdot^{\gamma,p}_{\beta}$.
\item $\cK^{\pm}_{\beta}$ extend to bounded operators from  $\L^p_{t} \Xdot^{\gamma -2\beta,p}_{\beta}$ to $ \cGdot^{\gamma,p}_{\beta}$.  
\item  If, in addition $\gamma\in [0,2\beta]$, $\cK^{\pm}_{\beta}$ extend to bounded operators from  $\L^p_{t,x} \Hdot^{\gamma -2\beta,p}_{\vphantom{t,x} v}$ to $\cFdot^{\gamma,p}_{\beta} $.
\end{itemize}
In each case, $\cK_{\beta}^\pm$ map sources $S$ into (tempered) distributional solutions $f$ to 
$\pm (\partial_{t}+v\cdot\nabla_{x})f +(-\Delta_{v})^\beta f=S$, respectively.
\end{prop}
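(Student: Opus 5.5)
The plan is to argue first on the dense class $\cS_{K}$ and then pass to the limit using the a priori bounds of Theorem~\ref{thm:bounds}. For $S\in\cS_{K}$, Lemma~\ref{lem:equation} shows that $f=\cK^\pm_\beta S$ is a strong $\L^2$ solution of $\pm(\partial_t+v\cdot\nabla_x)f+(-\Delta_v)^\beta f=S$. Hence
\[
\pm(\partial_t+v\cdot\nabla_x)f=S-(-\Delta_v)^\beta f
\]
holds in $\cS'(\Omega)$. The two terms on the right are handled as follows.
\begin{itemize}
\item $S$ lies in the source space $\Zdot^{\gamma,p}_\beta$, $\L^p_t\Xdot^{\gamma-2\beta,p}_\beta$ or $\L^p_{t,x}\Hdot^{\gamma-2\beta,p}_v$, depending on the item.
\item $(-\Delta_v)^\beta f=D_v^{2\beta}f$ is controlled by $\|D_v^\gamma f\|_{\L^p_{t,x,v}}$ in $\L^p_{t,x}\Hdot^{\gamma-2\beta,p}_v$, through Lemma~\ref{lem:fractionalLaplacian}.
\end{itemize}
By Proposition~\ref{prop:characterisation}, when $\gamma-2\beta\le 0$ this last space embeds into $\L^p_t\Xdot^{\gamma-2\beta,p}_\beta\subset\Zdot^{\gamma,p}_\beta$. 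The $\Hdot_v$ part of the norm, $\|D_v^\gamma f\|_{\L^p}\lesssim\|f\|_{\L^p_t\Xdot^{\gamma,p}_\beta}$, follows from Theorem~\ref{thm:bounds} together with Proposition~\ref{prop:characterisation} for $\gamma\ge0$. This gives the a priori estimates
\[
\|\cK^\pm_\beta S\|_{\cLdot^{\gamma,p}_\beta}\lesssim\|S\|_{\Zdot^{\gamma,p}_\beta},
\]
and the analogous estimates for $\cGdot$ and $\cFdot$ on $\cS_K$. For $\cFdot$ with source in $\L^p_{t,x}\Hdot^{\gamma-2\beta,p}_v$, we use that this space sits in $\L^p_t\Xdot^{\gamma-2\beta,p}_\beta$ when $\gamma\le2\beta$, so Theorem~\ref{thm:bounds} applies. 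The transport term then lies in $\L^p_{t,x}\Hdot^{\gamma-2\beta,p}_v$ as a sum of two such terms.

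When $\gamma>2\beta$ (possible only in the first two items), the term $(-\Delta_v)^\beta f$ has positive regularity, and I would place it in $\L^p_t\Xdot^{\gamma-2\beta,p}_\beta$ directly. Since $S$ is Schwartz-type, the symbol $|\xi|^{2\beta}d_\beta^{\gamma-2\beta}$ compared with $d_\beta^\gamma$ is a Marcinkiewicz multiplier by Lemma~\ref{lem:multipliers}. This gives $\|(-\Delta_v)^\beta f\|_{\L^p_t\Xdot^{\gamma-2\beta,p}_\beta}\lesssim\|f\|_{\L^p_t\Xdot^{\gamma,p}_\beta}$.

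Next, take $S$ in the source space and $S_n\in\cS_K$ with $S_n\to S$ (Lemma~\ref{lem:density}). Set $f_n=\cK^\pm_\beta S_n$. Theorem~\ref{thm:bounds} shows that $f_n$ is Cauchy in $\Ydot^{\gamma,p}_\beta$, so it converges to $f:=\cK^\pm_\beta S$ there. Because $\gamma<\homd/p$, the inclusion of $\Ydot^{\gamma,p}_\beta$ in $\cS'(\Omega)$ is continuous without quotienting by polynomials, so $f_n\to f$ in $\cS'(\Omega)$. Consequently $(\partial_t+v\cdot\nabla_x)f_n\to(\partial_t+v\cdot\nabla_x)f$ in $\cD'$. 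The sequence $(\partial_t+v\cdot\nabla_x)f_n$ is also Cauchy in the relevant source space. That space embeds continuously into $\cS'$, because $\gamma-2\beta<\homd/p$ and the Besov part needs $\gamma-2\beta/p<(2\beta+2)d/p$, i.e. $\gamma<\homd/p$. Its limit therefore coincides with $(\partial_t+v\cdot\nabla_x)f$. Likewise $D_v^\gamma f_n\to D_v^\gamma f$ in $\L^p$, and passing to the limit in the equation gives the distributional solution property and the norm bounds.

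The main obstacle is identifying limits for homogeneous spaces: one has to ensure that convergence in each (semi-)normed space implies convergence in $\cS'(\Omega)$ rather than modulo polynomials. This is exactly where $\gamma<\homd/p$ is used, and where $\gamma\ge0$ is needed to control $D_v^\gamma f$ via Proposition~\ref{prop:characterisation}. One must also check that $(-\Delta_v)^\beta f_n\to(-\Delta_v)^\beta f$ in $\cS'$, using Lemma~\ref{lem:fractionalLaplacian}(ii) with $\alpha=2\beta>\gamma-d/p$. When $2\beta\le\gamma-d/p$, I would instead pass to the limit in the definition modulo polynomials and argue that the polynomial ambiguity vanishes because the transport term lies in a space continuously embedded in $\cS'$.
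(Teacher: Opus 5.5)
Your argument follows the paper's route: approximate by $\cS_K$, apply Theorem~\ref{thm:bounds}, use that $\Ydot^{\gamma,p}_\beta$ and the source spaces embed continuously into $\cS'(\Omega)$ (this is where $\gamma<\homd/p$ enters), and pass to the limit in the equation. The paper skips your a priori bound on the transport term over $\cS_K$. Once the equation holds for the limit $f$, it reads off $(\partial_t+v\cdot\nabla_x)f=\pm\bigl(S-(-\Delta_v)^\beta f\bigr)$ and places the right-hand side in the source space. Your extra Cauchy-sequence step is harmless but redundant.

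The step that fails as written is the convergence $(-\Delta_v)^\beta f_n\to(-\Delta_v)^\beta f$ in $\cS'(\Omega)$.
\begin{itemize}
\item Lemma~\ref{lem:fractionalLaplacian}(ii) with $\alpha=2\beta$ needs $\gamma<2\beta+d/p$. That covers item three, but in items one and two the range $2\beta+d/p\le\gamma<\homd/p$ is nonempty; for example $d=1$, $\beta=1$, $p=2$, $\gamma\in[5/2,3)$.
\item In that range, Lemma~\ref{lem:fractionalLaplacian} defines $(-\Delta_v)^\beta f$ only modulo polynomials in $v$. Your fallback, ``pass to the limit modulo polynomials,'' does not fix this. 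Choosing the representative $S\mp(\partial_t+v\cdot\nabla_x)f$ would make the equation a tautology rather than a statement about $f$.
\end{itemize}

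The fix uses a bound you already proved, but you need it for every $\gamma$, not only $\gamma>2\beta$.
\begin{itemize}
\item The multiplier $(|\xi|/d_\beta(\varphi,\xi))^{2\beta}$ from Lemma~\ref{lem:multipliers} makes $(-\Delta_v)^\beta$ bounded from $\L^p_t\Xdot^{\gamma,p}_\beta$ to $\L^p_t\Xdot^{\gamma-2\beta,p}_\beta$.
\item This defines $(-\Delta_v)^\beta f$ by density.
\item The target space embeds continuously into $\cS'(\Omega)$ because $\gamma-2\beta<(2\beta+2)d/p$, which follows from $\gamma<\homd/p$.
\end{itemize}
This is the paper's argument, and it covers all $\gamma\in[0,\homd/p)$ at once.

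A related slip: the continuity threshold for $\L^p_t\Xdot^{\gamma-2\beta,p}_\beta\subset\cS'(\Omega)$ is $\gamma-2\beta<(2\beta+2)d/p$, not $\gamma-2\beta<\homd/p$ as you wrote. It is still implied by $\gamma<\homd/p$.
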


\begin{proof} 
Let us consider the case of the forward operator $\cK^{+}_{\beta}$ as the case of the backward one is similar. 

We first consider the boundedness from $ \Zdot^{\gamma,p}_{\beta}$ to $\cLdot^{\gamma,p}_{\beta}$. 
Let $ S \in \Zdot^{\gamma,p}_{\beta}$ and set $f=\cK^{+}_{\beta}S \in  \Ydot^{\gamma,p}_{\beta}$ by Theorem~\ref{thm:bounds}. 
Let  $S_{j}\in \cS_{K}$ converge to $ S$ in $\Zdot^{\gamma,p}_{\beta}$. 
We know that $f_{j}=\cK_{\beta}^+S_{j}$ satisfies the equation  and we pass to the limit in $\cS'(\Omega)$ next. As observed in Section \ref{sec:kineticspaces},
the inclusions of $\Zdot^{\gamma,p}_{\beta}$ and $\Ydot^{\gamma,p}_{\beta}$ in $\cS'(\Omega)$ are continuous if and only if $\gamma<\homd/p$. 
Hence, we have $S_{j}\to S$, $f_{j}\to f$ and 
$(\partial_{t}+v\cdot\nabla_{x})f_{j} \to  (\partial_{t}+v\cdot\nabla_{x})f$ in $\cS'(\Omega)$. 
We next claim that $(-\Delta_{v})^\beta f_{j}\to (-\Delta_{v})^\beta f$ in $\cS'(\Omega)$. 
The operator $(-\Delta_{v})^\beta$ is defined and bounded from $\L^p_{t} \Xdot^{\gamma,p}_{\beta}$ to 
$\L^p_{t} \Xdot^{\gamma -2\beta,p}_{\beta}$ when $\gamma<2\beta+(2\beta+2)d/p$, which is the condition for the latter space 
to be continuously contained in $\cS'(\Omega)$. 
Indeed, if $g\in \cS_{K}$, write
\[
	d_{\beta}(\varphi,\xi)^{\gamma-2\beta} |\xi|^{2\beta} \hat g
	= \left(\frac{|\xi|}{d_{\beta}(\varphi,\xi)}\right)^{2\beta} d_{\beta}(\varphi,\xi)^\gamma \hat g.
\]
By Lemma~\ref{lem:multipliers}, $(|\xi|/d_{\beta}(\varphi,\xi))^{2\beta}$ is an $\L^p_{x,v}$ Fourier multiplier. 
Hence, $\|(-\Delta_{v})^\beta g\|_{\L^p_{t} \Xdot^{\gamma -2\beta,p}_{\beta}} \lesssim \| g\|_{\L^p_{t} \Xdot^{\gamma,p}_{\beta}}$.
Then we conclude by density. 
Eventually, as $\gamma<\homd/p$, we have  $\gamma<2\beta+(2\beta+2)d/p$ and the claim follows.  
The equation for $f$ has been verified and we conclude for $f\in \cLdot^{\gamma,p}_{\beta}$. 
Indeed, we have $ f\in \Ydot^{\gamma,p}_{\beta} \subset \L^p_{t,x} \Hdot^{\gamma,p}_{v}$ as $\gamma\ge 0$, 
and $(\partial_{t}+v\cdot\nabla_{x})f= S- (-\Delta_{v})^\beta f\in \Zdot^{\gamma,p}_{\beta}$. 
 
The argument for the boundedness for the second item is the same. 

For the third item, we observe that if $\gamma \le 2\beta$, 
$S\in \L^p_{t,x} \Hdot^{\gamma -2\beta,p}_{v} \subset  \L^p_{t} \Xdot^{\gamma -2\beta,p}_{\beta}\subset \Zdot^{\gamma,p}_{\beta}$. 
Hence we can repeat the first part and obtain that $f=\cK_{\beta}^+S$ satisfies the equation and 
$f\in \L^p_{t,x} \Hdot^{\gamma,p}_{v}$ as $\gamma\ge 0$.  
Since $(\partial_t +v \cdot \nabla_x)f$ already belongs to the target space by definition, it remains to control $(-\Delta_v)^{\beta} f$.
Here,  $(-\Delta_{v})^\beta f \in \L^p_{t,x} \Hdot^{\gamma-2\beta,p}_{v}$  as $\gamma \le 2\beta$. 
Thus $(\partial_{t}+v\cdot\nabla_{x})f= S- (-\Delta_{v})^\beta f\in \L^p_{t,x} \Hdot^{\gamma-2\beta,p}_{v}$. 
\end{proof}
 
Next, we go the other direction, applying the differential operator mapping from the kinetic Sobolev spaces.

\begin{lem}[Boundedness] 
Let $\beta\in (0,1]$,  $p\in (1,\infty)$ and $\gamma\in (-\infty, 2\beta]$. 
The operators  $\pm (\partial_{t}+v\cdot\nabla_{x}) +(-\Delta_{v})^\beta$ are bounded 
\begin{itemize}
\item from $\cLdot^{\gamma,p}_{\beta}$ to $ \Zdot^{\gamma,p}_{\beta}$.
\item  from $\cGdot^{\gamma,p}_{\beta}$ to $\L^p_{t} \Xdot^{\gamma -2\beta,p}_{\beta}$.
\item  from $\cFdot^{\gamma,p}_{\beta} $ to  $\L^p_{t,x} \Hdot^{\gamma -2\beta,p}_{\vphantom{t,x} v}$   
\end{itemize}
\end{lem}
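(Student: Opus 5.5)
By the triangle inequality it suffices to show that the transport part and the diffusion part each map into the target space. For the transport part there is nothing to do: in each of the three kinetic spaces, $\|(\partial_t+v\cdot\nabla_x)f\|$ in the corresponding target space ($\Zdot^{\gamma,p}_{\beta}$, $\L^p_{t}\Xdot^{\gamma-2\beta,p}_{\beta}$, or $\L^p_{t,x}\Hdot^{\gamma-2\beta,p}_{v}$) is part of the defining (semi-)norm. So everything reduces to proving
\[
\|(-\Delta_v)^\beta f\|_{\Adot}\lesssim \|D_v^\gamma f\|_{\L^p_{t,x,v}},
\]
with $\Adot$ the respective target space, for all $f\in\L^p_{t,x}\Hdot^{\gamma,p}_v$.

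The key step is the one-variable statement: for a.e.\ $(t,x)$, the map $(-\Delta_v)^\beta=D_v^{2\beta}$ is bounded from $\Hdot^{\gamma,p}_v$ to $\Hdot^{\gamma-2\beta,p}_v$. I would get this from Lemma~\ref{lem:fractionalLaplacian}(ii) with $\alpha=2\beta$. Its hypothesis is $\alpha>\gamma-d/p$, which holds because $\gamma\le 2\beta$ gives $\gamma-d/p<2\beta$. The lemma then defines $D_v^{2\beta}f$ as the Littlewood--Paley series $\sum_j (-\Delta_v)^{\beta}\psi_j(D)f$ in $\cS'$, lying in $\Hdot^{\gamma-2\beta,p}_v$ with norm $\sim\|D_v^\gamma f\|_{\L^p_v}$.

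I would apply this fibrewise in $(t,x)$ and integrate in $\L^p_{t,x}$. This gives $\|(-\Delta_v)^\beta f\|_{\L^p_{t,x}\Hdot^{\gamma-2\beta,p}_v}\lesssim\|D_v^\gamma f\|_{\L^p_{t,x,v}}$, which settles the third item.

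The other two items then follow from inclusions of target spaces. For $\gamma-2\beta\le 0$, Proposition~\ref{prop:characterisation} gives
\[
\L^p_{t,x}\Hdot^{\gamma-2\beta,p}_v\subset \L^p_t\Xdot^{\gamma-2\beta,p}_\beta\subset \Zdot^{\gamma,p}_\beta,
\]
with continuous inclusions. This is exactly where the restriction $\gamma\le 2\beta$ enters.

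The main obstacle is making the distributional meaning consistent. The identity $(-\Delta_v)^\beta f$ must be understood as the same object that appears in the equation in $\cS'(\Omega)$ or $\cD'(\Omega)$, and when $\gamma-d/p\ge 0$ the element $f$ is only defined modulo polynomials in $v$. I would handle this in two steps. First, check that polynomials of degree at most $[\gamma-d/p]<2\beta$ are annihilated by the series definition, which is automatic since $(-\Delta_v)^\beta\psi_j(D)$ kills polynomials. Second, check measurability in $(t,x)$ by approximating $f$ with elements of the dense class $\cS_K$ (Lemma~\ref{lem:density}), for which everything is plain Fourier multiplication, and then passing to the limit in $\cS'$ modulo polynomials.
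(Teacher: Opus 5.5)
Your proposal is correct and follows the same route as the paper. The transport part is controlled by the definition of the kinetic norms. Lemma~\ref{lem:fractionalLaplacian}(ii) with $\alpha=2\beta>\gamma-d/p$ gives that $(-\Delta_v)^\beta$ maps $\L^p_{t,x}\Hdot^{\gamma,p}_v$ into $\L^p_{t,x}\Hdot^{\gamma-2\beta,p}_v$. For $\gamma\le 2\beta$, the inclusions $\L^p_{t,x}\Hdot^{\gamma-2\beta,p}_v\subset\L^p_t\Xdot^{\gamma-2\beta,p}_\beta\subset\Zdot^{\gamma,p}_\beta$ from Proposition~\ref{prop:characterisation} then give the other two items. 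The paper states this as a one-line remark; your extra discussion of polynomials and measurability in $(t,x)$ is harmless care that the paper leaves implicit.
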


\begin{proof}  
It suffices to remark that for $\gamma\le 2\beta$, $(-\Delta_{v})^{\beta}$ maps  
$\L^p_{t,x} \Hdot^{\gamma,p}_{\vphantom{t,x} v}$ into 
$\L^p_{t,x} \Hdot^{\gamma-2\beta,p}_{\vphantom{t,x} v} \subset \L^p_{t} \Xdot^{\gamma -2\beta,p}_{\beta} \subset \Zdot^{\gamma,p}_{\beta}$.
\end{proof}

\begin{cor}[Ontoness]\label{cor:ontoness} 
Let $\beta\in (0,1]$,  $p\in (1,\infty)$ and $\gamma\in [0, 2\beta]$ with $\gamma<\homd/p$. 
The operators  $\pm (\partial_{t}+v\cdot\nabla_{x}) +(-\Delta_{v})^\beta$ are bounded  and onto.
\begin{itemize}
\item from $\cLdot^{\gamma,p}_{\beta}$ to $ \Zdot^{\gamma,p}_{\beta}$.
\item  from $\cGdot^{\gamma,p}_{\beta}$ to $\L^p_{t} \Xdot^{\gamma -2\beta,p}_{\beta}$.
\item  from $\cFdot^{\gamma,p}_{\beta} $ to $\L^p_{t,x} \Hdot^{\gamma -2\beta,p}_{\vphantom{t,x} v}$   
\end{itemize}
with the forward and backward Kolmogorov operators as right inverses, respectively.
\end{cor}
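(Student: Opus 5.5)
Boundedness is the preceding lemma (Boundedness), which covers all $\gamma\le 2\beta$, so only ontoness and the right-inverse property remain. The plan is to show that for each source $S$ in the target space, $f:=\cK^\pm_\beta S$ lies in the corresponding kinetic space and solves $\pm(\partial_t+v\cdot\nabla_x)f+(-\Delta_v)^\beta f=S$ in $\cD'(\Omega)$. This is exactly the content of Proposition~\ref{prop:mapping}, whose hypotheses $\gamma\in[0,\homd/p)$, plus $\gamma\le 2\beta$ for the third item, are met under our assumptions. I treat the forward sign; the backward one is symmetric.

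For the first item, take $S\in\Zdot^{\gamma,p}_\beta$. The first item of Proposition~\ref{prop:mapping} gives $f=\cK^+_\beta S\in\cLdot^{\gamma,p}_\beta$, with norm controlled by $\|S\|_{\Zdot^{\gamma,p}_\beta}$, and $f$ is a distributional solution with source $S$. Hence $\big((\partial_t+v\cdot\nabla_x)+(-\Delta_v)^\beta\big)\cK^+_\beta S=S$, so the operator is onto with $\cK^+_\beta$ as a bounded right inverse.

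The second and third items follow in the same way from the second and third items of Proposition~\ref{prop:mapping}: for $S\in\L^p_t\Xdot^{\gamma-2\beta,p}_\beta$ we get $f\in\cGdot^{\gamma,p}_\beta$, and for $S\in\L^p_{t,x}\Hdot^{\gamma-2\beta,p}_v$ we get $f\in\cFdot^{\gamma,p}_\beta$. In both cases $f$ solves the equation with source $S$.

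The main point requiring care is the identity $\big((\partial_t+v\cdot\nabla_x)+(-\Delta_v)^\beta\big)\cK^+_\beta S=S$ for general $S$. Here $(-\Delta_v)^\beta f$ must mean the extension defined on $\L^p_{t,x}\Hdot^{\gamma,p}_v$ via Lemma~\ref{lem:fractionalLaplacian}, and this must agree with the distributional limit used in the proof of Proposition~\ref{prop:mapping}. I would settle this by density of $\cS_K$ (Lemma~\ref{lem:density}) together with Lemma~\ref{lem:equation}. The condition $\gamma<\homd/p$ makes every space involved embed continuously into $\cS'(\Omega)$. Both sides of the identity are therefore continuous in $S$ into $\cS'(\Omega)$, and they agree on $\cS_K$ by Lemma~\ref{lem:equation}, so they agree everywhere.
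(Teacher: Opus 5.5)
Your proposal is correct and follows the paper's own route: boundedness comes from the preceding Boundedness lemma, and ontoness comes from Proposition~\ref{prop:mapping}, with $\cK^\pm_\beta$ serving as right inverses. Your additional consistency check, using density of $\cS_K$ and the continuous embeddings into $\cS'(\Omega)$ for $\gamma<\homd/p$, repeats the limiting argument already carried out inside the proof of Proposition~\ref{prop:mapping}.
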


\begin{proof} 
Combine the previous results. 
The Kolmogorov operators are right inverses for the respective equations from Proposition~\ref{prop:mapping}, 
yielding ontoness in the three cases.  
\end{proof}

\subsection{Injectivity}

The only remaining issue is injectivity. 
We prove a uniqueness statement that covers more equations ($\beta>1$ possible) and works as well in the non favorable Sobolev range ($\gamma\ge d/p$). 
It improves the one obtained for $p=2$ in \cite{AIN}  because we weaken the assumption $f\in \Udot^\gamma$ 
made there when $\gamma\ge d/2$.  
The main point is to have a  uniqueness class involving regularity information with respect to $v$ only, 
hence directly relevant to the equation.

Thinking about it, we cannot argue by some duality argument using the ontoness proved in the previous section because the transport
 operator is only continuous in the sense of (tempered) distributions and not on the spaces involved. 
 For example, the transport operator does not map $ \L^p_{t,x}\Hdot^{\gamma,p}_{v}$ into $\L^p_{t,x}\Hdot^{\gamma-2\beta,p}_{v}$. 
 It is only when starting from a solution $f$ in the first space that we conclude that 
 $(\partial_{t}+v\cdot\nabla_{x})f $ belongs to the second space.  
 This forces us to develop an involved distributional argument that deeply uses the structure of kinetic equations. 
 The range of $\gamma$ with respect to $\beta, d$ and $p$ is the one guaranteeing that we work within (tempered) distributions. We recall $\Omega=\R^{1+2d}$.

\begin{lem}[Uniqueness in $\L^p_{t,x}\Hdot^{\gamma,p}_{v}$]
\label{lem:uniquenessH}
Let $\beta>0$, $p \in (1,\infty)$ and $-\infty<\gamma < 2\beta+d/p$. 
If   $f\in  \L^p_{t,x}\Hdot^{\gamma,p}_{v}$   satisfies  $\pm (\partial_{t}+v\cdot\nabla_{x})f +(-\Delta_{v})^{\beta}f=0$ 
in the sense of distributions on $\Omega$, then  $f=0$. 
\end{lem}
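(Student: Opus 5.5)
Our plan is to reduce uniqueness to a statement about a distribution whose Fourier support lies in a single $v$-dyadic annulus, to remove the transport term by the kinetic shift $\Gamma$, and to solve the resulting ODE along characteristics in Fourier variables.

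Let $\psi_j$ be the Littlewood--Paley functions in $v$ from Section~\ref{sec:homSobspaces}. For $f\in \L^p_{t,x}\Hdot^{\gamma,p}_{v}$ we cannot simply convolve in $v$, because $\psi_j(D_v)$ does not commute with $v\cdot\nabla_x$. Instead we localise in $x$ first. Take $\chi\in\cS(\R^d_x)$ with $\hat\chi$ supported in a small ball around $\varphi_0\neq 0$. Then $g=\chi\ast_x f$ is again a distributional solution, since $x$-convolution commutes with every term. Moreover $g\in \L^p_{t,x}\Hdot^{\gamma,p}_v$, because $\chi\ast_x$ is bounded on $\L^p_x$ and commutes with $D_v$. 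It therefore suffices to show $g=0$ for every such $\chi$: $x$-frequencies at $\varphi=0$ are negligible after pairing with test functions, and we can also restrict to test functions in $\cS_K$, which is dense.

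For $g$ we take the partial Fourier transform in $x$ only, with $\varphi$ confined to a compact set away from $0$. The equation becomes
\[
(\partial_t+i\varphi\cdot v)\hat g^{x}+(-\Delta_v)^\beta\hat g^{x}=0 .
\]
Conjugating by $e^{it\varphi\cdot v}$ (this is the kinetic shift $\Gamma$) and then Fourier transforming in $v$ gives
\[
(\partial_t-\varphi\cdot\nabla_\xi)\hat g+|\xi|^{2\beta}\hat g=0 .
\]
This holds in $\cD'$ in the variables $(t,\xi)$ for each fixed $\varphi$ in the localised set. The condition $\gamma<2\beta+d/p$ is what makes $(-\Delta_v)^\beta f$ a tempered distribution, via Lemma~\ref{lem:fractionalLaplacian}(ii) with $\alpha=2\beta$. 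Without it the equation would only make sense modulo polynomials in $v$. Solving along the characteristics $\xi+(t-s)\varphi$, every distributional solution takes the form
\[
\hat g(t,\varphi,\xi)=\exp\Big(-\int_s^t|\xi+(t-\tau)\varphi|^{2\beta}\,\dd\tau\Big)\,\hat g(s,\varphi,\xi+(t-s)\varphi)
\]
for $s<t$, interpreted weakly. Letting $s\to-\infty$, the factor $\widehat E_\beta$ decays like $\exp(-c(t-s)^{2\beta+1}|\varphi|^{2\beta})$ by \eqref{eq:nondegen} and \cite[Lemma~2.19]{AIN}, uniformly since $|\varphi|\ge\delta$.

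The main obstacle is to justify this limit using only the information $g\in \L^p_{t,x}\Hdot^{\gamma,p}_v$. That space gives no control in $t$ beyond $\L^p$, and in the $\Hdot$ range with $\gamma\ge d/p$ it carries polynomial ambiguity in $v$. Our first attempt is to pair with a test function $\Phi\in\cS_K$ and write $\angle{g}{\Phi}$ as $\angle{g(s)}{\cK^-$-type transported data$}$. Concretely, we would solve the backward equation with source $\Phi$ by $\cK^-_\beta$, restricted to $s$ ranging over a window $[-T-1,-T]$. The pairing then becomes an average over $s$ in the window of $\angle{g(s)}{\sem^-_{\cdot}\Phi}$. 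We estimate it by Hölder in $s$ with $\|g\|_{\L^p([-T-1,-T];\L^p_x\Hdot^{\gamma,p}_v)}\to 0$, together with a bound on $\sem^-\Phi$ in $\L^{p'}_x\Hdot^{-\gamma,p'}_v$. Because $\varphi$ stays away from $0$, this bound decays exponentially in $T$: the Mikhlin-type multiplier estimates of Lemma~\ref{lem:traceMult} apply, with $\xi$-support of the data bounded away from $0$ after the shift. Finally we integrate by parts in time against a cutoff $\eta(t)$ equal to $1$ on $[-T,\infty)$. This produces only this window term, so letting $T\to\infty$ gives $\angle{g}{\Phi}=0$. The tail $t\to+\infty$ does not arise because $\cK^-$ is supported backwards from $\supp\Phi$.
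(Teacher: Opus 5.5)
\textbf{There is a genuine gap.} Your reduction is fine, and you are right that $\gamma<2\beta+d/p$ is exactly what makes $(-\Delta_v)^\beta f$ tempered. The decisive step, however, is not justified, and it is where the whole difficulty of the lemma lies. That step is ``integrate by parts in time against $\eta$'': testing the equation against $\eta w$ with $w=\cK^-_\beta\Phi$ to get $\langle g,\Phi\rangle=\langle g,\eta' w\rangle$.

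The equation is only known in $\cS'(\Omega)$, so it may be tested against Schwartz functions, and for $\beta<1$ the function $\eta w$ is not one. In shifted Fourier variables,
\[
\widehat{\Gamma w}(s,\varphi,\xi)=\int_s^\infty \exp\Big(-\int_s^t|\xi-\tau\varphi|^{2\beta}\,\mathrm{d}\tau\Big)\,\widehat{\Gamma\Phi}(t,\varphi,\xi)\,\mathrm{d}t .
\]
This is non-smooth wherever $\xi-\tau\varphi$ vanishes for some $\tau\in[s,t]$. At $\tau=s$ this is the unshifted point $\xi=0$, so $w(s)$ also carries $v$-frequency mass at $0$, which makes the pairing ambiguous when $\gamma\ge d/p$. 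For $d\ge2$ it also happens at interior $\tau$ on $\{\xi\in\R\varphi\}$. Generic $\Phi\in\cS_K$ meet these sets for intermediate $s$.

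There is also no obvious density argument to fall back on, because $v\cdot\nabla_x$ is not continuous on $\L^{p'}_{t,x}\Hdot^{-\gamma,p'}_v$. This is precisely the obstruction to a duality argument that the paper points out just before the lemma. By contrast, the super-exponential smallness of $w$ on the window, which you derive from Lemma~\ref{lem:traceMult}, is the easy part. The paper gets it directly by playing the tempered bound \eqref{eq:S'} against $e^{-c|t_0|^{2\beta+1}}$.

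\textbf{How far your route can be repaired.} For $d\ge2$ it can be rescued. Take $\Phi$ with $\widehat\Phi$ compactly supported away from $\{\xi\in\R\varphi\}$. Then backward characteristics never meet $\xi-\tau\varphi=0$, so $\eta w\in\cS(\Omega)$ with $v$-frequencies away from $0$. You then need density of this class in $\L^{p'}_{t,x,v}$, which is \cite[Lemma~2.14]{AIN}. This is essentially the dual form of the paper's Step~2 on $\widetilde O$.

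For $d=1$ no such choice exists. When $\widehat\Phi$ is supported in $\{\xi\varphi<0\}$, every backward characteristic crosses $\xi=t\varphi$; only $\{\xi\varphi>0\}$ is crossing-free. Handling this crossing is the real content of the proof, and your proposal has no substitute for it. The paper does it as follows:
\begin{itemize}
\item it introduces the test class $\cD_\beta(O_2)$ of functions vanishing like $|\xi-t\varphi|^{2\beta}$;
\item it shows that $\widehat{\Gamma f}$ pairs with this class, which is where $\gamma<2\beta+1/p$ enters via Riesz potentials;
\item it writes $\widehat{\Gamma f}=m\,h^\pm$ on either side of the crossing and proves $h^+=h^-$ before letting $t\to-\infty$.
\end{itemize}

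\textbf{Missing final step.} Even granting $\langle g,\Phi\rangle=0$ for all $\Phi\in\cS_K$, you only obtain $D_v^\gamma g=0$. When $\gamma\ge d/p$ you must still rule out polynomials in $v$. The paper does this by noting that $(-\Delta_v)^\beta f=0$, hence $f(t,x,v)=T(x-tv,v)$, which is incompatible with $\L^p_{t,x}$ integrability unless $T=0$.
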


\begin{proof} 
We consider the forward equation, and the proof is essentially the same for the backward equation.
We use here $(\cdot,\cdot)$ for the bilinear duality between distributions and test functions.

\medskip
\paragraph 
{\textbf{Step 1: general facts.}} 

First, the two terms of the equation belong to $\cS'(\Omega)$. Indeed, 
$f\in   \L^p_{t,x}\Hdot^{\gamma,p}_{\vphantom{t,x}v} \subset \cS'(\Omega)$ so the transport term is in $\cS'(\Omega)$. 
Next, set $g=(-\Delta_{v})^{\gamma/2}f\in \L^p_{t,x,v}$. 
By Lemma~\ref{lem:fractionalLaplacian}, the condition $\gamma-2\beta<d/p$ ensures that $(-\Delta_{v})^{\beta}f$ is defined in 
$\cS'(\Omega)$ as $(-\Delta_{v})^{\beta}f= (-\Delta_{v})^{\beta-\gamma/2}g \in \cS'(\Omega)$.  
Thus the equation makes sense in  $\cS'(\Omega)$ 
and taking the kinetic shift and partial Fourier transform yields
$$
	\partial_{t}\widehat{\Gamma f} =- \widehat{[\Gamma (-\Delta_{v})^{\beta}f }]= -\widehat{[\Gamma(-\Delta_{v})^{\beta-\gamma/2}g]}
$$ 
in $\cS'(\Omega)$, hence in $\cD'(\Omega)$.

Secondly,  if $O$ is  the open set of $\Omega$ defined by $\xi-t\varphi\ne 0$, then the distributional restriction 
$\widehat{\Gamma f}_{\mid O}$ of $\widehat{\Gamma f}$ to $O$ satisfies 
$|\xi-t\varphi|^{\gamma} (\widehat{\Gamma f}_{\mid O})=\widehat{\Gamma g} _{\mid O}$. 
Hence we have the equation  
\begin{equation}\label{eq:ODE}
	\partial_{t}(\widehat{\Gamma f}_{\mid O})+ |\xi-t\varphi|^{2\beta} (\widehat{\Gamma f} _{\mid O})=0
\end{equation}
in $\cD'(O)$. 
 
\medskip
\paragraph{\textbf{Step 2: argument when $d\ge 2$.}}

The first  point is to solve the equation as an ODE. 
Consider the open set $\widetilde O=\R\times U$ where $U=\{(\varphi,\xi)\, ;\, \varphi\ne 0, d(\xi,\R\varphi)\ne 0\}$ and 
$d(\xi, \R\varphi)$ is the distance of $\xi$ to the set $\R\varphi$. 
When $d\ge 2$, the complement of $\widetilde O$ has null Lebesgue measure in $\Omega$ and the space 
$\widetilde \cS_{K}$ of those Schwartz functions whose Fourier transforms have compact supports in $\widetilde O$ 
is dense in $\L^{p'}_{t,x,v}$, see \cite[Lemma~2.14]{AIN}.  

By restricting further, we can solve the ODE in $\cD'(\widetilde O)$. 
Since $(t,\varphi,\xi)\mapsto |\xi-t\varphi|^{2\beta}$ is $\C^\infty(\widetilde O)$, one obtains that 
$\widehat{\Gamma f}_{\mid \widetilde O }$ 
has a representative in $\C^1(\R^{}_{t}; \cD'(U))$ and identifying with the representative,   
\begin{equation}\label{eq:ODEsolved}
	\widehat{\Gamma f}_{\mid \widetilde O }(t,\varphi,\xi)= m(t,\varphi,\xi) h(\varphi,\xi)
\end{equation}
for some $h\in \cD'(U)$ with $m$ the $\C^\infty(\widetilde O)$ function given by
$m(t,\varphi,\xi)=\exp\big(-\int_0^{t} \abs{\xi  -\tau \varphi}^{2\beta}\dd\tau \big)$. 
We indicate the $\varphi,\xi$ variables by abuse since we deal with distributions in order to ease the presentation. 

We now want to show that $h=0$. As $\widehat{\Gamma f}\in \cS'(\Omega)$, there exists an integer $N$ and a constant 
$0<C<\infty$ such that for all $\chi\in \cS(\Omega)$, 
\begin{equation} \label{eq:S'}
	|(\widehat{\Gamma f},\chi)| \le C \sup_{|\alpha|\le N, (t,\varphi,\xi)\in \Omega} |1+(t,\varphi,\xi)|^N|\partial^{\alpha}\chi(t,\varphi,\xi)|.
\end{equation}

Observe that  $h(\varphi,\xi)=  m(t,\varphi,\xi) ^{-1} (\widehat{\Gamma f}_{\mid \widetilde O })(t,\varphi,\xi)$ 
in $\cD'(\widetilde O)$ is independent of $t\in \R$. Fix $A$ an open set with compact closure in $U$. 
Remark that in particular $|\varphi|$ and $|\xi|$  are bounded from below by  positive constants when $(\varphi,\xi)\in A$.  
Let $I=(t_{0}-1,t_{0})$ for $t_{0}\in \R$.  
There is  $M\ge 0$ and positive constants $C_{1},C_{2},C_{3}$ such that  if $t_{0}\le -M$ then on $I\times A$ we have 
\begin{align*}
	\int_0^{t} \abs{\xi  -\tau \varphi}^{2\beta}\dd\tau = -\int_{t}^0 \abs{\xi  -\tau \varphi}^{2\beta}\dd\tau \le -C_{1}|t_{0}|^{2\beta+1}, \\
	C_{2}|t_{0}|\le |\xi-t\varphi|\le C_{3}|t_{0}|.
\end{align*}
The first inequality follows from \eqref{eq:nondegen} and a change of variable together with straightforward majorations. 
Let $\Phi\in \cD(A)$  and $\eta\in \cD(\R)$ with support in $(-1,0)$ and $\int_{-1}^0\eta(t)\dt=1$. 
Let $\eta_{t_{0}}(t)=\eta(t-t_{0})$. 
Then the support of $\eta_{t_{0}}\otimes \Phi$ is contained in $I\times A$ and  
\[
	(h,\Phi)=\int  \eta_{t_{0}}(t) (h,\Phi)\dt= (\widehat{\Gamma f}_{\mid \widetilde O }, m ^{-1}\eta_{t_{0}}\otimes \Phi).
\]
Applying \eqref{eq:S'} with the Fa\`a-di-Bruno identity and the Leibniz rule, we obtain the estimate
\[
	|(h,\Phi)| \lesssim_{d,\beta,N, C, A}  e^{-c|t_{0}|^{2\beta+1}}
\]
for some positive constant $c$  independent of $t_{0}\le -M$ as well as the implicit constant. 
Letting $t_{0}\to -\infty$ we obtain $(h,\Phi)=0$.  
As $A$ and $\Phi$ were arbitrary, we deduce $h=0$. 

The next step is to show that $g=0$. 
It follows from $h=0$ that  
$\widehat{\Gamma f}_{\mid \widetilde O }=0$. 
Since $|\xi-t\varphi|^{\gamma} (\widehat{\Gamma f}_{\mid \widetilde O})=\widehat{\Gamma g} _{\mid \widetilde O}$, we 
deduce $\widehat{\Gamma g}_{\mid \widetilde O }=0$. 
This implies that $(\Gamma g,\chi)=0$ for any $\chi\in \widetilde S_{K}$. 
Now $\Gamma g\in \L^p_{t,x,v}$ and  $\widetilde S_{K}$ is dense in $\L^{p'}_{t,x,v}$. 
Thus $\Gamma g=0$, hence $g=0$.

The last  point is to conclude that $f=0$. 
Since $\|f\|_{\L^p_{t,x}\Hdot^{\gamma,p}_{\vphantom{t,x}v}} \sim \|g\|_{\L^p_{t,x,v}}=0$, 
we have $f\in \L^p_{t,x}(\cP_{k}[v])$ where $k=[\gamma-d/p]$. 
We are done when $k<0$ since $\cP_{k}[V]=\{0\}$.  
Otherwise, we have in particular $f\in \L^p_{t,x}\L^p_{\loc,v}$. 
Let us come back to the equation.   
Since $(-\Delta_{v})^{\beta}f=  (-\Delta_{v})^{\beta-\gamma/2}g=0$ in $\cS'(\Omega)$, we obtain  
$(\partial_{t}+v\cdot\nabla_{x})f=0$. 
Thus, we also have $f(t,x,v)=T(x-tv,v)$ for some distribution $T\in \cD'(\R^{2d})$. 
If $K_{v}$ is any compact subset of $\R^d_{v}$, this implies
\[
	\int_{\R^{}_{t}\times \R^d_{x}\times K^{}_{v}} |T(x-tv,v)|^p\, \dd(t,x,v) <\infty
\]
and invariance by translation in $x$ shows this is not possible unless $T=0$ and thus $f=0$.

\medskip
\paragraph{\textbf{Step 3: argument when $d=1$.}}\
Introduce  the open set  $O_{1}=\{\xi-t\varphi\ne  0 \, \& \, \varphi\ne 0\} \subset \Omega=\R^3$.  
By restricting further, we  solve 
$\partial_{t}(\widehat{\Gamma f}_{\mid O_{1}})+ |\xi-t\varphi|^{2\beta} ( \widehat{\Gamma f}_{\mid O_{1}} )=0$ in $\cD'(O_{1})$. 
If we fix $\varphi\ne 0$ and $\xi \in \R$, then $\{t\in \R\, ;\, \xi-t\varphi\ne0\}$ has two connected components, 
$(-\infty, \xi/\varphi)$ and $(\xi/\varphi,\infty)$ and this will necessitate to address behaviour at $t=\xi/\varphi$.

Thus, if $I\times U$ is an open set in $O_{1}^+=\{\varphi\ne 0, t-\xi/\varphi>0\}$ with $I$ an interval, 
then $\widehat{\Gamma f}_{\mid O_{1}}$ restricted to $I\times U$ belongs to $ \C^1(I, \cD'(U))$ and  we find, 
abusively indicating the variables $\varphi,\xi$ for distributions to ease the presentation,    
$$
	(\widehat{\Gamma f}_{\mid O_{1}})(t,\varphi,\xi)
	= \exp\big(-\int_{t_{0}}^{t} \abs{\xi  -\tau \varphi}^{2\beta}\dd\tau \big) 
	(\widehat{\Gamma f}_{\mid O_{1}})(t_{0},\varphi,\xi)
$$
in $\cD'(U)$ for any $t_{0},t\in I$. 
We introduce $O_{2}=\R\times \R^*\times \R$.
Define $m:O_{2}\to \R$, 
\begin{equation}\label{eq:m}
	m(t,\varphi,\xi):=\exp\big(-\int_{\xi/\varphi}^{t} \abs{\xi  -\tau \varphi}^{2\beta}\dd\tau \big),
\end{equation}
which is clearly $\C^\infty$ on $O_{1}$. 
Thus  $m^{-1}\cdot\widehat{\Gamma f}_{\mid O_{1}}$ is a distribution on $O_{1}$ and its restriction to $I\times U$ does not depend on $t$. 
By patching and doing exactly the same thing on $O_{1}^-=\{\varphi\ne 0,t-\xi/\varphi<0\}$, we have obtained, 
\begin{equation*}
	(\widehat{\Gamma f}_{\mid O_{1}})(t,\varphi,\xi)= 
	\begin{cases}
	m(t,\varphi,\xi) \,h^{+}(\varphi,\xi), \quad \mathrm{on}\  O_{1}^+, \\
	m(t,\varphi,\xi) \, h^{-}(\varphi,\xi),  \quad  \mathrm{on}\  O_{1}^-,
	\end{cases}
\end{equation*}
for some  distributions $h^{\pm}$ on $\R^*_{\varphi}\times \R^{}_{\xi}$. 
The main step now  is to show $h^{-} = h^+=0$.

For this we introduce a class of test functions as follows. 
We let
\[ 
	\cD_{\beta}(O_{2})=\{\chi:O_{2}\to \IC\, ; \, \exists \theta\in \cD(O_{2}) \text{ with } \chi(t,\varphi,\xi)= |\xi-t\varphi|^{2\beta}\theta(t,\varphi,\xi)\}.
\]
We recall $\widehat\Gamma  \theta(t,\varphi, \xi):= \theta(t,\varphi,\xi+ t\varphi)$ is the conjugate of $\Gamma$ 
via the Fourier transform. 
Hence, $\chi\in \cD_{\beta}(O_{2})$ if and only if $\widehat\Gamma\chi=|\xi|^{2\beta}\widehat\Gamma \theta$ and 
$\widehat\Gamma \theta\in \cD(O_{2})$. 
Recall that we assume $\gamma<2\beta+1/p$ throughout.
We first claim that 
\begin{enumerate}
\item If $\chi\in \cD_{\beta}(O_{2})$ then $|\xi-t\varphi|^{-\gamma}\chi$ is the (partial) Fourier transform of a function 
in $\L^{p'}_{t,x,v}$ whose norm is controlled by a semi-norm for the function 
$\theta=|\xi-t\varphi|^{-2\beta}\chi \in \cD(O_{2})$.
\item Let $f\in \L^p_{t,x}\Hdot^{\gamma,p}_{v}$ and $\chi\in \cD_{\beta}(O_{2})$. 
If $\chi_{n}$, $n\in \N$, is defined via a normalized Littlewood--Paley family with $(\psi_{j})$ of Section~\ref{sec:homSobspaces} as 
$\widehat{ \Gamma \chi_{n}}= \sum_{|j|\le n} \widehat {\psi_{j}}\widehat {\Gamma \chi}$, then 
$\lim_{n\to \infty} (\widehat{\Gamma f}, \chi_{n})$  exists  and setting  $(\widehat{\Gamma f}, \chi)$ 
its limit, we have  
$$
	(\widehat{\Gamma f}, \chi)= (\widehat{\Gamma g}, |\xi-t\varphi|^{-\gamma}\chi)
$$ 
is independent of the choice of $\chi_{n}$, and 
$|(\widehat{\Gamma f}, \chi)| \le C_{\chi}\|f\|_{\L^p_{t,x}\Hdot^{\gamma,p}_{\vphantom{t,x} v}}$ with 
$C_{\chi}\sim  \|h\|_{\L^{p'}_{t,x,v}}$, where $h = \cF(|\xi|^{-\gamma}\widehat\Gamma \chi )$.  
\item If $f\in \L^p_{t,x}\Hdot^{\gamma,p}_{v}$ and $\chi\in \cD(O_{2})$, then 
$(\widehat{[\Gamma (-\Delta_{v})^{\beta}f]}, \chi)= (\widehat{\Gamma f}, |\xi-t\varphi|^{2\beta}\chi)$.
\end{enumerate}

To see (i), we first remark that $m(\xi)=|\xi|^{2\beta-\gamma}\theta(\xi)$ for some $\theta\in \cD(\R)$ is the 
(one dimensional) Fourier transform of an $\L^{p'}$~function.  
If $2\beta-\gamma\ge 0$, 
there is no difficulty of checking  the Mikhlin multiplier condition $|m|+|\xi m'|$ bounded. 
Hence, $m$ is an $\L^q$ Fourier multiplier for any $q\in (1,\infty)$. 
As $m$ has bounded support, one can write $m=m \Theta$ for some smooth bump function $\Theta$, hence the Fourier transform of an $\L^q$ function.  
This shows that $m$ is the Fourier transform of  an $\L^q$ function.
If $0>2\beta-\gamma>-1/p$, we let $1/q=2\beta-\gamma+1/p$, that is $2\beta-\gamma=1/p'-1/q'$ 
and observe that  $|\xi|^{2\beta-\gamma}$ is the multiplier of the Riesz potential mapping $\L^{q'}$ to $\L^{p'}$.  
Thus the conclusion follows on seeing $\theta$ as the Fourier transform of an $\L^{q'}$~function.  
Now, we come back to a function $\chi(t,\varphi,\xi)= |\xi-t\varphi|^{2\beta}\theta(t,\varphi,\xi)$ with $\theta \in \cD(O_2)$. 
The above argument plus the dependence in $t,\varphi$ shows that
$|\xi|^{2\beta-\gamma}\widehat \Gamma \theta \in \cD_{t,\varphi}\cF_{v}\L^{p'}_{v} \subset \cD_{t}\cF_{x,v}\L^{p'}_{x,v}$. 
This proves (i). 

To see (ii), we show that  $(\widehat f, \widehat \Gamma \chi_{n})$ has a limit equal to 
$(\widehat{g}, |\xi|^{-\gamma}\widehat \Gamma\chi)$. 
Call $h\in \L^{p'}_{t,x,v}$ the function whose (partial) Fourier transform is  $ |\xi|^{-\gamma}\widehat \Gamma \chi $ by (i). 
Recomputing the sum we have $\sum_{|j|\le n} \widehat \psi(2^{-j}\xi)=\widehat\Phi(\xi/2^n)-\widehat\Phi(\xi 2^{n+1})$ for some 
function $\widehat\Phi\in \cD(\R)$ which is equal to $1$ identically on a neighborhood of $0$. 
For $n$ large enough, $\widehat\Phi(\xi/2^n)$ is equal to $1$ on  the support of $\widehat \Gamma \chi $. 
Computing we have for such $n$
\begin{align*}
	(\widehat f, \widehat \Gamma \chi_{n}) &= (\widehat g, (\widehat\Phi(\xi/2^n)-\widehat\Phi(\xi 2^{n+1})) |\xi|^{-\gamma} \widehat  \Gamma \chi) \\
	&= (\widehat g,   |\xi|^{-\gamma} \widehat \Gamma \chi) - (\widehat g, \widehat\Phi(\xi 2^{n+1}) |\xi|^{-\gamma} \widehat  \Gamma \chi) \\
	&=  (\widehat g,   |\xi|^{-\gamma} \widehat \Gamma \chi) -  (\widehat g,    \widehat{h_{n }})
\end{align*} 
where $h_{n}= \Phi_{-n-1}\star_{v} h$ with $\Phi_{j}(v)= 2^{j}\Phi(2^{j}v)$. 
The last term can be controlled by $2\pi \|g\|_{\vphantom{L^{p'}}\L^p_{t,x,v}}\| h_{n}\|_{\L^{p'}_{t,x,v}}$, and 
$\|h_{n}\|_{\L^{p'}_{t,x,v}}\to 0$ as $n\to \infty$. 
In particular, the limit does not depend on the choice of the Littlewood--Paley decomposition. 
We get the bound for $C_{\chi}$ by invoking (i)  together with the isometry property of $\Gamma$ on $\L^{p'}_{t,x,v}$.

To see the equality in (iii), by definition of $f\in \L^p_{t,x}\Hdot^{\gamma,p}_{v}$ and as 
$\widehat \Gamma \chi\in \cD(O_{2})$, we can compute 
\begin{align*}
	(\widehat{[ (-\Delta_{v})^{\beta}f]}, \widehat \Gamma\chi) &= \lim_{n \to \infty} \sum_{|j|\le n} (\widehat \psi(2^{-j}\xi) \widehat{[ (-\Delta_{v})^{\beta}f]}, \widehat \Gamma\chi)\\
	& =  \lim_{n \to \infty} \sum_{|j|\le n} ( \widehat{f}, \widehat \psi(2^{-j}\xi) |\xi|^{2\beta} \widehat \Gamma\chi)\\
	&= (\widehat f, |\xi|^{2\beta}\widehat \Gamma\chi)
\end{align*}
as  $|\xi-t\varphi|^{2\beta}\chi\in \cD_{\beta}(O_{2})$ allows us to use (ii).

To move on, we introduce some notation. For $\chi\in \cD_{\beta}(O_{2})$, we set
\[
	[m\chi]^{+}(\varphi,\xi)= \int_{\xi/\varphi}^\infty m(t,\varphi,\xi) \chi(t,\varphi,\xi)\dt , \quad [m\chi]^{-}(\varphi,\xi)= \int^{\xi/\varphi}_{-\infty} m(t,\varphi,\xi) \chi(t,\varphi,\xi)\dt. 
\]
We claim that both $[m\chi]^{\pm}$ belong to $\cD(\R^*\times \R)$. 
To see this we re-express the integrals. 
A calculation shows that 
\[	
	m(t,\varphi,\xi) = \exp\bigg(-\textrm{sign} (t-\xi/\varphi) \frac{|\xi-t\varphi|^{2\beta+1}}{(2\beta+1)|\varphi|}\bigg).
\]
Hence, writing $\chi(t,\varphi,\xi)=|\xi-t\varphi|^{2\beta}\theta(t,\varphi,\xi)$ and  changing variables $u=\pm(t-\xi/\varphi)|\varphi|^{2\beta/(2\beta+1)}$ we find
\[
	[m\chi]^{\pm}(\varphi,\xi)= \int_{0}^\infty u^{2\beta} \exp\bigg(\mp\frac{u^{2\beta+1}}{(2\beta+1)}\bigg)\theta(\xi/\varphi \pm u/|\varphi|^{2\beta/(2\beta+1)}, \varphi,\xi) \dd u.
\]
By the properties of $\theta$, including the bounded support and that $|\varphi|\ge c>0$ on the support, 
the integral is computed on a finite interval and this
allows us to differentiate repeatedly under the integral sign and $[m\chi]^{\pm}\in \C^\infty(\R^*\times \R)$. Also the statement on the support is clear from that of $\theta$. 
The main claim is the representation formula, for all $\chi\in \cD_{\beta}(O_{2})$, 
\begin{equation}\label{eq:representation}
	(\widehat{\Gamma f}, \chi)= (h^+, [m\chi]^{+})+ (h^-, [m\chi]^{-}),
\end{equation}
where
the brackets on the right-hand side are for the distributions--test functions in $\R^*\times \R$ duality. 

To prove this, we first remark that it is enough to obtain the formula for $\chi$ replaced by its approximations $\chi_{n}$ as this passes to the limit. 
Indeed, for the left-hand side, this is our definition and for the right-hand side, we observe that
$[m\chi_{n}]^\pm \to [m\chi]^\pm$ in  $\cD(\R^*\times \R)$ 
using dominated convergence for the integrals and their partial derivatives.  
Thus we may assume that $\chi$ vanishes in a neighborhood of $t-\xi/\varphi=0$, so that it becomes a function in $\cD(O_{2})$. 
Pick two functions $\eta^\pm$ with $\eta^+\in \cD(0,\infty)$ and $\int_{0}^\infty \eta^+(u)\dd u= 1$,  
$\eta^-\in \cD(-\infty,0)$ and $\int_{-\infty}^0 \eta^-(u)\dd u= 1$, and define
\[
	\tilde \chi(t,\varphi,\xi)= \chi(t,\varphi,\xi) - [m\chi]^{+}(\varphi,\xi)\eta^+(t-{\xi/\varphi}) - [m\chi]^{-}(\varphi,\xi)\eta^-(t-{\xi/\varphi}).
\]
It is clear that $\tilde \chi \in \cD(O_{2})$ and vanishes near $t-\xi/\varphi=0$.  
We need to establish $(\widehat{\Gamma f}, \tilde \chi)=0$ because the last two terms give the right-hand side in \eqref{eq:representation}. 
To see this,  we consider the solution, for fixed $(\varphi,\xi)$, of the ODE on $\R$, 
\[
	\partial_{t}\chi^\sharp(t,\varphi,\xi) - |\xi-t\varphi|^{2\beta} \chi^\sharp(t,\varphi,\xi)= \tilde \chi(t,\varphi,\xi).
\]
This function can be expressed as 
\[ 
	\chi^\sharp(t,\varphi,\xi)=m(t,\varphi,\xi)^{-1} \int_{-\infty}^t m(u,\varphi,\xi)\tilde \chi(u,\varphi,\xi)\, \dd u.
\]
By construction, we  have $[m\tilde\chi]^\pm=0$ and we can write 
\begin{equation*}
	\chi^\sharp(t,\varphi,\xi)= 
	\begin{cases}
	\ \ m(t,\varphi,\xi)^{-1} \int_{\xi/\varphi}^t m(u,\varphi,\xi)\tilde \chi(u,\varphi,\xi)\, \dd u, \quad \mathrm{on}\  O_{1}^+,\\
	-m(t,\varphi,\xi)^{-1} \int^{\xi/\varphi}_{t} m(u,\varphi,\xi)\tilde \chi(u,\varphi,\xi)\, \dd u,  \quad  \mathrm{on}\  O_{1}^-.
	\end{cases}
\end{equation*}
Because $\tilde \chi(u,\varphi,\xi)$ is supported away from $u-\xi/\varphi=0$, it follows from this formula that 
$\chi^\sharp(t,\varphi,\xi)$ is also supported away from $t-\xi/\varphi=0$ 
and one can check that it belongs to $\C^\infty(O_{2})$, and has compact support using again $[m\tilde\chi]^\pm=0$. 

Since $f$ is a solution, we have (with the standard calculus because of supports)
\[
	0=  (\partial_{t}\widehat{\Gamma f}+\widehat{\Gamma (-\Delta_{v})^{\beta}f} , \chi^\sharp)= (\widehat{\Gamma f}, -\partial_{t}\chi^\sharp+|\xi-t\varphi|^{2\beta}\chi^\sharp )= -(\widehat{\Gamma f}, \tilde \chi).
\]
Now we can prove that $h^+=h^-$. Indeed, consider $\chi^\flat(t,\varphi,\xi)= \theta(\varphi,\xi)\eta(t-\xi/\varphi)$ where 
$\theta$ is an arbitrary function in $\cD(\R^*\times \R)$ and $\eta\in \cD(\R)$ which is 1 in a neighborhood of zero. 
Set $\tilde \chi^\flat(t,\varphi,\xi)= \partial_{t}  \chi^\flat(t,\varphi,\xi)- |\xi-t\varphi|^{2\beta} \chi^\flat(t,\varphi,\xi)$ 
and observe that $\tilde \chi^\flat\in \cD_{\beta}(O_{2})$. 
By using (ii) and (iii)  this time,  
\[
	0=  (\partial_{t}\widehat{\Gamma f}+\widehat{\Gamma (-\Delta_{v})^{\beta}f} , \chi^\flat)= (\widehat{\Gamma f}, -\partial_{t}\chi^\flat+|\xi-t\varphi|^{2\beta}\chi^\flat )= -(\widehat{\Gamma f}, \tilde \chi^\flat).
\]
Using the representation \eqref{eq:representation} we find
\[
	-(\widehat{\Gamma f}, \tilde \chi^\flat)= (h^+-h^-, \theta).
\]
This shows that $h^+=h^-$. 
Writing $h$ for this distribution and resinserting in the representation for arbitrary $\chi\in \cD_{\beta}(O_{2})$ we have obtained
\[
	(\widehat{\Gamma f}, \chi)= (h, [m\chi]^{+}+ [m\chi]^{-})= \bigg(h, \int_{\R} m(t,\varphi,\xi) \chi(t,\varphi,\xi)\dt\bigg).
\]
As in Step 2 by looking at the behavior of $m$ when $t\to -\infty$ and using the bound for $C_{\chi}$ in (ii) and (i) 
replacing \eqref{eq:S'}, we can conclude that $h=0$.

We now infer that $g=0$. 
Indeed, undoing the kinetic shift, we have proved in particular that $\widehat{f}_{\,\mid \{\xi\varphi\ne 0\}}=0$ 
and as $|\xi|^\gamma(\widehat{f}_{\,\mid \{\xi\varphi\ne 0\}})= \widehat{g}_{\,\mid \{\xi\varphi\ne 0\}} $ 
we conclude that $\widehat{g}_{\,\mid \{\xi\varphi\ne 0\}}=0$. 
This implies that $(g,\chi)=0$ for any $\chi\in \cS_{K}$. 
As $\cS_{K}$ is dense in $\L^{p'}_{t,x,v}$ we conclude that $g=0$. 

From there, we can argue exactly as in Step 2  and conclude that $f=0$.   
\end{proof}

\section{The kinetic Cauchy problem on the half-line}
\label{sec:cauchy}

This section is devoted to the kinetic Cauchy problem on $\Omega_{+}=
(0,\infty)\times \R^d_x\times \R^d_v$
for the Kolmogorov equation. The case of a strip $(0,T)\times \R^d_x\times \R^d_v$ will be considered in the next section. 
We work with the homogeneous anisotropic Sobolev/Besov spaces and the homogeneous kinetic spaces defined as before with time restricted to the interval $(0,\infty)$. We use the notation $\L^p((0,\infty)\times \R^d_{x}\, ; \, \Hdot^{\gamma,p}_v)$ etc, for mixed-spaces, and also $\Zdot^{\gamma,p}_{\beta,+}$, $\cLdot^{\gamma,p}_{\beta,+}$, etc. 

The key result is the following.

\begin{lem}
\label{lem:embedOmega+} Let $\beta\in(0,1]$, $p \in (1,\infty)$ and $0\le \gamma\le 2\beta$ with $\gamma<\homd/p$. If $f\in \cLdot^{\gamma,p}_{\beta,+}$ then 
\[
	f\in 
	 \L^p\big((0,\infty)\times \R^d_{v}\, ; \, \dot\H^{\frac{\gamma}{2\beta+1},p}_x\big)
	\cap \C^{}_0([0,\infty);\Bdot^{\gamma-{2\beta}/{p},p}_\beta),
\]
and one has the estimate
\begin{equation}
\label{eq:embeddingomega+}
		\|D_x^{\frac{\gamma}{2\beta+1}} f\|_{\L^p(\Omega_+)}
	+\sup_{t\ge 0}\|f(t)\|_{\Bdot^{\gamma-{2\beta}/{p},p}_\beta}\\
	\lesssim_{\beta,\gamma,d,p} \|D_v^\gamma f\|_{\L^p(\Omega_+)}
+ \|(\partial_t+v\cdot \nabla_x)f\|_{\Zdot^{\gamma,p}_{\beta, +}}.
\end{equation}
 \end{lem}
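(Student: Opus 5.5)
The plan is to reduce to the whole-line result, Corollary~\ref{cor:embedding}, by extending $f$ from $\Omega_+$ to $\Omega$. I will not try to extend $f$ itself. Instead I work with the equation.

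Write $(\partial_t+v\cdot\nabla_x)f=S_1+S_2$ on $\Omega_+$, with $S_1\in\L^p((0,\infty);\Xdot^{\gamma-2\beta,p}_\beta)$ and $S_2\in\L^1((0,\infty);\Bdot^{\gamma-2\beta/p,p}_\beta)$. Then set
\[
S:=(\partial_t+v\cdot\nabla_x)f+(-\Delta_v)^\beta f\in\Zdot^{\gamma,p}_{\beta,+}.
\]
This membership holds because $\gamma\le 2\beta$, so $(-\Delta_v)^\beta$ maps $\L^p_{t,x}\Hdot^{\gamma,p}_v$ into $\L^p_{t,x}\Hdot^{\gamma-2\beta,p}_v\subset\L^p_t\Xdot^{\gamma-2\beta,p}_\beta$.

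Let $\tilde S$ be the extension of $S$ by zero to $t<0$; it lies in $\Zdot^{\gamma,p}_\beta$ with the same norm. Set $F:=\cK^+_\beta\tilde S$. By Proposition~\ref{prop:mapping} and Theorem~\ref{thm:bounds}, $F\in\cLdot^{\gamma,p}_\beta\cap\Ydot^{\gamma,p}_\beta$, with norm controlled by $\|S\|_{\Zdot^{\gamma,p}_{\beta,+}}$. Since $\gamma\ge0$, Proposition~\ref{prop:characterisation} turns $F\in\L^p_t\Xdot^{\gamma,p}_\beta$ into control of $D_x^{\gamma/(2\beta+1)}F$ in $\L^p$. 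The Besov continuity of $F$ comes from the $\C_0$ part of $\Ydot$.

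It remains to compare $f$ with $F$ on $\Omega_+$. The difference $u:=f-F$ restricted to $\Omega_+$ lies in $\L^p(\Omega_+;\Hdot^{\gamma,p}_v)$ and solves the homogeneous Kolmogorov equation there. The strategy is to show that $u$ is given by the semigroup: $u(t)=\sem^+_t u_0$ for some $u_0\in\Bdot^{\gamma-2\beta/p,p}_\beta$ with $\|u_0\|\lesssim\|f\|_{\cLdot^{\gamma,p}_{\beta,+}}$. Lemma~\ref{lem:boundsinitialvalueproblem} and the third item of Theorem~\ref{thm:bounds} would then give both the $\L^p_t\Xdot^{\gamma,p}_\beta$ bound (hence the $D_x$ bound) and $\C_0([0,\infty);\Bdot)$ for $u$. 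Combining with the bounds for $F$ yields \eqref{eq:embeddingomega+}.

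\emph{Main obstacle.} The hard step is identifying the trace $u_0$ with its bound. This is a half-line uniqueness/trace statement, and I expect it to be the crux. I would first try to avoid it by a time-cutoff argument. Take $\eta\in\C^\infty(\R)$ with $\eta=0$ on $(-\infty,\tau]$ and $\eta=1$ on $[2\tau,\infty)$, and consider $\eta f$ extended by zero. Then
\[
(\partial_t+v\cdot\nabla_x)(\eta f)=\eta(S_1+S_2)+\eta' f ,
\]
and the $\eta' f$ term is the troublesome one. To control it, I would let $\eta$ be a Lipschitz ramp and use the Lions-type estimate of Lemma~\ref{lem:Sobolev2}-style averaging. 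More concretely, I would mimic the proof of Lemma~\ref{lem:uniquenessH}: an ODE argument in shifted Fourier variables on $(0,\infty)$ shows that $\widehat{\Gamma u}(t)=\widehat E_\beta(t,0)\,\widehat{\Gamma u}(t_0)$-type representation holds in distributions. This defines $u_0$ as the $t\to0^+$ limit in $\cS'$.

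The norm bound for $u_0$ then comes from the lower bound in \eqref{eq:tracekol}, applied to the restriction of $u$ to $(t_0,\infty)$ for each $t_0>0$. That gives a bound uniform in $t_0$, after which we let $t_0\to0$. Finally, $\|u\|_{\L^p_t\Xdot^{\gamma,p}_\beta}$ on $(t_0,\infty)$ must be controlled by $\|D_v^\gamma u\|_{\L^p}$ plus the source norm. I would obtain this by running the same reasoning on the whole-line operator after applying the cutoff $\eta$, using the isomorphism Theorem~\ref{lem:isom-embed} for $\eta u$ with the commutator $\eta' u$ absorbed in $\L^1_t\Bdot$ via the Besov sup bound, closing by a bootstrap as $\tau\to0$.
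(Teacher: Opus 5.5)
There is a genuine gap, and it comes from your choice of the \emph{forward} operator. Your first step is fine: $S\in\Zdot^{\gamma,p}_{\beta,+}$ because $\gamma\le 2\beta$, and the zero extension with Theorem~\ref{thm:bounds} is exactly what the paper uses. The trouble starts after that.

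\textbf{Why the forward operator does not reduce anything.} With $F=\cK^+_\beta\tilde S$ you have $F=0$ for $t\le 0$. So $u=f-F$ is a forward solution of the homogeneous equation on $\Omega_+$ with $u(0)=f(0)$. Proving $u_0\in\Bdot^{\gamma-2\beta/p,p}_\beta$ with the right bound, and $D_x^{\gamma/(2\beta+1)}u\in\L^p(\Omega_+)$, is exactly the lemma itself, specialised to homogeneous solutions. So the reduction has not made the problem smaller.

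\textbf{Why your closing arguments are circular.}
\begin{itemize}
\item The lower bound in \eqref{eq:tracekol} gives $\|u(t_0)\|_{\Bdot^{\gamma-2\beta/p,p}_\beta}\lesssim\|u\|_{\L^p((t_0,\infty);\Xdot^{\gamma,p}_\beta)}$. By Proposition~\ref{prop:characterisation}, the right-hand side contains $\|D_x^{\gamma/(2\beta+1)}u\|_{\L^p}$. You do not know this is finite; you only know $D_v^\gamma u\in\L^p$.
\item The cutoff argument needs $\eta'u\in\L^1_t\Bdot^{\gamma-2\beta/p,p}_\beta$, that is, $\sup_{[\tau,2\tau]}\|u(t)\|_{\Bdot^{\gamma-2\beta/p,p}_\beta}<\infty$. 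This is again part of the conclusion.
\item Even granting that bound, the cutoff argument only gives $\sup_{t\ge2\tau}\|u(t)\|\lesssim\sup_{\tau\le t\le2\tau}\|u(t)\|$. There is no small constant, and it never uses $D_v^\gamma u\in\L^p$, so there is no bootstrap to close.
\item The ODE representation only produces $u_0$ as a distribution on $\{\varphi\ne0,\ \xi\notin\R\varphi\}$, not as an element of the Besov space. So Lemma~\ref{lem:boundsinitialvalueproblem} cannot be invoked.
\end{itemize}

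\textbf{The missing idea: use the backward operator.} Set $S:=-(\partial_t+v\cdot\nabla_x)f+(-\Delta_v)^\beta f\in\Zdot^{\gamma,p}_{\beta,+}$, extend it by zero to $\tilde S$, and let $\tilde f:=\cK^-_\beta\tilde S$. By Proposition~\ref{prop:mapping} and Theorem~\ref{thm:bounds}, $\tilde f\in\Ydot^{\gamma,p}_\beta$.
\begin{itemize}
\item Then $w=f-\tilde f\in\L^p((0,\infty)\times\R^d_x;\Hdot^{\gamma,p}_v)$ solves the \emph{backward} homogeneous equation on $\Omega_+$.
\item In shifted Fourier variables this reads $\partial_t\widehat{\Gamma w}=+|\xi-t\varphi|^{2\beta}\widehat{\Gamma w}$. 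Hence $\widehat{\Gamma w}=m(t)^{-1}h$ with $m(t)=\exp\bigl(-\int_0^t|\xi-\tau\varphi|^{2\beta}\dd\tau\bigr)$.
\item Run the argument of Lemma~\ref{lem:uniquenessH} with test functions $\eta_{t_0}\otimes\Phi$, now letting $t_0\to+\infty$. On the support, $m\lesssim e^{-ct_0^{2\beta+1}}$, which beats the polynomial bound coming from temperedness. This forces $h=0$.
\item In $d=1$, the zero set $\xi=t\varphi$ meets $t>0$ only when $\varphi$ and $\xi$ have the same sign. There the $h^+=h^-$ argument applies unchanged.
\end{itemize}
Hence $w=0$ and $f=\tilde f|_{\Omega_+}$. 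All conclusions, including continuity up to $t=0$ and \eqref{eq:embeddingomega+}, then follow from $\|\tilde f\|_{\Ydot^{\gamma,p}_\beta}\lesssim\|\tilde S\|_{\Zdot^{\gamma,p}_\beta}$.

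No trace ever has to be identified. The forward half-line problem is not uniquely solvable without initial data. The backward problem on $(0,\infty)$ is uniquely solvable, because nonzero backward solutions grow super-exponentially as $t\to+\infty$.
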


\begin{proof} 
We let $S=-(\partial_t+v\cdot\nabla_x)f+(-\Delta_v)^\beta f \in \Zdot^{\gamma,p}_{\beta,+}$, $\tilde S$ the extension of $S$ by 0 to $\Omega$ and $\tilde f= \cK^-_{\beta}(\tilde S)$, where $\cK_{\beta}^-$ is the backward Kolmogorov operator. Then, we have $-(\partial_t+v\cdot\nabla_x)(f-\tilde f)+(-\Delta_v)^\beta (f-\tilde f)=0$ on $\Omega_{+}$. The proof is in the same spirit as the one of Lemma~\ref{lem:uniquenessH} with $t>0$ instead of $t\in \R$ with a slight change in one dimension: in the regions where $\varphi,\xi$ have different signs, the function $\xi-t\varphi$ has only one sign. So on this region, we can follow the strategy adopted in higher dimension. When $\varphi,\xi$ have same sign, nothing  changes. It is important that we use the backward operator in this argument to pick up decay when  $t\to \infty$. Thus $f-\tilde f=0$ on $\Omega_{+}$. Since $\tilde S\in \Zdot^{\gamma,p}_{\beta}$, it follows from Theorem~\ref{thm:bounds} that $\tilde f\in  \Ydot^{\gamma,p}_{\beta}$, so $f$  has the desired properties and estimates by taking  restriction of $\tilde f$ to $\Omega_{+}$.
 \end{proof}
 
\begin{rem}
 The proof shows that the image under $\cK_{\beta}^-$ of $\cS_{K}$, restricted to $\Omega_{+}$, that is $\{(\cK_{\beta}^-S)\!\mid _{\Omega_{+}}\, ;\, S\in \cS_{K}\}$,  is a dense subspace of $\cLdot^{\gamma,p}_{\beta,+}$ in the same range of exponents.  
\end{rem}

\begin{rem}
For the smaller subspaces  $\cFdot^{\gamma,p}_{\beta,+}$ and $\cGdot^{\gamma,p}_{\beta,+}$ we have the embedding too. These spaces also have the complex interpolation property respectively in the same  range of exponents as in the lemma.  
\end{rem}
 
\begin{rem}
 On $\Omega_{-}$, corresponding to $\I=(-\infty,0)$, the same result holds, either by a symmetry argument from the previous lemma or a direct argument using this time the forward Kolmogorov operator. 
\end{rem}

\subsection{The forward kinetic Cauchy problem on $(0,\infty)$}

We consider the forward kinetic Cauchy problem
\begin{align} \label{eq:CP}
	\begin{cases}
		(\partial_t+v\cdot\nabla_x)f+(-\Delta_v)^\beta f = S, & \text{in }\Omega_+
		,\\
		f(0)=\psi, & \text{in }\R^{2d}.
	\end{cases}
\end{align}

\begin{defn}
\label{defn:sol-cauchyLp}
Let $\beta\in(0,1]$, $p \in (1,\infty)$ and $0\le \gamma\le 2\beta$ with $\gamma<\homd/p$.
Let $S \in \cD' (\Omega_+)$ and $\psi\in \cD'(\R^{2d})$.
A distribution $f\in \L^p_{t,x}\Hdot^{\gamma,p}_{\vphantom{t}v}$ on $\Omega_{+}$
is said to be a (distributional solution) to \eqref{eq:CP} if 
the first equation in \eqref{eq:CP} holds in $\cD'(\Omega_{+})$ and $f(t)\to \psi$ in $\cD'(\R^{2d})$ as $t\to 0^+$.
\end{defn}

We remark that $f(t)$ exists for almost every $t>0$ in $\cD(\R^{2d})$ by definition of $\L^p_{t,x}\Hdot^{\gamma,p}_{\vphantom{t}v}$. So the limit $t\to 0^+$ can be taken along those times of existence.

\begin{thm}[Existence, uniqueness and representation for the kinetic Cauchy problem]
\label{thm:homCP-Lp}
Let $\beta\in(0,1]$, $p \in (1,\infty)$ and $0\le \gamma\le 2\beta$ with $\gamma<\homd/p$.
Let $S\in \Zdot^{\gamma,p}_{\beta,+}$ and $\psi\in \Bdot^{\gamma-{2\beta}/{p},p}_\beta$.
Then there exists a unique distributional solution $f\in \L^p((0,\infty)\times \R^d_{x}\, ; \, \Hdot^{\gamma,p}_v)$ to \eqref{eq:CP} in the sense of
Definition~\ref{defn:sol-cauchyLp}. Moreover, $f\in \Ydot^{\gamma,p}_{\beta,+}$  
and one has the estimate
\begin{multline}
\label{eq:CP-estimate-Lp}\|f\|_{\Ydot^{\gamma,p}_{\beta,+}}\simeq
	\|D_v^\gamma f\|_{\L^p(\Omega_+)}
	+\|D_x^{\frac{\gamma}{2\beta+1}} f\|_{\L^p(\Omega_+)}
	+\sup_{t\ge 0}\|f(t)\|_{\Bdot^{\gamma-{2\beta}/{p},p}_\beta}\\
	\lesssim_{\beta,\gamma,d,p}
	\|S\|_{\Zdot^{\gamma,p}_{\beta,+}}
	+\|\psi\|_{\Bdot^{\gamma-{2\beta}/{p},p}_\beta}.
\end{multline}

Finally, if $\tilde{S}$ denotes the zero extension of $S$ to $t<0$, then $f$ admits the representation
\begin{equation}
\label{eq:CP-representation}
	f(t)=\sem^+_t\psi + \big(\cK_\beta^+ \tilde{S}\big)(t),\qquad t\ge 0,
\end{equation}
where $\sem^+_t$ is the forward Kolmogorov semigroup associated with \eqref{eq:solKolIV}
and $\cK_\beta^+$ is the forward Kolmogorov operator $($defined on $\Omega$ via the fundamental solution$)$. The equality is meant in $\Bdot^{\gamma-{2\beta}/{p},p}_\beta$.
\end{thm}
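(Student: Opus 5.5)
The proof splits into existence via the representation formula \eqref{eq:CP-representation}, uniqueness via a half-line version of Lemma~\ref{lem:uniquenessH}, and the estimate via Lemma~\ref{lem:embedOmega+}.

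\emph{Existence and representation.} Set $\tilde S$ the zero extension of $S$ to $\Omega$. Since $\Zdot^{\gamma,p}_{\beta,+}$ is a sum of an $\L^p_t$ and an $\L^1_t$ space, $\tilde S\in\Zdot^{\gamma,p}_\beta$ with $\|\tilde S\|\le \|S\|$ (taking infima over decompositions). Define $f(t)=\sem^+_t\psi+(\cK^+_\beta\tilde S)(t)$ for $t\ge0$.
\begin{itemize}
\item By Theorem~\ref{thm:bounds} (first and third items) and Proposition~\ref{prop:characterisation} with $0\le\gamma$, $f\in\Ydot^{\gamma,p}_{\beta,+}\subset \L^p((0,\infty)\times\R^d_x;\Hdot^{\gamma,p}_v)$, and $f$ obeys the bound \eqref{eq:CP-estimate-Lp}.
\item The term $\cK^+_\beta\tilde S$ solves the equation with source $\tilde S$ in $\cS'(\Omega)$ by Proposition~\ref{prop:mapping}, hence solves it on $\Omega_+$ with source $S$.
\item The term $\sem^+\psi$ solves the homogeneous equation on $\Omega_+$. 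I check this first for $\psi$ in the dense class of Lemma~\ref{lem:initialvalue}, then pass to the limit in $\cD'(\Omega_+)$. The limit is legitimate because the map $\psi\mapsto\sem^+\psi$ is continuous into $\Ydot^{\gamma,p}_{\beta,+}\hookrightarrow\cS'$ when $\gamma<\homd/p$, and $(-\Delta_v)^\beta$ is continuous there exactly as in the proof of Proposition~\ref{prop:mapping}.
\end{itemize}

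For the initial condition:
\begin{itemize}
\item $\sem^+_t\psi\to\psi$ in $\Bdot^{\gamma-2\beta/p,p}_\beta$ by Lemma~\ref{lem:boundsinitialvalueproblem}, extended by density using the uniform bound.
\item $(\cK^+_\beta\tilde S)(0)=0$. For $S\in\cS_K$ supported in $t>0$ (dense), $\cK^+_\beta\tilde S$ vanishes for $t\le0$ by the causal formula \eqref{eq:kolmogorovoperator}. The continuity $\Zdot\to\C_0(\R;\Bdot)$ then transfers this to general $S$.
\item Together these give $f(t)\to\psi$ in $\Bdot^{\gamma-2\beta/p,p}_\beta$. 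Since $\gamma-2\beta/p<(2\beta+2)d/p$, this Besov space embeds continuously in $\cS'$, so the convergence also holds in $\cD'(\R^{2d})$. This also gives the representation as an equality in $\Bdot$.
\end{itemize}

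\emph{Uniqueness.} Let $u$ be the difference of two solutions. Then $u\in\L^p((0,\infty)\times\R^d;\Hdot^{\gamma,p}_v)$, $u$ solves the homogeneous equation on $\Omega_+$, and $u(t)\to0$ in $\cD'$. I extend $u$ by zero to $t<0$ and call the extension $U$. The claim is that $U$ is a distributional solution on all of $\Omega$. For a test function $\phi$, split $\phi=\phi\,\eta_\epsilon(t)+\phi(1-\eta_\epsilon(t))$ with $\eta_\epsilon$ a cutoff equal to $1$ near $t\le\epsilon$. The boundary term that appears is $\epsilon^{-1}\int\eta'(t/\epsilon)(u(t),\phi(t))\,dt$. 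It tends to zero because $u(t)\to0$ in $\cD'$, provided the pairing is averaged over the times where $u(t)$ is defined.

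Once $U$ is known to be a distributional solution, it satisfies $U\in\L^p_{t,x}\Hdot^{\gamma,p}_v$ and solves the forward equation on $\Omega$ with zero source. Since $\gamma\le2\beta<2\beta+d/p$, Lemma~\ref{lem:uniquenessH} gives $U=0$.

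\emph{Main obstacle.} The delicate step is the zero-extension argument. It needs uniform control of $(u(t),\phi(t))$ for small $t$, whereas only distributional convergence along a.e.\ times is assumed. I would first try to upgrade the behaviour of $u$ near $t=0$. One route is Lemma~\ref{lem:embedOmega+} (legitimate since $u\in\cLdot^{\gamma,p}_{\beta,+}$), which gives $u\in\C_0([0,\infty);\Bdot)$. Continuity plus $\cD'$-convergence to $0$ then gives $u(0)=0$ in $\Bdot$, and therefore uniform convergence of the pairings.

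Alternatively, if the extension is awkward, I would adapt the half-line ODE argument sketched in the proof of Lemma~\ref{lem:embedOmega+} directly. Solving on the Fourier side for $t>0$ gives $\widehat{\Gamma u}=m\,h$. The initial condition $u(0)=0$ should then force $h=0$, replacing the decay argument as $t\to-\infty$.
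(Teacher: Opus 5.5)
Your proposal is correct. Existence, the estimate and the representation \eqref{eq:CP-representation} are obtained as in the paper, but your uniqueness argument reaches its conclusion by a different route.

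\emph{The paper's route.} The paper also starts from Lemma~\ref{lem:embedOmega+}, which gives $u\in \C^{}_0([0,\infty);\Bdot^{\gamma-2\beta/p,p}_\beta)$ with $u(0)=0$. It then multiplies the zero extension $\tilde u$ by a cutoff $\theta_\varepsilon(t)$ vanishing near $t=0$, so that $\theta_\varepsilon\tilde u$ solves the equation on $\Omega$ with source $\theta'_\varepsilon\tilde u\in\L^1_t\Bdot^{\gamma-2\beta/p,p}_\beta$. Through the isomorphism of Theorem~\ref{lem:isom-embed} it identifies $\theta_\varepsilon\tilde u=\cK^+_\beta(\theta'_\varepsilon\tilde u)$. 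It concludes with the quantitative bound of Theorem~\ref{thm:bounds}:
\[
\|\theta_\varepsilon\tilde u\|_{\L^p_{t,x}\Hdot^{\gamma,p}_v}\lesssim \varepsilon^{-1}\int_{-2\varepsilon}^{2\varepsilon}\|\tilde u(s)\|_{\Bdot^{\gamma-2\beta/p,p}_\beta}\dd s\to0 .
\]

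\emph{Your route.} You check directly that the zero extension is a distributional solution on all of $\Omega$ and then invoke Lemma~\ref{lem:uniquenessH}. In substance this is equivalent: the paper's identity with $\theta'_\varepsilon\tilde u\to0$ in $\L^1_t\Bdot^{\gamma-2\beta/p,p}_\beta\hookrightarrow\cS'(\Omega)$ says exactly that $\tilde u$ solves the homogeneous equation on $\Omega$. Your route skips the detour through the isomorphism and the $\L^1_t\Bdot\to\L^p_{t,x}\Hdot$ bound. The price is verifying two pairings by hand.
\begin{itemize}
\item The boundary term is controlled by $|\langle u(t),\phi(t)\rangle|\le\|u(t)\|_{\Bdot^{\gamma-2\beta/p,p}_\beta}\sup_s\|\phi(s)\|_{\Bdot^{2\beta/p-\gamma,p'}_\beta}$. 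The last supremum is finite for test functions precisely because $\gamma<\homd/p$.
\item The nonlocal term is $\langle(-\Delta_v)^\beta U,\eta_\epsilon\phi\rangle=\langle D_v^\gamma U,\eta_\epsilon D_v^{2\beta-\gamma}\phi\rangle$. It tends to $0$ since $2\beta-\gamma\ge0$ gives $D_v^{2\beta-\gamma}\phi\in\L^{p'}$, and $D^\gamma_vU$ is supported in $t\ge0$.
\end{itemize}

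As you noticed yourself, mere $\cD'$-convergence of $u(t)$ along a.e.\ times would not control the averaged boundary term on its own. The upgrade via Lemma~\ref{lem:embedOmega+} is the right ingredient and is the one the paper uses. Your Fourier-side ODE alternative is not needed.
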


\begin{proof}
Let $\tilde{S}$ be the zero extension of $S$ to $t<0$ and define
\[
	f(t):=\sem^+_t\psi+(\cK_\beta^+\tilde{S})(t),\qquad t\ge0.
\]
By   Proposition~\ref{prop:mapping}, $\cK_\beta^+\tilde{S}\in\cLdot^{\gamma,p}_\beta$ and 
solves $((\partial_t+v\cdot\nabla_x)+(-\Delta_v)^\beta)(\cK_\beta^+\tilde{S})=\tilde{S}$ in $\cD'(\Omega)$. 
By Lemma~\ref{lem:boundsinitialvalueproblem} and the same density argument as in Proposition~\ref{prop:mapping}, $t\mapsto \sem^+_t\psi$ belongs to
$\cLdot^{\gamma,p}_{\beta,+}$, solves the homogeneous equation $((\partial_t+v\cdot\nabla_x)+(-\Delta_v)^\beta)(\sem^+\psi)=0$ in $\cD'(\Omega_+)$ and satisfies $\sem^+_t\psi\to\psi$ 
in $\Bdot^{\gamma-{2\beta}/{p},p}_\beta$ as $t\to0^+$.
Moreover, they both vanish for $t<0$ by definition, and continuity implies $\cK_\beta^+(\tilde{S})(0) = 0$.
Next,  \eqref{eq:CP-estimate-Lp} for $f$ is already in Theorem~\ref{thm:bounds}. 

It remains to show the uniqueness.   Let $f\in \L^p((0,\infty)\times \R^d_{x}\, ; \, \Hdot^{\gamma,p}_v)
$ be a distributional solution of \eqref{eq:CP} with  $S=0$ and $\psi=0$.  We know  $f\in \cLdot^{\gamma,p}_{\beta,+}$ by definition, hence $f\in \Ydot^{\gamma,p}_{\beta,+}$ by Lemma~\ref{lem:embedOmega+} with $ f(0,\cdot)=\psi=0$. Thus,   the extension $\tilde f$ by $0$ of $f$ to $t<0$ belongs to $\Ydot^{\gamma,p}_\beta$ with $\tilde f(0,\cdot)=0$. Let $\theta_{\varepsilon}(t)= \theta(t/\varepsilon)$, where $\varepsilon>0$ and $\theta:\R\to [0,1]$ is smooth, even, with $\theta=0$ on $[0,1]$ and $1$ on $[2,\infty)$. 
Set $\tilde f_{\varepsilon}= \theta_{\varepsilon}\tilde f$. Clearly 
$(\partial_t + v \cdot \nabla_x) \tilde f_{\varepsilon}+ (-\Delta_v)^{{\beta}} \tilde f_{\varepsilon} = \theta_{\varepsilon}' \tilde f$ in $\cD'(\Omega)$ as $\theta_\epsilon$ vanishes near $t = 0$. As $\tilde f_{\varepsilon}\in \L^p_{t,x}(\Hdot^{\gamma,p}_v)$ and $\theta_{\varepsilon}' \tilde f \in \L^1_{t}{\Bdot^{\gamma-{2\beta}/{p},p}_\beta}$, by the isomorphism on $\Omega$, we have $\tilde f_{\varepsilon}= \cK_{\beta}^+ (\theta_{\varepsilon}' \tilde f)$. By Theorem~\ref{thm:bounds}, we have in particular $$ \|\tilde f_{\varepsilon}\|_{\L^p_{t,x}(\Hdot^{\gamma,p}_{\vphantom{t}v})} \le C \|\theta_{\varepsilon}' \tilde f\|_{\L^1_{t}{\Bdot^{\gamma-{2\beta}/{p},p}_\beta}} \lesssim \frac 1 \varepsilon \int_{-2\varepsilon}^{2\varepsilon} \| \tilde f(s)\|_{{\Bdot^{\gamma-{2\beta}/{p},p}_\beta}}\, \dd s.$$ If $
\varepsilon\to 0$,  the left-hand side converges to 
$\|\tilde f\|_{\L^p_{t,x}\Hdot^{\gamma,p}_{\vphantom{t}v}}$ by dominated convergence, and the right-hand side converges to 0 as $\tilde f\in \C^{}_0(\R;\Bdot^{\gamma-{2\beta}/{p},p}_\beta)$ with $\tilde f(0,\cdot)=0$. Hence $\tilde f=0$.  
\end{proof}

\begin{rem}
Note that when $\gamma>0$  we  have $f\in \L^{\frac{p\homd}{\homd -p\gamma}}_{t,x,v}(\Omega_{+})$.  This follows from  the  embeddings of Section~\ref{sec:Lebesgueestimates}. By  Corollary~\ref{cor:Lebesguesestimates}, one can solve the  Cauchy problem with data  $\psi\in \L^{a^\flat}_{x,v}$ and $S\in \L^a_{t,x,v}(\Omega_{+})$ for  $a=\frac{p\homd}{\homd + p(2\beta-\gamma)}$ provided $1<a<\frac \homd{2\beta}$.
Furthemore, it is known that the Kolmogorov fundamental solution is, for each $t>0$, the density of a probability measure. Thus $\sem^+$ is bounded  $\L^r_{x,v}\to \L^\infty_{t}((0,\infty); \L^r_{x,v})$ for all $r\in [1,\infty]$ and $\cK_{\beta}^\pm$ is bounded  $ \L^s_{t}((0,T); \L^q_{x,v}) \to \L^s_{t}((0,T); \L^q_{x,v})$ for $s,q\in [1,\infty]$ and $0<T<\infty$. Combining all this with interpolation gives a wealth of estimates on bounded intervals for the solution. We leave details to the reader.
\end{rem}

\begin{cor}[Causality principle]
\label{cor:causality-Lp}
Let $\beta\in(0,1]$, $p \in (1,\infty)$ and $0\le \gamma\le 2\beta$ with $\gamma<\homd/p$.
Let $S\in \Zdot^{\gamma,p}_{\beta,+}$ and let $\tilde{S}$ be its zero extension to $t<0$.
Then $\cK_\beta^+\tilde{S}$ vanishes for $t\le 0$, and its restriction to $\Omega_+$ is the unique distributional solution of \eqref{eq:CP}
with $\psi=0$.
\end{cor}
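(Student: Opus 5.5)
The claim splits into three parts: vanishing of $\cK_\beta^+\tilde S$ for $t\le0$, the equation on $\Omega_+$ with zero initial value, and uniqueness. Uniqueness is already contained in Theorem~\ref{thm:homCP-Lp} applied with $\psi=0$, so the real work is to show that $\cK^+_\beta\tilde S$ vanishes for $t\le 0$. Once this is known, its restriction to $\Omega_+$ is the solution produced in Theorem~\ref{thm:homCP-Lp} (with $\sem^+_t 0=0$ in \eqref{eq:CP-representation}), and we are done.

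For the vanishing, I would first treat a dense class. By Lemma~\ref{lem:density}, $\cS_K$ is dense in $\Zdot^{\gamma,p}_\beta$. Localising in time, the elements of $\cD((0,\infty))\otimes\dense$ form a dense class in $\Zdot^{\gamma,p}_{\beta,+}$, since the proof of Lemma~\ref{lem:density} goes through for Bochner spaces on $(0,\infty)$. Their zero extensions $\tilde S_j$ lie in $\cS_K$ and vanish for $t\le0$. For such $\tilde S_j$, formula \eqref{eq:kolmogorovoperator} reads
\[
\widehat{\Gamma \cK^+_\beta\tilde S_j}(t)=\int_{-\infty}^t \widehat E_\beta(t,s)\,\widehat{\Gamma \tilde S_j}(s)\,\ds ,
\]
and the integrand is zero whenever $s<t\le 0$. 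Hence $\cK_\beta^+\tilde S_j(t)=0$ for all $t\le0$.

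Next I pass to the limit. Zero extension is an isometry from $\Zdot^{\gamma,p}_{\beta,+}$ into $\Zdot^{\gamma,p}_\beta$: both summands of a decomposition of $S$ extend by zero with the same norms. So $\tilde S_j\to\tilde S$ in $\Zdot^{\gamma,p}_\beta$. By Theorem~\ref{thm:bounds}, $\cK^+_\beta\tilde S_j\to\cK^+_\beta\tilde S$ in $\Ydot^{\gamma,p}_\beta\subset \C_0(\R;\Bdot^{\gamma-2\beta/p,p}_\beta)$, with convergence uniform in $t$. Therefore $\cK^+_\beta\tilde S(t)=0$ for every $t\le0$, as an element of $\Bdot^{\gamma-2\beta/p,p}_\beta$ and hence as a distribution, using $\gamma<\homd/p$.

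It remains to identify the restriction with the solution. Proposition~\ref{prop:mapping} gives $(\partial_t+v\cdot\nabla_x)f+(-\Delta_v)^\beta f=\tilde S$ in $\cD'(\Omega)$ for $f=\cK^+_\beta\tilde S$, and restricting to $\Omega_+$ gives the equation with source $S$. Moreover $f\in\L^p_{t,x}\Hdot^{\gamma,p}_v$, and continuity into the Besov space gives $f(t)\to f(0)=0$ in $\cD'(\R^{2d})$. So the restriction is a solution in the sense of Definition~\ref{defn:sol-cauchyLp}, and uniqueness from Theorem~\ref{thm:homCP-Lp} concludes. The only delicate point is the density of time-localised $\cS_K$ elements in the half-line space. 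If this is problematic, an alternative is to note that the representation \eqref{eq:CP-representation} already defines the solution via $\cK^+_\beta\tilde S$, and that continuity at $0$ was established in the proof of Theorem~\ref{thm:homCP-Lp}.
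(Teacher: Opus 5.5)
Your proof is correct and follows the paper's route. The paper derives this corollary from the proof of Theorem~\ref{thm:homCP-Lp}, where $\cK^+_\beta\tilde S$ is said to vanish for $t<0$ ``by definition'' and at $t=0$ by continuity, the equation comes from Proposition~\ref{prop:mapping}, and uniqueness comes from the separate cutoff argument. Your density argument, with sources in $\cD((0,\infty))\otimes\dense$ and uniform convergence in $\C_0(\R;\Bdot^{\gamma-2\beta/p,p}_\beta)$, is exactly the rigorous content of that ``by definition''; the step you flag as delicate is standard Bochner-space density, so your fallback is not needed.
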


\begin{rem} 
The embeddings for $\cLdot^{\gamma,p}_{\beta,+}$ imply that we can write the integral identities as in Corollary~\ref{cor:polarizedenergyequalities}. In particular,  
if $h\in \cD (\overline{\Omega_+})$, then  $t\mapsto \angle{f(t)}{h(t)}_{B}$  is absolutely continuous on $[0,\infty)$ and we have  for each $t\ge 0$, 
\begin{align*}
 \angle{f(t)}{h(t)}_{\B}- \angle{\psi}{h(0)}_{\B}&= \int_{0}^t \angle{f}{(\partial_t + v \cdot \nabla_x)h}_{\B}\dd s - \int_{0}^t\int_{\R^{d}} \langle f , (-\Delta_v)^{{\beta}} h \rangle_{\H}\dd x\dd s 
 \\
 & \qquad + \int_{0}^t 
\angle {S_{1}}{h}_{\X}     + \angle   {S_{2}}{h}_{\B} \dd s 
\end{align*}
if $S=S_{1}+S_{2}$, $S_1 \in \L^p(0,\infty\, ;\, \Xdot^{\gamma-2\beta,p}_{\vphantom{t} \beta})$ and 
$S_2 \in \L^1(0,\infty\, ; \, \Bdot_{\vphantom{t,x} \beta}^{\gamma-2\beta/p,p})$. The brackets are distributions-test functions but subscripts indicate  the duality for which  the brackets can be understood: $\B$ for anisotropic Besov, $\H$ for Sobolev and $\X$ for anisotropic Sobolev.
\end{rem}

\begin{rem}[Weak solutions]
\label{rem:weaksol}
	In the case $\gamma = \beta$ and $p=2$,  and for $t$ large in the formula of the previous remark,   since $h$ has compact support, with $D_{v}=(-\Delta_v)^{{1/2}}$,
	\[
	\int_{0}^t\int_{\R^{d}} \langle f , (-\Delta_v)^{{\beta}} h \rangle_{\H}\dd x\dd s
	= \int_{\Omega_{+}} D_v^\beta f \,\overline{D_v^\beta h}\, \dd (s,x,v)
	\]
	as an immediate consequence of Lemma \ref{lem:fractionalLaplacian}, 
	and if $\beta<d/2$, 
	\[\int_{0}^t \angle{f}{(\partial_t + v \cdot \nabla_x)h}_{\B}\dd s = 
	\int_{\Omega_{+}} f \ (\partial_t + v \cdot \nabla_x)\overline h\,  \dd(s,x,v)
	\]
	because by Sobolev embedding $f\in \L^{2}((0,\infty)\times \R^d_{x}\, ;\, \L^q_{\vphantom{t}v})$, with $q=\frac {2d}{d-2\beta}$. 	So we obtain the definition of weak solutions in \cite{AIN} when $\beta<d/2$. 
	When $\beta\ge d/2$ (if $\beta\le 1$, $d=1,2$ are the only concerned dimensions but the theory for $p=2$ works for all $\beta>0$), the last integral may not exist when $f\in \L^2((0,\infty)\times \R^d_{x}\, ;\,\Hdot^{\beta,2}_{v})$ and  \cite{AIN} assumes a further a priori condition on $f$  to keep the integral  formulation in the definition valid. Here, we took the distributional definition. But  as distributional solutions are shown to have $\C^{}_{t}\L^2_{x,v}$ regularity, one can use the integral formulation right away. We emphasise that  this regularity is not trivial: it is, in particular,  a consequence of the  stronger uniqueness result proved here (for $p=2$, the proof can be simplified a little).
\end{rem}

\begin{rem}[Strong solutions]
	In the case  $\gamma = 2\beta$, the solutions of Theorem \ref{thm:homCP-Lp} are strong solutions in the sense that the Kolmogorov equation holds in $\L^p_{t,x,v}(\Omega_+)$, and in particular,  pointwise almost everywhere.  
\end{rem}

\begin{rem}[Backward kinetic Cauchy problem]
By the same strategy, one can solve the backward kinetic Cauchy problem
\[
	-(\partial_t+v\cdot\nabla_x)f+(-\Delta_v)^\beta f=S \quad\text{on }(-\infty,0)\times\R^{2d}, \qquad f(0)=\psi,
\]
in the same range of parameters, with the backward Kolmogorov operator $\cK^-_\beta$ 
and the backward semigroup $\sem^-$ (cf.\ \eqref{eq:solKolIVback}) replacing $\cK^+_\beta$ and $\sem^+$.
\end{rem}

\section{Statements on inhomogeneous spaces}
\label{sec:inhom}

The homogeneous theory developed above has a fully parallel inhomogeneous counterpart.
The only change is the treatment of low frequencies: one replaces homogeneous Sobolev/Besov norms by their inhomogeneous versions 
(equivalently, one adds an $\L^p$-term to the homogeneous norms).
In particular, the auxiliary restriction $\gamma<\homd/p$ that was only used to avoid quotient issues in 
homogeneous Besov/Sobolev spaces disappears in the inhomogeneous framework. Moreover, inhomogeneous versions appear naturally when looking at the Cauchy problem on finite time intervals using the usual exponential trick. So we present a short account on this.
Throughout this section $\beta\in(0,1]$ and $p \in (1,\infty)$.

\subsection{Inhomogeneous kinetic spaces and embeddings} $ $ \\

\textbf{Inhomogeneous kinetic Besov and Sobolev spaces on $\R^{2d}$.}
Let $d_\beta$ be the smooth anisotropic quasi-norm on the Fourier side introduced previously.
We denote by $\X^{\gamma,p}_\beta$ the inhomogeneous anisotropic Bessel potential space associated with $d_\beta$:
\begin{align*}
	\X^{\gamma,p}_\beta &:=\Big\{ g\in\cS'(\R^{2d}) \,;\, \cF^{-1}\!\big((1+d_\beta(\varphi,\xi)^2)^{\gamma/2}\hat g(\varphi,\xi)\big)\in \L^p(\R^{2d}) \Big\}, \\ 
	\|g\|_{\X^{\gamma,p}_\beta}&:=\Big\|\cF^{-1}\!\big((1+d_\beta^2)^{\gamma/2}\hat g\big)\Big\|_{\L^p_{x,v}}.
\end{align*}
Similarly, $\B^{\gamma,p}_\beta$ denotes the inhomogeneous anisotropic Besov space.
For instance, in the continuous Littlewood--Paley formulation, fix $\psi\in \cS(\R^{2d})$ as in \eqref{eq:besovcont} 
and choose $\Phi\in\cS(\R^{2d})$ with $\widehat\Phi$ supported near $(0,0)$ such that
\[
	\widehat\Phi(\varphi,\xi) + \int_{0}^{1}\widehat\psi\big(s^{\frac{1}{2\beta}+1}\varphi,\,s^{\frac{1}{2\beta}}\xi\big)\,\frac{\ds}{s}=1 \qquad \text{for all }(\varphi,\xi)\in\R^{2d}.
\]
Then, for all $\gamma\in\R$,
\begin{equation}\label{eq:inhomBesov-cont}
	\|g\|_{\B^{\gamma,p}_\beta}^p \simeq \|\Phi*g\|_{\L^p_{x,v}}^p + \int_{0}^{1}\left(\frac{\|\psi_s*g\|_{\L^p_{x,v}}}{s^{\gamma/(2\beta)}}\right)^p\frac{\ds}{s}.
\end{equation}
As usual, this is equivalent to the dyadic definition.
\medskip

\textbf{Inhomogeneous kinetic spaces on $\Omega_{\I}$.} 
Let $\I\subset\R$ be an open interval and recall that $\Omega_{\I}:=\I\times\R^d_x\times\R^d_v$.
We define the inhomogeneous analogues of the spaces $\Ydot^{\gamma,p}_\beta$ and $\Zdot^{\gamma,p}_\beta$ on $\Omega_{\I}$ by
\begin{align*}
	\Y^{\gamma,p}_\beta(\I) &:= \L^p\!\big(\I\,;\,\X^{\gamma,p}_\beta\big) \cap \C^{}_{0}\!\big(\bar{\I}\, ;\, \B^{\gamma-{2\beta}/{p},p}_\beta\big), \\
	\|f\|_{\Y^{\gamma,p}_\beta(\I)} &:= \|f\|_{\L^p(\I\, ;\, \X^{\gamma,p}_\beta)}+\|f\|_{\L^\infty(\I\, ;\, \B^{\gamma-{2\beta}/{p},p}_\beta)}, \\[0.2cm]
	\zZ^{\gamma,p}_\beta(\I) &:= \L^p\!\big(\I\,;\,\X^{\gamma-2\beta,p}_\beta\big) + \L^1\!\big(\I\,;\,\B^{\gamma-{2\beta}/{p},p}_\beta\big),\\
	\|S\|_{\zZ^{\gamma,p}_\beta(\I)} &:=	\inf_{S=S_1+S_2}\Big( \|S_1\|_{\L^p(\I\, ;\, \X^{\gamma-2\beta,p}_\beta)} + \|S_2\|_{\L^1(\I\, ;\,\B^{\gamma-{2\beta}/{p},p}_\beta)} \Big).
\end{align*}
Here, the suffix $0$ for $\C^{}_{0}$ means that the limits at infinite endpoints of $\I$ vanishes if $\I$ is unbounded.

We also define the inhomogeneous kinetic spaces $\cF^{\gamma,p}_\beta(\I)$, $\cG^{\gamma,p}_\beta(\I)$ and 
$\cL^{\gamma,p}_\beta(\I)$ by replacing everywhere in definitions
$\Hdot^{s,p}$, $\Xdot^{s,p}_\beta$ and $\Bdot^{s,p}_\beta$ with
$\H^{s,p}$, $\X^{s,p}_\beta$ and $\B^{s,p}_\beta$ in the corresponding homogeneous definitions and working with $t\in \I$.
In particular,  we set
\begin{align*}
	\cL^{\gamma,p}_\beta(\I) &:= \Big\{ f\in \L^p\!\big(\I\times\R^d_x\, ;\,\H^{\gamma,p}_v\big) \ ;\ (\partial_t+v\cdot\nabla_x)f\in \zZ^{\gamma,p}_\beta(\I) \Big\}, \\
	\|f\|_{\cL^{\gamma,p}_\beta(\I)}&:= \|f\|_{\L^p(\I\times\R^d_x\, ;\,\H^{\gamma,p}_v)} + \|(\partial_t+v\cdot\nabla_x)f\|_{\zZ^{\gamma,p}_\beta(\I)}.
\end{align*}

\medskip

One may work either with the forward/backward Kolmogorov operators
$\cK_\beta^\pm$ if $\I$ is bounded or, equivalently, with the resolvents
\[
	\cK_{1,\beta}^{\pm}:=\big(\pm(\partial_t+v\cdot\nabla_x)+(-\Delta_v)^\beta+1\big)^{-1},
\]
which are convenient to handle low frequencies. If $\I$ is unbounded, we consider the resolvents.
{In the inhomogeneous case we do not need to impose $\gamma < \homd/p$ as we are dealing with functions.}
Concerning an inhomogeneous kinetic embedding and transfer of regularity, all proofs for $\I=\R$ are unchanged and easier.  In the case $\I=(0,\infty)$, one can adapt the proof of Lemma~\ref{lem:embedOmega+} using the resolvent to go back to the case of $\R$. This holds by symmetry for $\I=(-\infty,0)$. For $\I$ bounded, one can localise at each endpoint and apply the previous case with half-infinite intervals after a shift in time. Altogether, this yields the following statement.

\begin{thm}[Kinetic embeddings and transfer of regularity: inhomogeneous case]
\label{thm:inhomkinspaceLp}
Assume $\gamma\in[0,2\beta]$ and $\I\subset\R$ is an open interval. We have the following embeddings and properties.
\begin{enumerate}
\item \emph{(Transfer-of-regularity)} 
\[
	\cL^{\gamma,p}_\beta(\I)\hookrightarrow \L^p\big(\I\times\R^d_v\, ;\,\H^{\frac{\gamma}{2\beta+1},p}_{\vphantom{t}x}\big), \qquad \|f\|_{\L^p(\I\times\R^d_v\, ;\,\H^{\frac{\gamma}{2\beta+1},p}_{\vphantom{t}x})} \lesssim \|f\|_{\cL^{\gamma,p}_\beta(\I)}.
\]

\item \emph{(Continuity-in-time)} 
\[
	\cL^{\gamma,p}_\beta(\I)\hookrightarrow \C^{}_{0}(\overline \I\,;\,\B^{\gamma-{2\beta}/{p},p}_\beta), \qquad \sup_{t\in \overline\I}\|f(t)\|_{\B^{\gamma-{2\beta}/{p},p}_\beta}	\lesssim \|f\|_{\cL^{\gamma,p}_\beta(\I)}.
\]

\item \emph{(Gain-of-integrability)} For $p\le q\le \frac{p\homd}{\homd -p\gamma}$ if $\gamma<\frac \homd p$ and  $p\le q<\infty$ if  $\gamma\ge \frac \homd p$, 
\[
	\cL^{\gamma,p}_\beta(\I)\hookrightarrow 
	\L^{q}_{t}(\I;\L^{q}_{x,v}), \qquad \|f\|_{\L^{q}_{t,x,v}}\lesssim \|f\|_{\cL^{\gamma,p}_\beta(\I)}.
\]

\item \emph{(Dense class)} The set
\begin{equation*}
	\mathfrak{C}:=\Big\{\, f=\tilde f\big|_{\Omega_{\I}}\;  ; \; \tilde f\in \C_c^\infty(\R_t\times\R_x^d\times\R_v^d)\,\Big\}
\end{equation*}
 is dense in $\cF^{\gamma,p}_\beta(\I)$, $\cG^{\gamma,p}_\beta(\I)$, $\cL^{\gamma,p}_\beta(\I)$.

\item \emph{(Complex interpolation)} The spaces  $\cF^{\gamma,p}_\beta(\I)$ and $ \cG^{\gamma,p}_\beta(\I)$ respectively interpolate by the complex method along each segment drawn in the box $(\gamma, 1/p) \in [0,2\beta] \times (0,1)$. 
\end{enumerate}

The implicit constants depend on $d,\beta,\gamma,p$, and on $|\I|$ if $\I$ is bounded. 
\end{thm}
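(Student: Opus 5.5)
The plan is to reduce everything to the case $\I=\R$ by the resolvent trick, and then to read off the embeddings from an inhomogeneous version of the isomorphism theorem. For $\I=\R$, I would first show that $\cK^\pm_{1,\beta}$ maps $\zZ^{\gamma,p}_\beta(\R)$ boundedly into $\Y^{\gamma,p}_\beta(\R)$, and that it inverts $\pm(\partial_t+v\cdot\nabla_x)+(-\Delta_v)^\beta+1$ on $\cL^{\gamma,p}_\beta(\R)$.

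\textbf{Resolvent bounds.} In shifted Fourier variables the resolvent kernel is $e^{-(t-s)}\widehat{E}_\beta$. The symbol $(1+d_\beta^2)^{\gamma/2}$ is a Marcinkiewicz multiplier relative to $1+d_\beta(\varphi,\xi-t\varphi)^\gamma$, so I would split frequencies with a smooth cutoff $\Phi$ near the origin. On high frequencies the homogeneous estimates apply: Theorem~\ref{thm:boundednessTgamma}, Corollary~\ref{cor:besovbounds} and Lemma~\ref{lem:boundsinitialvalueproblem}, with the damping $e^{-(t-s)}$ only helping, since it is a bounded multiplier in $t-s$ and the Hörmander computations of Lemma~\ref{lem:hormander} go through unchanged. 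On low frequencies the weights are bounded, and Young's inequality in $t$ with the $\L^1$ bound $e^{-(t-s)}$ suffices. This uses the probabilistic fact that $E_\beta(t,s,\cdot)$ is a probability density, so the semigroup is a contraction on $\L^p_{x,v}$.

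\textbf{Injectivity and the embeddings (i) and (ii).} For injectivity, the argument of Lemma~\ref{lem:uniquenessH} applies verbatim, with $m$ multiplied by $e^{-t}$; no polynomial quotient issue arises because $f$ is a function. Given $f\in\cL^{\gamma,p}_\beta(\I)$ with $\I=\R$, I would set $S=(\partial_t+v\cdot\nabla_x)f+(-\Delta_v)^\beta f+f\in\zZ^{\gamma,p}_\beta$. Here $f\in\L^p_t\X^{\gamma-2\beta,p}_\beta$ since $\gamma\ge0$, and $(-\Delta_v)^\beta f\in\L^p_{t,x}\H^{\gamma-2\beta,p}_v$. Then $f=\cK^+_{1,\beta}S\in\Y^{\gamma,p}_\beta$, and Proposition~\ref{prop:characterisation} in inhomogeneous form gives (i) and (ii).

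\textbf{Half-lines and bounded intervals.} For $\I=(0,\infty)$ I would mimic Lemma~\ref{lem:embedOmega+}: extend $S$ by zero, apply $\cK^-_{1,\beta}$, and prove that the difference vanishes on $\Omega_+$ by the uniqueness argument. The backward operator together with $e^{-t}$ gives the needed decay as $t\to\infty$. The half-line $(-\infty,0)$ follows by symmetry. For bounded $\I=(a,b)$, I would use a partition of unity $\chi_a+\chi_b=1$ in time, with $\chi_a$ supported near $a$. Then $\chi_a f$ lies in $\cL^{\gamma,p}_\beta((a,\infty))$ with extra source $\chi_a'f\in\L^p_t\X^{0,p}\subset\zZ$, and this is where the constants acquire their dependence on $|\I|$.

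\textbf{Items (iii)--(v).} For (iii), Lemma~\ref{lem:Sobolev2} applies to the high-frequency part and Bernstein to the low part. Interpolating with the trivial $\L^p$ bound gives all $q$ up to the critical exponent; when $\gamma\ge\homd/p$, I would lower $\gamma$ first. For (iv), on $\R$ the images of $\cS_K$ under the resolvent are dense. To reach $\C_c^\infty$, I would truncate with cutoffs in $(t,x,v)$ and regularise by kinetic convolution. The commutator $[v\cdot\nabla_x,\chi(v)]=0$ makes $v$-cutoffs harmless, while cutoffs in $x$ produce terms $v\cdot\nabla_x\chi$ controlled via $f\in\L^p$ on bounded sets. \textbf{I expect the main obstacle to be this truncation in $v$ at large velocities:} I would try cutoffs $\chi(x/R^{2\beta+1},v/R)$ and control the errors using the transfer-of-regularity already established. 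Finally, (v) follows by transporting complex interpolation of $\L^p_{t,x}\H^{\gamma-2\beta,p}_v$ and $\L^p_t\X^{\gamma-2\beta,p}_\beta$ through the isomorphism, exactly as in the homogeneous corollary.
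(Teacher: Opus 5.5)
Your proposal is correct and follows essentially the paper's route. Both arguments use the resolvents $\cK^{\pm}_{1,\beta}$ together with the uniqueness argument on $\R$, the adaptation of Lemma~\ref{lem:embedOmega+} with the backward resolvent on half-lines, and localisation at the endpoints for bounded $\I$. Then (iii) follows by interpolating with the $\L^p$ bound (lowering $\gamma$ when $\gamma\ge\homd/p$), (iv) by kinetic convolution plus cut-offs, and (v) by transporting interpolation through the isomorphism.

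One small correction in your resolvent step: for a low-frequency source, the output weight $(1+d_\beta(\varphi,\xi-t\varphi)^2)^{\gamma/2}$ is not bounded, because the transport shifts the frequency. It only grows polynomially in $t-s$, and the damping $e^{-(t-s)}$ absorbs this growth, so your Young-inequality argument still goes through.
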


\begin{proof} 
The proof of (i) and (ii) has already been commented. The interpolation statement (v) comes from the isomorphism and interpolation for the target spaces corresponding to $\cF$ and $\cG$ spaces.

As for (iii), we have already seen the result at  $q=\frac{p\homd}{\homd -p\gamma}$ if $\gamma<\frac \homd p$ in the homogeneous case.  This is the same in the inhomogeneous case and we interpolate with $f\in \L^p_{t,x,v}$ by assumption. If   $\gamma\ge \frac \homd p$, then we have $f\in 
 \Y^{\gamma,p}_{\beta}(\I)$ by (i) and (ii). The spaces being inhomogeneous, we also have  $f\in 
 \Y^{\delta,p}_{\beta}(\I)$ for all $\delta\in [0, \gamma]$. If $q\in [p,\infty)$, it suffices to pick $\delta$ with $\delta<\homd/p$ and $q=\frac{p\homd}{\homd -p\delta}$ to conclude.

Regarding (iv) on $\R^{1+2d}$, the key point is that, as we deal with inhomogeneous spaces, we  may regularize by kinetic convolution $\ast_{\rm kin}$ defined as
\begin{equation*}
	[f \ast_{\rm kin} g](t,x,v) = \int_{\R^{1+2d}}f(t-s,x-y-(t-s)w,v-w) g(s,y,w) \dd(s,y,w),
\end{equation*}
i.e.,\ by averaging against the Galilean translations which commute with $\partial_t+v\cdot\nabla_x$. With this tool, one can mollify (according to kinetic scaling) and then multiply by smooth cut-offs in order to produce approximations in the kinetic norms. We refer to \cite{MR4527757} for the special case corresponding to weak solutions $\gamma=\beta$ and $p=2$.
If $\I\ne \R$, one may use continuous time truncations to get back to the case $\I=\R$. Details are left to the reader. 
\end{proof}

\subsection{The kinetic Cauchy problem on finite intervals}

Let $0<T<\infty$ and set $\I=(0,T)$.
All inhomogeneous kinetic spaces are understood on $\Omega_{\I}$ as above.
We emphasise that the kinetic Cauchy problem \eqref{eq:CP} in inhomogeneous spaces is in general only well-posed on finite time intervals as the Kolmogorov semigroup is not exponentially stable. 

We now record a clean maximal-regularity formulation of the kinetic Cauchy problem
for the (fractional) Kolmogorov equation. Our notion of distributional solution is as before with  $f\in \L^p(\I\times\R^d_x\, ;\,\H^{\gamma,p}_v)$.

\begin{thm}[Cauchy problem in inhomogeneous kinetic spaces]
\label{thm:CP-inhom-Lp}
Let $\beta\in(0,1]$, $p \in (1,\infty)$, $\gamma\in[0,2\beta]$ and  $\I=(0,T)$ with $0<T<\infty$. 
Let
\[
	S\in \zZ^{\gamma,p}_\beta(\I) \qquad\text{and}\qquad \psi\in \B^{\gamma-{2\beta}/{p},p}_\beta.
\]
Then there exists a unique distributional solution $f\in \L^p(\I\times\R^d_x\, ;\,\H^{\gamma,p}_v)$
such that
\begin{equation}\label{eq:CP-inhom-Lp}
	\begin{cases}
		(\partial_t+v\cdot\nabla_x)f+(-\Delta_v)^\beta f = S, & \text{in }\cD'(\Omega_{\I}), \\
		f(0)=\psi & \text{in }\B^{\gamma-{2\beta}/{p},p}_\beta.
	\end{cases}
\end{equation}
Moreover,  $f\in \Y^{\gamma,p}_{\beta}(\I)$ and satisfies the  estimate
\begin{align}\label{eq:CP-inhom-Lp-est}
	&\|f\|_{\L^p(\I\times\R^d_x\, ;\,\H^{\gamma,p}_v)} +\|f\|_{\L^p(\I\times\R^d_v\, ;\,\H^{\frac{\gamma}{2\beta+1},p}_{\vphantom{t}x})}
+ \sup_{t\in \overline \I}\|f(t)\|_{\B^{\gamma-{2\beta}/{p},p}_\beta} \\
	&\hspace{6cm}\lesssim_{d,\beta,\gamma,p, T}\ \|S\|_{\zZ^{\gamma,p}_\beta(\I)}+\|\psi\|_{\B^{\gamma-{2\beta}/{p},p}_\beta}. \nonumber
\end{align}
The solution is given by the Duhamel formula
\[
	f(t)=\sem^+_t\psi+\int_{0}^{t} \sem^+_{t-s}S(s)\,\ds=\sem^+_t\psi+[\cK_{\beta}^+\widetilde S](t) ,\qquad t\in [0,T),
\]
where the equality is meant in $\B^{\gamma-{2\beta}/{p},p}_\beta$ and the integral is understood in the weak sense, i.e.\ $\int_{0}^t \angle{S(s)}{(\sem^+_{t-s})^*\,g}\ds$ against arbitrary test functions $g${} $($seen as a dense elements in the dual of the  Besov space$)$ and $\widetilde S$ is the extension of $S$ by $0$ outside of $\Omega_{\I}$. 
\end{thm}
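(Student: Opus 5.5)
The plan is to reduce everything to the homogeneous theory on $\R$ and on the half-line $(0,\infty)$ (Theorem~\ref{thm:homCP-Lp}) by the exponential trick, and to handle low frequencies by the resolvent.

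\emph{Existence and the estimate.} Set $\tilde S$ as the zero extension of $S$ outside $\Omega_\I$. Since $\I$ is bounded, $\L^p(\I;\X^{\gamma-2\beta,p}_\beta)\subset \L^p(\R;\X^{\gamma-2\beta,p}_\beta)$, and on bounded time intervals the inhomogeneous norms control the homogeneous ones in the range needed. Define $f(t)=\sem^+_t\psi+[\cK_\beta^+\tilde S](t)$ for $t\in[0,T)$. For the high-frequency part I would invoke Theorem~\ref{thm:bounds}: $\cK_\beta^+:\Zdot^{\gamma,p}_\beta\to\Ydot^{\gamma,p}_\beta$ and $\sem^+:\Bdot^{\gamma-2\beta/p,p}_\beta\to\Ydot^{\gamma,p}_{\beta,+}$. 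For the low-frequency part, i.e.\ after applying $\Phi(D)$, I would use that the fundamental solution is a probability density for each $t>0$. This gives that $\sem^+$ is bounded on $\L^p_{x,v}$ uniformly in $t$, and that $\cK_\beta^+$ maps $\L^1((0,T);\L^p)$ and $\L^p((0,T);\L^p)$ into $\L^\infty((0,T);\L^p)$ with constants depending on $T$. Low-frequency multipliers such as $\Phi(D)$ and $(1+d_\beta^2)^{\gamma/2}\Phi(D)$ are $\L^p$ bounded, and this handles the $\L^p$ part of the inhomogeneous norms. Alternatively, I would write $f=e^{t}g$ with $g$ solving the resolvent equation with source $e^{-t}\tilde S$, and use the $\R$-isomorphism for $\cK^+_{1,\beta}$ asserted in the preceding discussion; the constants then pick up $e^{T}$. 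The equation in $\cD'(\Omega_\I)$ follows as in Proposition~\ref{prop:mapping} by density of $\cS_K$, since everything is a function here. The initial trace $f(0)=\psi$ in $\B^{\gamma-2\beta/p,p}_\beta$ comes from Lemma~\ref{lem:boundsinitialvalueproblem} together with causality, namely $\cK_\beta^+\tilde S(0)=0$ by Corollary~\ref{cor:causality-Lp}. The transfer-of-regularity part of \eqref{eq:CP-inhom-Lp-est} then follows from Theorem~\ref{thm:inhomkinspaceLp}(i)--(ii), since $f\in\cL^{\gamma,p}_\beta(\I)$.

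\emph{Duhamel formula.} Pair $[\cK_\beta^+\tilde S](t)$ with a test function $g$ and use Lemma~\ref{lem:energy} (second identity, with $\sem^-$ the adjoint of $\sem^+$). For $S\in\cS_K$ this gives $\int_0^t\angle{S(s)}{(\sem^+_{t-s})^*g}\,\ds$. The identity then extends to general $S$ by density and the bounds above.

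\emph{Uniqueness.} This is the step I expect to need the most care. Take a solution with $S=0$ and $\psi=0$. The first step is to show that $f\in\cL^{\gamma,p}_\beta(\I)$, which holds because $(\partial_t+v\cdot\nabla_x)f=-(-\Delta_v)^\beta f\in \L^p(\I;\H^{\gamma-2\beta,p}_v)\subset\zZ^{\gamma,p}_\beta(\I)$. Theorem~\ref{thm:inhomkinspaceLp}(ii) then gives continuity in $\B^{\gamma-2\beta/p,p}_\beta$ with $f(0)=0$. Next, extend $f$ by $0$ for $t<0$ and cut it off near $T$ with $\theta(t)$ equal to $1$ on $[0,T/2]$. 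The cut-off function solves the equation on $(-\infty,T)$ with source $\theta' f\in\L^1\B$ supported in $[T/2,T)$. Combining the time-cutoff argument from the proof of Theorem~\ref{thm:homCP-Lp} (cutting at $0$ with $\theta_\varepsilon$) with the uniqueness of Lemma~\ref{lem:uniquenessH}, applied on $\R$ after the resolvent trick, gives $f=0$ on $(0,T/2)$. Iterating in time, or shifting the interval, yields $f=0$ on $\I$. The delicate point is to make the localisation compatible with the resolvent isomorphism on $\R$; I would first try the exponential change $f\mapsto e^{-t}f$.
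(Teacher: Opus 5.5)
Your proposal follows the route the paper indicates for this statement: the Duhamel/Kolmogorov representation $\sem^+_t\psi+\cK^+_\beta\widetilde S$ with the exponential (resolvent) trick for low frequencies for existence and the estimate, Lemma~\ref{lem:energy} for the weak Duhamel formula, and, for uniqueness, the continuity embedding followed by the $\theta_\varepsilon$ time-cutoff argument of Theorem~\ref{thm:homCP-Lp}, combined with uniqueness on $\R$ and causality. The one slip is your aside that on bounded time intervals the inhomogeneous norms control the homogeneous ones. This is false for negative indices such as $\gamma-2\beta$, and for $\gamma-2\beta/p$ when it is negative: bounding $t$ does nothing about low $(x,v)$-frequencies, and $\Phi(D)$ does not commute with $\sem^+_t$ or $\cK^+_\beta$. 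So you must split the data rather than the solution, handling the cross terms with Theorem~\ref{thm:bounds} at other regularity indices, or use the resolvent as you also propose.
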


\begin{rem}
From Theorem~\ref{thm:inhomkinspaceLp}, we  have $f\in \L^{q}_{t,x,v}$ for  $p\le q\le \frac{p\homd}{\homd -p\gamma}$ if $\homd -p\gamma>0$ and $p\le q<\infty$ if $\homd -p\gamma\le 0$.    By  Corollary~\ref{cor:Lebesguesestimates}, one can take in particular  $\psi\in \L^{a^\flat}_{x,v}$ and $S\in \L^a_{t,x,v}$ for  $a=\frac{p\homd}{\homd + p(2\beta-\gamma)}$ provided $1<a<\frac \homd{2\beta}$. 
\end{rem}

\begin{rem}[Backward Cauchy problem]
The same statements hold for the backward kinetic Cauchy problem
\[
	-(\partial_t+v\cdot\nabla_x)f+(-\Delta_v)^\beta f = S \quad\text{on }(0,T)\times\R^{2d},
	\qquad f(T)=\psi,
\]
with $\cK_\beta^-$, $\sem^-$ in place of $\cK_\beta^+$, $\sem^+
$ and with the obvious time-reversal modifications.
\end{rem}

\section{Beyond the restriction $\gamma \in [0,2\beta]$}
\label{sec:NZ}

The assumption $f \in \L^p_{t,x}\Hdot^{\gamma,p}_{v}$ only, which appears in the definition of the homogeneous kinetic Sobolev spaces, is motivated by the theory of distributional solutions to the Kolmogorov equations, since this is the only information we expect to use there. The definition, however, requires a restriction on the range of $\gamma$. Yet, for quasilinear equations one typically wants to bootstrap regularity all the way to $\C^\infty$. For this reason, we introduce two additional scales of kinetic Sobolev spaces to
remove the restriction $\gamma \in [0,2\beta]$. In the homogeneous setting, we must still assume that  $\gamma < \homd/p$. In the inhomogeneous setting, this restriction can also be removed and one can work with $\gamma \in \R$. Bootstrap arguments can thus be set up.

We therefore present the details for this two scales only for the homogeneous case, since the inhomogeneous case is analogous and simpler.

 Let 
\begin{equation}
 \label{eq:Tdotgammabeta}
 \cTdot^{\gamma,p}_{\beta}= \left\{ f \in \cD'(\Omega)\, : \,  f \in \L^p_{t}\Xdot^{\gamma,p}_{\beta} \ \& \ (\partial_t + v \cdot \nabla_x)f \in \L^p_{t} \Xdot^{\gamma -2\beta,p}_\beta \right\}
\end{equation}
with (semi-)norm defined by
\begin{equation}
 \label{eq:Tdotgammabetanorm}
 \|f\|_{\cTdot^{\gamma,p}_{\beta}}^p= \|  f\|_{\L^p_{t}\Xdot^{\gamma,p}_{\beta}}^p + \|(\partial_t + v \cdot \nabla_x)f\|_{\L^p_{t}\Xdot^{\gamma-2\beta,p}_{\vphantom{t} \beta}}^p,
 \end{equation}
 and \begin{equation}
 \label{eq:Udotgammabeta}
 \cUdot^{\gamma,p}_{\beta}= \left\{ f \in \cD'(\Omega)\, : \,  f \in \L^p_{t}\Xdot^{\gamma,p}_{\beta} \ \& \ (\partial_t + v \cdot \nabla_x)f \in \Zdot^{\gamma,p}_\beta \right\}
\end{equation}
with (semi-)norm defined by
\begin{equation}
 \label{eq:Udotgammabetanorm}
 \|f\|_{\cUdot^{\gamma,p}_{\beta}}^p= \|  f\|_{\L^p_{t}\Xdot^{\gamma,p}_{\beta}}^p + \|(\partial_t + v \cdot \nabla_x)f\|_{\Zdot^{\gamma,p}_{\vphantom{t} \beta}}^p.
 \end{equation}

For all $\gamma\in \R$, we have  $\cTdot^{\gamma,p}_{\beta} \subset \cUdot^{\gamma,p}_{\beta}$. 
Also by Proposition~\ref{prop:characterisation}, we have $\cTdot^{\gamma,p}_{\beta} \subset \cGdot^{\gamma,p}_{\beta}$ 
and $\cUdot^{\gamma,p}_{\beta} \subset \cLdot^{\gamma,p}_{\beta}$ if $\gamma\ge 0$ and the opposite inclusions if $\gamma\le 0$. 

The following lemma, analogous to Lemma~\ref{lem:uniquenessH} in this context, gives the range of parameters for which the operators $\pm(\partial_{t}+v\cdot\nabla_{x})+ (-\Delta_{v})^\beta  $ are well-defined in the distribution sense and injective on $\L^p_{t}\Xdot^{\gamma,p}_{\beta}.$

\begin{lem}[Uniqueness in $ \L^p_{t}\Xdot^{\gamma,p}_{\beta}$]
\label{lem:uniquenessX}
Let $\beta>0$,  $1<p<\infty$ and   $-\infty<\gamma < 2\beta+(2\beta+2)d/p$. 
If   $f\in  \L^p_{t}\Xdot^{\gamma,p}_{\beta}$   satisfies  $\pm (\partial_{t}+v\cdot\nabla_{x})f +(-\Delta_{v})^{\beta}f=0$ 
in the sense of distributions on $\Omega$, then  $f=0$. 
\end{lem}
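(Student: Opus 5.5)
The plan is to follow the scheme of Lemma~\ref{lem:uniquenessH} on the Fourier side after the kinetic shift. The only change is that the a priori information is now anisotropic, $d_\beta(\varphi,\xi)^\gamma\hat f\in \L^p$, rather than $|\xi|^\gamma\hat f\in\L^p$.

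\textbf{Step 1: the equation makes sense.} Set $g:=\cF^{-1}(d_\beta^\gamma \hat f)\in \L^p_{t,x,v}$, in the sense of the Littlewood--Paley definition. For the transport term we use that $f\in\L^p_t\Xdot^{\gamma,p}_\beta\subset\cS'(\Omega)$ by construction. For the diffusion term we write
\[
(-\Delta_v)^\beta f=\cF^{-1}\big((|\xi|/d_\beta)^{2\beta}\, d_\beta^{2\beta-\gamma}\,\hat g\big).
\]
By Lemma~\ref{lem:multipliers}, $(|\xi|/d_\beta)^{2\beta}$ is an $\L^p$ multiplier, so $(-\Delta_v)^\beta$ maps $\L^p_t\Xdot^{\gamma,p}_\beta$ boundedly into $\L^p_t\Xdot^{\gamma-2\beta,p}_\beta$. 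The hypothesis $\gamma<2\beta+(2\beta+2)d/p$ is exactly the condition under which this space embeds continuously in $\cS'(\Omega)$ (Lemma~\ref{lem:anisoconvergenceLP}). Hence both terms are tempered distributions. We then apply $\Gamma$ and the partial Fourier transform. On the open set $O=\{(t,\varphi,\xi):\varphi\neq0 \text{ or } \xi-t\varphi\neq 0\}$, where the relevant multipliers are smooth, this gives the ODE \eqref{eq:ODE}. We also have $d_\beta(\varphi,\xi-t\varphi)^\gamma\,\widehat{\Gamma f}=\widehat{\Gamma g}$ there, because $d_\beta$ is smooth away from the origin.

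\textbf{Step 2: solve the ODE and kill the constant.} For $d\ge2$ we restrict to $\widetilde O=\R\times U$ as in Step~2 of Lemma~\ref{lem:uniquenessH}. There we get $\widehat{\Gamma f}=m\,h$ with $h\in\cD'(U)$, and the tempered bound \eqref{eq:S'} together with the decay $m^{-1}\lesssim e^{-c|t_0|^{2\beta+1}}$ as $t_0\to-\infty$ forces $h=0$. Consequently $\widehat{\Gamma g}=0$ on $\widetilde O$. Density of $\widetilde\cS_K$ in $\L^{p'}_{t,x,v}$ then gives $g=0$.

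For $d=1$ we repeat Step~3 of Lemma~\ref{lem:uniquenessH} with $|\xi-t\varphi|^{-\gamma}$ replaced by $d_\beta(\varphi,\xi-t\varphi)^{-\gamma}$. This requires the analogue of claim (i): if $\chi=|\xi-t\varphi|^{2\beta}\theta$ with $\theta\in\cD(O_2)$, then $d_\beta(\varphi,\xi-t\varphi)^{-\gamma}\chi$ is the Fourier transform of an $\L^{p'}$ function. After the shift, $\theta$ has support in a compact set with $|\varphi|\ge c>0$. On that set $d_\beta(\varphi,\xi)\sim 1$ is smooth and nonvanishing, so $d_\beta^{-\gamma}|\xi|^{2\beta}\widehat\Gamma\theta$ is a $\C^\infty$ function with compact support, i.e.\ an element of $\cD$. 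This makes the claim easier than in Lemma~\ref{lem:uniquenessH}, and in fact needs no restriction involving $1/p$. Claims (ii), (iii), the representation formula \eqref{eq:representation}, $h^+=h^-$ and $h=0$ then go through verbatim with $\cS_K$ in place of the $v$-only Littlewood--Paley truncations. We conclude $g=0$ as before.

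\textbf{Step 3: remove polynomials.} From $g=0$ we get $\|f\|_{\L^p_t\Xdot^{\gamma,p}_\beta}=0$. So for a.e.\ $t$, $f(t)\in\cP_k[x,v]$ with $k=[\gamma-(2\beta+2)d/p]$, and we are done if $k<0$. Otherwise $(-\Delta_v)^\beta f=\cF^{-1}(\ldots\hat g)=0$ in $\cS'$, so $(\partial_t+v\cdot\nabla_x)f=0$. This gives $f(t,x,v)=P_t(x,v)$ with $P_t(x,v)=T(x-tv,v)$ a polynomial that is $\L^p$ in $t$ and measurable in $t$ with values in $\cP_k$. Since $P_{t}(x,v)=P_0(x-tv,v)$, the coefficients depend polynomially on $t$, and $\L^p$-integrability in $t$ forces them to vanish. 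Hence $f=0$.

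I expect the main obstacle to be the $d=1$ case: checking carefully that the $\cD_\beta(O_2)$ test-function machinery of Lemma~\ref{lem:uniquenessH} is compatible with the anisotropic weight $d_\beta$ and with approximation through $\cS_K$. I would first try to reduce it to the smoothness of $d_\beta$ away from $\varphi=0$ noted in Step~2.
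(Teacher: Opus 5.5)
Your outline follows the paper's proof step for step:
\begin{itemize}
\item the same factorisation $(-\Delta_v)^\beta f=\cF^{-1}\big((|\xi|/d_\beta)^{2\beta}d_\beta^{2\beta-\gamma}\hat g\big)$ under the same condition $\gamma-2\beta<(2\beta+2)d/p$;
\item the ODE on the shifted Fourier side;
\item Step~2 of Lemma~\ref{lem:uniquenessH} for $d\ge 2$;
\item the $\cD_\beta(O_2)$ machinery with $d_\beta(\varphi,\xi-t\varphi)^{-\gamma}$ in place of $|\xi-t\varphi|^{-\gamma}$ for $d=1$;
\item the removal of polynomials via $(\partial_t+v\cdot\nabla_x)f=0$ and $\L^p$-integrability in $t$ of coefficients that are polynomial in $t$.
\end{itemize}
Your conclusion that the analogue of claim (i) needs no restriction involving $1/p$ is also correct.

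\textbf{The justification of claim (i) is wrong.} You assert that $d_\beta^{-\gamma}|\xi|^{2\beta}\widehat\Gamma\theta$ is $\C^\infty$, but this fails unless $\beta\in\N$. The reason is that $\widehat\Gamma\theta$ need not vanish on $\{\xi=0\}$ (that is, on $\{\xi=t\varphi\}$ before the shift), and $|\xi|^{2\beta}$ is not smooth there. Admitting exactly this singularity is the purpose of $\cD_\beta(O_2)$. If the symbol were in $\cD$, then $\chi$ would itself be a test function and the approximation scheme in (ii) would be pointless.

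The paper's argument is the correct one. Write
\[
d_\beta^{-\gamma}\widehat\Gamma\chi=(|\xi|/d_\beta)^{2\beta}\,d_\beta^{2\beta-\gamma}\widehat\Gamma\theta .
\]
The second factor lies in $\cD(O_2)$ because $|\varphi|\ge c$ on its support. The first factor is an $\L^q_{x,v}$ Fourier multiplier for every $q\in(1,\infty)$ by Lemma~\ref{lem:multipliers}. Alternatively, you can use that $\cF^{-1}(|\xi|^{2\beta}\Theta)\in\L^1\cap\L^\infty$ for a bump function $\Theta$.

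\textbf{The approximants in (ii) need more care.} For the same reason, the $\chi_n$ must be smooth and vanish near $\{\xi=t\varphi\}$; this is what reduces \eqref{eq:representation} to the ODE on $O_1$. They must also have cut-off form, so that $[m\chi_n]^\pm\to[m\chi]^\pm$ in $\cD(\R^*\times\R)$. The $v$-only truncations of Lemma~\ref{lem:uniquenessH} satisfy both requirements. Their convergence argument carries over because $d_\beta^{-\gamma}$ is smooth on the support. An arbitrary $\cS_{K}$ approximation does not guarantee this. Neither does a truncation in $d_\beta$, since $d_\beta\sim 1$ on the support of $\widehat\Gamma\chi$ and such a truncation is the identity for large $n$.

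\textbf{A smaller slip in Step~1.} The ODE holds on $\{\xi\neq t\varphi\}$, not on your set $\{\varphi\ne 0 \text{ or } \xi\ne t\varphi\}$, where $|\xi-t\varphi|^{2\beta}$ is not smooth. The larger set is only where $d_\beta(\varphi,\xi-t\varphi)^\gamma\,\widehat{\Gamma f}=\widehat{\Gamma g}$ holds. This is harmless, because you only use the ODE on $\widetilde O$ and $O_1$.
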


\begin{proof} 
The proof follows the same pattern as the  one of Lemma~\ref{lem:uniquenessH} with many differences which we detail. 
Again, we only look at the case of the forward equation. We divide the argument in five steps. 

\medskip
\paragraph 
{\textbf{Step 1: meaning of the equation.}} 

First, $f\in   \L^p_{t}\Xdot^{\gamma,p}_{\beta} \subset \cS'(\Omega)$ and  the equation may be interpreted in $\cS'(\Omega)$.
Indeed, we know from the proof of  Proposition~\ref{prop:mapping} that the condition $\gamma-2\beta<(2\beta+2)d/p$ ensures that 
$(-\Delta_{v})^{\beta}f$ is defined in $\cS'(\Omega)$ as $(-\Delta_{v})^{\beta}f = D_{\beta}^{2\beta-\gamma}m_{2\beta}\, g$,
 and belongs to $\L^p_{t}\Xdot^{\gamma-2\beta,p}_{\beta}$,  where $g=D_{\beta}^{\gamma}f$,  $D_{\beta}$ being the 
 Fourier multiplier with symbol $d_{\beta}(\varphi,\xi)$, and $M_{2\beta}$ is the  Fourier multiplier with symbol 
 $m_{2\beta}(\varphi,\xi)= (|\xi|/d_{\beta}(\varphi,\xi))^{2\beta}$. 
 More precisely, we took the extension of the Fourier multiplier $D_{\beta}^{2\beta-\gamma}m_{2\beta}$ 
 from the class $\cS_{K}$ by density in $\L^p_{t,x,v}$ into $\L^p_{t}\Xdot^{\gamma-2\beta,p}_{\beta}$.  

By taking the kinetic shift and partial Fourier transform, we have
$$
	\partial_{t}\widehat{\Gamma f} =- \widehat{[\Gamma (-\Delta_{v})^{\beta}f }]= -\widehat{[\Gamma D_{\beta}^{2\beta-\gamma}M_{2\beta}\, g]}
$$ 
in $\cS'(\Omega)$, hence in $\cD'(\Omega)$.

\medskip
\paragraph 
{\textbf{Step 2: an ODE for a restriction of $\widehat{\Gamma f}$.}} 
If $O$ is  the open set of $\Omega$ defined by  $\xi-t\varphi\ne 0$, then the distributional restriction 
$\widehat{\Gamma f}_{\mid O}$ of $\widehat{\Gamma f}$ to $O$  satisfies the equation  
\begin{equation}\label{eq:ODE1}
	\partial_{t}(\widehat{\Gamma f}_{\mid O})+ |\xi-t\varphi|^{2\beta} (\widehat{\Gamma f} _{\mid O})=0
\end{equation}
in $\cD'(O)$. 
Indeed, by taking partial Fourier transform and kinetic shift from $g=D_{\beta}^{\gamma}f$ we have that 
$d_{\beta}(\varphi,\xi-t\varphi)^\gamma\,  (\widehat {\Gamma f}_{\, \mid \{d_{\beta}(\varphi,\xi-t\varphi)\ne 0\}})
= \widehat{\Gamma g}_{\, \mid \{d_{\beta}(\varphi,\xi-t\varphi)\ne 0\}}$ 
as $d_{\beta}$ is $\C^\infty$ away from the origin.  Next, as $\xi-t\varphi\ne 0$ implies 
$d_{\beta}(\varphi,\xi-t\varphi)\ne 0$ and  $ |\xi-t\varphi|^{2\beta}$ is $\C^\infty$ on $O$,  
we have  the following identities
\begin{align*}
	|\xi-t\varphi|^{2\beta} (\widehat{\Gamma f} _{\mid O})
	= d_{\beta}(\varphi,\xi-t\varphi)^{2\beta-\gamma}m_{2\beta}(\varphi,\xi-t\varphi)\, (\widehat{\Gamma g}_{\mid O})
	= -(\partial_{t}\widehat{\Gamma f})_{\mid O} =-\partial_{t}(\widehat{\Gamma f}_{\mid O}),
\end{align*}
in $\cD'(O)$, where the first two equalities come from Step 1. 

\medskip
\paragraph 
{\textbf{Step 3: solving the ODE and getting $g=0$ when $d\ge 2$.}} 

We solve the ODE for a further restriction of $\widehat{\Gamma f}_{\mid \widetilde O}$ exactly as in Step 2  of the proof of  Lemma~\ref{lem:uniquenessH}. This is a purely distributional argument.  To prove that $g=0$, we only have to replace at the very end $|\xi-t\varphi|$ by $d_{\beta}(\varphi,\xi-t\varphi)$ in the relation between $\widehat{\Gamma f}_{\mid \widetilde O}$ and  $\widehat{\Gamma g}_{\mid\widetilde O}$.   

\medskip
\paragraph 
{\textbf{Step 4: solving the ODE and getting $g=0$ when $d=1$.}} 

We proceed exactly the same way as in step 3  of the proof of  Lemma~\ref{lem:uniquenessH}.  
The only change is in the list (i)--(iii) which should be as follows. 
Assume  $\gamma< 2\beta+ (2\beta+2)d/p$ and let $g\in \L^p_{t,x,v}$ be defined as above.
\begin{enumerate}
\item If $\chi\in \cD_{\beta}(O_{2})$ then $d_{\beta}(\varphi,\xi-t\varphi)^{-\gamma}\chi$ is the (partial) Fourier transform of 
a function in $\L^{p'}_{t,x,v}$ whose norm is controlled by a semi-norm 
for the function $\theta=|\xi-t\varphi|^{-2\beta}\chi \in \cD(O_{2})$.
\item Let $f\in \L^p_{t}\Xdot^{\gamma,p}_{\beta}$ and $\chi\in \cD_{\beta}(O_{2})$. 
\item If  $\chi_{n}$,  $n\in \N$, is defined via a normalized anisotropic Littlewood--Paley family $(\theta_{j})$ of 
Section~\ref{sec:anhomSobspaces} as $\widehat \Gamma \chi_{n}= \sum_{|j|\le n} \widehat {\theta_{j}}\widehat \Gamma \chi $,
then $\lim_{n\to \infty} (\widehat{\Gamma f}, \chi_{n})$ exists, and  setting $(\widehat{\Gamma f}, \chi)$ its limit, we have   
$$
	(\widehat{\Gamma f}, \chi)= (\widehat{\Gamma g}, d_{\beta}^{-\gamma}\chi)
$$ 
is independent of the choice of $\chi_{n}$ and  
$|(\widehat{\Gamma f}, \chi)|  \le C_{\chi}\|f\|_{\L^p_{t}\Xdot^{\gamma,p}_{\vphantom{t} \beta}}$ with 
$C_{\chi}\sim  \|h\|_{\L^{p'}_{t,x,v}}$ where $h=\cF^{-1}(d_{\beta}^{-\gamma}\widehat \Gamma\chi)$. 
\item If $f\in \L^p_{t}\Xdot^{\gamma,p}_{\beta}$ and $\chi\in \cD(O_{2})$, then 
$(\widehat{[\Gamma (-\Delta_{v})^{\beta}f]}, \chi)= (\widehat{\Gamma f}, |\xi-t\varphi|^{2\beta}\chi)$.
\end{enumerate}

We only indicate some details for (i), as the proofs of (ii), (iii) and (iv) follow the same patterns. 
We write 
\[
	d_{\beta}^{-\gamma}\widehat \Gamma \chi  = m_{2\beta} d_{\beta}^{2\beta-\gamma}\widehat \Gamma \theta
\]
with $\theta\in \cD(O_{2})$.  
We have already seen that $m_{2\beta}$ is an $\L^q_{x,v}$ Fourier multiplier for any $q\in (1,\infty)$ and similar arguments using
either Mikhlin conditions in $\varphi$ and $\xi$ separately, or anisotropic Sobolev inequalities allow us to conclude.

\medskip
\paragraph 
{\textbf{Step 5:  concluding $f=0$.}}

Since $\|f\|_{\L^p_{t}\Xdot^{\gamma,p}_{\vphantom{t}\beta}} \sim\|g\|_{\L^p_{t,x,v}}=0$, 
we have   $f\in \L^p_{t}(\cP_{k}[x,v])$ where $k=[\gamma-(2\beta+2)d/p]$. 
We are done when $k<0$ since $\cP_{k}[x,v]=\{0\}$. 
Otherwise, we can write $f(t,x,v)=\sum_{|\alpha|\le k} a_{\alpha}(t,x)v^\alpha$ where 
$a_{\alpha}\in \L^p_{\vphantom{\loc,x}t}\L^p_{\loc,x}$ because they are polynomial in $x$ with coefficients in $\L^p_{t}$. 
Let us come back to the equation. By Step 1 and $g=0$, we have    $(-\Delta_{v})^{\beta}f=0$ in $\cS'(\Omega)$. 
Hence,  $(\partial_{t}+v\cdot\nabla_{x})f=0$ and it follows that  $f(t,x,v)=T(x-tv,v)$ for some distribution $T\in \cD'(\R^{2d})$. 
But the equality $\sum_{|\alpha|\le k} a_{\alpha}(t,x)v^\alpha = T(x-tv,v)$ in $\cD'(\Omega)$ is only possible if $T=0$. 
Indeed, the coefficients $a_{\alpha}(t,x)$ can be computed on taking the partial derivative $\partial_{v}^\alpha$  
in $\cD'(\Omega)$  and evaluating at $v=0$. 
Doing this operation for $T(x-tv,v)$ yields a polynomial in $t$ with coefficients being distributions on $\R_{x}$. 
As this polynomial  belongs to $\L^p_{\vphantom{\loc,x} t}\L^p_{\loc,x}$, it must be 0. 
Therefore all $a_{\alpha}$ are 0 and thus $f=0$.
\end{proof}

Next, we look at the Kolmogorov operators. The first item in Theorem~\ref{thm:bounds}  applies and gives boundedness with parameters $\beta\in (0,1], p\in (1,\infty), \gamma\in \R$. We want to relate boundedness properties of $\cK^\pm_{\beta}$ and the corresponding differential operators. The same strategy as before gives us a comparable result to Theorem~\ref{lem:isom-embed} where the condition $\gamma\in [0,2\beta]$ drops, as well as in the list of corollaries below.  

\begin{thm}[Isomorphism] \label{lem:isom-embedNZ}
Let $\beta\in (0,1]$,    $p \in (1,\infty)$ and  $\gamma\in \R$ with $\gamma<\homd/p$.  
Then, the operators $\pm (\partial_{t}+v\cdot\nabla_{x}) +(-\Delta_{v})^\beta$ are isomorphisms from  
\begin{itemize}
\item  $\cUdot^{\gamma,p}_{\beta}$ onto $\Zdot^{\gamma,p}_{\beta}$,
\item $\cTdot^{\gamma,p}_{\beta}$ onto $\L^p_{t} \Xdot^{\gamma -2\beta,p}_{\beta}$. 
\end{itemize}
The inverses are respectively given by the Kolmogorov operators $\cK^\pm_\beta$ defined via the fundamental solutions.  
In particular we have the maximal regularity bounds
\begin{align*}
	\|\pm(\partial_{t}+v\cdot\nabla_{x})f+ (-\Delta_{v})^\beta f \|_{\Adot^{\gamma,p}_{\beta}} 
	&\sim \|f\|_{\L^p_{t} \Xdot^{\gamma,p}_{\vphantom{t} \beta}}+ \|(\partial_{t}+v\cdot\nabla_{x})f\|_{\Adot^{\gamma,p}_{\beta}}\\
	\|S\|_{\Adot^{\gamma,p}_{\beta}}   \sim \|\cK^\pm_\beta S\|_{\L^p_{t} \Xdot^{\gamma,p}_{\vphantom{t} \beta}}
	& + \|(\partial_{t}+v\cdot\nabla_{x})\cK^\pm_\beta S\|_{\Adot^{\gamma,p}_{\beta}},
\end{align*}
 where $\Adot^{\gamma,p}_{\beta}$ denotes any of the two target spaces of the isomorphisms. 
 \end{thm}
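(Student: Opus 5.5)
We follow the scheme of Theorem~\ref{lem:isom-embed}: boundedness of the differential operators, ontoness with the Kolmogorov operators as right inverses, and injectivity from Lemma~\ref{lem:uniquenessX}. We treat the forward operator; the backward one is identical.

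\textbf{Boundedness.} Let $f\in \cUdot^{\gamma,p}_{\beta}$. By the computation in the proof of Proposition~\ref{prop:mapping}, the multiplier $(|\xi|/d_\beta)^{2\beta}$ from Lemma~\ref{lem:multipliers} together with density gives $(-\Delta_v)^\beta:\L^p_t\Xdot^{\gamma,p}_\beta\to \L^p_t\Xdot^{\gamma-2\beta,p}_\beta$. This is valid for $\gamma<2\beta+(2\beta+2)d/p$, which is implied by $\gamma<\homd/p$. Since $\L^p_t\Xdot^{\gamma-2\beta,p}_\beta\subset \Zdot^{\gamma,p}_\beta$, we get
\[
\|(\partial_t+v\cdot\nabla_x)f+(-\Delta_v)^\beta f\|_{\Zdot^{\gamma,p}_\beta}\lesssim \|f\|_{\cUdot^{\gamma,p}_\beta},
\]
and the same bound holds for $\cTdot^{\gamma,p}_\beta$ with $\L^p_t\Xdot^{\gamma-2\beta,p}_\beta$ as target. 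No sign condition on $\gamma$ is needed, since we never pass through $\L^p_{t,x}\Hdot^{\gamma,p}_v$.

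\textbf{Ontoness.} Given $S\in\Zdot^{\gamma,p}_\beta$, Theorem~\ref{thm:bounds} (first item, valid for all $\gamma\in\R$) gives $f=\cK^+_\beta S\in\Ydot^{\gamma,p}_\beta\subset\L^p_t\Xdot^{\gamma,p}_\beta$. We approximate $S$ by $S_j\in\cS_K$ (Lemma~\ref{lem:density}); Lemma~\ref{lem:equation} shows that $f_j=\cK^+_\beta S_j$ solves the equation with source $S_j$. Because $\gamma<\homd/p$, the inclusions of $\Zdot^{\gamma,p}_\beta$, $\Ydot^{\gamma,p}_\beta$ and $\L^p_t\Xdot^{\gamma-2\beta,p}_\beta$ into $\cS'(\Omega)$ are continuous. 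Hence $S_j\to S$, $f_j\to f$, the transport terms converge, and $(-\Delta_v)^\beta f_j\to(-\Delta_v)^\beta f$, all in $\cS'(\Omega)$. So $f$ is a distributional solution, and $(\partial_t+v\cdot\nabla_x)f=S-(-\Delta_v)^\beta f\in\Zdot^{\gamma,p}_\beta$, which gives $f\in\cUdot^{\gamma,p}_\beta$ with norm $\lesssim\|S\|_{\Zdot^{\gamma,p}_\beta}$. If $S\in\L^p_t\Xdot^{\gamma-2\beta,p}_\beta$, then $(-\Delta_v)^\beta f$ lies in the same space, so $f\in\cTdot^{\gamma,p}_\beta$.

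\textbf{Injectivity and norm equivalences.} Since $\gamma<\homd/p<2\beta+(2\beta+2)d/p$, Lemma~\ref{lem:uniquenessX} applies. If $f$ lies in the kernel, then $f=0$; consequently every $f\in\cUdot^{\gamma,p}_\beta$ satisfies $f=\cK^+_\beta\big((\partial_t+v\cdot\nabla_x)f+(-\Delta_v)^\beta f\big)$. The two-sided maximal regularity bounds then follow from the bounds in both directions obtained above.

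The main obstacle is justifying the convergence $(-\Delta_v)^\beta f_j\to(-\Delta_v)^\beta f$ in $\cS'$ when $\gamma<0$. Here elements of $\L^p_t\Xdot^{\gamma,p}_\beta$ are genuine distributions and need not be functions. We handle this by defining $(-\Delta_v)^\beta$ through the bounded extension of the multiplier $D_\beta^{2\beta-\gamma}m_{2\beta}$ on $\L^p_t\Xdot^{\gamma,p}_\beta$, exactly as in Step~1 of Lemma~\ref{lem:uniquenessX}. This definition must agree with the distributional fractional Laplacian used in the equation; we verify the agreement on $\cS_K$ and pass to the limit via the continuous embedding into $\cS'(\Omega)$.
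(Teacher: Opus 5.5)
Your proposal is correct and follows the paper's route: the paper only indicates that the strategy of Theorem~\ref{lem:isom-embed} carries over, namely boundedness of the differential operators, ontoness with $\cK^\pm_\beta$ as right inverses via Theorem~\ref{thm:bounds} and the density argument of Proposition~\ref{prop:mapping}, and injectivity from Lemma~\ref{lem:uniquenessX}. As you observe, the restriction $\gamma\in[0,2\beta]$ disappears because $\L^p_{t,x}\Hdot^{\gamma,p}_v$ is never used. One small precision: check that the multiplier definition of $(-\Delta_v)^\beta$ agrees with the classical one directly on $f_j=\cK^+_\beta S_j$, rather than on $\cS_K$ followed by a limit. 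The $f_j$ are not in $\cS_K$, but for each $t$ the Fourier transform of $f_j(t)$ is compactly supported in $\{\varphi\ne 0\}$, so the check is immediate.
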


\begin{cor}[Completeness, dense class and interpolation] 
Let $\beta\in (0,1]$, $p \in (1,\infty)$ and $\gamma\in \R$ with $\gamma<\homd/p$.
The spaces $\cTdot^{\gamma,p}_{\beta}$ and  $\cUdot^{\gamma,p}_{\beta}$  are complete Banach spaces 
of tempered distributions containing the images of $\cS_{K}$ under $\cK_{\beta}^\pm$ as dense subspaces.  
The  $\cT$  spaces  interpolate by the complex method along each segment drawn in the convex region 
$(\gamma, 1/p) \in \R \times (0,1)$ defined by $\gamma<\homd/p$.
\end{cor}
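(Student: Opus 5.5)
The plan is to derive the corollary entirely from Theorem~\ref{lem:isom-embedNZ}, following the proof of the analogous corollary for $\cFdot,\cGdot,\cLdot$. Fix $\beta\in(0,1]$, $p\in(1,\infty)$ and $\gamma<\homd/p$. The target spaces $\L^p_t\Xdot^{\gamma-2\beta,p}_\beta$ and $\Zdot^{\gamma,p}_\beta$ are continuously embedded in $\cS'(\Omega)$, by the discussion in Section~\ref{sec:kineticspaces}, since $\gamma-2\beta<(2\beta+2)d/p$ and $\gamma<\homd/p$. Within that embedding they are genuine normed spaces rather than quotients modulo polynomials, and they are complete. By Lemma~\ref{lem:density}, $\cS_K$ is dense in both of them.

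\textbf{Completeness and density.} The differential operator $\pm(\partial_t+v\cdot\nabla_x)+(-\Delta_v)^\beta$ is a bounded bijection from $\cUdot^{\gamma,p}_\beta$ (resp.\ $\cTdot^{\gamma,p}_\beta$) onto $\Zdot^{\gamma,p}_\beta$ (resp.\ $\L^p_t\Xdot^{\gamma-2\beta,p}_\beta$), and $\cK^\pm_\beta$ is its bounded inverse. The two-sided norm equivalence in Theorem~\ref{lem:isom-embedNZ} therefore makes each $\cTdot$, $\cUdot$ space isomorphic to its target space. It follows that the defining semi-norm is a norm: injectivity comes from Lemma~\ref{lem:uniquenessX}, which applies since $\gamma<\homd/p<2\beta+(2\beta+2)d/p$. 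Completeness transfers through the isomorphism. Concretely, for a Cauchy sequence $f_n$ the sources $S_n$ form a Cauchy sequence with limit $S$; then $\cK^\pm_\beta S$ lies in the space, and $f_n\to\cK^\pm_\beta S$ by the bound. The elements are tempered distributions because $\L^p_t\Xdot^{\gamma,p}_\beta\subset\cS'(\Omega)$ continuously, which again uses $\gamma<\homd/p$. Density of $\cK^\pm_\beta(\cS_K)$ follows because isomorphisms map dense sets to dense sets.

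\textbf{Interpolation.} For two points $(\gamma_0,1/p_0)$ and $(\gamma_1,1/p_1)$ in the region, set $\gamma_\theta$ and $1/p_\theta$ by linear interpolation. The region is convex, so the segment stays inside it. I would use that the anisotropic spaces $\Xdot^{s,p}_\beta$ interpolate by the complex method as weighted Triebel--Lizorkin type spaces,
\[
[\Xdot^{s_0,p_0}_\beta,\Xdot^{s_1,p_1}_\beta]_\theta=\Xdot^{s_\theta,p_\theta}_\beta .
\]
Together with the standard complex interpolation of Bochner spaces $\L^p_t(A)$, this gives that $\L^{p}_t\Xdot^{\gamma-2\beta,p}_\beta$ interpolates along the segment. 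The couple must be compatible: all these spaces embed continuously in $\cS'(\Omega)$ in the region, so they form a compatible couple there. A single operator $\cK^+_\beta$ serves as a common inverse on $\cS_K$ for both endpoints, and the differential operator is defined uniformly on the sum space in $\cS'(\Omega)$. The retraction/co-retraction principle then transports the interpolation identity to $\cTdot^{\gamma_\theta,p_\theta}_\beta$. No claim is made for $\cUdot$, because the $\Zdot$ family mixes $\L^p_t$ and $\L^1_t$ sums and its interpolation is unknown, as remarked earlier.

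\textbf{Main obstacle.} The delicate point is the interpolation of the homogeneous anisotropic spaces $\Xdot^{s,p}_\beta$ together with the consistency of $\cK^\pm_\beta$ across the couple. For the first, I would reduce via the Littlewood--Paley retraction onto $\L^p(\ell^2)$ spaces with weights $2^{js}$. This retraction is valid because the anisotropic Littlewood--Paley family satisfies the vector-valued Calder\'on--Zygmund estimates on the homogeneous space $(\R^{2d},d_\beta)$. For the second, the extensions of $\cK^\pm_\beta$ from $\cS_K$ agree at both endpoints, since both are continuous into $\cS'(\Omega)$.
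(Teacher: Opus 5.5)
Your proposal is correct and follows the paper's route. The paper obtains this corollary, like its $\cFdot,\cGdot,\cLdot$ analogue, by transporting completeness, density of $\cS_K$ and complex interpolation from the target spaces $\L^p_t\Xdot^{\gamma-2\beta,p}_\beta$ and $\Zdot^{\gamma,p}_\beta$ through the isomorphism of Theorem~\ref{lem:isom-embedNZ}, and it makes no interpolation claim for the $\cUdot$ family.

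One small correction: $\L^p_t\Xdot^{\gamma,p}_\beta$ embeds continuously into $\cS'(\Omega)$ only when $\gamma<(2\beta+2)d/p$, not for all $\gamma<\homd/p$. The continuous embedding of $\cTdot^{\gamma,p}_\beta$ and $\cUdot^{\gamma,p}_\beta$ into $\cS'(\Omega)$, which you also need for the compatibility of the interpolation couple, should instead come from $f=\cK^\pm_\beta S\in\Ydot^{\gamma,p}_\beta$. The Besov factor $\C_0(\R;\Bdot^{\gamma-2\beta/p,p}_\beta)$ of $\Ydot^{\gamma,p}_\beta$ embeds continuously in $\cS'(\Omega)$ exactly when $\gamma<\homd/p$.
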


\begin{cor} 
Let $\beta\in (0,1]$,    $p \in (1,\infty)$ and  $\gamma\in \R$ with $\gamma<\homd/p$.  
Then $\cUdot^{\gamma,p}_{\beta}\subset \C^{}_{0}(\R^{}_{t}\, ;\, \Bdot^{\gamma-2\beta/p,p}_{\beta})$ with   
\begin{equation}\label{eq:embedding2}
	\sup_{t\in \R}\|f(t,\cdot)\|_{\Bdot^{\gamma-2\beta/p,p}_{\beta}} 
	\lesssim_{\beta,\gamma,d,p}\| f\|_{\L^p_{t}\Xdot^{\gamma,p}_\beta}+ \|(\partial_t + v \cdot \nabla_x)f\|_{\Zdot^{\gamma,p}_\beta}.
\end{equation}
\end{cor}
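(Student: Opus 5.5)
The embedding in the last corollary will follow from the isomorphism in Theorem~\ref{lem:isom-embedNZ} together with the first item of Theorem~\ref{thm:bounds}, exactly as Corollary~\ref{cor:embedding} was deduced from Theorem~\ref{lem:isom-embed}.

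Let $f\in\cUdot^{\gamma,p}_{\beta}$ and set
\[
	S:=(\partial_t+v\cdot\nabla_x)f+(-\Delta_v)^\beta f .
\]
We first check that $S\in\Zdot^{\gamma,p}_\beta$. By definition, $(\partial_t+v\cdot\nabla_x)f\in\Zdot^{\gamma,p}_\beta$. For the diffusion term, the multiplier argument in the proof of Proposition~\ref{prop:mapping} (symbol $(|\xi|/d_\beta)^{2\beta}$, Lemma~\ref{lem:multipliers}) gives
\[
	\|(-\Delta_v)^\beta f\|_{\L^p_t\Xdot^{\gamma-2\beta,p}_\beta}\lesssim\|f\|_{\L^p_t\Xdot^{\gamma,p}_\beta}.
\]
This is valid since $\gamma<\homd/p$ implies $\gamma-2\beta<(2\beta+2)d/p$, and $\L^p_t\Xdot^{\gamma-2\beta,p}_\beta\subset\Zdot^{\gamma,p}_\beta$. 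Hence
\[
	\|S\|_{\Zdot^{\gamma,p}_\beta}\lesssim \|f\|_{\L^p_t\Xdot^{\gamma,p}_\beta}+\|(\partial_t+v\cdot\nabla_x)f\|_{\Zdot^{\gamma,p}_\beta}.
\]

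Next we identify $f$ with $\cK^+_\beta S$. Theorem~\ref{lem:isom-embedNZ} gives this directly. Without invoking it, the argument runs as follows. By Theorem~\ref{thm:bounds}, $\cK^+_\beta S\in\Ydot^{\gamma,p}_\beta\subset\L^p_t\Xdot^{\gamma,p}_\beta$. As in Proposition~\ref{prop:mapping}, using the continuity of $\Zdot^{\gamma,p}_\beta,\Ydot^{\gamma,p}_\beta\hookrightarrow\cS'(\Omega)$ for $\gamma<\homd/p$ and density of $\cS_K$, it solves the same equation in $\cS'(\Omega)$. The difference $f-\cK^+_\beta S$ therefore lies in $\L^p_t\Xdot^{\gamma,p}_\beta$ and solves the homogeneous equation. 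Since $\gamma<\homd/p<2\beta+(2\beta+2)d/p$, Lemma~\ref{lem:uniquenessX} shows the difference vanishes, so $f=\cK^+_\beta S$.

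Finally, the first item of Theorem~\ref{thm:bounds} gives $\cK^+_\beta S\in\C_0(\R_t;\Bdot^{\gamma-2\beta/p,p}_\beta)$ with sup-norm bounded by $\|S\|_{\Zdot^{\gamma,p}_\beta}$. Combined with the bound on $\|S\|_{\Zdot^{\gamma,p}_\beta}$ above, this yields \eqref{eq:embedding2}.

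The only delicate point is the identification step: the equality $f=\cK^+_\beta S$ must hold as tempered distributions, not merely modulo polynomials. This is exactly where $\gamma<\homd/p$ enters. It makes the inclusions of the $\Zdot$ and $\Ydot$ spaces into $\cS'(\Omega)$ continuous and places us inside the range of Lemma~\ref{lem:uniquenessX}.
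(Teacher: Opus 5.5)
Your proposal is correct and is essentially the paper's argument. Exactly as for Corollary~\ref{cor:embedding}, you write $f=\cK^\pm_\beta S$ via the isomorphism of Theorem~\ref{lem:isom-embedNZ} and then apply the first item of Theorem~\ref{thm:bounds}. Your unpacking of the identification step is precisely the ``same strategy as before'' that the paper invokes for Theorem~\ref{lem:isom-embedNZ}: the solution property in $\cS'(\Omega)$ comes from the argument of Proposition~\ref{prop:mapping}, and injectivity comes from Lemma~\ref{lem:uniquenessX}.
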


\begin{cor}[Equalities between kinetic spaces] 
Let $\beta\in (0,1]$,  $p\in(1,\infty)$ and $\gamma\in [0,2\beta]$ with $\gamma<\homd/p$. 
We have $\cUdot^{\gamma,p}_{\beta}=\cLdot^{\gamma,p}_{\beta}$ and $\cTdot^{\gamma,p}_{\beta}=\cGdot^{\gamma,p}_{\beta}$.
\end{cor}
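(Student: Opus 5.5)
The plan is to prove both inclusions by combining Theorem~\ref{lem:isom-embed} (isomorphism for $\cLdot$, $\cGdot$) with Theorem~\ref{lem:isom-embedNZ} (isomorphism for $\cUdot$, $\cTdot$). Both theorems identify the target spaces ($\Zdot^{\gamma,p}_{\beta}$ and $\L^p_t\Xdot^{\gamma-2\beta,p}_\beta$ respectively) as the same, with inverse the same Kolmogorov operator $\cK^+_\beta$. First I record the easy inclusions: since $\gamma\ge 0$, Proposition~\ref{prop:characterisation} gives $\L^p_t\Xdot^{\gamma,p}_\beta\subset \L^p_{t,x}\Hdot^{\gamma,p}_v$ continuously, hence $\cUdot^{\gamma,p}_{\beta}\subset\cLdot^{\gamma,p}_{\beta}$ and $\cTdot^{\gamma,p}_{\beta}\subset\cGdot^{\gamma,p}_{\beta}$, as already noted in the text.

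For the reverse inclusion I take $f\in \cLdot^{\gamma,p}_{\beta}$ and set $S=(\partial_t+v\cdot\nabla_x)f+(-\Delta_v)^\beta f$. Since $\gamma\le 2\beta$, the boundedness lemma of Section~\ref{sec:iso} gives $S\in\Zdot^{\gamma,p}_{\beta}$. By Theorem~\ref{lem:isom-embed}, injectivity (Lemma~\ref{lem:uniquenessH}) forces $f=\cK^+_\beta S$. On the other hand, Theorem~\ref{thm:bounds} (first item) gives $\cK^+_\beta S\in \Ydot^{\gamma,p}_{\beta}\subset \L^p_t\Xdot^{\gamma,p}_\beta$. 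Thus $f\in\L^p_t\Xdot^{\gamma,p}_\beta$ with $(\partial_t+v\cdot\nabla_x)f\in\Zdot^{\gamma,p}_\beta$, that is, $f\in\cUdot^{\gamma,p}_{\beta}$, with norm control $\|f\|_{\cUdot}\lesssim \|S\|_{\Zdot}\lesssim\|f\|_{\cLdot}$. The same argument with $\L^p_t\Xdot^{\gamma-2\beta,p}_\beta$ in place of $\Zdot^{\gamma,p}_\beta$ handles $\cGdot=\cTdot$, using the second item of the boundedness lemma and the second isomorphism.

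The only delicate point is that the identity $f=\cK^+_\beta S$ must hold as tempered distributions, not only modulo polynomials, so that it truly places $f$ in $\L^p_t\Xdot^{\gamma,p}_\beta$ as a set rather than just bounding a seminorm. I would handle this using $\gamma<\homd/p$. In this range, Section~\ref{sec:kineticspaces} shows that $\Ydot^{\gamma,p}_\beta$ embeds continuously into $\cS'(\Omega)$. Proposition~\ref{prop:mapping} then shows that $\cK^+_\beta S$ is a genuine distributional solution. The difference $f-\cK^+_\beta S\in\L^p_{t,x}\Hdot^{\gamma,p}_v$ solves the homogeneous equation, so it vanishes by Lemma~\ref{lem:uniquenessH}, which applies since $\gamma\le 2\beta<2\beta+d/p$. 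This settles both equalities, with equivalent norms.
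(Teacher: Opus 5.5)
Your proof is correct and is essentially the paper's argument. The paper gets $\subset$ from the characterisation for $\gamma\ge 0$, and the reverse inclusions from the embedding $\cLdot^{\gamma,p}_\beta\subset\Ydot^{\gamma,p}_\beta$ (Corollary~\ref{cor:embedding}); you re-derive that embedding inline via $f=\cK^+_\beta S$ (Lemma~\ref{lem:uniquenessH}) and Theorem~\ref{thm:bounds}, and Theorem~\ref{lem:isom-embedNZ}, announced in your plan, is never actually needed.
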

 
\begin{proof}
The inclusions $\subset$ were already observed by definition when $\gamma\ge 0$. 
Also elements $f$ in any of those spaces belong to $\Ydot^{\gamma,p}_{\beta}$ by the embedding above. 
So the other inclusions follow.
\end{proof}

\begin{cor}
The formula \eqref{eq:derivative} holds for $f\in \cUdot^{\gamma,p}_{\beta}$ and 
$\tilde f\in \cUdot^{\tilde \gamma,p'}_{\beta}$, under the condition $2\beta-\homd/p'<\gamma<\homd/p$ and with $\tilde{\gamma} = 2\beta-\gamma$.
\end{cor}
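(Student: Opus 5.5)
The plan is to follow the proof of Corollary~\ref{cor:polarizedenergyequalities} essentially verbatim, working now in the $\cUdot$ scale and with no restriction to $[0,2\beta]$. By Theorem~\ref{lem:isom-embedNZ}, applied to $(\gamma,p)$ and to $(\tilde\gamma,p')$, every $f\in\cUdot^{\gamma,p}_\beta$ can be written as $f=\cK^+_\beta S$ with $S=(\partial_t+v\cdot\nabla_x)f+(-\Delta_v)^\beta f\in\Zdot^{\gamma,p}_\beta$. Likewise $\tilde f=\cK^-_\beta\tilde S$ with $\tilde S=-(\partial_t+v\cdot\nabla_x)\tilde f+(-\Delta_v)^\beta\tilde f\in\Zdot^{\tilde\gamma,p'}_\beta$.

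This step needs $\gamma<\homd/p$ and $\tilde\gamma<\homd/p'$. The second inequality is exactly $2\beta-\homd/p'<\gamma$, so the hypothesis is precisely what makes both isomorphisms available. Writing $S=S_1+S_2$ and $\tilde S=\widetilde S_1+\widetilde S_2$ in the defining decompositions of the $\Zdot$ spaces, the claimed formula~\eqref{eq:derivative} becomes, after substituting the equation, an identity of the form
\[
\frac{\mathrm d}{\mathrm dt}\angle{f}{\tilde f}=\angle{S}{\tilde f}+\angle{f}{\tilde S}.
\]
Here the terms $\angle{(-\Delta_v)^\beta f}{\tilde f}$ and $\angle{f}{(-\Delta_v)^\beta\tilde f}$ cancel. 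I would check this cancellation first on $\cS_K$ by Plancherel, and then extend it by continuity using the boundedness $\L^p_t\Xdot^{\gamma,p}_\beta\to\L^p_t\Xdot^{\gamma-2\beta,p}_\beta$ of $(-\Delta_v)^\beta$ from the proof of Proposition~\ref{prop:mapping}.

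For $S,\tilde S\in\cS_K$ the identity is the $\L^2$ one from \cite{AIN}, recalled in the proof of Corollary~\ref{cor:polarizedenergyequalities}. To pass to general data, take $S^n\to S$ and $\tilde S^n\to\tilde S$ in the respective $\Zdot$ spaces, which is possible by Lemma~\ref{lem:density}. Write the identity in integrated form on $[a,b]$. By Theorem~\ref{thm:bounds}, $\cK^+_\beta S^n\to f$ in $\Ydot^{\gamma,p}_\beta$ and $\cK^-_\beta\tilde S^n\to\tilde f$ in $\Ydot^{\tilde\gamma,p'}_\beta$. The boundary terms converge uniformly in $t$ by the $\C_0$ Besov bounds, since $\gamma-2\beta/p$ and $\tilde\gamma-2\beta/p'=-(\gamma-2\beta/p)$ are dual exponents.

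The source terms pair in the right way. The term $\angle{S_1}{\tilde f}$ is controlled by $\|S_1\|_{\L^p_t\Xdot^{\gamma-2\beta,p}_\beta}\|\tilde f\|_{\L^{p'}_t\Xdot^{2\beta-\gamma,p'}_\beta}$, because $2\beta-\gamma=\tilde\gamma$. The term $\angle{S_2}{\tilde f}$ is controlled by $\|S_2\|_{\L^1_t\Bdot}\|\tilde f\|_{\L^\infty_t\Bdot}$, and symmetrically for the tilded sources. Hence each bracket is an integrable function of $t$, and bilinear continuity passes the integrated identity to the limit. Absolute continuity and the a.e.\ derivative formula then follow.

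The main obstacle I expect is bookkeeping rather than analysis. The decomposition $S=S_1+S_2$ in the statement is arbitrary, whereas the density argument fixes one decomposition. I would handle this by noting that the right-hand side of~\eqref{eq:derivative} depends only on $S_1+S_2$ as a distribution. To see this, approximate $S_1$ and $S_2$ separately within their own spaces, so that $S_1^n+S_2^n$ approximates the given decomposition; the limit is then independent of the splitting.

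One should also confirm that the duality brackets are well defined when $\gamma<0$ or $\gamma>2\beta$. This works because all of them are Besov or anisotropic-Sobolev dualities, which hold for every real regularity index. The identity therefore holds throughout the full range $2\beta-\homd/p'<\gamma<\homd/p$.
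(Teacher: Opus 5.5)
Your proposal is correct and is essentially the paper's argument. The paper obtains this corollary by transplanting the proof of Corollary~\ref{cor:polarizedenergyequalities} to the $\cUdot$ scale, and you do the same: Theorem~\ref{lem:isom-embedNZ} is applied to both $(\gamma,p)$ and $(\tilde\gamma,p')$, which is exactly where $2\beta-\homd/p'<\gamma<\homd/p$ enters, followed by the $\L^2$ identity for $\cS_K$ data and passage to the limit via density and the $\Zdot\to\Ydot$ bounds of Theorem~\ref{thm:bounds}. One correction: with $f=\cK^+_\beta S$ and $\tilde f=\cK^-_\beta\tilde S$ the intermediate identity is $\frac{\mathrm d}{\mathrm dt}\angle{f}{\tilde f}=\angle{S}{\tilde f}-\angle{f}{\tilde S}$, not with a plus sign (the paper's proof of Corollary~\ref{cor:polarizedenergyequalities} has the same slip); only with the minus sign, after splitting $S=(S_1+(-\Delta_v)^\beta f)+S_2$ and $\tilde S=((-\Delta_v)^\beta\tilde f-\widetilde S_1)-\widetilde S_2$, do the diffusion terms cancel and yield \eqref{eq:derivative}.
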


There are inhomogeneous versions of these spaces by adding the condition $f\in \L^p_{t,x,v}$ in the definitions. The isomorphism property is for the differential operators $ \pm (\partial_{t}+v\cdot\nabla_{x}) +(-\Delta_{v})^\beta +1$ in the full range $\gamma\in \R$. Such results can be used to study Cauchy problems on strips $[0,T]\times \R^{2d}$ and, in particular, to bootstrap regularity all the way to $\C^\infty$ in applications to non-linear problems.

\section{The case of local diffusion}
\label{sec:local}

In this section we provide an alternative approach to the $\L^p$-estimate in the special case of local diffusion $\beta = 1$ and weak solutions $\gamma = 1$. 
In the case of weak solutions, we want to find the minimal requirements on $S$ such that $\nabla_v f \in \L^p_{t,x,v}$. 
One choice is to consider a source term of the form $S = \nabla_v \cdot S_0$ for $S_0 \in \L^p(\R^{1+2d};\R^d)$. 
We write $S \in \L^p_{t,x}\Wdot^{-1,p}_{\vphantom{t}v}$ equipped with the norm $\norm{S}_{\L^p_{t,x}\Wdot^{-1,p}_{\vphantom{t}v}} = \inf \norm{S_0}_{\L^p_{t,x,v}}$, the infimum being taken over all such $S_{0}$.
As before, when $p=2$, $\L^2_{t,x,v}$ bounds for $\nabla_v f$ can be proved by employing the Fourier transform and the theorem of 
Plancherel; see \cite{AIN}. 
To consider $p\ne 2$, we prove a representation formula in physical variables $(t,x,v)$ and do not use Fourier transform as in Section \ref{sec:towardsLp}. 
One then proves that the resulting singular integral operator is bounded from $\L^p_{t,x,v}$ to $\L^p_{t,x,v}$ 
and we deduce the following global $\L^p_{t,x,v}$ estimate for weak solutions to the Kolmogorov equation when $\beta = 1$. 

\begin{thm} \label{thm:weakLplocal}
	Let $\beta = 1$ and $p\in (1,\infty)$. The solution $f$ given by the fundamental solution to the Kolmogorov equation with data $S=\nabla_{v}\cdot S_{0}$ for $S_{0} \in \C_c^\infty(\R^{1+2d} ; \R^d)$ satisfies
	\begin{equation*}
		\norm{(\partial_t+v\cdot \nabla_x)f}_{\L^p_{t,x}\Wdot^{-1,p}_{\vphantom{t}v}} + \norm{\nabla_v f}_{\L^p_{t,x,v}} \lesssim \norm{S}_{\L^p_{t,x}\Wdot^{-1,p}_{\vphantom{t}v}}.
	\end{equation*} 
\end{thm}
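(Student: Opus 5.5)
The plan is to work entirely in physical variables with the explicit Kolmogorov fundamental solution for $\beta=1$. The bound on the transport term then reduces to the bound on $\nabla_v f$.

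\emph{Representation.} For $\beta=1$ the fundamental solution $\Gamma_1(t,x,v)$ is an explicit Gaussian in the variables $(x - tv/2, v)$, with the scaling $(t,x,v)\mapsto(\lambda^2 t,\lambda^3 x,\lambda v)$. Write $f(t,x,v)=\int_{s<t}\int \Gamma_1(t-s, x-y-(t-s)w, v-w)\,(\nabla_w\cdot S_0)(s,y,w)\,\dd(s,y,w)$. Since $S_0$ is smooth and compactly supported, we may integrate by parts in $w$, which is legitimate for each $s<t$. Note that $\nabla_w$ acting on $\Gamma_1(t-s,x-y-(t-s)w,v-w)$ gives $-(\nabla_v\Gamma_1 - \ldots)$ evaluated along the kinetic translation. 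Applying $\nabla_v$ then gives
\[
\nabla_v f = \lim_{\epsilon\to 0}\int_{t-s>\epsilon} \mathsf{K}(t-s,\cdot)\ast_{\rm kin} S_0 \;+\; c\,S_0 ,
\]
where $\mathsf K$ is a second derivative $Y_iY_j\Gamma_1$ along the vector fields $Y=\nabla_v$ adapted to kinetic convolution. The constant term should vanish or be explicit, as in the classical treatment of $\nabla^2$ of the heat kernel. The kernel $\mathsf K$ is homogeneous of degree $-\homd$ with respect to the kinetic dilations, smooth away from the origin, and has vanishing integral on kinetic ``spheres'', because each derivative of a Gaussian integrates to zero in $v$ for each fixed $t$. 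This gives the cancellation needed for a principal value.

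\emph{Boundedness.} I will treat $S_0\mapsto\nabla_v f$ as a Calderón--Zygmund operator on the homogeneous group $(\R^{1+2d},\circ)$, where the product is the Galilean one, the quasi-distance is the kinetic distance and the measure is Lebesgue measure. The steps are as follows.
\begin{enumerate}
\item $\L^2$ boundedness follows from \cite{AIN} (Plancherel), equivalently from Lemma~\ref{lem:TgammaL2} with $\gamma=\beta=1$ composed with the $\L^2$-bounded Riesz transforms in $v$.
\item The standard size and smoothness estimates for $\mathsf K$ follow from the Gaussian bounds of all derivatives of $\Gamma_1$ together with homogeneity. These give the Hörmander condition for the kernel and for its adjoint.
\item The theorem of Coifman--Weiss, Theorem~\ref{thm:CW}, then yields $\L^p$ bounds for $1<p<\infty$, uniformly in the truncation $\epsilon$. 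We pass to the limit as in Theorem~\ref{thm:TgammepsLp}.
\end{enumerate}
The main obstacle is the rigorous derivation of the principal-value representation with the correct local term. To obtain it, I would first compute on $\{t-s>\epsilon\}$, then integrate by parts in $w$ and control the boundary term at $t-s=\epsilon$ by rescaling to $\epsilon=1$ and using continuity of $S_0$.

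\emph{Transport term.} Since $f$ solves $(\partial_t+v\cdot\nabla_x)f=\Delta_v f+\nabla_v\cdot S_0=\nabla_v\cdot(\nabla_v f+S_0)$ pointwise, for example by Lemma~\ref{lem:equation} or directly from the smoothness of $f$, the definition of the $\L^p_{t,x}\Wdot^{-1,p}_v$ norm gives
\[
\|(\partial_t+v\cdot\nabla_x)f\|_{\L^p_{t,x}\Wdot^{-1,p}_v}\le\|\nabla_v f\|_{\L^p}+\|S_0\|_{\L^p}.
\]
Finally, $S\mapsto\nabla_v f$ depends only on $S$ and not on the choice of $S_0$, by linearity and uniqueness of the fundamental-solution representation. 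Taking the infimum over all representations $S=\nabla_v\cdot S_0$ therefore concludes the proof.
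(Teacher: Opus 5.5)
Your proposal is correct and follows essentially the paper's route. Like the paper, you write $\nabla_v f$ in physical variables as a principal-value integral against $(\tau\partial_{\zeta_j}+\partial_{\eta_j})\partial_{\eta_i}G_1$ (Lemma~\ref{lem:repweak}), then combine the $\L^2$ bound from \cite{AIN}, H\"ormander's condition and Theorem~\ref{thm:CW}; you simply replace the paper's kinetic-shift conjugation and the path argument of Section~\ref{sec:horm} with classical pointwise homogeneous-group kernel estimates.

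One remark: your truncation $\{t-s>\epsilon\}$ does not depend on $w$, so integrating by parts in $w$ produces no boundary term and the local constant is exactly zero (the paper's $\mathfrak c$ comes from its $w$-dependent cutoff $\chi_\epsilon$). Your explicit treatment of the transport term via $(\partial_t+v\cdot\nabla_x)f=\nabla_v\cdot(\nabla_v f+S_0)$ and of the infimum over representations $S=\nabla_v\cdot S_0$ fills in steps the paper leaves to the reader. The infimum step needs a routine density argument showing that the $\L^p$ extension of $S_0\mapsto\nabla_v f$ annihilates $v$-divergence-free fields.
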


Let $G_1$ be Kolmogorov's fundamental solution
\[
	G_1(t,x,v)=\frac{3^{\frac d2}}{(2\pi)^d\,t^{2d}} \exp\!\left(-\frac{|v|^2}{4t}-\frac{3}{t^3}\Big|x-\frac t2v\Big|^2\right)\mathds 1_{(0,\infty)}(t).
\]
Moreover, we fix a homogeneous quasi-norm (smooth away from zero) for the Kolmogorov dilations
\[
	\delta_r(t,x,v):=(r^2t,r^3x,rv), \qquad \rho(t,x,v):= \left(|t|^3+|x|^{2}+|v|^6\right)^{\frac{1}{6}}.
\]

\begin{lem}\label{lem:repweak}
	Let $S_0\in \C_c^\infty(\mathbb R^{1+2d};\mathbb R^d)$ and set $S:=\nabla_v\!\cdot S_0$.
	Define $f$ by the Duhamel formula
	\begin{equation}\label{eq:f_def_convolution}
		f(t,x,v):=\int_{\mathbb R^{1+2d}}G_1\bigl(t-s,\,x-y-(t-s)w,\,v-w\bigr)\,S(s,y,w) \dd(s,y,w)
	\end{equation}
	Fix a nondecreasing cutoff $\chi\in \C^\infty([0,\infty)\, ; \, [0,1])$ such that $\chi(r)=0$ for $0\le r\le 1$ and $\chi(r)=1$ for $r\ge 2$, and define $\chi_\epsilon(t,x,v):=\chi(\rho(t,x,v)/\epsilon)$. For $\tau>0$ and $(\zeta,\eta)\in\mathbb R^{2d}$ define the $d\times d$ matrix-valued kernel
	\begin{equation}\label{eq:K_def}
		\mathcal K(\tau,\zeta,\eta):=\bigl(\mathcal K_{ij}(\tau,\zeta,\eta)\bigr)_{1\le i,j\le d},
		\qquad
		\mathcal K_{ij}(\tau,\zeta,\eta):=(\tau\partial_{\zeta_j}+\partial_{\eta_j})\,\partial_{\eta_i}G_1(\tau,\zeta,\eta),
	\end{equation}
	and set $\mathcal K(\tau,\zeta,\eta)=0$ for $\tau\le 0$.

	Then for every $(t,x,v)\in\mathbb R^{1+2d}$ one has
	\begin{align}\label{eq:repweak_lemma}
		[\nabla_v f](t,x,v) \nonumber
		&= \mathrm{p.v.}\!\int_{\mathbb R^{1+2d}} \mathcal K\bigl(t-s,\,x-y-(t-s)w,\,v-w\bigr)\,S_0(s,y,w)\dd(s,y,w) \\
		&\hphantom{=}+\mathfrak c\,S_0(t,x,v),
	\end{align}
	where the principal value is defined using the cutoff $\chi_\epsilon$ by
	\begin{align}\label{eq:pv_def}
		\lim_{\epsilon\to 0} \int_{\mathbb R^{1+2d}} [\chi_\epsilon
		\mathcal K]\bigl(t-s,\,x-y-(t-s)w,\,v-w\bigr)  \cdot S_0(s,y,w)\dd(s,y,w),	\end{align}
	and $\mathfrak c\in\mathbb R^{d\times d}$ is a constant matrix $($depending only on $d$ and on the choice of $\chi${}$)$
	given by
	\begin{equation}\label{eq:c_def}
		\mathfrak c_{ij}:=
		\int_{\mathbb R^{1+2d}} \partial_{\eta_i}G_1(\tau,\zeta,\eta)\, (\tau\partial_{\zeta_j}+\partial_{\eta_j})\chi_1(\tau,\zeta,\eta)\dd(\tau,\zeta,\eta).
	\end{equation}
\end{lem}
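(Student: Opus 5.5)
The plan is to move the $v$-derivative of the source onto the kernel and then treat the singularity at the origin by the cutoff $\chi_\epsilon$, in the spirit of the classical Calder\'on--Zygmund representation of second derivatives of the Newtonian potential. Write $z=(t,x,v)$, $\zeta=(s,y,w)$ and use the kinetic group law $\zeta^{-1}\circ z=(t-s,x-y-(t-s)w,v-w)$. The key structural fact is that, for a function $F(t-s,x-y-(t-s)w,v-w)$, differentiation in $w_j$ acts as $-(\tau\partial_{\zeta_j}+\partial_{\eta_j})F$ evaluated at $\zeta^{-1}\circ z$. Meanwhile, differentiation in $v_i$ acts as $\partial_{\eta_i}F$. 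Since $S_0$ is smooth with compact support and $G_1\in\L^1_{\loc}$ with $\nabla_\eta G_1$ locally integrable (it scales like $\rho^{-\homd+1}$ with $\homd=4d+2$), we may differentiate under the integral in \eqref{eq:f_def_convolution}. This gives $\nabla_v f = \int \partial_{\eta_i}G_1(\zeta^{-1}\circ z)\, \partial_{w_j}S_{0,j}(\zeta)\dd \zeta$.

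First, split $\partial_{\eta_i}G_1=\chi_\epsilon\,\partial_{\eta_i}G_1+(1-\chi_\epsilon)\,\partial_{\eta_i}G_1$. The second piece is supported in $\rho\le 2\epsilon$ and is integrable with mass $O(\epsilon)$ by homogeneity, so its contribution is bounded by $\epsilon\|\nabla S_0\|_\infty$ and vanishes as $\epsilon\to0$. On the first piece integrate by parts in $w_j$; there are no boundary terms, since $S_0$ has compact support and $\chi_\epsilon\partial_\eta G_1$ is smooth there, being supported away from the singularity. Because $\partial_{w_j}$ of the composed function equals $-[(\tau\partial_{\zeta_j}+\partial_{\eta_j})(\chi_\epsilon\partial_{\eta_i}G_1)](\zeta^{-1}\circ z)$, Leibniz produces two terms. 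The first is $\int[\chi_\epsilon\mathcal K_{ij}](\zeta^{-1}\circ z)S_{0,j}(\zeta)\dd\zeta$, which is exactly \eqref{eq:pv_def}. The second is $\int [\partial_{\eta_i}G_1\,(\tau\partial_{\zeta_j}+\partial_{\eta_j})\chi_\epsilon](\zeta^{-1}\circ z)S_{0,j}(\zeta)\dd\zeta$.

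Next, analyze the second term. Since $\tau\partial_{\zeta_j}+\partial_{\eta_j}$ is homogeneous of degree $-1$ under $\delta_r$, as is $\partial_{\eta_i}$, the function $\Theta_\epsilon:=\partial_{\eta_i}G_1(\tau\partial_{\zeta_j}+\partial_{\eta_j})\chi_\epsilon$ satisfies $\Theta_\epsilon=\epsilon^{-\homd}\Theta_1\circ\delta_{1/\epsilon}$. Here we use $G_1\circ\delta_r=r^{-4d}G_1$ and $\homd=4d+2$ for $\beta=1$. Moreover, $\Theta_1$ is bounded and supported in $\{1\le\rho\le2\}$, hence integrable, with integral $\mathfrak c_{ij}$ as in \eqref{eq:c_def}. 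Thus $\Theta_\epsilon$ is an approximate identity with respect to the kinetic convolution, and by continuity and compact support of $S_0$, together with uniform continuity under the change of variables $\zeta\mapsto\zeta^{-1}\circ z$ (which has unit Jacobian), the term converges to $\mathfrak c_{ij}S_{0,j}(z)$. Collecting both limits proves \eqref{eq:repweak_lemma} and shows in particular that the principal value exists.

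The main obstacle is bookkeeping in the integration by parts: we must verify that the $w$-derivative of the composed kernel is precisely $-(\tau\partial_\zeta+\partial_\eta)$ applied to the kernel, since the $x$-argument $x-y-(t-s)w$ depends on $w$. We must also confirm that the homogeneity degrees match, so that $\Theta_\epsilon$ is exactly an $\L^1$-normalized dilate. Both follow from the explicit chain rule and the scaling of $G_1$ stated above.
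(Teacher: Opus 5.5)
Your proposal is correct and follows essentially the same route as the paper. Both arguments differentiate under the integral, insert the cutoff $\chi_\epsilon$ with an $O(\epsilon)$ error from $\{\rho\le 2\epsilon\}$, integrate by parts in $w$ via $\partial_{w_j}=-(\tau\partial_{\zeta_j}+\partial_{\eta_j})$, and use the $\delta_\epsilon$-scaling to identify the commutator term. The only difference is in the last step: the paper freezes $S_{0,j}$ at $(t,x,v)$, shows that $c_{ij}(\epsilon)$ does not depend on $\epsilon$, and bounds the remainder by the mean value theorem, whereas you write $\Theta_\epsilon=\epsilon^{-\homd}\Theta_1\circ\delta_{1/\epsilon}$ and conclude by dominated convergence, which needs only continuity of $S_0$.
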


\begin{rem}
	We refer to \cite[Lemma 3.1]{MR1662349} for a related (localised) representation formula for higher-order Kolmogorov equations as studied in \cite{pascucci_mosers_2004}.  
\end{rem}

\begin{proof}
	Fix $(t,x,v)\in\mathbb R^{1+2d}$ and $(s,y,w)\in\mathbb R^{1+2d}$.
	Introduce the variables
	\begin{equation}\label{eq:diff_variables}
		\tau:=t-s,\qquad \zeta:=x-y-\tau w,\qquad \eta:=v-w.
	\end{equation}
	Thus the kernel appearing in \eqref{eq:f_def_convolution} is $G_1(\tau,\zeta,\eta)$.

	A direct chain-rule computation from \eqref{eq:diff_variables} yields, for any smooth $\Phi=\Phi(\tau,\zeta,\eta)$,
	\begin{equation}\label{eq:w_derivative_identity}
		\partial_{w_j}\bigl[\Phi(\tau,\zeta,\eta)\bigr] = -\bigl(\tau\partial_{\zeta_j}+\partial_{\eta_j}\bigr)\Phi(\tau,\zeta,\eta).
	\end{equation}
	Indeed, $\partial_{w_j}\tau=0$, $\partial_{w_j}\zeta=-\tau e_j$, and $\partial_{w_j}\eta=-e_j$.

	From the explicit formula for $G_1$ one checks the scaling
	\begin{equation}\label{eq:G_scaling}
		G_1(\delta_r(\tau,\zeta,\eta)) = r^{-4d}G_1(\tau,\zeta,\eta)\qquad \tau\in\mathbb R,\ \zeta,\eta\in\mathbb R^d,\ r>0.
	\end{equation}
	Differentiating \eqref{eq:G_scaling} gives, for $r>0$,
	\begin{align}\label{eq:dG_scaling}
		\partial_{\eta_i}G_1(\delta_r(\tau,\zeta,\eta)) &= r^{-(4d+1)}\,(\partial_{\eta_i}G_1)(\tau,\zeta,\eta), \nonumber \\
		\mathcal K_{ij}(\delta_r(\tau,\zeta,\eta)) &= r^{-(4d+2)}\,\mathcal K_{ij}(\tau,\zeta,\eta).
	\end{align}
	We recall the homogeneous dimension associated with the dilations $\delta_r$ is $\homd = 4d+2$.
	Under the change of variables $\zeta=(\tau,\zeta,\eta)=\delta_r(\tilde\tau,\tilde\zeta,\tilde\eta)$ one has 
	$\dd (\tau,\zeta,\eta)= r^{\homd}\dd (\tilde\tau,\tilde\zeta,\tilde\eta)$.

	We now record the key local integrability fact. For fixed $\tau>0$, $\partial_{\eta_i}G_1$ is a polynomial times a Gaussian 
	in $(\zeta,\eta)$, hence integrable. 
	As $\tau\to0$, one has $\|\partial_{\eta_i}G_1(\tau, \cdot)\|_{\L^1_{\zeta,\eta}}\lesssim \tau^{-1/2}$, 
	so the $\tau$-singularity is integrable. 
	Therefore $\partial_\eta G_1\in \L^1_{\mathrm{loc}}(\R^{1+2d})$.
	
	Hence, using \eqref{eq:dG_scaling} with $r=\epsilon$ and the dilation scaling of $\rho$, 
	$\rho(\delta_\epsilon(\tilde\tau,\tilde\zeta,\tilde\eta))=\epsilon\,\rho(\tilde\tau,\tilde\zeta,\tilde\eta)$
	we obtain, for any $\epsilon\in(0,1]$,
	\begin{align*}
		\int_{\rho(\tau,\zeta,\eta)\le 2\epsilon} |\partial_{\eta_i}G_1(\tau,\zeta,\eta)|\dd(\tau,\zeta,\eta)
		&= \int_{\rho(\tilde\tau,\tilde\zeta,\tilde\eta)\le 2} |\partial_{\eta_i}G_1(\delta_\epsilon(\tilde\tau,\tilde\zeta,\tilde\eta))|\,\epsilon^{\homd}\dd(\tilde\tau,\tilde\zeta,\tilde\eta) \\
		&= \epsilon^{\homd}\epsilon^{-(4d+1)} \int_{\rho(\tilde\tau,\tilde\zeta,\tilde\eta)\le 2} |\partial_{\eta_i}G_1(\tilde\tau,\tilde\zeta,\tilde\eta)|\dd(\tilde\tau,\tilde\zeta,\tilde\eta) \\
		&= \epsilon^{(4d+2)-(4d+1)}\, \int_{\rho(\tilde\tau,\tilde\zeta,\tilde\eta)\le 2} |\partial_{\eta_i}G_1(\tilde\tau,\tilde\zeta,\tilde\eta)|\dd(\tilde\tau,\tilde\zeta,\tilde\eta) \\
		&\lesssim \epsilon.
	\end{align*}

	Since $S=\nabla_v\cdot S_0\in \C_c^\infty$ and $G_1$ is smooth for $\tau\neq 0$ with Gaussian decay, 
	standard dominated-convergence arguments (using the local integrability of $\partial_{\eta}G_1$) show that $f\in \C_v^1$ and,
	for each $i=1,\dots,d$,
	\begin{equation}\label{eq:dvf_first}
		[\partial_{v_i}f](t,x,v) = \int_{\mathbb R^{1+2d}} \partial_{\eta_i}G_1(\tau,\zeta,\eta)\,S(s,y,w)\dd(s,y,w).
	\end{equation}
	Fix $\epsilon\in(0,1)$ and define
	\begin{equation}\label{eq:Ieps_def}
		I_{i,\epsilon}(t,x,v) := \int_{\mathbb R^{1+2d}} \partial_{\eta_i}G_1(\tau,\zeta,\eta)\, \chi_\epsilon(\tau,\zeta,\eta)\, S(s,y,w)\dd(s,y,w).
	\end{equation}
	Because $\chi_\epsilon\to 1$ pointwise and $|\partial_{\eta_i}G_1|$ is locally integrable, while $S$ is compactly supported, we have
	\begin{equation}\label{eq:Ieps_to_dvf}
		\lim_{\epsilon\to 0} I_{i,\epsilon}(t,x,v)=[\partial_{v_i}f](t,x,v).
	\end{equation}
	Indeed,
	\[
		\bigl|[\partial_{v_i}f](t,x,v) - I_{i,\epsilon}(t,x,v)\bigr| \le \|S\|_{L^\infty}\int_{\rho(\tau,\zeta,\eta)\le 2\epsilon}|\partial_{\eta_i}G_1(\tau,\zeta,\eta)|\dd(\tau,\zeta,\eta) \lesssim \epsilon\|S\|_{L^\infty}.
	\]

	Recall that $S=\sum_{j = 1}^d \partial_{w_j}S_{0,j}$. Using \eqref{eq:Ieps_def} and integration by parts in $w$ 
	(no boundary terms since $S_0$ is compactly supported), we get
	\begin{align}
		I_{i,\epsilon}(t,x,v)
		&= \sum_{j = 1}^d\int \partial_{\eta_i}G_1(\tau,\zeta,\eta)\,\chi_\epsilon(\tau,\zeta,\eta)\,\partial_{w_j}S_{0,j}(s,y,w)\dd(s,y,w) \nonumber\\
		&= -\sum_{j = 1}^d\int \partial_{w_j}\Big(\partial_{\eta_i}G_1(\tau,\zeta,\eta)\,\chi_\epsilon(\tau,\zeta,\eta)\Big)\,S_{0,j}(s,y,w)\dd(s,y,w). \label{eq:Ieps_ibp}
	\end{align}
	Apply \eqref{eq:w_derivative_identity} to the product in \eqref{eq:Ieps_ibp}:
	\begin{align*}
		-\partial_{w_j}\Big(\partial_{\eta_i}G_1\,\chi_\epsilon\Big) &= (\tau\partial_{\zeta_j}+\partial_{\eta_j})\Big(\partial_{\eta_i}G_1\,\chi_\epsilon\Big) \\
		&=  {(\tau\partial_{\zeta_j}+\partial_{\eta_j})\partial_{\eta_i}G_1}\ \chi_\epsilon
		+\partial_{\eta_i}G_1\,(\tau\partial_{\zeta_j}+\partial_{\eta_j})\chi_\epsilon \\
		&=  \mathcal K_{ij}\ \chi_\epsilon
		+\partial_{\eta_i}G_1\,(\tau\partial_{\zeta_j}+\partial_{\eta_j})\chi_\epsilon.
	\end{align*}
	Hence \eqref{eq:Ieps_ibp} becomes
	\begin{equation}\label{eq:Ieps_split}
		I_{i,\epsilon}(t,x,v) = A_{i,\epsilon}(t,x,v)+B_{i,\epsilon}(t,x,v),
	\end{equation}
	where
	\begin{align}
		&A_{i,\epsilon}(t,x,v) \nonumber\\
		&:= \sum_{j = 1}^d\int_{\mathbb R^{1+2d}} \mathcal K_{ij}(\tau,\zeta,\eta)\,\chi_\epsilon(\tau,\zeta,\eta)\,S_{0,j}(s,y,w)\dd(s,y,w), \label{eq:A_def}\\
		&B_{i,\epsilon}(t,x,v) \nonumber\\
		&:= \sum_{j = 1}^d\int_{\mathbb R^{1+2d}} \partial_{\eta_i}G_1(\tau,\zeta,\eta)\,(\tau\partial_{\zeta_j}+\partial_{\eta_j})\chi_\epsilon(\tau,\zeta,\eta)\,S_{0,j}(s,y,w)\dd(s,y,w).
	\label{eq:B_def}
	\end{align}

	First observe that $(\tau\partial_{\zeta_j}+\partial_{\eta_j})\chi_\epsilon$ is supported in the annulus 
	$\{\epsilon\le \rho(\tau,\zeta,\eta)\le 2\epsilon\}$.

	It is convenient to change variables from $(s,y,w)$ to $(\tau,\zeta,\eta)$ for fixed $(t,x,v)$: 
	the map \eqref{eq:diff_variables} is a smooth bijection with Jacobian determinant $\pm 1$, hence
	\begin{equation}\label{eq:haar_change}
		\dd(s,y,w) = \dd(\tau,\zeta,\eta),
		\qquad
		(s,y,w) = (t-\tau,\ x-\zeta-\tau(v-\eta),\ v-\eta).
	\end{equation}
	Using \eqref{eq:haar_change}, we can rewrite \eqref{eq:B_def} as
	\begin{align}\label{eq:B_def_zeta}
		B_{i,\epsilon}(t,x,v) &= \sum_{j = 1}^d\int_{\mathbb R^{1+2d}} \partial_{\eta_i}G_1(\tau,\zeta,\eta)\,(\tau\partial_{\zeta_j}+\partial_{\eta_j})\chi_\epsilon(\tau,\zeta,\eta)\, \nonumber \\
		&\hspace{2cm} \cdot S_{0,j}\bigl(t-\tau,\,x-\zeta-\tau(v-\eta),\,v-\eta\bigr)\dd(\tau,\zeta,\eta).
	\end{align}

	We decompose the last factor into its value at $(t,x,v)$ plus a remainder:
	\[
		S_{0,j}\bigl(t-\tau,\,x-\zeta-\tau(v-\eta),\,v-\eta\bigr) = S_{0,j}(t,x,v)+\Big(S_{0,j}\bigl(t-\tau,\,x-\zeta-\tau(v-\eta),\,v-\eta\bigr)-S_{0,j}(t,x,v)\Big).
	\]
	Accordingly, write $B_{i,\epsilon}=B^{(0)}_{i,\epsilon}+R_{i,\epsilon}$ with
	\begin{align}
		&B^{(0)}_{i,\epsilon}(t,x,v) \nonumber\\ &:= \sum_{j = 1}^dS_{0,j}(t,x,v)\int_{\mathbb R^{1+2d}} \partial_{\eta_i}G_1(\tau,\zeta,\eta)\,(\tau\partial_{\zeta_j}+\partial_{\eta_j})\chi_\epsilon(\tau,\zeta,\eta)\, \dd(\tau,\zeta,\eta), \label{eq:B0_def}\\
		&\nonumber R_{i,\epsilon}(t,x,v) := \sum_{j = 1}^d\int_{\mathbb R^{1+2d}} \partial_{\eta_i}G_1(\tau,\zeta,\eta)\,(\tau\partial_{\zeta_j}+\partial_{\eta_j})\chi_\epsilon(\tau,\zeta,\eta)\\
		&\hspace{6cm}\cdot \Big(S_{0,j}(\cdots)-S_{0,j}(t,x,v)\Big)\dd(\tau,\zeta,\eta). \label{eq:R_def}
	\end{align}

	\emph{Claim 1: $R_{i,\epsilon}(t,x,v)\to 0$ as $\epsilon\to 0$.}
	On the support of $(\tau\partial_{\zeta_j}+\partial_{\eta_j})\chi_\epsilon$ we have $\rho(\tau,\zeta,\eta)\sim\epsilon$, hence the mean value theorem implies
	\[
		\big|S_{0,j}(\cdots)-S_{0,j}(t,x,v)\big|\lesssim_{S_0}\epsilon.
	\]
	Moreover, by scaling considerations we have the pointwise bound
	\[
		\big|(\tau\partial_{\zeta_j}+\partial_{\eta_j})\chi_\epsilon(\tau,\zeta,\eta)\big| \lesssim \epsilon^{-1}\mathds 1_{\{\epsilon\le \rho(\tau,\zeta,\eta)\le 2\epsilon\}}.
	\]
	Combining these estimates with the scaling of $\partial_{\eta_i}G_1$ in \eqref{eq:dG_scaling} yields
	\begin{align*}
		|R_{i,\epsilon}(t,x,v)|
		&\lesssim_{S_0}\epsilon\int_{\{\epsilon\le \rho\le 2\epsilon\}} |\partial_{\eta_i}G_1(\tau,\zeta,\eta)|\,\epsilon^{-1}\dd(\tau,\zeta,\eta) \\
		&\lesssim_{S_0} \int_{\{\epsilon\le \rho\le 2\epsilon\}} |\partial_{\eta_i}G_1(\tau,\zeta,\eta)|\dd(\tau,\zeta,\eta) \lesssim \epsilon.
	\end{align*}
	Therefore $R_{i,\epsilon}(t,x,v)\to 0$.

	\emph{Claim 2: the coefficient in \eqref{eq:B0_def} is independent of $\epsilon$.}
	Define
	\[
		c_{ij}(\epsilon) :=
		\int_{\mathbb R^{1+2d}} \partial_{\eta_i}G_1(\tau,\zeta,\eta)\,(\tau\partial_{\zeta_j}+\partial_{\eta_j})\chi_\epsilon(\tau,\zeta,\eta) \dd(\tau,\zeta,\eta).
	\]
	Let $(\tau,\zeta,\eta)=\delta_\epsilon(\tilde\tau,\tilde\zeta,\tilde\eta)$.
	Then $\dd(\tau,\zeta,\eta) = \epsilon^{\homd}\dd(\tilde\tau,\tilde\zeta,\tilde\eta)$.
	Using \eqref{eq:dG_scaling} and the fact that $(\tau\partial_{\zeta_j}+\partial_{\eta_j})$ has homogeneous degree $-1$ with respect to $\delta_r$, we obtain
	\begin{align*}
		\partial_{\eta_i}G_1(\delta_\epsilon(\tilde\tau,\tilde\zeta,\tilde\eta))&=\epsilon^{-(4d+1)}\partial_{\eta_i}G_1(\tilde\tau,\tilde\zeta,\tilde\eta), \\
		(\tau\partial_{\zeta_j}+\partial_{\eta_j})\chi_\epsilon(\delta_\epsilon(\tilde\tau,\tilde\zeta,\tilde\eta))&=\epsilon^{-1}(\tilde\tau\partial_{\tilde\zeta_j}+\partial_{\tilde\eta_j})\chi_1(\tilde\tau,\tilde\zeta,\tilde\eta).
	\end{align*}
	Hence
	\begin{align*}
		c_{ij}(\epsilon)
		&= \int_{\mathbb R^{1+2d}} \epsilon^{-(4d+1)}\partial_{\tilde\eta_i}G_1(\tilde\tau,\tilde\zeta,\tilde\eta)\, \epsilon^{-1}(\tilde\tau\partial_{\tilde\zeta_j}+\partial_{\tilde\eta_j})\chi_1(\tilde\tau,\tilde\zeta,\tilde\eta)\,\epsilon^{\homd}\dd (\tilde\tau,\tilde\zeta,\tilde\eta)\\
		&= \epsilon^{\homd-(4d+2)}\int_{\mathbb R^{1+2d}}\partial_{\tilde\eta_i}G_1(\tilde\tau,\tilde\zeta,\tilde\eta)\,(\tilde\tau\partial_{\tilde\zeta_j}+\partial_{\tilde\eta_j})\chi_1(\tilde\tau,\tilde\zeta,\tilde\eta)\,d(\tilde\tau,\tilde\zeta,\tilde\eta).
	\end{align*}
	Therefore $c_{ij}(\epsilon)=c_{ij}(1)$ for all $\epsilon>0$.
	Define $\mathfrak c_{ij}=c_{ij}(1)$ as in \eqref{eq:c_def}.
	Consequently,
	\[
		B^{(0)}_{i,\epsilon}(t,x,v)= \sum_{j = 1}^d \mathfrak c_{ij}\,S_{0,j}(t,x,v)\qquad\text{for all }\epsilon>0.
	\]
	Together with Claim~1 we have shown
	\begin{equation}\label{eq:B_limit}
		\lim_{\epsilon\to 0}B_{i,\epsilon}(t,x,v)=\mathfrak c_{ij}\,S_{0,j}(t,x,v).
	\end{equation}

	Combine \eqref{eq:Ieps_to_dvf}, \eqref{eq:Ieps_split}, and \eqref{eq:B_limit}:
	\[
		\partial_{v_i}f(t,x,v) = \lim_{\epsilon\to 0}I_{i,\epsilon}(t,x,v) = \lim_{\epsilon\to 0}A_{i,\epsilon}(t,x,v) + \mathfrak c_{ij}\,S_{0,j}(t,x,v).
	\]
	Therefore the limit $\lim_{\epsilon\to 0}A_{i,\epsilon}(t,x,v)$ exists.
\end{proof}

It remains to prove that this singular integral operator $T_{\varepsilon}$ is bounded on $\L^p_{t,x,v}$. To save some writing we adapt the strategy of Section \ref{sec:towardsLp}, Section \ref{sec:estimates} and Section \ref{sec:horm}. First, we conjugate with the kinetic shift $\Gamma$ to obtain a singular integral operator, which is convolution in $(x,v)$.  
Let $\mathcal K(\tau,\zeta,\eta)$ be the matrix-valued kernel from \eqref{eq:K_def} in Lemma~\ref{lem:repweak}, extended by $0$ for $\tau\le 0$.
For $\epsilon>0$ define the truncated operators
\begin{align} \label{eq:T_eps_def}
	[T_\epsilon g](t,x,v)  =
	\int_{\R^{1+2d}} [\chi_\epsilon \mathcal K]\!\bigl(t-s,\ x-y-(t-s)w,\ v-w\bigr)  
	g(s,y,w) \dd(s,y,w), 
\end{align}
and define the principal value operator $T$ by $Tg:=\lim_{\epsilon\to 0}T_\epsilon g$ whenever the limit exists pointwise (in particular it exists for $g\in \C_c^\infty$ by Lemma~\ref{lem:repweak}).

\begin{lem}\label{lem:conj_conv}
Let $\widetilde T_\epsilon:=\Gamma T_\epsilon \Gamma^{-1}$ and $\widetilde T:=\Gamma T\Gamma^{-1}$.
For $t,s\in\R$ and $(x,v)\in\R^d\times\R^d$ define
\begin{equation}\label{eq:kernel_ts_def}
	\widetilde{\mathcal K}_\epsilon(t,s,x,v)
	:=
	[\chi_\epsilon\mathcal K](t-s,\ x+t v,\ v),
	\qquad
	\widetilde{\mathcal K}(t,s,x,v)
	:=
	\mathcal K(t-s,\ x+t v,\ v).
\end{equation}
Then for every $g\in \C_c^\infty(\R^{1+2d};\R^d)$ and every $(t,x,v)\in\R^{1+2d}$,
\begin{equation}\label{eq:Ttilde_eps_conv}
	[\widetilde T_\epsilon g](t,x,v)
	=
	\int_{\R}\Bigl[\bigl(\widetilde{\mathcal K}_\epsilon(t,s,\cdot,\cdot)\ast_{x,v} g(s,\cdot,\cdot)\bigr)(x,v)\Bigr]\dd s.
\end{equation}
Consequently,
\begin{equation}\label{eq:Ttilde_conv_form}
	[\widetilde T g](t,x,v)
	=
	\mathrm{p.v.}\!\int_{\R}\Bigl[\bigl(\widetilde{\mathcal K}(t,s,\cdot,\cdot)\ast_{x,v} g(s,\cdot,\cdot)\bigr)(x,v)\Bigr]\dd s,
\end{equation}
where the principal value is taken with the truncation $\widetilde{\mathcal K}_\epsilon$.
\end{lem}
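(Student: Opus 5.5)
This is a direct computation with the kinetic shift $\Gamma f(t,x,v)=f(t,x+tv,v)$, whose inverse is $\Gamma^{-1}h(t,x,v)=h(t,x-tv,v)$. Fix $g\in\C_c^\infty$ and $(t,x,v)$. We compute $\Gamma T_\epsilon\Gamma^{-1}g$ pointwise from the definition \eqref{eq:T_eps_def}:
\[
[\widetilde T_\epsilon g](t,x,v)=\int_{\R^{1+2d}}[\chi_\epsilon\mathcal K]\bigl(t-s,\,x+tv-y-(t-s)w,\,v-w\bigr)\,g(s,y-sw,w)\dd(s,y,w).
\]
The integral is absolutely convergent. Indeed, $\chi_\epsilon\mathcal K$ vanishes near the origin, and $\mathcal K$ is smooth away from $\tau=0$ with Gaussian decay in $(\zeta,\eta)$ for $\tau$ in compact sets. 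Moreover $g$ has compact support. Hence Fubini applies, and all substitutions below are legitimate.

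The key step is the substitution $y'=y-sw$, for fixed $(s,w)$. It is a translation in $y$, so it is measure preserving. Then
\[
x+tv-y-(t-s)w=x+tv-y'-sw-tw+sw=(x-y')+t(v-w).
\]
So the integrand becomes $[\chi_\epsilon\mathcal K](t-s,(x-y')+t(v-w),v-w)\,g(s,y',w)$. By the definition \eqref{eq:kernel_ts_def}, this is exactly $\widetilde{\mathcal K}_\epsilon(t,s,x-y',v-w)\,g(s,y',w)$. Integrating first in $(y',w)$ gives the convolution in $(x,v)$ at fixed $s$, and then integrating in $s$ gives \eqref{eq:Ttilde_eps_conv}. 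The same identity without the cutoff yields the formula for $\widetilde{\mathcal K}$.

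For the consequence \eqref{eq:Ttilde_conv_form}, note that $\Gamma^{-1}g\in\C_c^\infty$ whenever $g\in\C_c^\infty$, since $\Gamma^{-1}$ is a smooth diffeomorphism of $\R^{1+2d}$ that preserves compact support. By Lemma~\ref{lem:repweak}, the limit $\lim_{\epsilon\to0}T_\epsilon(\Gamma^{-1}g)$ exists at every point, in particular at $(t,x+tv,v)$. Applying $\Gamma$ is evaluation at a shifted point, so
\[
\widetilde Tg(t,x,v)=\lim_{\epsilon\to0}\widetilde T_\epsilon g(t,x,v).
\]
This is precisely the principal value of \eqref{eq:Ttilde_eps_conv} with the truncated kernels $\widetilde{\mathcal K}_\epsilon$. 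There is no real obstacle. The only points needing care are the absolute convergence that justifies Fubini for fixed $\epsilon$, and keeping the correct argument $x+tv$ in \eqref{eq:kernel_ts_def} in the affine bookkeeping of the change of variables.
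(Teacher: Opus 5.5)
Your proposal is correct and is essentially the paper's proof: you evaluate $T_\epsilon(\Gamma^{-1}g)$ at $(t,x+tv,v)$, substitute $y'=y-sw$ so that the spatial argument becomes $(x-y')+t(v-w)$, and then let $\epsilon\to0$ using Lemma~\ref{lem:repweak} applied to $\Gamma^{-1}g\in\C_c^\infty$. Your remark on absolute convergence adds something the paper leaves implicit. The cleanest justification is that $|\chi_\epsilon\mathcal K|\lesssim\epsilon^{-(4d+2)}$. This follows from the homogeneity bound $|\mathcal K|\lesssim\rho^{-(4d+2)}$ away from the origin, which also covers $\tau\to0^+$ and is not only a matter of Gaussian decay for $\tau$ in compact sets. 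With this bound, the compact support of $g$ suffices for Fubini.
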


\begin{proof}
Fix $g\in \C_c^\infty(\R^{1+2d};\R^d)$ and $(t,x,v)\in\R^{1+2d}$.
By definition,
\[
	[\widetilde T_\epsilon g](t,x,v)
	=
	[\Gamma T_\epsilon \Gamma^{-1}g](t,x,v)
	=
	[T_\epsilon(\Gamma^{-1}g)](t,x+t v,v).
\]
Using \eqref{eq:T_eps_def} and the identity
\[
	[\Gamma^{-1}g](s,y,w)=g(s,y-s w,w),
\]
we get
\begin{align*}
	[\widetilde T_\epsilon g](t,x,v)
	&=
	\int_{\R^{1+2d}}
	[\chi_\epsilon\mathcal K]\bigl(t-s,\ (x+t v)-y-(t-s)w,\ v-w\bigr)\,
	g(s,y-s w,w)\dd(s,y,w).
\end{align*}
Now perform the change of variables
\[
	\bar y:=y-s w,
	\qquad y=\bar y+s w,
	\qquad \dd y=\dd \bar y.
\]
Then
\[
	(x+t v)-y-(t-s)w
	=
	(x+t v)-(\bar y+s w)-(t-s)w
	=
	x-\bar y+t(v-w).
\]
Hence
\begin{align*}
	[\widetilde T_\epsilon g](t,x,v)
	&=
	\int_{\R^{1+2d}}
	[\chi_\epsilon\mathcal K]\bigl(t-s,\ x-\bar y+t(v-w),\ v-w\bigr)\,
	g(s,\bar y,w)\dd(s,\bar y,w)\\
	&=
	\int_{\R^{1+2d}}
	\widetilde{\mathcal K}_\epsilon(t,s,x-\bar y,v-w)\,
	g(s,\bar y,w)\dd(s,\bar y,w),
\end{align*}
which is exactly \eqref{eq:Ttilde_eps_conv}. Taking the limit $\epsilon\to0$ gives
\eqref{eq:Ttilde_conv_form}.
\end{proof}

Next, we provide the following kernel estimates on $\mathcal K$. 

\begin{lem}
	For every $N\in\N$ there exists $C_{N,d}>0$ such that for all $s<t$ and all $x,v\in\R^d$,
	\begin{equation}\label{eq:Ktilde_pointwise_poly}
		|\widetilde{\mathcal K}(t,s,x,v)|
		\le
		C_{N,d}\,(t-s)^{-(2d+1)}
		\left(1+\frac{|x+s v|}{(t-s)^{3/2}}+\frac{|v|}{(t-s)^{1/2}}\right)^{-N},
	\end{equation}
	where $|\widetilde{\mathcal K}|$ denotes any fixed matrix norm.
\end{lem}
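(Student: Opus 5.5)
We prove the pointwise bound for $\mathcal K$ itself and then transfer it to $\widetilde{\mathcal K}$ by the change of variables \eqref{eq:kernel_ts_def}. We have $\widetilde{\mathcal K}(t,s,x,v)=\mathcal K(\tau,\zeta,\eta)$ with $\tau=t-s$, $\zeta=x+tv$, $\eta=v$. Hence it suffices to show, for $\tau>0$,
\[
	|\mathcal K(\tau,\zeta,\eta)|\le C_{N,d}\,\tau^{-(2d+1)}\Big(1+\frac{|\zeta-\tau\eta|}{\tau^{3/2}}+\frac{|\eta|}{\tau^{1/2}}\Big)^{-N},
\]
because $\zeta-\tau\eta=x+tv-(t-s)v=x+sv$.

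\textbf{Step 1: scaling.} By \eqref{eq:dG_scaling}, $\mathcal K$ is homogeneous of degree $-(4d+2)$ under $\delta_r$. Taking $r=\tau^{1/2}$, we write $(\tau,\zeta,\eta)=\delta_r(1,\tilde\zeta,\tilde\eta)$ with $\tilde\zeta=\tau^{-3/2}\zeta$ and $\tilde\eta=\tau^{-1/2}\eta$. This gives
\[
	\mathcal K(\tau,\zeta,\eta)=\tau^{-(2d+1)}\mathcal K(1,\tilde\zeta,\tilde\eta).
\]
Since $\tau^{-3/2}|\zeta-\tau\eta|=|\tilde\zeta-\tilde\eta|$, we are reduced to the single estimate
\[
	|\mathcal K(1,\tilde\zeta,\tilde\eta)|\lesssim_N (1+|\tilde\zeta-\tilde\eta|+|\tilde\eta|)^{-N}.
\]
Note that $1+|\tilde\zeta-\tilde\eta|+|\tilde\eta|\sim 1+|\tilde\zeta|+|\tilde\eta|$, because the linear map $(\tilde\zeta,\tilde\eta)\mapsto(\tilde\zeta-\tilde\eta,\tilde\eta)$ is invertible.

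\textbf{Step 2: Schwartz decay at $\tau=1$.} From the explicit formula, $G_1(1,\zeta,\eta)=c_d\exp(-Q(\zeta,\eta))$, where
\[
	Q(\zeta,\eta)=\tfrac14|\eta|^2+3|\zeta-\tfrac12\eta|^2
\]
is a positive definite quadratic form. We compute $\mathcal K_{ij}(1,\cdot)=(\partial_{\zeta_j}+\partial_{\eta_j})\partial_{\eta_i}G_1(1,\cdot)$. This operator involves only derivatives in $(\zeta,\eta)$; the $\tau$ in front of $\partial_{\zeta_j}$ equals $1$ here. The result is a polynomial of degree at most $2$ times $e^{-Q}$. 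Since $Q\ge c(|\zeta|^2+|\eta|^2)$, we get $|\mathcal K(1,\zeta,\eta)|\lesssim_N(1+|\zeta|+|\eta|)^{-N}$ for every $N$.

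\textbf{Step 3: checking the scaling identity.} Combining Steps 1 and 2 gives the claim. The only point requiring care is the scaling in Step 1. We must verify that $\tau\partial_{\zeta_j}+\partial_{\eta_j}$ is homogeneous of degree $-1$ under $\delta_r$: the factor $\tau$ scales like $r^2$ and $\partial_\zeta$ like $r^{-3}$. This shows that \eqref{eq:dG_scaling} holds for all $\tau>0$ and not merely formally. This step is routine.

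The main obstacle is therefore modest: keeping track of the shift $x+tv$ versus $x+sv$ in the variables. We resolve it through the identity $\zeta-\tau\eta=x+sv$.
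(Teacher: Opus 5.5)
Your proof is correct and follows essentially the same route as the paper. You rescale by the Kolmogorov dilation $\delta_{\tau^{1/2}}$ to reduce to the Schwartz function $\mathcal K(1,\cdot,\cdot)$, which is a polynomial times a Gaussian, and then trade the weight in $x+tv$ for the one in $x+sv$; the paper does this last exchange via the triangle inequalities $|x+tv|\le |x+sv|+(t-s)|v|$ and $|x+sv|\le|x+tv|+(t-s)|v|$, which is equivalent to your identity $\zeta-\tau\eta=x+sv$ combined with the invertibility of $(\tilde\zeta,\tilde\eta)\mapsto(\tilde\zeta-\tilde\eta,\tilde\eta)$.
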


\begin{proof}
	Set $\tau:=t-s>0$. By the homogeneity of $\mathcal K$,
	\[
		\mathcal K_{ij}(\tau,\zeta,\eta)
		=
		\tau^{-(2d+1)}
		\mathfrak K_{ij}\!\left(\frac{\zeta}{\tau^{3/2}},\frac{\eta}{\tau^{1/2}}\right),
		\qquad
		\mathfrak K_{ij}(X,V):=\mathcal K_{ij}(1,X,V).
	\]
	Since $\mathcal K_{ij}(1,\cdot,\cdot)$ is a polynomial times a Gaussian, we have
	$\mathfrak K_{ij}\in\mathcal S(\R^{2d})$. Therefore, for every $N\in\N$, $|\mathfrak K_{ij}(X,V)|
		\le
		C_{N,d}(1+|X|+|V|)^{-N}$.
	Using \eqref{eq:kernel_ts_def}, this yields
	\[
		|\widetilde{\mathcal K}_{ij}(t,s,x,v)|
		\le
		C_{N,d}\,\tau^{-(2d+1)}
		\left(1+\frac{|x+t v|}{\tau^{3/2}}+\frac{|v|}{\tau^{1/2}}\right)^{-N}.
	\]
	Finally,
	\[
		\frac{|x+t v|}{\tau^{3/2}}
		\le
		\frac{|x+s v|}{\tau^{3/2}}+\frac{|v|}{\tau^{1/2}},
		\qquad
		\frac{|x+s v|}{\tau^{3/2}}
		\le
		\frac{|x+t v|}{\tau^{3/2}}+\frac{|v|}{\tau^{1/2}},
	\]
	so the weights with $x+t v$ and $x+s v$ are equivalent. This gives
	\eqref{eq:Ktilde_pointwise_poly}.
\end{proof}

\begin{proof}[Proof of Theorem \ref{thm:weakLplocal}]
	With these estimates at hand it is clear that we obtain integrated bounds as in Proposition~\ref{prop:intkernel} and that we may verify H\"ormander's condition with the same calculations provided already in Section \ref{sec:horm} to prove $\L^p_{t,x,v}$-boundedness of $T_\epsilon$. 
	Details are left to the reader.
\end{proof}

\begin{rem}
	The same strategy works for strong solutions when $\gamma = 2$ and more generally for $\gamma \in \Z$. 
\end{rem}

\begin{rem}
	This approach is elegant when working with local diffusion and integer derivatives. 
	However, as soon as we are concerned with the transfer-of-regularity (which requires to treat fractional derivatives) or non-local diffusion $\beta \in (0,1)$, it is unclear to us how to handle the calculations with this strategy  and the Fourier approach seems to be more adapted. 
\end{rem}

\appendix
\section{Singular integrals on spaces of  homogeneous type}
\label{sec:cw}
We recall the Coifman-Weiss theorem \cite{MR499948}. We give a version with fewer assumptions, which we have not seen explicitly in the literature in this context.  
The proof in \cite{MR499948} was given before the representation of singular integrals by kernels was fully understood. 
But nonetheless the proof applies to our statement.  
Let us consider a space $(X,\rho,\mu)$ of homogeneous type. That is, $X$ is a set equipped with a quasi-distance $\rho$ such that $X$ endowed with the topology induced by balls $B(x,r)=\{y\in X; \rho(x,y)<r\}$  is a Hausdorff space. 
Moreover, $\mu$ is assumed to be a doubling Borel measure.

Consider a linear, bounded operator $T \colon \L^2(X) \to \L^2(X)$ for which  there exists a measurable kernel $K \colon X \times X \setminus \{ (x,x) : x \in X \} \to \R$ (or, even,  the space of bounded linear operators between two Hilbert spaces) such that 
\begin{enumerate}
	\item  $K$ is locally integrable on $X \times X \setminus \{ (x,x) : x \in X \}$, 
	\item  $K$ satisfies H\"ormander's condition, i.e. there exists a constant $C>0$ and $N>0$ such that for all $r_0>0$ and almost any $z,z' \in X$ with $\rho(z,z')\le r_0$ we have
	\begin{equation*}
		\int_{\rho(x,z)\ge Nr_0} \abs{K(x,z)-K(x,z')} \dd \mu(x) \le C, 
			\end{equation*}
	\item for all $f,g\in \L^\infty(X)$ supported in sets $A$, $B$ with finite measure and $\rho(A,B)>0$, then 
	\begin{equation*}
	(g,Tf) = \iint_{X\times X} K(x,y) f(y)g(x)  \dd \mu(x) \dd \mu(y).
	\end{equation*}
\end{enumerate}
The first item implies that the integral in the third item is well-defined. And then, the equality linking $T$ and $K$ in third item implies the representation  
\begin{equation*}
	Tf(x) = \int_{X} K(x,y) f(y)   \dd \mu(y),
	\end{equation*}
	for almost every $x\notin A$. 
\medskip
The Coifman-Weiss theorem, which extends H\"ormander's theorem in this context, states the following. 
\begin{thm}
\label{thm:CW} 
	Any such operator $T$  admits a bounded extension to $\L^p(X)$ for $p \in (1,2)$, with bounds depending only on the structural constants for $X$, the $\L^2$ operator norm for $T$, $C$ and $N$ in $($ii$)$.  
\end{thm}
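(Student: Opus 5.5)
The plan is to follow the Calderón--Zygmund scheme of Coifman and Weiss: interpolate between the $\L^2$ bound and a weak type $(1,1)$ estimate.

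The main tool is the Calderón--Zygmund decomposition in spaces of homogeneous type. Fix $f\in \L^2(X)\cap\L^1(X)$ and a height $\lambda>0$. By doubling, the Whitney-type covering of the open set $\{Mf>\lambda\}$, where $M$ is the Hardy--Littlewood maximal function, gives balls $B_k=B(z_k,r_k)$ with bounded overlap. It also gives the decomposition $f=g+\sum_k b_k$ with the following properties:
\[
\|g\|_{\L^\infty}\lesssim\lambda,\qquad \|g\|_{\L^1}\le\|f\|_{\L^1},\qquad \operatorname{supp} b_k\subset B_k,\qquad \int b_k\,\dd\mu=0,
\]
\[
\sum_k\|b_k\|_{\L^1}\lesssim\|f\|_{\L^1},\qquad \sum_k\mu(B_k)\lesssim \lambda^{-1}\|f\|_{\L^1}.
\]
The good part is then handled by the $\L^2$ bound and Chebyshev:
\[
\mu(\{|Tg|>\lambda\})\lesssim\lambda^{-2}\|g\|_{\L^2}^2\lesssim\lambda^{-1}\|f\|_{\L^1}.
\]
For the bad part, remove the exceptional set $E=\bigcup_k B(z_k,N' r_k)$, where $N'$ is $N$ times the quasi-triangle constant. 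By doubling, $\mu(E)\lesssim\lambda^{-1}\|f\|_{\L^1}$.

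The key step is the bound $\int_{X\setminus B(z_k,N'r_k)}|Tb_k|\,\dd\mu\lesssim\|b_k\|_{\L^1}$. I would first prove it for $b_k\in\L^\infty$ with support in $B_k$. Take $h\in\L^\infty$ supported in a finite-measure subset of the complement, so at positive distance from $B_k$. Item (iii) then gives
\[
(h,Tb_k)=\iint K(x,y)b_k(y)h(x)\,\dd\mu(y)\,\dd\mu(x).
\]
Using the cancellation of $b_k$ we rewrite this as
\[
\iint \bigl(K(x,y)-K(x,z_k)\bigr)b_k(y)h(x)\,\dd\mu(y)\,\dd\mu(x).
\]
Here Hörmander's condition (ii) is applied with $z=y$, $z'=z_k$ and $r_0=r_k$; the "almost every $z$" clause is harmless after averaging in $y$. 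This yields $|(h,Tb_k)|\le C\|h\|_\infty\|b_k\|_{\L^1}$, and duality gives the claimed $\L^1$ bound outside the enlarged ball. To make $K(x,z_k)$ legitimate I would instead subtract the average of $K(x,\cdot)$ over $B_k$, which only needs the local integrability from (i). Finally I would sum over $k$, apply Chebyshev, and use the linearity of $T$ on $\sum b_k$, with the sum converging in $\L^2$ for $f\in\L^1\cap\L^2$ by bounded overlap.

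The main obstacle is that the $b_k$ are not bounded but only in $\L^1\cap\L^2$. Item (iii) is stated only for bounded functions on sets of finite measure at positive distance. I would truncate $b_k$ at level $n$, re-centre to keep mean zero, and pass to the limit in $\L^2$ using the $\L^2$-boundedness of $T$ on the left side and Fatou on the right. This yields weak $(1,1)$ on $\L^1\cap\L^2$ with constants depending only on the doubling constant, the quasi-triangle constant, $\|T\|_{2\to2}$, $C$ and $N$. Marcinkiewicz interpolation with the $\L^2$ bound then gives $\L^p$ boundedness for $1<p<2$ on the dense class $\L^p\cap\L^2$, and the bounded extension follows by density.
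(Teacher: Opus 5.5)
Your proposal is correct and is essentially the argument the paper invokes: the Coifman--Weiss Calder\'on--Zygmund proof of weak type $(1,1)$, with hypothesis (iii) replacing their qualitative assumption $K\in\L^2(X\times X)$ to represent $Tb_k$ away from $B_k$, followed by Marcinkiewicz interpolation and density. The paper differs only in that it runs the weak-type estimate directly for bounded $f$ with finite-measure support, so the $b_k$ are already bounded and your truncation step is unnecessary; separately, once you average over $B_k$ you need $N'\ge A(2AN+1)$ ($A$ the quasi-triangle constant) rather than $N'=AN$, though this is only bookkeeping.
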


\begin{proof}
	The  proof  in \cite{MR499948} gives the weak-type $(1,1)$ estimate for all $f$ as above.  Note that their assumption $K \in \L^2(X \times X)$ is not used quantitatively, but  only qualitatively  to write a  representation  in order to apply (ii). Thus assuming (iii) suffices. Then, 
	Marcinkiewicz interpolation implies that for all such $f$, $Tf\in \L^p(X)$ with $\|Tf\|_{p}\lesssim \|f\|_{p}$ for some constant independent of $f$. We conclude by density. 
\end{proof}

\begin{rem} \label{rem:CW}
 Of course, the dual condition to (ii) exchanging the roles of the variables, implies the dual conclusion with $2<p<\infty$. 
\end{rem}

\bibliographystyle{plain}

\end{document}